\documentclass[11pt]{amsart}
\usepackage{graphicx}
\usepackage{amsmath}
\usepackage{amssymb}
\usepackage{amsfonts}
\usepackage{verbatim}
\usepackage{scalefnt}
\usepackage{multirow}
\usepackage{mathtools}
\usepackage{bbm}
\usepackage{float}
\usepackage{multicol}
\restylefloat{figure}


\usepackage{pstricks,tikz}
\usepackage{fullpage}
\usepackage{fancyhdr}
\usepackage{hyperref}

\begin{document}

\def\COMMENT#1{}
\let\COMMENT=\footnote

\newtheorem{theorem}{Theorem}[section]
\newtheorem{fact}[theorem]{Fact}
\newtheorem{lemma}[theorem]{Lemma}
\newtheorem{proposition}[theorem]{Proposition}
\newtheorem{corollary}[theorem]{Corollary}
\newtheorem{conjecture}[theorem]{Conjecture}
\newtheorem{claim}[theorem]{Claim}
\newtheorem{definition}[theorem]{Definition}
\newtheorem{Proposition}[theorem]{Proposition}
\newtheorem{firstthm}{Proposition}
\newtheorem{thm}[firstthm]{Theorem}
\newtheorem{prop}[firstthm]{Proposition}

\newtheorem{cor}[firstthm]{Corollary}
\newtheorem{problem}[firstthm]{Problem}
\newtheorem{defin}[firstthm]{Definition}
\newtheorem{conj}[firstthm]{Conjecture}

\newtheorem{ques}[firstthm]{Question} 
\numberwithin{equation}{section}

\def\eps{{\varepsilon}}
\newcommand{\cP}{\mathcal{P}}
\newcommand{\cT}{\mathcal{T}}
\newcommand{\cL}{\mathcal{L}}
\newcommand{\ex}{\mathbb{E}}
\newcommand{\eul}{e}
\newcommand{\pr}{\mathbb{P}}

\title{ON DEGREE SEQUENCES FORCING THE SQUARE OF A HAMILTON CYCLE}
\title[On degree sequences forcing the square of a Hamilton cycle]{On degree sequences forcing the square of a Hamilton cycle}
\author{Katherine Staden and Andrew Treglown}

\begin{abstract}
A famous conjecture of P\'osa from 1962 asserts that every graph on $n$ vertices and with minimum degree at least $2n/3$ contains the square of a Hamilton cycle. The conjecture was proven for large graphs in 1996 by Koml\'os,  S\'ark\"ozy and  Szemer\'edi~\cite{kssposa}. In this paper we prove a degree sequence version of P\'osa's conjecture:
Given any $\eta >0$, every graph $G$ of sufficiently large order $n$ contains the square of a Hamilton cycle if its degree sequence $d_1\leq \dots \leq d_n$ satisfies $d_i \geq (1/3+\eta)n+i$ for all $i \leq n/3$. The degree sequence condition here is asymptotically best possible. Our approach uses a hybrid of the Regularity-Blow-up method and the Connecting-Absorbing method.
\end{abstract}

\date{\today}
\maketitle


\section{Introduction}

One of the most fundamental results in extremal graph theory is Dirac's theorem~\cite{dirac} which states that every graph $G$ on $n \geq 3$ vertices with minimum degree $\delta (G)$ at least $n/2$ contains a Hamilton cycle.
It is easy to see that the minimum degree condition here is best possible. The \emph{square} of a Hamilton cycle $C$ is obtained from $C$ by adding an edge between every pair of vertices of distance two on $C$.
A famous conjecture of P\'osa from 1962~(see~\cite{posa}) provides an analogue of Dirac's theorem for the square of a Hamilton cycle.
\begin{conjecture}[P\'osa~\cite{posa}]\label{posaconj}
Let $G$ be a graph on $n$ vertices. If $\delta (G) \geq 2 n/3$,
then $G$ contains the square of a Hamilton cycle.
\end{conjecture}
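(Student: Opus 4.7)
The plan is to prove the conjecture using a hybrid of the Regularity-Blow-up method and the Connecting-Absorbing method. First I would set aside two small vertex sets: an \emph{absorbing square-path} $A$ and a \emph{reservoir} $R$. The absorbing path should have the property that for every sufficiently small $S \subseteq V(G) \setminus V(A)$, the vertices of $V(A) \cup S$ span the square of a path with the same pair of endvertices (and same ``second endvertices'') as $A$. The reservoir $R$ should consist of vertices that, collectively, are richly connected to the rest of $G$, so that any two short square-paths outside $R$ can be joined through $R$. Both structures are produced by repeatedly invoking the degree condition $\delta(G)\geq 2n/3$ together with standard probabilistic selection: every pair of vertices has at least $n/3$ common neighbours, which is ample to find many candidate absorbers for each vertex and to guarantee connectivity through $R$.

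Next I would apply Szemer\'edi's Regularity Lemma to $G':=G \setminus (V(A) \cup R)$, producing an $\eps$-regular partition whose reduced graph $R_{\mathrm{red}}$ essentially inherits the minimum degree condition: $\delta(R_{\mathrm{red}}) \geq (2/3 - o(1))|R_{\mathrm{red}}|$. By the Hajnal--Szemer\'edi theorem this gives a triangle factor in $R_{\mathrm{red}}$. After cleaning up to make each such triangle super-regular, the Blow-up Lemma of Koml\'os, S\'ark\"ozy and Szemer\'edi lets me find, inside each blown-up triangle, the square of a Hamilton path spanning its vertex classes. This is exactly the step where the original Koml\'os--S\'ark\"ozy--Szemer\'edi argument for P\'osa's conjecture does its main work.

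I would then use the reservoir $R$ to connect the square-paths coming from different triangles of the factor, one by one, producing a single long square-path $P$ that covers all of $V(G) \setminus V(A)$ except for a small ``leftover'' set $L$ (consisting of unused reservoir vertices and the handful of vertices lost to the regularity cleaning). Finally I would apply the absorbing property of $A$ to swallow $L$, and then close up the square-path into the square of a Hamilton cycle. Throughout the connecting and absorbing steps one must work with \emph{pairs} of consecutive endvertices rather than single endvertices, because a square of a path is determined by its two ordered pairs of endvertices.

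The main obstacle, I expect, is the construction of the absorbing structure. For Hamilton cycles the absorbing gadget is a simple path through a triangle; for the square of a cycle one needs a gadget that can insert a vertex $v$ into an existing square-path while preserving all square-edges on both sides, which demands that $v$ have many common neighbours with several already-chosen vertices simultaneously. Showing that enough such gadgets exist for \emph{every} $v \in V(G)$, and then linking them into a single short square-path $A$, is the delicate part; everything else (the Regularity+Blow-up core and the connecting argument) is comparatively standard once the right minimum-degree bookkeeping is in place.
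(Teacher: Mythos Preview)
The statement you are addressing is \emph{Conjecture~1.1} in the paper, and the paper does not prove it. It is stated as P\'osa's conjecture, with a remark that it was proved for large graphs by Koml\'os, S\'ark\"ozy and Szemer\'edi, and then the paper moves on to its own main result (Theorem~1.3), a degree sequence version which explicitly does \emph{not} imply Conjecture~1.1 because of the additive $\eta n$ term. So there is no ``paper's own proof'' of this statement to compare against.

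That said, your outline is essentially the approach the paper sketches in Section~2.1 as a proof of the \emph{approximate} P\'osa conjecture ($\delta(G)\geq(2/3+\eta)n$), following Levitt, S\'ark\"ozy and Szemer\'edi. Two remarks are in order. First, as written your plan only gives the approximate version: the absorbing and reservoir constructions you describe rely on having $\eta n$ room in the degree condition, and at the exact threshold $\delta(G)\geq 2n/3$ one must additionally carry out a stability/extremal analysis (this is what makes the exact result substantially harder). Second, the paper's own contribution goes in a different direction entirely: for the degree sequence hypothesis of Theorem~1.3 the authors explicitly say they \emph{could not} build an absorbing square path (since vertices may have degree as low as $(1/3+\eta)n$), and instead replace the absorbing step by a careful incorporation of exceptional vertices into a ``cycle structure'' built from an almost-spanning triangle cycle $Z_\ell$ in the reduced graph. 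So while your sketch is a faithful summary of the known approach to (approximate/large-$n$) P\'osa, it is neither a proof of the conjecture as stated nor the method the present paper develops.
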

Again, it is easy to see that the minimum degree condition in P\'osa's conjecture cannot be lowered. The conjecture was  intensively studied in the 1990s (see e.g.~\cite{fan0, fan1, fan2, fan3, fau}), culminating in its proof for large graphs $G$ by Koml\'os,  S\'ark\"ozy and  Szemer\'edi~\cite{kssposa}. The proof applies  Szemer\'edi's Regularity lemma and as such  the graphs $G$ considered are extremely large. More recently, the lower bound on the size of $G$ in this result has been significantly lowered (see~\cite{ cdk, lev}).

Although the minimum degree condition is best possible in Dirac's theorem, this does not necessarily mean that one cannot significantly strengthen this result. Indeed, Ore~\cite{ore} showed that a graph~$G$ of
order $n\ge 3$ contains a
Hamilton cycle if $d(x)+d(y) \geq n$ for all non-adjacent $x \not = y\in V(G)$. 
The following result of P\'osa~\cite{posathm} provides a \emph{degree sequence} condition that ensures Hamiltonicity.
\begin{theorem}[P\'osa~\cite{posathm}]\label{posathm}
Let $G$ be a graph on $n\ge 3$ vertices with degree sequence $d_1\leq \dots \leq d_n$. 
If $d_i \geq i+1$ for all $i<(n-1)/2$ and if additionally $d_{\lceil n/2\rceil} \geq \lceil n/2\rceil$ when~$n$ is odd, then $G$ contains a Hamilton cycle. 
\end{theorem}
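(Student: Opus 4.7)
The plan is to argue by contradiction via P\'osa's rotation-extension technique. Suppose $G$ satisfies the degree sequence condition but contains no Hamilton cycle. Since adding edges only strengthens the hypothesis, one may pass to an edge-maximal such $G$; then $G+uv$ has a Hamilton cycle for every non-edge $uv$, so $G$ already contains a Hamilton $u$-$v$ path between every non-adjacent pair $u,v$, and in particular $G$ admits a Hamilton path.

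Fix a Hamilton path $P=v_1v_2\dots v_n$ in $G$; then $v_1v_n\notin E(G)$. The basic single-rotation observation is that the sets $\{v_i : v_1v_{i+1}\in E(G),\, 1\le i\le n-1\}$ and $N(v_n)$ are disjoint in $V(G)\setminus\{v_n\}$, for a common element $v_i$ would close $P$ into the Hamilton cycle $v_1v_2\dots v_iv_nv_{n-1}\dots v_{i+1}v_1$. Hence $d(v_1)+d(v_n)\le n-1$. Moreover, each edge $v_1v_{i+1}$ with $i\ge 2$ produces, via a P\'osa rotation, a fresh Hamilton path $v_iv_{i-1}\dots v_1v_{i+1}\dots v_n$ with first endpoint $v_i$. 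Iterating such rotations at the $v_1$-end while keeping $v_n$ fixed builds a set $S\ni v_1$ of first endpoints of Hamilton paths to $v_n$; each $x\in S$ is non-adjacent to $v_n$ and, by applying the single-rotation observation to the corresponding path, satisfies $d(x)\le n-1-d(v_n)$.

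The key technical step -- and the main obstacle -- is P\'osa's Rotation Lemma, which supplies a sufficiently strong lower bound on $|S|$. A single round of rotations already gives $|S|\ge d(v_1)$, and deeper iterations further enlarge $S$; the bookkeeping is delicate because each rotation reshuffles the path, so one must argue inductively that rotations at the updated endpoint contribute vertices not already in $S$. Granted $|S|\ge d(v_1)$, together with every vertex of $S$ having degree at most $n-1-d(v_n)$, the hypothesis $d_k\ge k+1$ for $k<(n-1)/2$ forces $d(v_1)+d(v_n)\le n-2$. Applying the same rotation argument symmetrically at the $v_n$-end -- or, equivalently, choosing $P$ initially to maximise $d(v_1)+d(v_n)$ and then rotating -- tightens this into a contradiction; the parity clause $d_{\lceil n/2\rceil}\ge\lceil n/2\rceil$ for odd $n$ resolves the boundary case where $d(v_1)$ approaches $n/2$.
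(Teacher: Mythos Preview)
The paper does not prove this theorem; it is quoted as a classical background result of P\'osa with a citation and no argument supplied. So there is nothing in the paper to compare against, and I can only assess your sketch on its own merits.

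Your outline has the right skeleton (edge-maximal counterexample, Hamilton path $v_1\dots v_n$, the crossing observation $d(v_1)+d(v_n)\le n-1$), but the execution is muddled in two places. First, the talk of iterating rotations and a ``Rotation Lemma'' is a red herring: a single round already gives a set $S=\{v_i: v_1v_{i+1}\in E(G)\}$ of size exactly $d(v_1)$, and that is all you need. Second, and more importantly, the bound you actually use on the degrees in $S$, namely $d(x)\le n-1-d(v_n)$, is too weak to close the argument --- from it you only deduce $d(v_1)+d(v_n)\le n-2$, which is not a contradiction. The point you bury at the end is the crucial one: if you \emph{start} by choosing the Hamilton path (equivalently, the non-adjacent pair $v_1,v_n$) to maximise $d(v_1)+d(v_n)$, then each $v_i\in S$ is non-adjacent to $v_n$ and hence $d(v_i)\le d(v_1)$ by maximality. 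Setting $h:=d(v_1)\le (n-1)/2$, this gives $h$ vertices of degree at most $h$, i.e.\ $d_h\le h$, directly contradicting $d_h\ge h+1$ when $h<(n-1)/2$. For the boundary case $h=(n-1)/2$ with $n$ odd, the symmetric count on the $n-1-h$ non-neighbours of $v_1$ (each of degree $\le d(v_n)=n-1-h$), together with $v_1$ itself, yields $n-h$ vertices of degree at most $n-1-h$, so $d_{(n+1)/2}\le (n-1)/2<\lceil n/2\rceil$, contradicting the parity clause. You should restructure the write-up so that the maximality choice is made up front and used to get $d(x)\le d(v_1)$ rather than $d(x)\le n-1-d(v_n)$.
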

Notice that Theorem~\ref{posathm} is significantly stronger than Dirac's theorem as it allows for almost half of the vertices of $G$ to have degree less than $n/2$. A theorem of Chv\'atal~\cite{ch} generalises Theorem~\ref{posathm} by  characterising all those degree sequences which
ensure the existence of a Hamilton cycle in a graph: Suppose that the degrees of a graph $G$
are $d_1\le \dots \le d_n$. If $n \geq 3$ and $d_i \geq i+1$ or $d_{n-i} \geq n-i$
for all $i <n/2$ then $G$ is Hamiltonian. Moreover, if $d_1 \leq \dots \leq d_n$
is a  degree sequence that does not satisfy this condition then there
exists a non-Hamiltonian graph $G$ whose degree sequence $d'_1 \leq  \dots \leq d'_n$ is such
that $d'_i \geq d_i$ for all $1\leq i \leq n$.

Recently there has been an  interest in generalising P\'osa's conjecture.
An `Ore-type' analogue of P\'osa's conjecture has been proven for large graphs in~\cite{chau, deore}. A random version of P\'osa's conjecture was proven by K\"uhn and Osthus in~\cite{ranposa}.
In~\cite{file}, Allen, B\"ottcher and Hladk\'y determined the minimum degree threshold that ensures a large graph  contains a square cycle of a given length. The problem of finding the square of a Hamilton cycle in a pseudorandom graph has  recently been studied in~\cite{new}.
The focus of this paper is to investigate degree sequence conditions that guarantee a graph contains the square of a Hamilton cycle. This problem was raised in the arXiv version of~\cite{bkt}.
The main result of this paper is the following degree sequence version of P\'osa's conjecture.
\begin{theorem}\label{mainthm}
Given any $\eta >0$ there exists an $n_0 \in \mathbb N$ such that the following holds. If $G$ is a graph on $n \geq n_0$ vertices whose degree sequence $d_1\leq \dots \leq d_n$ satisfies
$$ d_i \geq n/3+i+\eta n \  \text{ for all  $\ i\leq n/3$,}$$ then $G$ contains the square of a Hamilton cycle.
\end{theorem}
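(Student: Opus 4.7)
The plan is to combine the Regularity-Blow-up method with the Connecting-Absorbing method. The global strategy is: (a) build a short square absorbing path $P_{\text{abs}}$ that can engulf any small leftover set of vertices; (b) set aside a small random reservoir $R$ for connecting; (c) apply Szemer\'edi's Regularity Lemma to $G':=G-V(P_{\text{abs}})-R$ and use the reduced graph $\mathcal{R}$ (which inherits a slightly weaker form of the degree sequence condition) to find a near-perfect triangle cover, which via the Blow-up Lemma yields a collection of long squares of paths in $G'$; (d) link these square paths and the endpoints of $P_{\text{abs}}$ into one square cycle by short connecting square paths through $R$; (e) use the absorbing property of $P_{\text{abs}}$ to engulf the residual uncovered vertices.

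The first delicate point is constructing $P_{\text{abs}}$ so that it absorbs even a low-degree vertex $v$ with $d(v)$ close to $n/3+\eta n$. Here one exploits that the degree sequence condition forces the vast majority of $V(G)$ to have degree at least $2n/3+\eta n$: a random $4$-tuple in $N(v)$ then has non-negligible probability of forming an absorber gadget for $v$, so a standard greedy/probabilistic selection yields many absorber gadgets per vertex, which are concatenated into one $P_{\text{abs}}$.

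The main obstacle is step (c). In the original Koml\'os-S\'ark\"ozy-Szemer\'edi proof every cluster has degree at least $2|\mathcal{R}|/3$ and a Corr\'adi-Hajnal-type argument yields a triangle factor of $\mathcal{R}$. Under the weaker hypothesis of Theorem~\ref{mainthm}, up to $|\mathcal{R}|/3$ clusters may have degree near $|\mathcal{R}|/3$, and a new degree-sequence analogue of the triangle cover is required. I would prove this by an extremal/non-extremal dichotomy: in the extremal case $G$ is close to one of the tight examples witnessing sharpness of the condition $d_i \ge n/3 + i + \eta n$ and is handled by an ad hoc construction; in the non-extremal case the degree sequence furnishes enough high-degree clusters that a triangle cover can be built greedily, with each low-degree cluster appearing as one vertex of a triangle whose other two vertices are high-degree clusters.

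A further technical point is that low-degree vertices must then be embedded into the blown-up square paths in positions whose local square-path adjacency constraints they can actually satisfy; coordinating this embedding with the reservoir-based connections and with the absorbing step is where the proof is most likely to be intricate.
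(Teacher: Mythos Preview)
Your proposal has two genuine gaps, and also misidentifies the main obstacle.

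\textbf{The absorbing path.} You claim that for a low-degree vertex $v$ with $d(v)\approx (1/3+\eta)n$, a random $4$-tuple in $N(v)$ has non-negligible probability of forming an absorber gadget. The paper's authors explicitly tried this and state that they ``were unable to construct such an absorbing square path, and do not believe there is a `simple' way to construct one'' (Section~2.2). The difficulty is not that $N(v)$ lacks high-degree vertices, but that the absorber gadgets for different low-degree vertices may be forced to concentrate in the same small region of $G$, so one cannot select $\Omega(n)$ disjoint gadgets per vertex and still concatenate them into a single short path. Your sketch does not address this disjointness issue, and the paper replaces absorption entirely by a Regularity-Blow-up incorporation step (Section~7), where exceptional vertices are swapped into a rich ``cycle structure'' $\mathcal C$ built from an almost spanning copy of the triangle cycle $Z_\ell$ in the reduced graph.

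\textbf{Connecting through the reservoir.} Your step~(d) assumes one can connect arbitrary square paths through $R$. But if the last two vertices $a,b$ of some path have $d(a),d(b)<n/2$, they may have no common neighbour at all, so no square path can be extended past $ab$. The paper handles this by ensuring every square path in the collection $\mathcal P$ is \emph{$\eta$-heavy} (its first and last two vertices have degree at least $(2/3+\eta)n$); this is nontrivial and requires the ``folded path'' machinery (Section~5) to extend each triangle in the reduced graph to a structure whose endpoints correspond to clusters of large core degree. Your step~(c) produces square paths with no control on their endpoints, so step~(d) fails as stated.

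\textbf{The triangle cover is not the obstacle.} You flag the degree-sequence analogue of Corr\'adi--Hajnal as the main difficulty and propose an extremal/non-extremal dichotomy. In fact this is already a theorem (Treglown, Theorem~\ref{trianglepack} in the paper), used as a black box. The real work lies in the two points above: making the square paths heavy so they can be connected, and incorporating the leftover (possibly low-degree) vertices without an absorber.
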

Note that Theorem~\ref{mainthm} allows  for almost $n/3$ vertices in $G$ to have degree substantially smaller than $2n/3$. However, it does not quite imply P\'osa's conjecture for large graphs due to the term $\eta n$.
A surprising facet of the problem is that the term $\eta n$ in Theorem~\ref{mainthm} cannot be omitted completely.
Indeed, an example from the arXiv version of~\cite{bkt} shows that the term $\eta n$ cannot even be replaced by  $o(\sqrt{n})$ for every $i\leq n/3$.  So in this sense Theorem~\ref{mainthm} is close to best possible. (Extremal examples for Theorem~\ref{mainthm} are discussed in more detail in Section~\ref{sec:extremal}.)
We suspect though that the degrees in Theorem~\ref{mainthm} can be capped at $2n/3$.
\begin{conjecture}\label{conjy}
Given any $\eta >0$ there exists an $n_0 \in \mathbb N$ such that the following holds. If $G$ is a graph on $n \geq n_0$ vertices whose degree sequence $d_1\leq \dots \leq d_n$ satisfies
$$ d_i \geq \min \{ n/3+i+\eta n , 2n/3 \} \  \text{ for all \ $i$,}$$ then $G$ contains the square of a Hamilton cycle.
\end{conjecture}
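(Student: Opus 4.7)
The plan is to dichotomise on whether $\delta(G)\ge 2n/3$. If so, the Koml\'os--S\'ark\"ozy--Szemer\'edi proof of P\'osa's conjecture~\cite{kssposa} directly yields a square Hamilton cycle for $n$ large, and we are done. So assume $\delta(G)<2n/3$, and set $L:=\{v\in V(G):d_G(v)<2n/3\}$. The hypothesis forces $|L|\le n/3-\eta n$, since any vertex of rank $i> n/3-\eta n$ satisfies $d_i\ge 2n/3$, while each $v\in L$ of rank $i$ still satisfies $d_G(v)\ge n/3+i+\eta n$. In particular every vertex of $V\setminus L$ has at least $2n/3-|L|\ge n/3+\eta n$ neighbours inside $V\setminus L$, so $L$ is a small exceptional set attached to a graph of near-KSS quality.

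With $L$ identified, the strategy is to run the hybrid Regularity-Blow-up and Connecting-Absorbing framework of Theorem~\ref{mainthm}, but treating only the vertices of $L$ as exceptional; every vertex of $V\setminus L$ is handled as in the proof of the Koml\'os--S\'ark\"ozy--Szemer\'edi theorem. Concretely, one would build (a)~an absorbing square-path $P_A$ of linear length absorbing any sufficiently small target set; (b)~a connecting lemma merging two square-paths through a short square-path in $V\setminus L$; and (c)~an almost-perfect square-path cover of $V\setminus L$ obtained from a regular partition of $G[V\setminus L]$ via the blow-up lemma. The $\eta n$-slack attached to each vertex of $L$ is then exploited to embed $L$ vertex-by-vertex into $P_A$, using the two already-embedded consecutive neighbours needed on the square-path.

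The main obstacle is step~(a). In Theorem~\ref{mainthm} every vertex carries a uniform $\eta n$-slack above its individual threshold, which lets one build absorbing gadgets for any target; under Conjecture~\ref{conjy} this slack is only available for vertices of $L$, while typical vertices satisfy only $d_i\ge 2n/3$. One must therefore construct two species of absorbing gadget: the usual ones from the proof of Theorem~\ref{mainthm} for vertices of $L$, and sharp $2n/3$-threshold gadgets of KSS type for vertices of $V\setminus L$. A natural route is a stability/extremal dichotomy: either $G[V\setminus L]$ is far from every tight configuration for P\'osa's conjecture, in which case KSS-style gadgets can be built directly; or $G$ is close to one of the extremal configurations, and then the global structure combined with the flexibility supplied by $L$ lets one finish by an explicit case analysis against the configurations indicated in Section~\ref{sec:extremal}.

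Once (a), (b) and (c) are in place, the remainder proceeds exactly as in Theorem~\ref{mainthm}: use the connecting lemma to merge the path cover and $P_A$ into one long square-path $P$, absorb the $o(n)$ leftover vertices into $P$ via $P_A$, and finally close $P$ into a square Hamilton cycle by one additional short connection through $V\setminus L$.
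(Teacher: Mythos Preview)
The statement you are trying to prove is Conjecture~\ref{conjy}, which the paper leaves \emph{open}; there is no proof in the paper to compare against. The authors explicitly write that they ``suspect'' the conjecture holds but do not establish it.

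Your proposal has a genuine gap, and it is precisely the one the authors of the paper flag as the obstruction. You describe the ``hybrid Regularity-Blow-up and Connecting-Absorbing framework of Theorem~\ref{mainthm}'' as involving an absorbing square path $P_A$, but this is a mischaracterisation: the paper states in Section~\ref{sketch} that ``we were unable to construct such an absorbing square path, and do not believe there is a `simple' way to construct one'', and instead proves Theorem~\ref{mainthm} via the cycle-structure machinery of Sections~\ref{sectc}--\ref{sec:square}, with no absorber at all. So your step~(a) is not something you can import from the paper; it is the missing ingredient. Your own sketch acknowledges this (``The main obstacle is step~(a)'') and then defers to an unspecified stability/extremal analysis, but this is exactly the hard part: for vertices of degree exactly $2n/3$ you are at the sharp P\'osa threshold, and building absorbers there is already delicate in the pure minimum-degree setting of~\cite{lev,cdk}; combining that with the degree-sequence absorbers for $L$ is not a routine merge. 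Moreover, your remark that ``typical vertices satisfy only $d_i\ge 2n/3$'' understates the difficulty --- under Conjecture~\ref{conjy} one may have $d_i=2n/3$ for all $i>n/3-\eta n$ simultaneously, so there is no residual slack anywhere in $V\setminus L$ to drive the gadget constructions or the connecting lemma. Until you can actually produce the absorber (or replace it by a cycle-structure argument that handles the sharp $2n/3$ cap on $2n/3$ of the vertices), this remains a plan rather than a proof.
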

It would be of considerable interest to establish an analogue of Chv\'atal's theorem for the square of a Hamilton cycle, i.e., to characterise those degree sequences which force the square of a Hamilton cycle.
However, at present we do not have a conjecture for this. In general, we believe that it would be extremely difficult to strengthen Theorem~\ref{mainthm}, and it is likely that several further new ideas would be required.

A well-known result of Aigner and Brandt~\cite{aigner} and Alon and Fischer~\cite{fishy} states that if  $G$ is a graph on $n$ vertices  with minimum degree $\delta (G) \geq (2n-1)/3$ then $G$ contains every graph $H$ on $n$ vertices with maximum degree $\Delta (H) \leq 2$. This resolves a special case of the famous Bollob\'as--Eldridge--Catlin Conjecture~\cite{boleld,catlin}. (A conjecture of El-Zahar~\cite{elzahar}, that was proven for large graphs by Abbasi~\cite{abbasi}, implies  that for many graphs $H$ with  $\Delta (H) \leq 2$, the minimum degree condition here can be substantially lowered.)
Since a square path on $n$ vertices contains any such graph $H$, an immediate consequence of Theorem~\ref{mainthm} is the following degree sequence result.
\begin{corollary}\label{mainthm2}
Given any $\eta >0$ there exists an $n_0 \in \mathbb N$ such that the following holds. Suppose that $H$ is a graph on $n\geq n_0$ vertices such that $\Delta (H) \leq 2$. If $G$ is a graph on $n $ vertices whose degree sequence $d_1\leq \dots \leq d_n$ satisfies
$$ d_i \geq n/3+i+\eta n \  \text{ for all $\ i\leq n/3$,}$$ then $G$ contains $H$.
\end{corollary}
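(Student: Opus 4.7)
The authors flag this as an immediate consequence of Theorem~\ref{mainthm}, so my plan is essentially to justify the word ``immediate''. The strategy proceeds in three clean steps.

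First, I apply Theorem~\ref{mainthm} to $G$ (the degree sequence hypothesis of the corollary is identical to that of Theorem~\ref{mainthm}) to obtain the square of a Hamilton cycle in $G$. Deleting one vertex's ``wrap-around'' edges but keeping all vertices, this contains a spanning square path: there is an ordering $v_1,v_2,\ldots,v_n$ of $V(G)$ such that $v_iv_j \in E(G)$ whenever $1 \le |i-j| \le 2$. Call this graph $P_n^2$.

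Second, I reduce the problem to a purely combinatorial embedding claim: any graph $H$ on $n$ vertices with $\Delta(H) \le 2$ is a vertex-disjoint union of paths, cycles, and isolated vertices, so it suffices to show that every such $H$ embeds as a subgraph (not necessarily induced) of $P_n^2$. Let the components of $H$ have orders $n_1,n_2,\ldots,n_s$ with $\sum_{i} n_i = n$, and partition $\{1,\ldots,n\}$ into consecutive intervals $I_1,\ldots,I_s$ of sizes $n_1,\ldots,n_s$. Since extra edges of $P_n^2$ between different intervals are harmless (we only need the edges of $H$ to be present), it is enough to embed each component of $H$ into its interval $I_i$.

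Third, I verify the per-component embedding. A path component of length $\ell$ on vertices of $I_i = \{v_{a+1},\ldots,v_{a+\ell}\}$ is realised by the natural order, using only edges $v_jv_{j+1}$. An isolated vertex is trivial. The only nontrivial case is a cycle component $C_\ell$ with $\ell \ge 3$, which I embed into $I_i = \{v_{a+1},\ldots,v_{a+\ell}\}$ by the zig-zag pattern
\[
v_{a+1},\, v_{a+3},\, v_{a+5},\, \ldots,\, v_{a+\ell'},\, v_{a+\ell''},\, \ldots,\, v_{a+4},\, v_{a+2},\, v_{a+1},
\]
where $\ell',\ell''$ are the largest odd and largest even index at most $\ell$; consecutive vertices on this cycle differ in index by exactly $2$ (within each monotone run) or by $1$ (at the turnaround between odd and even indices, and when closing up), so every edge of the cycle lies in $P_n^2$. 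A direct check handles $\ell=3,4,5$ as base cases and the pattern works for all larger $\ell$.

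I do not expect any real obstacle: the degree sequence work is entirely contained in Theorem~\ref{mainthm}, and the remaining content is the elementary structural fact that $P_n^2$ contains every $n$-vertex graph of maximum degree at most $2$. The only point requiring a moment's thought is the zig-zag embedding of cycle components, which is standard.
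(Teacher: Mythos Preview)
Your proof is correct and follows exactly the approach the paper intends: apply Theorem~\ref{mainthm} to obtain a spanning square path in $G$, then use the elementary fact that $P_n^2$ contains every $n$-vertex graph of maximum degree at most $2$. The paper asserts this last fact without justification, whereas you spell out the component-wise interval embedding and the zig-zag for cycle components; this extra detail is all sound.
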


The case when $H$ is a triangle factor was proved in~\cite{triangle}, and in fact this result is used as a tool in the proof of Theorem~\ref{mainthm} (see Section~\ref{sectionalmost}).

A key component of the proof of Theorem~\ref{mainthm} is a special structural embedding lemma (Lemma~\ref{trianglecycle}), which is likely to be of independent interest. In particular, we believe that it could have applications to other embedding problems (see Section~\ref{concsec} for further discussion).

The proof of Theorem~\ref{mainthm} makes use of Szemer\'edi's Regularity lemma~\cite{szem} and the Blow-up lemma~\cite{blowuplem}. In Section~\ref{sketch} we give a detailed sketch of the proof. 
We discuss extremal examples for Theorem~\ref{mainthm} in Section~\ref{sec:extremal}.
After introducing some notation and preliminary results in Section~\ref{prelim}, we prove Theorem~\ref{mainthm} in Sections~\ref{sectionalmost}--\ref{sectionend}.


\section{Overview of the proof}\label{sketch}

Over the last few decades a number of powerful techniques have been developed for embedding problems in graphs. The Blow-up lemma~\cite{blowuplem}, in combination with the Regularity lemma~\cite{szem}, has been used to resolve a number of long-standing open problems, including P\'osa's conjecture for large graphs~\cite{kssposa}. More recently, the so-called Connecting-Absorbing method developed by R\"odl, Ruci\'nski and  Szemer\'edi~\cite{rrs2} 
has also proven to be highly effective in tackling such embedding problems.

Typically, both these approaches have been applied to graphs with `large' minimum degree. Our graph $G$ in Theorem~\ref{mainthm} may have minimum degree  $(1/3+o(1))n$. In particular, this is significantly smaller than the minimum degree threshold that forces the square of a Hamilton cycle in a graph (namely, $2n/3$). As we describe below, having vertices of relatively small degree makes the proof of Theorem~\ref{mainthm} highly involved and rather delicate. Indeed, our proof draws on ideas from both the Regularity-Blow-up method and the Connecting-Absorbing method. Further, we also develop a number of new ideas in order
to deal with these vertices of small degree.

\subsection{An approximate version of P\'osa's conjecture}
In order to highlight some of the difficulties in the proof of Theorem~\ref{mainthm}, we first give a sketch of a proof of an approximate version of P\'osa's conjecture. This is based on  the  proof of P\'osa's conjecture for large graphs given in~\cite{lev} that uses the absorbing method (rather than the regularity-based proof given in~\cite{kssposa}).

Let $0< \eps \ll \gamma \ll \eta$.
Suppose that $G$ is a sufficiently large graph on $n$ vertices with $\delta (G) \geq (2/3+\eta)n$.  We wish to find the square of a Hamilton cycle in $G$. The proof splits into 
three main parts.
\begin{itemize}
\item {\bf Step 1 (Absorbing square path):} Find an `absorbing' square path $P_A$ in $G$ such that $|P_A|\leq \gamma n$. $P_A$ has the property that given \emph{any} set $A \subseteq V(G) \setminus V(P_A)$ such that $|A|\leq 2 \eps n$, $G$ contains a square path $P$ with vertex set $V(P_A) \cup A$, where the first and last two vertices on $P$ are the same as the first and last two vertices on $P_A$.
\item {\bf Step 2 (Reservoir set):} Let $G':=G\setminus V(P_A)$. Find a `reservoir' set $\mathcal R \subseteq V(G')$ such that $|\mathcal R|\leq \eps n$. $\mathcal R$ has the property that, given \emph{arbitrary} disjoint ordered edges $ab,cd \in E(G)$, there are `many' short  square paths $P$ in $G$ so that: (i) The first two vertices on $P$ are $a,b$ respectively; (ii) The last two vertices on $P$ are $c,d$ respectively; (iii) $V(P)\setminus \{a,b,c,d\} \subseteq \mathcal R$.
\item {\bf Step 3 (Almost tiling with square paths):} Let $G'':= G' \setminus \mathcal R$. Find a collection $\mathcal P$ of a bounded number of vertex-disjoint square paths in $G''$ that together cover all but $\eps n$ of the vertices in $G''$.
\end{itemize}
Assuming that $\delta (G) \geq (2/3+\eta)n$, the proof of each of these three steps is not too involved. (Note though that the proof in~\cite{lev} is  more technical since there $\delta (G) \geq 2n/3$.)

After completing  Steps~1--3, it is straightforward to find the square of  a Hamilton cycle in $G$. Indeed, suppose $ab$ is the last edge on a square path $P_1$ from $\mathcal P$ and $cd$ is the first edge on a square path $P_2$ from $\mathcal P$. Then Step~2 implies that we can `go through' $\mathcal R$ to join $P_1$ and $P_2$ into a single square path in $G$. Repeating this process we can obtain a square cycle $C$ in $G$ that contains all the square paths from $\mathcal P$. Further, we may also incorporate the absorbing square path $P_A$ into $C$. $C$ now covers almost all the vertices of $G$. 
We then use $P_A$ to absorb all the vertices from $V(G)\setminus V(C)$  into $C$  to obtain the square of a Hamilton cycle.

\subsection{A degree sequence version of P\'osa's conjecture}
Suppose that $G$ is a sufficiently large graph on $n$ vertices as in the statement of Theorem~\ref{mainthm}. A result of the second author~\cite{triangle} guarantees that $G$ contains a collection of $\lfloor n/3 \rfloor$ vertex-disjoint triangles  (see Theorem~\ref{trianglepack}). Further, this result together with a simple application of the Regularity lemma implies that $G$ in fact contains 
a collection $\mathcal P$ of a bounded number of vertex-disjoint square paths  that together cover almost all of the vertices in $G$. So we can indeed prove an analogue of Step~3 in this setting. In particular, if we could  find a reservoir set $\mathcal R$ as above, then certainly we would be able to join together the square paths in $\mathcal P$ through $\mathcal R$, to obtain an almost spanning square cycle $C$ in $G$.

Suppose that $ab,cd \in E(G)$  and we wish to find a square path $P$ in $G$ between $ab$ and $cd$. If  $d_G (a), d_G (b) <n/2$ then it may be the case that  $a$ and $b$ have no common neighbours. Then it is clearly impossible to find such a square path $P$ between $ab$ and $cd$ (since $ab$ does not lie in a single square path!). The degree sequence condition on $G$ is such that almost $n/6$ vertices in $G$ may have degree 
less than $n/2$. Therefore we cannot hope to find a reservoir set precisely as in Step~2 above.

We overcome this significant problem as follows. We first show that $G$ contains a reservoir set $\mathcal R$  that can \emph{only} be used to find a square path between pairs of edges $ab,cd \in E(G)$ of \emph{large degree} (namely, at least $(2/3+\eta)n$). This turns out to be quite involved (the whole of Section~\ref{connectingsec} is devoted to constructing $\mathcal R$). In order to use $\mathcal R$ to join together the
square paths $P \in \mathcal P$ into an almost spanning square cycle, we now require that the first and last two vertices on each such $P $ have large degree.

To find such a collection of square paths $\mathcal P$ we first find a special collection $\mathcal F$ of so-called `folded paths' in a reduced graph $R$ of $G$. Roughly speaking,  folded paths are a generalisation of the notion of a square path. Each such folded path $F \in \mathcal F$ will act as a `guide' for embedding one of the paths $P \in \mathcal P$ into $G$.
More precisely, there is a homomorphism from a square path $P$ into a folded path $F$.
 In particular, the structure of $F$ will ensure that the first and last two vertices on $P$ are `mapped' to large degree vertices in $G$. This is achieved in Section~\ref{sectionalmost}. 

Given our new reservoir set $\mathcal R$ and collection of square paths $\mathcal P$, we again can obtain an almost spanning square cycle $C$ in $G$. Further, if we could construct an absorbing square path $P_A$ as in Step~1, we would be able to absorb the vertices in $V(G)\setminus V(C)$ to obtain the square of a Hamilton cycle. However, we were unable to construct such an absorbing square path, and do not believe there is a `simple' way to construct one. (Though, one could construct such a square path $P_A$ if one only requires $P_A$ to absorb vertices of large degree.) Instead, our method now turns towards the Regularity-Blow-up approach.

Using the results from Sections~\ref{sectionalmost} and~\ref{connectingsec} we can obtain an almost spanning square cycle in the reduced graph $R$ of $G$. In fact, we obtain a much richer structure $Z_{\ell}$ in $R$ called a `triangle cycle'  (see Section~\ref{sectc}). $Z_{\ell}$ is a special $6$-regular graph on $3\ell$ vertices that contains the square of a Hamilton cycle. In particular, $Z_{\ell}$ contains 
a collection of vertex-disjoint triangles $T_{\ell}$ that together cover all the vertices in $Z_{\ell}$. 
Structures similar to $Z_\ell$ have been used previously for embedding other spanning subgraphs (see~e.g.~\cite{bst}).
We then show that $G$ contains an almost spanning structure $\mathcal C$ that looks like the `blow-up' of $ Z_{\ell}$.  More precisely, if $V(Z_{\ell})=\{1,\dots, 3\ell\}$ and $V_1, \dots , V_{3\ell}$ are the corresponding clusters in $G$, then
\begin{itemize}
\item $V(\mathcal C)= V_1 \cup \dots \cup V_{3\ell}$;
\item $\mathcal C [V_i,V_j]$ is $\eps$-regular whenever $ij \in E(Z_{\ell})$;
\item If $ij$ is an edge in a triangle $T \in T_{\ell}$ then $\mathcal C [V_i,V_j]$ is $\eps$-superregular.
\end{itemize}
We call $\mathcal C$ a `cycle structure' (see Section~\ref{subsec:square} for the formal definition).
The initial structure of $\mathcal C$ is such that it contains a spanning square cycle. However, since $\mathcal C$ is not necessarily spanning in $G$, this does not correspond to the square of a Hamilton cycle in $G$. We thus need to incorporate the `exceptional vertices' of $G$ into this cycle structure $\mathcal C$ in a balanced way so that at the end $\mathcal C$ (and hence $G$)  contains the square of a Hamilton cycle. The rich structure of $Z_{\ell}$ and thus $\mathcal C$ is vital for this. Again particular care is needed when incorporating exceptional vertices of small degree into our cycle structure. This is achieved in Section~\ref{sec:square}. This part of the proof builds on ideas used in~\cite{3partite, bst}.

\section{Extremal examples for Theorem~\ref{mainthm}}\label{sec:extremal}
In this section we describe examples which show that Theorem~\ref{mainthm} is asymptotically best possible.

Given a fixed graph $H$, an \emph{$H$-packing} in a graph $G$ is a collection of vertex-disjoint copies of $H$ in $G$.
We say that an $H$-packing is \emph{perfect} if it contains $\lfloor |G|/|H| \rfloor$ copies of $H$ in $G$, i.e. the maximum number.
Observe that the square of a Hamilton cycle contains a perfect $K_3$-packing.
The following proposition is a special case  of Proposition~17 in~\cite{bkt}.
It implies that one cannot replace $\eta n$ with $-1$ in Theorem~\ref{mainthm}.

\begin{proposition}\label{extremal1}
Suppose that $n \in 3\mathbb N$, $k \in \mathbb{N}$ and $1\leq k < n/3$. Then there exists a graph $G$ on $n$ vertices whose
degree sequence $d_1\leq \dots \leq d_n$ satisfies
$$
d_i = \begin{cases} n/3+k-1 &\mbox{if }\ \ 1 \leq i \leq k \\
2n/3 & \mbox{if }\ \ k+1 \leq i \leq n/3+k\\
n-k-1 & \mbox{if }\ \  n/3+k+1 \leq i \leq n-k+1\\
n-1 & \mbox{if }\ \  n-k+2 \leq i \leq n, \end{cases} 
$$
but such that $G$ does not contain a perfect $K_3$-packing.
\end{proposition}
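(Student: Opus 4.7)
My plan is to construct $G$ explicitly via a complete-join construction. Partition $V(G) = A \cup B \cup C \cup D$ with $|A| = k$, $|B| = n/3$, $|C| = 2n/3 - 2k + 1$ and $|D| = k-1$ (these sum to $n$ for $1 \le k < n/3$). Define the edges so that $D$ is joined completely to $V(G) \setminus D$ and is a clique inside; $A$ and $B$ are independent; $C$ is a clique; there is a complete bipartite graph between $A$ and $B$ and between $B$ and $C$; and there are no $A$–$C$ edges.

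The first step is to verify the degree sequence by direct calculation. A vertex of $A$ has neighbours exactly in $B \cup D$, giving degree $n/3 + (k-1)$. A vertex of $B$ has neighbours in $A \cup C \cup D$, giving degree $k + (2n/3 - 2k + 1) + (k-1) = 2n/3$. A vertex of $C$ has neighbours in $B \cup (C \setminus \{v\}) \cup D$, giving degree $n/3 + (2n/3 - 2k) + (k-1) = n - k - 1$. A vertex of $D$ is adjacent to every other vertex, so has degree $n - 1$. These multiplicities $k$, $n/3$, $2n/3 - 2k + 1$, $k - 1$ match the prescribed degree sequence exactly.

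The second step is to rule out a perfect $K_3$-packing. The key observation is that since $A$ is independent and $A$–$C$ is a non-edge class, any triangle of $G$ meeting $A$ has its other two vertices forming an edge inside $B \cup D$; but $B$ is independent, so each such triangle uses at least one vertex of $D$. Now suppose, for contradiction, that $\mathcal{T}$ is a perfect $K_3$-packing. As $A$ is independent, the $k$ vertices of $A$ lie in $k$ pairwise distinct triangles of $\mathcal{T}$, each containing at least one vertex of $D$; by vertex-disjointness of $\mathcal{T}$ these $D$-vertices are distinct, forcing $|D| \ge k$. This contradicts $|D| = k - 1$.

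I do not expect any genuine obstacle. The only delicate point is to choose the four part sizes so that keeping $A$–$C$ as a non-edge class still produces the prescribed degrees on $A$-, $B$- and $C$-vertices; the values of $|A|, |B|, |C|, |D|$ above are essentially forced by this requirement, after which both the degree computation and the $K_3$-packing obstruction are immediate.
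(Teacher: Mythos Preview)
Your construction is correct and is in fact the same graph as the paper's, under the relabelling $A \leftrightarrow B$, $B \leftrightarrow V_1$, $C \leftrightarrow V_2$, $D \leftrightarrow A$; the degree check and the $K_3$-packing obstruction (each triangle through the size-$k$ independent set must use a vertex of the size-$(k-1)$ universal set) are identical to the paper's.
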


\begin{proof}
Construct $G$ as follows.
The vertex set of $G$ is the union of disjoint sets $V_1,V_2,A,B$ of sizes $n/3$, $2n/3-2k+1$, $k-1$, $k$ respectively.
Add all edges from $B \cup V_{2} \cup A$ to $V_1$. Further, add all edges with both endpoints in $V_{2} \cup A$. Add all possible edges between $A$ and $B$. 

Consider an arbitrary copy $T$ of $K_3$ in $G$ which contains $b \in B$.
Since $B$ is an independent set in $G$ and there are no edges between $B$ and $V_2$, we have that $V(T) \setminus \lbrace b \rbrace \subseteq A \cup V_1$.
But $V_1$ is an independent set in $G$, so $T$ contains at most one vertex in $V_1$ and hence at least one vertex in $A$.
But since $|B|>|A|$ this implies that $G$ does not
contain a perfect $K_3$-packing. Furthermore, it is easy to check that $G$ has our desired degree sequence.
\end{proof}
Note that Proposition~\ref{extremal1} shows that, if true, Conjecture~\ref{conjy} is close to best possible in the following sense: Given any $1 \leq k < n/3$, there is a graph $G$ on $n$ vertices with degree sequence $d_1\leq \dots \leq d_n$ such that (i) $G$ does not contain the square of a Hamilton cycle and (ii) $G$ satisfies the degree sequence condition in Conjecture~\ref{conjy} except for the terms $d_{k-\eta n}, \dots, d_k$ which only `just' fail to satisfy the desired condition.

At first sight,
one might think that the $\eta n$ term in Theorem~\ref{mainthm} is an artifact of our proof, but in fact it is a feature of the problem: indeed,
it cannot be replaced by $o(\sqrt n)$.
This is shown by an example in Proposition~22 in the arXiv version of~\cite{bkt}.

\section{Preliminaries}\label{prelim}

\subsection{Notation}

We write  $|G|$ for the  order of a graph $G$ and $\delta (G)$
and $\Delta (G)$ for its minimum and maximum degrees respectively. 
The degree of a vertex $x \in V(G)$ is denoted by $d_G(x)$ and its neighbourhood
by $N_G(x)$. 
Given $A \subseteq V(G)$, we write $N_G(A) := \bigcup_{a \in A}{N_G(a)}$. 
We will write $N(A)$, for example, if this is unambiguous.
For $x \in V(G)$ and $A \subseteq V(G)$ we write $d_G(x,A)$ for the number of edges $xy$ in $G$ with $y \in A$.
Given (not necessarily disjoint) $X,Y \subseteq V(G)$, we write $E(G[X,Y])$ for the collection of edges with one endpoint in $X$ and the other endpoint in $Y$.
Define $e_G(X,Y) := |E(G[X,Y])|$.
For each $k \in \mathbb{N}$,  we let $K_k$ denote the complete graph on $k$ vertices.

Given a graph $G$, $X \subseteq V(G)$ and an integer $k \leq |X|$, we define the \emph{$k$-neighbourhood of $X$ in $G$} by
$$
N^k_G(X) := \bigcup_{\stackrel{X' \subseteq X}{|X'|=k}}\bigcap_{x \in X'}N_G(x),
$$
that is, the set of all vertices in $G$ adjacent to at least $k$ members of $X$.
When $X = \lbrace x_1, \ldots, x_\ell \rbrace$, we will also write $N^k_G(x_1, \ldots, x_\ell) := N^k_G(X)$.
Observe that, if $X,Y \subseteq V(G)$ are disjoint, then
\begin{equation}\label{disjoint}
N^{|X|}_G(X) \cap N^{|Y|}_G(Y) = N^{|X|+|Y|}_G(X \cup Y).
\end{equation}
If $H \subseteq G$ we set $N_G^k(H) := N_G^k(V(H))$.
For $A \subseteq V(G)$ we define $N_A^k(X):=N_G^k(X) \cap A$, and (\ref{disjoint}) holds with $G$ replaced by $A$.

Given a graph $G$ and a subset $X \subseteq V(G)$, we write $G[X]$ for the subgraph of $G$ induced by $X$. 
We write $G\setminus X$ for the subgraph of $G$ induced by $V(G)\setminus X$. Given disjoint $X,Y \subseteq V(G)$ we let $G[X,Y]$ denote the graph with vertex set $X\cup Y$ whose edge set consists of all those edges $xy \in E(G)$ with $x \in X$ and $y \in Y$.

Given a function $f: D \rightarrow C$ and $D' \subseteq D$, we write $f(D') := \lbrace f(d) : d \in D' \rbrace \subseteq C$. 


Given a graph $H$, the \emph{square} of $H$ is obtained from $H$ by adding an edge between every pair of vertices of distance two in $H$. In particular,
we say that $P = v_1 \ldots v_k$ is a \emph{square path} if $V(P) = \lbrace v_1, \ldots, v_k \rbrace$ and
$$
E(P) = \lbrace v_iv_{i+1} : 1 \leq i \leq k-1 \rbrace \cup \lbrace v_iv_{i+2} : 1 \leq i \leq k-2 \rbrace.
$$    
So we always implicitly assume that a square path $P$ is equipped with an ordering.
We write $P^*=v_k \dots v_1$ for $P$ `ordered backwards'; so $P \neq P^*$.
Given vertices $x_1, \ldots, x_\ell \in V(G)$
such that $v_1 \ldots v_k x_1 \ldots x_\ell$ is a square path, we sometimes write $Px_1 \ldots x_k := v_1 \ldots v_k x_1 \ldots x_\ell$.
The square path $x_1 \ldots x_k P$ is defined similarly. Given sets $X_1, \dots , X_k$,
we write $P \in X_1 \times \ldots \times X_k$ if $v_i \in X_i$ for all $1 \leq i \leq k$.

Given a square path $P$ and a positive integer $\ell \leq |P|$, 
we say that $[P]_\ell$ is an \emph{$\ell$-segment} if it is an ordered set whose members are $\ell$ consecutive vertices of $P$, endowed with the ordering of $P$.
We usually write $a_1 \ldots a_\ell$ for the $\ell$-segment $(a_1, \ldots, a_\ell)$.
We define the \emph{final $\ell$-segment} $[P]^+_\ell$ of $P$ to be the ordered set of the final $\ell$ vertices in $P$, whose order is inherited from $P$.
The \emph{initial $\ell$-segment} $[P]^-_\ell$ is defined analogously.
We write $(P)^+_\ell, (P)^-_\ell$ for the unordered versions. 
By a slight abuse of notation, we also write $[P]_\ell$ for the square path $P[(P)_\ell]$ and similarly for $[P]^\pm_\ell$.

Throughout we will omit floors and ceilings where the argument is unaffected. The constants in the hierarchies used to state our results are chosen from right to left.
For example, if we claim that a result holds whenever $0<1/n\ll a\ll b\ll c\le 1$ (where $n$ is the order of the graph), then 
there are non-decreasing functions $f:(0,1]\to (0,1]$, $g:(0,1]\to (0,1]$ and $h:(0,1]\to (0,1]$ such that the result holds
for all $0<a,b,c\le 1$ and all $n\in \mathbb{N}$ with $b\le f(c)$, $a\le g(b)$ and $1/n\le h(a)$. 
Note that $a \ll b$ implies that we may assume in the proof that e.g. $a < b$ or $a < b^2$.
Given $n,n' \in \mathbb{N}$ with $n \leq n'$, we write $[n,n'] := \lbrace n, \ldots, n' \rbrace$ and $[n] := [1,n]$.
We write $a\mathbb{N} := \lbrace an : n \in \mathbb{N} \rbrace$.
We also write $a = b \pm \eps$ for $a \in [b - \eps , b + \eps]$.

We will need the following simple consequence of the inclusion-exclusion principle.

\begin{Proposition}\label{2ndnbrhd}
Let $G$ be a graph on $n$ vertices and let $w,x,y \in V(G)$ be distinct. Then
\begin{itemize}
\item[(i)] $|N^2_G(x,y)| \geq d_G(x)+d_G(y) - n$;
\item[(ii)] $|N^2_G(w,x,y)| + |N^3_G(w,x,y)| \geq d_G(w)+d_G(x)+d_G(y) - n$.
\end{itemize}
\end{Proposition}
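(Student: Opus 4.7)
The plan is to prove both parts by standard inclusion-exclusion / double counting on the vertex set of $G$, using the trivial fact that the indicator variables for membership in $N(w), N(x), N(y)$ can sum to at most $3$ at any vertex.

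For part (i), I will simply observe that $N^2_G(x,y) = N_G(x) \cap N_G(y)$ by the definition of the $k$-neighbourhood. Then by inclusion-exclusion inside the ambient set $V(G)$,
\[
|N_G(x) \cap N_G(y)| \;=\; |N_G(x)| + |N_G(y)| - |N_G(x) \cup N_G(y)| \;\geq\; d_G(x) + d_G(y) - n,
\]
since $|N_G(x) \cup N_G(y)| \leq n$.

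For part (ii), the approach is to count, for each $v \in V(G)$, the score
\[
t(v) \;:=\; |\{u \in \{w,x,y\} : uv \in E(G)\}| \;=\; \mathbf{1}_{N_G(w)}(v) + \mathbf{1}_{N_G(x)}(v) + \mathbf{1}_{N_G(y)}(v).
\]
Summing over $v$ gives $\sum_{v \in V(G)} t(v) = d_G(w) + d_G(x) + d_G(y)$. By the definition of the $k$-neighbourhood, $v \in N^2_G(w,x,y)$ iff $t(v) \geq 2$, and $v \in N^3_G(w,x,y)$ iff $t(v) \geq 3$. Since $t(v) \in \{0,1,2,3\}$, a case check on the four values shows
\[
t(v) \;\leq\; 1 + \mathbf{1}[t(v) \geq 2] + \mathbf{1}[t(v) \geq 3]
\]
for every $v$. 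Summing this inequality over all $v \in V(G)$ yields
\[
d_G(w) + d_G(x) + d_G(y) \;\leq\; n + |N^2_G(w,x,y)| + |N^3_G(w,x,y)|,
\]
which is the desired bound after rearrangement.

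There is no real obstacle here; the only thing to be slightly careful about is confirming that the four cases $t(v) \in \{0,1,2,3\}$ really all satisfy the pointwise inequality (they do, since $\min(t,3) \leq 1 + \mathbf{1}[t\geq 2] + \mathbf{1}[t\geq 3]$ is tight at $t = 1,2,3$ and loose at $t=0$), and that isolated issues such as whether $w, x, y$ themselves lie in $N_G(w), N_G(x), N_G(y)$ are irrelevant because the bound is derived uniformly across all vertices in $V(G)$.
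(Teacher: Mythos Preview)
Your proof is correct and is essentially the same argument as the paper's: the paper proves (ii) via the inclusion--exclusion identity
\[
n \geq |N_G(w) \cup N_G(x) \cup N_G(y)| = d_G(w)+d_G(x)+d_G(y) - |N^2_G(w,x,y)| - |N^3_G(w,x,y)|,
\]
which is exactly what your pointwise inequality $t(v) \leq 1 + \mathbf{1}[t(v)\geq 2] + \mathbf{1}[t(v)\geq 3]$ summed over $V(G)$ encodes. Your treatment of (i) is likewise the standard one (the paper omits it).
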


\begin{proof}
We will only prove (ii).
Observe that
$$
n \geq |N_G(w) \cup N_G(x) \cup N_G(y)| = d_G(w) + d_G(x) + d_G(y) - |N^2_G(w,x,y)| - |N^3_G(w,x,y)|,
$$
as required.
\end{proof}

\subsection{The Regularity and Blow-up lemmas}\label{regblow}
In the proof of Theorem~\ref{mainthm}
we apply Szemer\'edi's Regularity lemma~\cite{szem}. To state it we need some more definitions. We write $d_G(A,B)$ for 
the \emph{density} $\frac{e_G(A,B)}{\vert A \vert \vert B \vert}$ of a bipartite graph $G$ with vertex classes $A$ and $B$. 
Given $\eps > 0$ we say that $G$ is 
$\eps$\emph{-regular} if every $X \subseteq A$ and $Y \subseteq B$ with $\vert X \vert \geq \eps \vert A \vert$ 
and $\vert Y \vert \geq \eps \vert B \vert$ satisfy $\vert d(A,B) - d(X,Y) \vert \leq \eps$.  
Given $\eps, d \in (0,1)$ we say that $G$ is \emph{$(\eps, d)$-regular} if $G$ is 
$\eps$-regular and $d_G(A,B) \geq d$.

\begin{fact}\label{supereasy}
 Given an $(\eps,d)$-regular bipartite graph $G[A,B]$ and $X \subseteq A$ with $|X| \geq \eps|A|$, there are less than $\eps|B|$ vertices in $B$ which have less than $(d-\eps)|X|$ neighbours in $X$.
\end{fact}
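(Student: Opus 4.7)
The plan is a short proof by contradiction that directly invokes the definition of $\eps$-regularity on a cleverly chosen subpair. I would not expect any real obstacle here — the content is a one-shot application of the definitions.

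First, let $Y \subseteq B$ denote the set of vertices in $B$ with fewer than $(d-\eps)|X|$ neighbours in $X$, and suppose for contradiction that $|Y| \geq \eps|B|$. The hypotheses give $|X| \geq \eps|A|$ and now $|Y| \geq \eps|B|$, so the pair $(X,Y)$ is precisely the kind of pair the $\eps$-regularity condition controls.

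Next, I would apply $\eps$-regularity to $(X,Y)$: it yields $|d_G(X,Y) - d_G(A,B)| \leq \eps$, and since $d_G(A,B) \geq d$ (because $G[A,B]$ is $(\eps,d)$-regular), this gives the lower bound $d_G(X,Y) \geq d - \eps$, hence $e_G(X,Y) \geq (d-\eps)|X||Y|$.

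Finally, I would contrast this with the direct edge count using the definition of $Y$: every $y \in Y$ contributes strictly fewer than $(d-\eps)|X|$ edges to $X$, so $e_G(X,Y) < (d-\eps)|X||Y|$. This contradicts the previous inequality, so $|Y| < \eps|B|$, which is exactly the assertion of the fact.
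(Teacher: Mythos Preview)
Your proof is correct and is the standard argument. The paper does not supply a proof of this statement at all --- it is recorded as a well-known fact and left unproved --- so your contradiction argument via the definition of $(\eps,d)$-regularity is exactly what one would write to justify it.
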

We say that $G$ is $(\eps , d )$\emph{-superregular} if both of the following hold:
\begin{itemize} 
\item $G$ is $(\eps,d)$-regular;
\item $d_G(a) \geq d|B|$ and $d_G(b) \geq d|A|$ for all $a \in A$, $b \in B$.
\end{itemize}

We will use the degree form of the Regularity lemma, which can be easily derived from the standard version~\cite{szem}.

\begin{lemma} \label{reg}
\emph{(Degree form of the Regularity lemma)} For every $\eps \in (0,1)$ and every $M^\prime \in \mathbb{N}$ there exist $M, n_0 \in \mathbb{N}$ such that if $G$ is a graph on $n \geq n_0$ vertices and $d \in [0,1]$ is any real number, then there is a partition of the vertex set of $G$ into $V_0, V_1, \ldots , V_{L}$ and a spanning subgraph $G^\prime$ of $G$ such that the following holds:
\begin{itemize}
\item[(i)] $M^\prime \leq L \leq M$;
\item[(ii)] $\vert V_0 \vert \leq \eps n$;
\item[(iii)] $\vert V_1 \vert = \ldots = \vert V_{L} \vert =: m$;
\item[(iv)] $d_{G^\prime}(x) > d_{G}(x) - (d + \eps) n$ for all $x \in V(G)$;
\item[(v)] for all $1 \leq i \leq L$ the graph $G^\prime[V_i]$ is empty;
\item[(vi)] for all $1 \leq i < j \leq L$, $G'[V_i , V_j]$ is $\eps$-regular and has density either 0 or at least $d$.
\end{itemize}
\end{lemma}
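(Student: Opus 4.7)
The plan is to derive this degree form from the standard Szemer\'edi Regularity lemma by a cleanup argument. I would apply the standard lemma with a much smaller regularity parameter $\eps_1 \ll \eps$ (say $\eps_1 := \eps^2/10^3$) and an enlarged lower cluster-count bound $M'_1 := \lceil 8M'/\eps \rceil$, obtaining a partition $U_0, U_1, \ldots, U_{L'}$ with $|U_0| \leq \eps_1 n$, clusters of common size $m'$, $M'_1 \leq L' \leq M_1 =: M$, and at most $\eps_1 (L')^2$ non-$\eps_1$-regular pairs. The final $G'$ would be obtained from $G$ by deleting three kinds of edges: (a) those inside some $V_i$, (c) those in an irregular pair, and (d) those in a pair of density less than $d$ --- while retaining all edges incident to the final exceptional set $V_0$, which makes condition (iv) automatic for $v \in V_0$.

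The crux is the per-vertex control of deletions of types (c) and (d), since the standard Regularity lemma provides only global bounds. I would handle this via a two-stage cleanup. First, let $I \subseteq [L']$ index those clusters lying in more than $\sqrt{\eps_1}\, L'$ non-regular pairs; double-counting gives $|I| \leq 2\sqrt{\eps_1}\, L'$. Secondly, for each surviving $\eps_1$-regular pair $(U_i, U_j)$, regularity implies that at most $\eps_1 m'$ vertices of $U_i$ have degree into $U_j$ deviating from the pair's expected value by more than $\eps_1 m'$; averaging then shows that per cluster at most $\sqrt{\eps_1}\, m'$ vertices are \emph{atypical} for more than $\sqrt{\eps_1}\, L'$ of their pairs. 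Moving $U_0$, the clusters indexed by $I$, and all atypical vertices into a new exceptional set yields $|V_0| \leq \eps_1 n + 2\sqrt{\eps_1}\, n + \sqrt{\eps_1}\, n \leq \eps n$ for $\eps_1$ sufficiently small. Relabelling the surviving clusters as $V_1, \ldots, V_L$ (re-equalising their sizes by shifting at most $L$ further vertices into $V_0$) leaves $M' \leq L \leq M$ and a common size $m$.

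To verify (iv) for $v \in V_i$ with $i \geq 1$: type~(a) deletions cost at most $m \leq n/M'_1 \leq \eps n / 8$; type~(c) deletions cost at most $\sqrt{\eps_1}\, L \cdot m \leq \sqrt{\eps_1}\, n \leq \eps n / 8$, since $V_i$ lies in at most $\sqrt{\eps_1}\, L'$ irregular pairs; and type~(d) deletions cost at most $(d + \eps_1)m$ per ``typical'' low-density pair plus at most $m$ per ``atypical'' one, totalling at most $(d + \eps_1)n + \sqrt{\eps_1}\, n$ because $v$ is atypical for at most $\sqrt{\eps_1}\, L'$ pairs. The grand total is strictly less than $(d + \eps)n$, giving (iv); for $v \in V_0$ no incident edge is deleted, so (iv) is trivial. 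The structural properties (i)--(iii), (v), (vi) are immediate from the construction.

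The main obstacle in the derivation is precisely the per-vertex control in step~(d): on a low-density pair the density bound yields only an average per-vertex degree loss, not a uniform one, and a single vertex with atypically large degree into many low-density neighbour clusters could in principle spoil the bound. Absorbing clusters with too many irregular pairs, \emph{and} absorbing vertices that are atypical for too many pairs, into $V_0$ is the standard device that upgrades the averaged bound to a pointwise one, at the modest price of choosing $\eps_1$ significantly smaller than $\eps$.
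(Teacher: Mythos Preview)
Your derivation is correct and is precisely the standard cleanup argument used to pass from the classical Szemer\'edi Regularity lemma to the degree form. The paper itself does not supply a proof of this lemma: it simply states the result and remarks that it ``can be easily derived from the standard version~\cite{szem}''. So there is nothing to compare against beyond noting that your argument is exactly the derivation the paper alludes to. One very minor remark: when you re-equalise cluster sizes and pass from $\eps_1$-regularity to $\eps$-regularity after removing the atypical vertices, you are implicitly invoking a robustness-of-regularity fact (as in Proposition~\ref{superslice2}); it would be worth saying so explicitly, but the step is routine.
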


We call $V_1 , \ldots , V_{L}$ \emph{clusters}, $V_0$ the \emph{exceptional set} and the vertices in $V_0$ \emph{exceptional vertices}. We refer to $G^\prime$ as the \emph{pure graph}. The last condition of the lemma says that all pairs of clusters are $\eps$-regular (but possibly with different densities). The \emph{reduced graph} $R$ of $G$ with parameters $\eps$, $d$ and $M^\prime$ is the graph whose vertices are $1,\ldots,L$ and in which $ij$ is an edge precisely when $G'[V_i , V_j]$ is $\eps$-regular and has density at least $d$. 

The following simple observation is well known.

\begin{proposition}\label{superslice2} 
Suppose that $0< \eps \le d' \le d \leq 1$ and $ d' \leq d/2, 1/6$. Let $G$ be a bipartite graph with
vertex classes
$A$ and $B$ of size $(1 \pm \eps)m$. Suppose that $G'$ is obtained from $G$ by removing at most
$d'm$
vertices from each vertex class.
\begin{itemize}
\item[(i)]
If $G$ is $(\eps,d)$-regular then $G'$ is $(2d',d-\eps)$-regular.
\item[(ii)] If $G$ is $(\eps,d)$-superregular then $G'$ is $(2d',d-2d')$-superregular.
\end{itemize}
\end{proposition}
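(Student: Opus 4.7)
The plan is to exploit the fact that removing few vertices leaves large surviving classes and that regularity of $G$ transfers directly to its induced subgraph on any sufficiently large subsets.

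Let $A' \subseteq A$, $B' \subseteq B$ be the vertex classes of $G'$. First I would pin down the sizes: since $\eps \le d' \le 1/6$, we have
\[
(1-\eps-d')m \;\le\; |A'|,|B'| \;\le\; (1+\eps)m,
\]
so both $|A'|$ and $|B'|$ lie in $[\tfrac{2}{3}m,\tfrac{7}{6}m]$. The key numerical point is that any $X \subseteq A'$ with $|X| \ge 2d'|A'|$ satisfies $|X| \ge 2d' \cdot \tfrac{2}{3}m = \tfrac{4}{3}d'm \ge \tfrac{7}{6}\eps m \ge \eps|A|$, using $d' \ge \eps$; analogously for $Y \subseteq B'$.

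For (i), take $X \subseteq A'$, $Y \subseteq B'$ with $|X|\ge 2d'|A'|$, $|Y|\ge 2d'|B'|$. Because only vertices outside $X\cup Y$ were removed, $d_{G'}(X,Y)=d_G(X,Y)$, and similarly $d_{G'}(A',B')=d_G(A',B')$. By the size estimate above, $\eps$-regularity of $G$ applies both to the pair $(X,Y)$ and to $(A',B')$, giving
\[
|d_{G'}(X,Y)-d_{G'}(A',B')| \;\le\; |d_G(X,Y)-d_G(A,B)|+|d_G(A,B)-d_G(A',B')| \;\le\; 2\eps \;\le\; 2d'.
\]
The density bound $d_{G'}(A',B')\ge d-\eps$ follows by applying $\eps$-regularity directly to $(A',B')$.

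For (ii), I only need to verify the new degree condition on top of (i). For any $a\in A'$, superregularity of $G$ gives $d_G(a)\ge d|B|$, and since we discarded at most $d'm$ vertices from $B$,
\[
d_{G'}(a) \;\ge\; d|B|-d'm \;\ge\; (d-2d')|B|,
\]
where the last inequality is equivalent to $2d'|B|\ge d'm$, which holds because $|B|\ge(1-\eps)m\ge m/2$. Since $|B'|\le|B|$, this yields $d_{G'}(a)\ge (d-2d')|B'|$; the same argument handles $b\in B'$. Combined with (i), this proves $(\ell,d-2d')$-superregularity with $\ell=2d'$. The entire argument is essentially bookkeeping; the only place one has to be slightly careful is arranging $d'\ge\eps$ and $\eps+d'\le 1/3$ so that $|A'|,|B'|$ stay large enough for $2d'$-sized subsets to be $\eps$-large in the original classes.
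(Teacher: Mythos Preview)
Your proof is correct and takes essentially the same approach as the paper: both arguments verify that $2d'$-fractions of the surviving classes are still $\eps$-fractions of the original classes, then invoke the $\eps$-regularity of $G$ directly; for (ii) both bound $d_{G'}(a)\ge d|B|-d'm$ and reduce to $(d-2d')|B'|$. Your bookkeeping via the explicit interval $[\tfrac{2}{3}m,\tfrac{7}{6}m]$ is slightly more detailed than the paper's, but the substance is identical.
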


\begin{proof}
To prove (i), let $A' \subseteq A$ and $B' \subseteq B$ denote the vertex classes of $G'$.
Since $|A'|,|B'| \geq \eps |A|,\eps |B|$ and $G$ is $(\eps ,d)$-regular we have that 
\begin{align}\label{1}
d_{G'} (A',B')=d_{G} (A',B')= d_G (A,B) \pm \eps
\end{align} and 
\begin{align}\label{2}
d_{G'} (A',B') \geq d-\eps.
\end{align} 
Suppose that $X \subseteq A'$, $Y \subseteq B'$ are such that $|X| \geq 2{d'}|A'| \geq \eps |A|$ and $|Y| \geq 2{d'}|B'|\geq \eps |B| $.
Then as $G$ is $(\eps ,d)$-regular
$d_{G'}(X,Y) = d_G (A,B) \pm \eps $. Together with (\ref{1}) this implies that 
$|d_{G'} (A',B')-d_{G'} (X,Y) | \leq 2 \eps \leq 2 {d'}$, and so by (\ref{2}) $G'$ is $(2{d'},d-\eps)$-regular.
To see (ii), note further that, for any $a \in A'$ we have
$$
d_{G'}(a,B') \geq d|B|-d'm \geq \left(d-\frac{d'}{1-\eps}\right)|B| \geq (d-2d')|B'|,
$$
and similarly $d_{G'}(b,A') \geq (d-2d')|A'|$ for all  $b\in B'$.
\end{proof}

The next proposition appears as Proposition~8 in~\cite{3partite}, and is a slight variant of Proposition~\ref{superslice2}.

\begin{proposition}\label{newsuperslice}
Let $G$ be a graph with $A,B \subseteq V(G)$ disjoint.
Suppose that $G[A,B]$ is $(\eps,d)$-regular and let $A',B' \subseteq V(G)$ be such that $|A \triangle A'| \leq \alpha|A'|$ and $|B \triangle B'| \leq \alpha|B'|$ for some $0 \leq \alpha < 1$.
Then $G[A',B']$ is $(\eps',d')$-regular, with
$$
\eps' := \eps + 6\sqrt{\alpha} \ \ \text{ and }\ \ d' := d-4\alpha.
$$
If, moreover, $G[A,B]$ is $(\eps,d)$-superregular and each vertex $x \in A'$ has at least $d'|B'|$ neighbours in $B'$ and each vertex $x \in B'$ has at least $d'|A'|$ neighbours in $A'$, then $G[A',B']$ is $(\eps',d')$-superregular with $\eps'$ and $d'$ as above.
\end{proposition}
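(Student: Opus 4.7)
The plan is to reduce everything to the given $(\eps,d)$-regularity of $G[A,B]$ by passing through the common parts $A_1:=A\cap A'$ and $B_1:=B\cap B'$, treating the symmetric-difference vertices as a small perturbation. First I would record the basic size bounds: since $A\setminus A_1$ and $A'\setminus A_1$ are both contained in $A\triangle A'$, one has $|A_1|\geq(1-\alpha)|A'|$ and $|A_1|\geq|A|-\alpha|A'|\geq(1-2\alpha)|A|$ (using $|A|\leq(1+\alpha)|A'|$), with analogous bounds for $B_1$. In particular $A_1$ and $B_1$ are large enough inside $A$ and $B$ to invoke $(\eps,d)$-regularity on them. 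For the density bound $d(A',B')\geq d-4\alpha$, it then suffices to observe that $e(A',B')\geq e(A_1,B_1)\geq(d-\eps)|A_1||B_1|\geq(d-\eps)(1-\alpha)^2|A'||B'|$, which (in the intended regime $\eps\ll\alpha$) gives $d(A',B')\geq d-\eps-2\alpha\geq d-4\alpha$.

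For $\eps'$-regularity, I would take arbitrary $X\subseteq A'$, $Y\subseteq B'$ with $|X|\geq\eps'|A'|$, $|Y|\geq\eps'|B'|$ and set $X_1:=X\cap A$, $Y_1:=Y\cap B$. The bound $|X_1|\geq(\eps'-\alpha)|A'|\geq\eps|A|$ (and likewise for $Y_1$) holds comfortably once $\eps'=\eps+6\sqrt\alpha$, so $(\eps,d)$-regularity delivers $d(X_1,Y_1)=d(A,B)\pm\eps$ and $d(A_1,B_1)=d(A,B)\pm\eps$. The perturbation between $(X,Y)$ and $(X_1,Y_1)$ is then bounded by straightforward edge counting:
\begin{equation*}
\bigl|e(X,Y)-e(X_1,Y_1)\bigr|\leq|X\setminus X_1|\cdot|Y|+|X|\cdot|Y\setminus Y_1|\leq 2\alpha|A'||B'|.
\end{equation*}
Dividing by $|X||Y|\geq(\eps')^2|A'||B'|$ shows $|d(X,Y)-d(X_1,Y_1)|=O(\alpha/\eps')=O(\sqrt\alpha)$, and the same estimate applied with $X=A'$, $Y=B'$ yields $|d(A',B')-d(A_1,B_1)|=O(\alpha)$. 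Chaining these via the triangle inequality gives $|d(X,Y)-d(A',B')|\leq\eps+6\sqrt\alpha=\eps'$, as required. For the superregular conclusion the minimum-degree requirement is part of the hypothesis, so no further work is needed beyond the regularity just established.

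The main obstacle is the regime in which $|X|$ sits close to its lower threshold $\eps'|A'|$ while a constant fraction of $X$ could lie inside $A'\setminus A$: on that piece the $(\eps,d)$-regularity of $G[A,B]$ offers no control at all, and the error contributed to $e(X,Y)/(|X||Y|)$ by these vertices is of order $\alpha|A'|/|X|=\alpha/\eps'$. Demanding that this error be absorbed into $\eps'$ itself is exactly what forces $\eps'=\Theta(\sqrt\alpha)$ rather than merely $\eps+O(\alpha)$, and accounts for the square root appearing in the statement.
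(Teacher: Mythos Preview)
The paper does not actually prove this; it quotes the result from B\"ottcher--Schacht--Taraz (their Proposition~8) without argument. Your approach is the standard one and is correct in spirit, but two bookkeeping points deserve care. First, dividing your displayed bound $2\alpha|A'||B'|$ by $|X||Y|\geq(\eps')^2|A'||B'|$ yields $2\alpha/(\eps')^2$, which is only a constant (about $1/18$) when $\eps'\approx 6\sqrt\alpha$; to obtain the $O(\alpha/\eps')$ you claim, bound instead $|X\setminus X_1|\,|Y|/(|X||Y|)=|X\setminus X_1|/|X|\leq\alpha|A'|/|X|\leq\alpha/\eps'$ (cancelling the $|Y|$), and likewise for the second term, and track separately the change of denominator from $|X_1||Y_1|$ to $|X||Y|$, which costs another $1-|X_1||Y_1|/(|X||Y|)\leq 2\alpha/\eps'$. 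Second, your triangle-inequality chain routes through both $d(X_1,Y_1)=d(A,B)\pm\eps$ and $d(A_1,B_1)=d(A,B)\pm\eps$, so it literally delivers $2\eps+O(\sqrt\alpha)$ rather than $\eps+6\sqrt\alpha$; this is harmless in the regime $\eps\ll\sqrt\alpha$ in which the proposition is applied throughout the paper (and which you already invoke for the density lower bound), but the stated constants should be read with that regime in mind.
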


The following lemma is well known in several variations.
The version here is almost identical to Proposition~8 in~\cite{maxplanar}.
\begin{lemma}\label{superslice}
Let $L \in \mathbb{N}$ and suppose that $0 < 1/m \ll 1/L \ll \eps \ll d, 1/\Delta \leq 1$.
Let $R$ be a graph with $V(R) = [L]$.
Let $G$ be a graph with vertex partition $V_1, \ldots, V_{L}$ such that $|V_i|=(1 \pm \eps)m$ for all $1 \leq i \leq L$, and in which $G[V_i,V_j]$ is $(\eps,d)$-regular whenever $ij \in E(R)$.
Let $H$ be a subgraph of $R$ with $\Delta(H) \leq \Delta$.
Then for each $i \in V(H)$, $V_i$ contains a subset $V_i'$ of size $(1-\sqrt{\eps})m$ such that for every edge $ij$ of $H$, the graph $G[V_i',V_j']$ is $(4\sqrt{\eps},d/2)$-superregular.
\end{lemma}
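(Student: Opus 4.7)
The plan is to produce each $V_i'$ by first removing from $V_i$ a small ``bad'' subset $B_i$ and then trimming the remainder down to the required size $(1-\sqrt{\eps})m$. The bad set should consist of vertices that, for some $H$-neighbour $j$ of $i$, have too few neighbours in $V_j$. Concretely, for each edge $ij\in E(H)$, Fact~\ref{supereasy} applied to the $(\eps,d)$-regular pair $G[V_i,V_j]$ yields a set $B_{ij}\subseteq V_i$ of size less than $\eps|V_i|$ containing every vertex with fewer than $(d-\eps)|V_j|$ neighbours in $V_j$. Setting $B_i := \bigcup_{j\in N_H(i)} B_{ij}$ and using $|N_H(i)|\le\Delta$ gives $|B_i|\le\Delta\eps|V_i|\le 2\Delta\eps m$. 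The hierarchy $\eps\ll 1/\Delta$ makes this well below $|V_i|-(1-\sqrt{\eps})m\ge(\sqrt{\eps}-\eps)m$, so one may pick any $V_i'\subseteq V_i\setminus B_i$ of size exactly $(1-\sqrt{\eps})m$. (For $i\in V(H)$ isolated in $H$, $B_i=\emptyset$ and any subset of the right size works.)

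Next I would verify the two superregularity conditions for each edge $ij\in E(H)$. Regularity is immediate from Proposition~\ref{superslice2}(i) applied with $d'=2\sqrt{\eps}$: at most $2\sqrt{\eps}m$ vertices have been removed from each of $V_i,V_j$ (since $|V_i|\le(1+\eps)m$ and $|V_i'|=(1-\sqrt{\eps})m$), and the proposition delivers that $G[V_i',V_j']$ is $(4\sqrt{\eps},d-\eps)$-regular, hence $(4\sqrt{\eps},d/2)$-regular because $\eps\ll d$. For the minimum-degree clause, every $v\in V_i'\subseteq V_i\setminus B_{ij}$ satisfies $d_G(v,V_j)\ge(d-\eps)|V_j|$ by construction, and at most $|V_j\setminus V_j'|\le 2\sqrt{\eps}m$ of these neighbours lie outside $V_j'$. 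This yields $d_G(v,V_j')\ge(d-3\sqrt{\eps})m$, which comfortably exceeds $(d/2)|V_j'|=(d/2)(1-\sqrt{\eps})m$ since $\sqrt{\eps}\ll d$.

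The whole argument is a single round of careful deletion plus a direct appeal to Proposition~\ref{superslice2}, so I do not foresee any genuine obstacle. The parameter hierarchy enters in exactly two places: $\eps\ll 1/\Delta$ ensures the union of bad sets can be absorbed into the truncation loss (as $\Delta\eps\ll\sqrt{\eps}$), and $\eps\ll d$ ensures both that regularity drops only from $d$ to $d-\eps$ and that the minimum degree survives the loss of up to $2\sqrt{\eps}m$ neighbours on the opposite side. One might be tempted instead to invoke Proposition~\ref{newsuperslice}, but because $|V_i\triangle V_i'|$ is only bounded by $2\sqrt{\eps}m$, the resulting regularity parameter would be of order $\eps^{1/4}$ rather than $\sqrt{\eps}$, so Proposition~\ref{superslice2} is the correct tool here.
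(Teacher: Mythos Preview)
Your proposal is correct and follows essentially the same approach as the paper's proof: both use Fact~\ref{supereasy} to identify, for each $H$-edge $ij$, the small set of vertices in $V_i$ with too few neighbours in $V_j$, take the union over the at most $\Delta$ neighbours to define the bad set, trim to size $(1-\sqrt{\eps})m$, and then invoke Proposition~\ref{superslice2}(i) with $d'=2\sqrt{\eps}$ for regularity while checking the minimum-degree condition directly. Your write-up is in fact slightly more explicit about the constants than the paper's, but the argument is the same.
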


\begin{proof}
Consider an edge $ij \in E(H)$.
By ($\star$), there are less than $\eps|V_i|$ vertices in $V_i$ which have less than $(d-\eps)(1-\eps)m \geq (d-2\eps)m$ neighbours in $V_j$.
So for every vertex $i$ of $H$ we can choose a set $V_i' \subseteq V_i$ of size $(1-\sqrt{\eps})m \leq (1-\eps\Delta)|V_i|$ such that for each neighbour $j$ of $i$ in $H$, all vertices $x \in V_i'$ have at least $(d-2\eps)m$ neighbours in $V_j$.
Proposition~\ref{superslice2}(i) with $2\sqrt{\eps}$ playing the role of $d'$ implies that, for each edge $ij \in E(H)$, $G[V_i',V_j']$ is $(4\sqrt{\eps},d-\eps)$-regular, and hence $(4\sqrt{\eps},d/2)$-regular.
Moreover, for each $x \in V_i'$, $d_{G}(x,V_j') \geq (d-2\eps)m-2\sqrt{\eps}m \geq d(1-\sqrt{\eps})m/2$.
Therefore $G[V_i',V_j']$ is $(4\sqrt{\eps},d/2)$-superregular.
\end{proof}

The following proposition is an easy consequence of $(\eps,d)$-regularity, so we only sketch the proof.

\begin{proposition}\label{triangleembed}
Let $0 < 1/m \ll \eps \ll c,d < 1$.
Let $G$ be a graph with vertex partition $X_1,X_2,X_3$ where $|X_i|=(1 \pm \eps) m$ for all $1 \leq i \leq 3$ and such that $G[X_i,X_j]$ is $(\eps,d)$-regular for all $1 \leq i < j \leq 3$.
For each $i=1,2$, let $A_i,B_i \subseteq X_i$, where $|A_i|,|B_i| \geq cm$.
Let $W \subseteq V(G)$ be such that $|W \cap X_i| \leq \eps m/2$ for all $1 \leq i \leq 3$.
Then there exists a square path $P \in A_1 \times A_2 \times X_3 \times B_1 \times B_2$ with $V(P) \cap W = \emptyset$.
\end{proposition}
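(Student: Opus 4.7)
The plan is to embed $P = p_1p_2p_3p_4p_5$ greedily in the order $p_3, p_2, p_4, p_1, p_5$: the middle vertex $p_3\in X_3$ must be adjacent to all four other $p_i$, so picking it first lets me cut down the universes $A_1,A_2,B_1,B_2$ to their sub-neighbourhoods $A_i':= A_i\cap N(p_3)$ and $B_i':= B_i\cap N(p_3)$. Next I pick the two interior vertices $p_2\in A_2'$ and $p_4\in B_1'$ (each of which still has two further adjacency constraints), and finally the endpoints $p_1\in A_1'$ and $p_5\in B_2'$. At every step the set of forbidden vertices in a given $X_i$ (namely $W\cap X_i$ plus at most two previously chosen vertices in that cluster) has size at most $\eps m/2+2$, whereas the pool of viable candidates produced by regularity always has size $\Omega(cd)m$; since $\eps\ll c,d$ a legal choice survives.

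Concretely, Fact~\ref{supereasy} applied to the four $(\eps,d)$-regular pairs $G[X_3,A_i]$ and $G[X_3,B_i]$, $i\in\{1,2\}$ (valid since $|A_i|,|B_i|\ge cm\ge \eps|X_3|$), rules out at most $4\eps|X_3|$ vertices of $X_3$ and thus furnishes the required $p_3\in X_3\setminus W$ with $|A_i'|,|B_i'|\ge (d-\eps)cm$ for $i=1,2$. Two further applications of Fact~\ref{supereasy} inside $G[X_1,X_2]$, with $A_1'$ and then $B_1'$ as the small side (each of size $\ge \eps|X_1|$), show that all but $2\eps|X_2|$ vertices of $X_2$ have at least $(d-\eps)|A_1'|$ neighbours in $A_1'$ and at least $(d-\eps)|B_1'|$ neighbours in $B_1'$; this yields a valid $p_2\in A_2'\setminus W$. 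The same technique with $B_2'$ as the small side produces $p_4\in(B_1'\cap N(p_2))\setminus W$ having at least $(d-\eps)|B_2'|$ neighbours in $B_2'$. Finally $p_1$ is drawn from $(A_1'\cap N(p_2))\setminus(W\cup\{p_4\})$ and $p_5$ from $(B_2'\cap N(p_4))\setminus(W\cup\{p_2\})$, both of which still have size $\Omega(c^2d^2)m$.

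All seven square-path edges are then forced by the construction: $p_1p_3,p_2p_3,p_3p_4,p_3p_5$ come from the choice of $p_3$ together with the nested memberships $p_1\in A_1'$, $p_2\in A_2'$, $p_4\in B_1'$, $p_5\in B_2'$; $p_1p_2$ and $p_2p_4$ come from the neighbourhood conditions built into $p_2$; and $p_4p_5$ comes from the choice of $p_5$. The only obstacle is bookkeeping: at each invocation of Fact~\ref{supereasy} I must verify that the designated small side still has size at least $\eps$ times its ambient cluster, and that the final pool of candidates strictly exceeds $|W\cap X_i|$ plus the number of previously chosen vertices in that cluster. Both checks follow immediately from the hierarchy $1/m\ll\eps\ll c,d$ and present no conceptual difficulty.
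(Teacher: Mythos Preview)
Your proof is correct. The paper's sketch takes a slightly different packaging of the same idea: it first passes to pairwise disjoint subsets $A_1',A_2',B_1',B_2',X_3'$ (each of size $\ge cm/4$) that also avoid $W$, observes that the induced bipartite sub-pairs remain $(\sqrt{\eps},d/2)$-regular, and then views $A_1'A_2'X_3'B_1'B_2'$ as a square path in a five-vertex ``reduced graph'', invoking the standard fact that such a path lifts to $G$. Your argument is a direct greedy embedding that handles $W$-avoidance and vertex-distinctness at each step rather than upfront. The two are interchangeable; yours is marginally more explicit about the numerics, while the paper's version hides the greedy lift behind the phrase ``a simple consequence of regularity''.
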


\medskip
\noindent
\emph{Sketch proof.}
For $j=1,2$, let $A'_j \subseteq A_j$, $B_j' \subseteq B_j$, $X_3' \subseteq X_3$ be such that $A_j',B_j',W$ are pairwise disjoint, $X_3' \cap W = \emptyset$, and $|A_j'|,|B_j'|,|X_3'| \geq cm/4$.
So $\mathcal{X} := \lbrace A_1',A_2',B_1',B_2',X_3' \rbrace$ is a collection of vertex-disjoint susbets of $V(G)$, and $W \cap Y = \emptyset$ for all $Y \in \mathcal{X}$.
Let $R$ be the graph whose vertices are elements of $\mathcal{X}$, in which, for all $Y,Z \in \mathcal{X}$, we have $YZ \in E(R)$ whenever $G[Y,Z]$ is $(\sqrt{\eps},d/2)$-regular.
Then $R$ is a complete tripartite graph with vertex classes $\lbrace A_1',B_1' \rbrace, \lbrace A_2',B_2' \rbrace, \lbrace X_3' \rbrace$. 
So $R$ contains the square path $P' := A_1'A_2'X_3'B_1'B_2'$.
It is a simple consequence of regularity that therefore $G$ contains a square path $P \in A_1' \times A_2' \times X_3' \times B_1' \times B_2'$.
Note that $P$ has the required properties.
\hfill$\square$

\medskip
Given two graphs $H,G$, we say that a function $\phi : V(H) \rightarrow V(G)$ is a \emph{graph homomorphism} if, for all edges $uv \in E(H)$, we have that $\phi(u)\phi(v) \in E(G)$.
If $\phi$ is injective, then we call it an \emph{embedding}, in which case we write $H \subseteq G$.

We need the following result from~\cite[Lemma 10]{3partite} which, given a homomorphism from a graph $H$ into the reduced graph $R$, allows us to embed $H$ into $G$.
Furthermore, under certain conditions we can guarantee that a small fraction of the vertices of $H$ are mapped into specific sets.
A similar result was first obtained by Chv\'atal, R\"odl, Szemer\'edi and Trotter~\cite{crst}.

\begin{lemma}\label{partialembed} (Partial embedding lemma~\cite{3partite})
Suppose that $L \in \mathbb{N}$ and $0 < 1/m \ll 1/L \ll \eps \ll c \ll d,1/\Delta < 1$.
Let $R$ be a graph with $V(R) = [L]$.
Let $G$ be a graph with vertex partition $V_1, \ldots, V_{L}$ such that $|V_i|=(1 \pm \eps)m$ for all $1 \leq i \leq L$, and in which $G[V_i,V_j]$ is $(\eps,d)$-regular whenever $ij \in E(R)$.

Let $H$ be a graph with vertex partition $X,Y$ and let $f : V(H) \rightarrow V(R)$ be a graph homomorphism (so $f(h)f(h') \in E(R)$ whenever $hh' \in E(H)$).

Then, if $|H| \leq \eps m$ and $\Delta(H) \leq \Delta$, there exists an injective mapping $\tau : X \rightarrow V(G)$ with $\tau(x) \in V_{f(x)}$ for all $x \in X$ and there exist sets $C_y \subseteq V_{f(y)} \setminus \tau(X)$ for all $y \in Y$, so that 
the following hold:
\begin{itemize}
\item[(i)] if $x,x' \in X$ and $xx' \in E(H)$, then $\tau(x)\tau(x') \in E(G)$;
\item[(ii)] for all $y \in Y$ we have that $C_y \subseteq N_G(\tau(x))$ for all $x \in N_H(y) \cap X$;
\item[(iii)] $|C_y| \geq c|V_{f(y)}|$ for all $y \in Y$.
\end{itemize}
\end{lemma}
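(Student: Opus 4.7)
The plan is a greedy embedding of $X$ which maintains, for every vertex $h \in V(H)$ not yet embedded, a \emph{candidate set} $C_h \subseteq V_{f(h)}$ consisting of those vertices in $V_{f(h)}$ adjacent in $G$ to every already-embedded neighbour of $h$. Initialise $C_h := V_{f(h)}$ for all $h$, fix an arbitrary ordering $x_1, \ldots, x_k$ of $X$, and process the $x_i$ one at a time. At step $i$, having already chosen $\tau(x_1), \ldots, \tau(x_{i-1})$, we pick $\tau(x_i) \in C_{x_i}$ (see below) and then, for each $h \in N_H(x_i) \setminus \lbrace x_1, \ldots, x_i \rbrace$, replace $C_h$ by $C_h \cap N_G(\tau(x_i))$. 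At the end we delete $\tau(X)$ from every $C_y$ with $y \in Y$ and output $\tau$ together with these final candidate sets. Conclusions~(i) and~(ii) of the lemma are then built into the construction.

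The delicate point is to ensure that the candidate sets never shrink too far. Since $f$ is a homomorphism, for each $h \in N_H(x_i)$ the pair $G[V_{f(x_i)}, V_{f(h)}]$ is $(\eps,d)$-regular, so Fact~\ref{supereasy}, applied with $C_h$ playing the role of $X$ (assuming inductively that $|C_h| \geq \eps |V_{f(h)}|$), implies that all but at most $\eps|V_{f(x_i)}| \leq 2\eps m$ vertices $v \in V_{f(x_i)}$ satisfy $|N_G(v) \cap C_h| \geq (d-\eps)|C_h|$. Defining
\[
B_i := \lbrace \tau(x_1), \ldots, \tau(x_{i-1}) \rbrace \cup \bigcup_{h} \lbrace v \in V_{f(x_i)} : |N_G(v) \cap C_h| < (d-\eps)|C_h| \rbrace,
\]
where the inner union runs over $h \in N_H(x_i) \setminus \lbrace x_1, \ldots, x_{i-1} \rbrace$, we have $|B_i| \leq |X| + 2\Delta\eps m \leq 3\Delta\eps m$, and any choice $\tau(x_i) \in C_{x_i} \setminus B_i$ guarantees that every updated candidate set satisfies $|C_h^{\mathrm{new}}| \geq (d-\eps)|C_h^{\mathrm{old}}|$.

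Each $C_h$ is updated at most $d_H(h) \leq \Delta$ times, so throughout the process $|C_h| \geq (d-\eps)^\Delta(1-\eps)m \geq (d/2)^\Delta m/2$. The hierarchy $\eps \ll c \ll d, 1/\Delta$ is chosen precisely so that this quantity exceeds both $3\Delta\eps m$, ensuring that $C_{x_i}\setminus B_i \neq \emptyset$ at each step, and $c|V_{f(h)}|+|X|$, ensuring that after deleting $\tau(X)$ the final $C_y$ still has size at least $c|V_{f(y)}|$, which gives~(iii). The only thing one must be careful about is to take the bad set $B_i$ over \emph{all} unembedded neighbours of $x_i$, including those in $Y$, since the candidate sets of vertices of $Y$ must also survive until the end of the procedure; the main obstacle is nothing deeper than this bookkeeping.
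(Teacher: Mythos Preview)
Your proof is correct and is the standard greedy embedding argument with candidate sets. The paper itself does not prove this lemma; it is quoted from~\cite{3partite} (B\"ottcher--Schacht--Taraz, Lemma~10), where the proof follows exactly the approach you describe.
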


In its simplest form, the Blow-up lemma of Koml\'os, S\'ark\"ozy and Szemer\'edi~\cite{blowuplem} states that for the purposes of embedding a spanning bipartite graph of bounded degree, a superregular pair behaves like a complete bipartite graph.

\begin{theorem}\label{blowup} (Blow-up lemma~\cite{blowuplem})
For every $d,\Delta,c > 0$ and $k \in \mathbb{N}$ there exist constants $\eps_0$ and $\alpha$ such that the following holds.
Let $n_1, \ldots, n_k$ be positive integers,  $0 < \eps < \eps_0$, and $G$ be a $k$-partite graph with vertex classes $V_1, \ldots, V_k$ where $|V_i|=n_i$ for $i \in [k]$.
Let $J$ be a graph on vertex set $[k]$ such that $G[V_i,V_j]$ is $(\eps,d)$-superregular whenever $ij \in E(J)$.
Suppose that $H$ is a $k$-partite graph with vertex classes $W_1, \ldots, W_k$ of size  at most $n_1, \ldots, n_k$ respectively with $\Delta(H) \leq \Delta$.
Suppose further that there exists a graph homomorphism $\phi : V(H) \rightarrow V(J)$ such that $|\phi^{-1}(i)| \leq n_i$ for every $i \in [k]$.
Moreover, suppose that in each class $W_i$ there is a set of at most $\alpha n_i$ special vertices $y$, each of them equipped with a set $S_y \subseteq V_i$ with $|S_y| \geq cn_i$.
Then there is an embedding of $H$ into $G$ such that every special vertex $y$ is mapped to a vertex in $S_y$.
\end{theorem}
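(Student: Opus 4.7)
The plan is to prove this via the randomized greedy embedding algorithm of Koml\'os, S\'ark\"ozy and Szemer\'edi. Fix a partition-respecting ordering on $V(H)$ and place a small \emph{reservoir} $R$ (of size about $\alpha n$, containing all special vertices) at the end. The strategy is to embed $V(H) \setminus R$ one vertex at a time by a random greedy process, then complete the embedding of $R$ by a K\"onig--Hall argument on an auxiliary bipartite graph. For each $y \in V(H)$ we maintain an \emph{available list} $A_y \subseteq V_{\phi(y)}$ (initialised to $V_{\phi(y)}$, or to $S_y$ if $y \in R$ is special), consisting of those $v$ still adjacent in $G$ to every already-embedded $H$-neighbour of $y$. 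When it is the turn of $y \notin R$ we choose its image uniformly at random from $A_y$, and then update $A_{y'}$ for every not-yet-embedded neighbour $y'$ of $y$ in $H$.

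The technical heart is to show that with high probability every $A_y$ stays of size at least $c'n_{\phi(y)}$ throughout the greedy phase, for some $c' = c'(d,\Delta) > 0$. Since $G[V_i,V_j]$ is $(\eps,d)$-superregular whenever $ij \in E(J)$, and $y$ has at most $\Delta$ neighbours in $H$, an easy inductive argument using Fact~\ref{supereasy} shows that the expected size of $A_y$ after embedding $k$ of its neighbours is at least $(d-\eps)^k n_{\phi(y)}$. To promote expectation to a high-probability statement, apply the Azuma--Hoeffding inequality to the Doob martingale associated with the random choices of images (the one-step Lipschitz constant is $O(1)$ because each random choice removes at most one vertex from any fixed $A_{y}$), and take a union bound over the $O(n)$ values of $y$ and over the $O(\Delta)$ stages of embedding its neighbours. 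To prevent runaway shrinkage, introduce a \emph{stopping rule}: if any $A_y$ would drop below some threshold $c'n_{\phi(y)}$, move $y$ into the reservoir $R$ (one checks that with high probability this happens for only $o(\alpha n)$ vertices, so the augmented reservoir is still $\le \alpha n$).

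In the final phase $R$ is a bounded-degree subgraph of $H$, and we must embed it into the unused vertices of each $V_i$. Form the auxiliary bipartite graph $B$ with parts $R$ and $\bigcup_i (V_i \setminus \psi(V(H) \setminus R))$, with $y$ joined to $v$ iff $v \in A_y$ (so in particular $v \in S_y$ when $y$ is special). We verify Hall's condition: for any $S \subseteq R$ the size of its common neighbourhood in $B$ exceeds $|S|$, which follows because each $A_y$ has size at least $c'n_{\phi(y)}$ and the superregularity of $G[V_i,V_j]$ (passed through Proposition~\ref{newsuperslice} after the random restriction) forces the available lists to spread out, so no small subset of images can accommodate too many $y$'s. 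Hall's theorem then delivers a perfect matching of $R$, and the union of this matching with the greedy partial embedding is the desired $H \hookrightarrow G$.

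The principal obstacle is the concentration analysis in the greedy phase: showing that the superregularity needed in the final Hall step is not destroyed by the random restriction, and that the various $A_y$ do not shrink faster than the expectation suggests. This requires the careful combination of Azuma's inequality on a Doob martingale with the superregularity-based updating rules, and the precise choice of the parameter hierarchy $\eps \ll \alpha \ll d, 1/\Delta, c, 1/k$ so that error terms compounded over the at most $\Delta$ neighbour-embeddings for each $y$ remain under control.
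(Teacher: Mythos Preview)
The paper does not prove Theorem~\ref{blowup}; it is quoted from Koml\'os, S\'ark\"ozy and Szemer\'edi~\cite{blowuplem} and used as a black box throughout. There is therefore no proof in the paper to compare your proposal against.

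That said, your sketch does follow the broad architecture of the original KSS argument (randomised greedy embedding of the bulk, a small buffer left to the end, and a K\"onig--Hall matching to finish). The genuine gap is in your concentration step. You assert that the Doob martingale for $|A_y|$ has Lipschitz constant $O(1)$ because ``each random choice removes at most one vertex from any fixed $A_y$''. This is not true: when you embed a neighbour $x$ of $y$ at some vertex $v$, the update replaces $A_y$ by $A_y \cap N_G(v)$, which can delete a linear number of vertices from $A_y$ in a single step. Changing the random image of $x$ can therefore change $|A_y|$ by $\Theta(n_{\phi(y)})$, so Azuma with an $O(1)$ Lipschitz bound gives nothing. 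The actual KSS proof does not control $|A_y|$ by a naive martingale bound; instead it shows that at each step only an $O(\eps)$-fraction of the available choices are ``bad'' for any given future vertex (via regularity), and then uses a queueing argument to bound the total number of vertices whose candidate sets ever shrink below threshold. Without that mechanism your claim that only $o(\alpha n)$ vertices get pushed into the reservoir is unsupported, and the Hall argument in the final phase is left hanging.
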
%

Notice that in Theorem~\ref{blowup}, $\eps$ depends on $k$ (that is, $\eps \ll 1/k$).
However, in one of our applications of the Blow-up lemma, we do not have this.
The following modified version of the Blow-up lemma is instead applied; it is a very special case of a Blow-up lemma of Csaba~\cite[Lemma 5]{csaba} and also B\"ottcher, Kohayakawa, Taraz and W\"urfl~\cite[Theorem 4]{bktw}.

\begin{theorem}\label{blowup2} (Alternative Blow-up lemma)
For every $d,\Delta,c > 0$ there exists a constant $\eps_0$ such that for all $k \in \mathbb{N}$, there exists $n_0>0$ such that the following holds.
Let $n \geq n_0$ be an integer, $0 < \eps < \eps_0$, and $G$ an $n$-vertex $k$-partite graph with vertex classes $V_1, \ldots, V_k$ where $|V_i|=n/k$ for $i \in [k]$.
Let $J$ be a graph on vertex set $[k]$ with $\Delta(J) \leq \Delta$ such that $G[V_i,V_j]$ is $(\eps,d)$-superregular whenever $ij \in E(J)$.
Suppose that $H$ is a $k$-partite graph with vertex classes $W_1, \ldots, W_k$ each of size  at most $n/k$ with $\Delta(H) \leq \Delta$.
Suppose further that there exists a graph homomorphism $\phi : V(H) \rightarrow V(J)$ such that $|\phi^{-1}(i)| \leq n/k$ for every $i \in [k]$.
Then there is an embedding of $H$ into $G$.
\end{theorem}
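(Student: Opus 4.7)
The plan is to follow the classical randomized greedy strategy of Koml\'os, S\'ark\"ozy and Szemer\'edi for the Blow-up lemma (Theorem~\ref{blowup}), but to re-examine the error analysis so that the allowed $\eps_0$ depends only on $d,\Delta$ and not on~$k$. The key structural observation is that, since $\Delta(J)\leq\Delta$, the classes $V_i$ and the corresponding $W_i$ each interact non-trivially with only $\Delta$ other classes: embedding a vertex in $V_i$ can only shrink candidate sets in the at most $\Delta$ classes $V_j$ with $ij\in E(J)$. Thus, although there are $k$ superregular pairs in total, the error in any single pair accumulates from at most $O(\Delta)$ randomized embedding steps, not from all $k$.

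First, I would set up a buffer. For each $i\in[k]$, pick a random subset $B_i\subseteq W_i$ of size $\alpha n/k$ (where $\alpha\ll c,d,1/\Delta$); these are the vertices to be embedded last. Partition $H$ into the main part $H_0=H-\bigcup_i B_i$ and the buffer $\bigcup_i B_i$. For each remaining vertex $y$ of $H_0$ I maintain a candidate set $C_y\subseteq V_{\phi(y)}$ consisting of those $v$ adjacent in $G$ to all already-embedded neighbours of $y$. Initially $|C_y|=|V_{\phi(y)}|=n/k$, and after $t$ of its neighbours have been embedded, superregularity guarantees $|C_y|\geq (d-\eps)^t n/k\geq (d/2)^\Delta n/k$ throughout, provided the running embedding avoids an $\eps$-small bad set in each relevant $V_j$.

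Next, I would embed vertices of $H_0$ one by one in a carefully chosen order (e.g.\ breadth-first within each component), placing each vertex uniformly at random from its current candidate set. The main technical step is to prove that, with probability approaching $1$, the following \emph{tracking invariants} hold throughout: (a) for every unembedded $y$, $|C_y|\geq (d/2)^\Delta n/k$; (b) for every edge $ij\in E(J)$ and every large subset $X\subseteq V_i$ still containing candidates, the bipartite graph between $X$ and the un-used part of $V_j$ remains $(\eps^{1/2},d/2)$-superregular. Both (a) and (b) are controlled by a Chernoff/Azuma-type bound: each random embedding influences $C_y$ only if $\phi(y)$ is a $J$-neighbour of the class used, so the martingale has at most $\Delta$ dependency-type terms per pair. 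This is where the dependence on $k$ disappears: the number of relevant bad events per pair is bounded in $\Delta$, and a union bound over $k$ pairs costs only a polynomial factor in $n$, which is absorbed by an exponentially small failure probability.

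Finally, after $H_0$ has been embedded, the buffer vertices remain, with $|B_i|=\alpha n/k$ and, by (b), the induced bipartite graph between $B_i$'s candidate images and the free part of $V_j$ (for each $ij\in E(J)$) is still $(\eps',d')$-superregular. To finish I would set up, for each $i$, a bipartite graph between $B_i$ and the unused vertices of $V_i$ whose edges are the still-valid placements, and invoke a Hall/K\"onig argument (as in the final stage of the standard Blow-up lemma proof) to find a perfect matching; superregularity and the smallness of $\alpha$ guarantee the Hall condition. The main obstacle, and the only genuinely new point compared with the classical proof, is verifying that (a) and (b) can be maintained with error parameters depending only on $d$ and $\Delta$; this requires being careful to localize every concentration inequality to the $\Delta$-neighbourhood of each class in $J$, so that the blow-up of the partition size $k$ never enters the error bound.
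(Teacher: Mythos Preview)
The paper does not actually prove Theorem~\ref{blowup2}: it is quoted without proof as ``a very special case of a Blow-up lemma of Csaba~\cite[Lemma~5]{csaba} and also B\"ottcher, Kohayakawa, Taraz and W\"urfl~\cite[Theorem~4]{bktw}.'' So there is no proof in the paper to compare your proposal against; the paper simply imports the result.

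That said, your sketch is along the right lines and is essentially the strategy used in the cited references. The crucial observation --- that $\Delta(J)\leq\Delta$ localises all dependencies, so that the candidate set of any vertex $y$ is affected only by embeddings into the at most $\Delta$ classes $V_j$ with $j\phi(y)\in E(J)$, and hence the concentration estimates carry constants depending only on $d,\Delta$ --- is exactly the point that distinguishes this version from the original Blow-up lemma.

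One technical issue: your buffer $B_i$ is chosen as a uniform random subset of $W_i$, but for the final matching step you implicitly need the buffer vertices to be embedded independently of one another. In the standard argument the buffer is taken to be $2$-independent in $H$ (no two buffer vertices within distance $2$), so that each buffer vertex's candidate set is fully determined once $H_0$ is embedded, and the final placements really do reduce to a system of disjoint bipartite matchings. A uniform random subset will not have this property; you need to select the buffer greedily (which is easy since $\Delta(H)\leq\Delta$ gives plenty of room). Also, the statement that invariant~(b) holds ``for every large subset $X\subseteq V_i$'' is too strong to be maintained literally --- there are exponentially many such subsets --- but what one actually tracks is the pair between the currently unused parts of $V_i$ and $V_j$, which is a single evolving pair per edge of $J$. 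With these adjustments your outline matches the published proofs.
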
%

\subsection{$\eta$-good degree sequences}

We will often think of the collection of degrees of the vertices of a graph $G$ as a function $d_G : V(G) \rightarrow \lbrace 0, 1, \ldots, n-1 \rbrace$.
The notation $d_G$ will always be used in this way.
Later we will define a different notion of degree, a function whose image is not necessarily a subset of $\mathbb{N}\cup \{0\}$. 

\begin{definition}\emph{($\eta$-goodness)}
Given $\eta > 0$, $n \in \mathbb{N}$, a  finite set $V$, and a function $d : V \rightarrow \mathbb{R}$, 
let $v_1, \ldots, v_{|V|}$ be an ordering of the elements of $V$ such that $d(v_i) \leq d(v_j)$ whenever $1 \leq i \leq j \leq |V|$.
We say that $d$  is \emph{$(\eta,n)$-good} if $d(v_i) \geq (1/3+\eta)n + i+1$ for all $1 \leq i \leq |V|/3$.
If $V$ is the vertex set of a graph $G$, and $d(v)$ is the degree of $v \in V$ in $G$, we say that $G$ is \emph{$(\eta,n)$-good}.
If $|V|=n$ we say that $G$ is \emph{$\eta$-good}.
\end{definition}

The next simple proposition is very useful.
Its proof follows immediately from the definition of $(\eta,n)$-good, so we omit it.

\begin{proposition}\label{vertexdeg}
Let $\eta > 0$ and $n,k \in \mathbb{N}$.
Let $G$ be a graph on $n$ vertices and let $d : V(G) \rightarrow \mathbb{R}$ be an $(\eta,n)$-good function.
Then the following hold:
\begin{itemize}
\item[(i)] for all $X \subseteq V(G)$ with $|X| \geq n/3$, there exist at least $|X|-n/3$ vertices $x \in X$ with $d(x) \geq (2/3+\eta)n$;
\item[(ii)] for all $X \subseteq V(G)$ with $k \leq |X| \leq n/3$, there exist at least $k$ vertices $x \in X$ with
$d(x) \geq (1/3+\eta)n + |X|-k+2$.
\end{itemize}
\end{proposition}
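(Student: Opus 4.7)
The plan is to unpack the definition of $(\eta,n)$-goodness directly. Fix an ordering $v_1, \ldots, v_n$ of $V(G)$ with $d(v_1) \leq \cdots \leq d(v_n)$, so that the hypothesis reads $d(v_i) \geq (1/3+\eta)n + i + 1$ for every $i \leq n/3$. Both parts follow by identifying the appropriate degree threshold and combining monotonicity with this inequality; the only (minor) care needed is to track floors on $n/3$ and to verify that the indices I work with lie in the range where the $(\eta,n)$-good inequality is valid.

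For (i), I would apply the hypothesis at $i=\lfloor n/3 \rfloor$ to obtain $d(v_{\lfloor n/3 \rfloor}) \geq (1/3+\eta)n + \lfloor n/3 \rfloor + 1 \geq (2/3+\eta)n$. By monotonicity, every $v_j$ with $j \geq \lfloor n/3\rfloor$ already satisfies $d(v_j) \geq (2/3+\eta)n$, so fewer than $n/3$ vertices of $V(G)$ lie strictly below this threshold. Hence at most $n/3$ of the vertices of $X$ can, and at least $|X|-n/3$ vertices of $X$ meet the bound required in (i).

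For (ii) I would argue contrapositively. Set $t := (1/3+\eta)n + |X|-k+2$ and suppose that fewer than $k$ vertices of $X$ have $d$-value at least $t$, so at least $|X|-k+1$ vertices of $X$ have $d$-value strictly less than $t$. Since $k \geq 1$ and $|X| \leq n/3$ (so $|X| \leq \lfloor n/3\rfloor$), we have $t \leq (1/3+\eta)n + \lfloor n/3\rfloor + 1 \leq d(v_{\lfloor n/3\rfloor})$, so any such low-degree vertex $x=v_j$ must have $j < \lfloor n/3\rfloor$. But then the $(\eta,n)$-good inequality applies and gives $(1/3+\eta)n + j + 1 \leq d(v_j) < t$, forcing $j \leq |X|-k$. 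Thus all low-degree vertices sit inside $\lbrace v_1,\ldots,v_{|X|-k}\rbrace$, contradicting the existence of $|X|-k+1$ of them.

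There is no real obstacle: the proof is a direct rearrangement of the defining inequality, with the only subtlety being to check that the threshold $t$ stays below $d(v_{\lfloor n/3\rfloor})$ so that any would-be low-degree vertex has small index and hence falls inside the regime covered by the goodness assumption. This is precisely what the constraint $|X| \leq n/3$ ensures.
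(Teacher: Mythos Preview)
Your argument is correct and is precisely the direct unpacking of the definition that the paper has in mind; indeed, the paper omits the proof entirely, saying it ``follows immediately from the definition of $(\eta,n)$-good.'' Your care with the floor of $n/3$ and with checking that the threshold $t$ sits below $d(v_{\lfloor n/3\rfloor})$ is exactly what is needed to make the immediate argument rigorous.
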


Given a graph $G$ on $n$ vertices and a set $X \subseteq V(G)$, we write
$$
X_{\eta} := \lbrace x \in X : d_G(x) \geq (2/3+\eta)n \rbrace.
$$
Observe that, if $G$ is $\eta$-good, then
\begin{equation}\label{etagood}
|V(G)_\eta| \geq 2n/3.
\end{equation}

The following proposition collects together some useful facts about $\eta$-good graphs.

\begin{proposition}\label{largeset}
Let $n,k \in \mathbb{N}$ and $\eta >0$ such that $0 \leq 1/n \leq 1/k,\eta \leq 1$.
Let $G$ be an $\eta$-good graph on $n$ vertices and let $X,Y \subseteq V(G)$.
Then the following hold:
\begin{itemize}
\item[(i)] if $X_{\eta} = \emptyset$, then $|X| < n/3$;
\item[(ii)] if $|X_{\eta}| \geq (1/3-\eta/2)n$, then there are no isolated vertices in $G[X_{\eta}]$;
\item[(iii)] if $|X| \geq n/3+k$, then $e_G(X) > k^2/2$;
\item[(iv)] if $X,Y \neq \emptyset$ and $E(G[X,Y])=\emptyset$, then $|X|+|Y| < (2/3-\eta)n$.
\end{itemize}
\end{proposition}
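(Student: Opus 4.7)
The four parts should all follow directly from the $\eta$-goodness hypothesis, using Proposition~\ref{vertexdeg} where convenient. My plan is to treat each part separately, but the underlying idea is the same: translate the sorted-degree inequality $d_i \geq (1/3+\eta)n + i + 1$ into a statement about either the number of high-degree vertices or the size of a neighbourhood.

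For (i), I would observe that $\eta$-goodness applied at $i = \lfloor n/3 \rfloor$ gives $d_i \geq (1/3+\eta)n + \lfloor n/3 \rfloor + 1 \geq (2/3+\eta)n$, so at least $n - \lfloor n/3 \rfloor + 1$ vertices of $G$ lie in $V(G)_\eta$. Consequently $|V(G) \setminus V(G)_\eta| < n/3$, and the hypothesis $X_\eta = \emptyset$ forces $X$ to sit inside this set. For (ii), I would use the fact that any $x \in X_\eta$ has at most $n - 1 - (2/3+\eta)n = n/3 - \eta n - 1$ non-neighbours in $V(G) \setminus \{x\}$; combined with $|X_\eta| \geq (1/3-\eta/2)n$, this leaves $x$ with at least $(\eta/2)n > 0$ neighbours in $X_\eta \setminus \{x\}$, so $x$ is not isolated in $G[X_\eta]$.

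For (iii), the plan is to apply Proposition~\ref{vertexdeg}(i) to produce at least $|X| - n/3 \geq k$ vertices $x \in X$ with $d_G(x) \geq (2/3+\eta)n$. Each such $x$ can miss at most $n/3 - \eta n - 1$ vertices of $V(G) \setminus \{x\}$, so $d_G(x, X) \geq |X| - 1 - (n/3 - \eta n - 1) \geq k + \eta n$. Summing over these $\geq k$ vertices gives
\[
2e_G(X) \;=\; \sum_{x \in X} d_G(x, X) \;\geq\; k(k + \eta n) \;>\; k^2,
\]
as required. For (iv), my plan is a short case analysis. If both $|X|, |Y| > n/3$, then Proposition~\ref{vertexdeg}(i) supplies some $x \in X$ with $d_G(x) \geq (2/3+\eta)n$, while $E(G[X,Y]) = \emptyset$ forces $d_G(x) \leq n - |Y| < 2n/3$, a contradiction. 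So I may assume $|X| \leq n/3$; applying Proposition~\ref{vertexdeg}(ii) with $k=1$ then produces $x \in X$ with $d_G(x) \geq (1/3+\eta)n + |X| + 1$, and combining with $d_G(x) \leq n - |Y|$ rearranges to $|X|+|Y| \leq (2/3-\eta)n - 1$.

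None of the parts is really hard; the only place where one must pause is (iv), where the natural degree upper bound $d_G(x) \leq n - |Y|$ only becomes useful once one has first ruled out the $|X|, |Y| > n/3$ case and picked the smaller side to which Proposition~\ref{vertexdeg}(ii) may be applied. Thus the main (mild) obstacle is keeping the case distinction in (iv) clean; the other three parts are essentially single-line consequences of $\eta$-goodness.
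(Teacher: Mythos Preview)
Your proposal is correct and follows essentially the same approach as the paper: parts (i) and (ii) are immediate from the definition, (iii) proceeds by counting neighbours in $X$ of the $\geq k$ vertices in $X_\eta$, and (iv) reduces to the case where one side has size at most $n/3$ and then applies Proposition~\ref{vertexdeg}(ii) with $k=1$ together with the bound $d_G(x)\leq n-|Y|$. The only cosmetic difference is that the paper handles the first case of (iv) by invoking (i) (to get $X_\eta\neq\emptyset$) rather than Proposition~\ref{vertexdeg}(i), but this is the same observation.
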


\begin{proof}
First note that (i) and (ii) follow immediately from the definition of $X_{\eta}$ and $\eta$-goodness.

We now prove (iii).
By (i), $|X_\eta| \geq k$.
For each $x \in X_\eta$ we have
$$
d_G(x,X) \geq d_G(x) - (n-|X|) \geq (2/3+\eta)n - (2n/3-k) > k.
$$
So $e(G[X]) \geq \frac{1}{2}\sum_{x \in X}d_G(x,X) > k^2/2$, as required.

To prove (iv),
suppose, without loss of generality, that $|X| \leq |Y|$.
Note that $|X| \leq n/3$ otherwise (i) implies that $X_\eta \neq \emptyset$ and then since $|Y|\geq |X| \geq n/3$ we have that $e_G(X,Y)  > 0$, a contradiction.
Let $x_0 \in X$ be such that $\max_{x \in X}\lbrace d_G(x) \rbrace=d_G(x_0)$.
Proposition~\ref{vertexdeg}(ii) applied with $k := 1$ implies that
$$
(1/3+\eta)n+|X|+1 \leq d_G(x_0) \leq n-|Y|,
$$
and so $|X|+|Y| < (2/3-\eta)n$, as desired.
\end{proof}

We now define what it means for a square path to be head- or tail-heavy.
We will show in Section~\ref{connectingsec} that if $P$ is a tail-heavy square path and $Q$ is a head-heavy square path, then we can `connect' them in an appropriate manner.

\begin{definition}\emph{($\eta$-heaviness)}
Let $n \in \mathbb{N}$ and  $\eta > 0$.
Let $G$ be an $\eta$-good graph on $n$ vertices containing a square path $P$.
We say that $P$ is \emph{$\eta$-tail-heavy} if $[P]^+_2 \in V(G)_\eta \times V(G)_\eta$.
We say that $P$ is \emph{$\eta$-head-heavy} if $[P]^-_2 \in V(G)_\eta \times V(G)_\eta$.
If $P$ is both $\eta$-head- and $\eta$-tail-heavy, we say that it is \emph{$\eta$-heavy}.
We omit the prefix $\eta$- if it is clear from the context.
\end{definition}

Equivalently, $P$ is $\eta$-tail-heavy if $d_G(x) \geq (2/3+\eta)n$ for all $x \in (P)^+_2$, and analogously for head-heavy.
Note that $P$ is $\eta$-tail-heavy if and only if $P^*$ is $\eta$-head-heavy.

\subsection{Core degree}\label{coredegree}
Suppose that $R$ is the reduced graph (with parameters $\eps, d$ and $M'$) of a graph $G$. If $G$ is $\eta$-good then we will show that $R$ `inherits' this property (see Lemma~\ref{reduced}(ii)). Note though that the degree of a vertex $i \in V(R)$ does not provide precise information about the degrees of the vertices $x \in V_i$ in $G$. In particular, if $d$ is small it is possible for $i$ to have `large' degree in $R$ but for \emph{every} vertex $x \in V_i$ to have `small' degree in $G$. In the proof of Theorem~\ref{mainthm} it will be important to ensure that certain clusters contain a `significant' number of vertices of `large' degree in $G$. For this, we introduce the notion of the `core degree' of a cluster in $R$.



Given $0 < \alpha \leq 1$, a graph $G$ on $n$ vertices and a collection $\mathcal{R}$ of disjoint subsets of $V(G)$,
we define the \emph{$\alpha$-core degree $d^\alpha_{\mathcal{R},G}(X)$ of $X \in \mathcal{R}$ (with respect to $G$)} as follows.
Let $d_1 \leq \ldots \leq d_{|X|}$ be the vertex degrees in $G$ of the vertices in $X$.
Then we let
$$
d^\alpha_{\mathcal{R},G}(X) := d_{\lfloor(1-\alpha)|X|\rfloor+1}|\mathcal{R}|/n.
$$
So $d_{\mathcal{R},G}^\alpha(X) \geq k|\mathcal{R}|$ if and only if there are at least $\alpha|X|$ vertices  $x \in X$ with $d_G(x) \geq kn$.
Note that whenever $\alpha' \leq \alpha$ we have that $d^{\alpha'}_{\mathcal{R},G}(X) \geq d^\alpha_{\mathcal{R},G}(X)$ for all $X \in \mathcal{R}$.

Suppose that $\mathcal{R} := \lbrace V_1, \dots , V_k \rbrace$. If $R$ is a graph such that each $j \in V(R)$ corresponds to the set $V_j \in \mathcal{R}$, we define
$$
d^\alpha_{R,G}(j) :=d^\alpha _{\mathcal{R},G}(V_j).
$$
(Typically $R$ will be a reduced graph and so its vertex set $\{1, \dots, k\}$ naturally corresponds to clusters $V_1, \dots ,V_k$ in $G$.)
In this case,
we often think of $d^\alpha_{R,G}$ as a function which maps each vertex of $R$ to some rational less than $|R|$,
and call this function the \emph{$\alpha$-core degree function of $R$ (with respect to $G$)}.

The next lemma shows that the reduced graph $R$ and the function $d^\alpha_{R,G}$ `inherit' the degree sequence of $G$.

\begin{lemma} \label{reduced}
Let $0 < 1/n \ll 1/M^\prime \ll \eps \ll d,\alpha \ll \eta < 1$ and let $G$ be a graph of order $n$ which is $\eta$-good. Apply Lemma~\ref{reg} with parameters $\eps, d$ and $M^\prime$ to obtain a pure graph $G^\prime$ and a reduced graph $R$ of $G$.
Then
\begin{itemize}
\item[(i)] $d_R(j) \geq (1-6d)d^\alpha_{R,G}(j)$ for all $j \in V(R)$;
\item[(ii)] $d^\alpha_{R,G}$ and $R$ are both $(\eta/2,|R|)$-good.
\end{itemize}
\end{lemma}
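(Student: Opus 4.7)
My plan is to exploit two standard observations about the output of Lemma~\ref{reg}. With $L := |R|$ and $m := |V_1|$, so that $mL = n - |V_0| \in [(1-\eps)n, n]$, Lemma~\ref{reg}(iv) gives $d_{G'}(x) \geq d_G(x) - (d+\eps)n$ for every $x \in V(G)$, while parts (v) and (vi) imply that for any $x \in V_j$ we have $N_{G'}(x) \subseteq V_0 \cup \bigcup_{i \in N_R(j)} V_i$ and hence $d_{G'}(x) \leq \eps n + d_R(j)\cdot m$. Combining these yields the chief inequality: if $x \in V_j$ then $d_R(j)\cdot m \geq d_G(x) - (d+2\eps)n$. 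With this in hand I would carry out three stages.

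First I will establish (ii) for $d^\alpha_{R,G}$, in fact with the stronger conclusion that $d^\alpha_{R,G}$ is $(2\eta/3, L)$-good. If not, there exist $i \leq L/3$ and clusters $V_{j_1}, \ldots, V_{j_i}$ each with $d^\alpha_{R,G}(j_k) < (1/3+2\eta/3)L + i + 1$. The definition of $\alpha$-core degree forces at least $(1-\alpha)m$ vertices of each $V_{j_k}$ to have $d_G$-degree below $(1/3+2\eta/3)n + (i+1)n/L$, yielding at least $i(1-\alpha)(1-\eps)n/L$ low-degree vertices in $G$. However, $\eta$-goodness of $G$ permits at most about $(i+1)n/L - \eta n/3$ of these, and comparing the two bounds gives $\eta \leq \alpha + \eps + O(1/L)$, contradicting the hierarchy $\alpha, \eps, 1/L \ll \eta$.

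Next, for (i), I fix $j \in V(R)$, set $D := d^\alpha_{R,G}(j) \cdot n/L$, and apply the chief inequality to any of the $\geq \alpha m$ vertices $x \in V_j$ with $d_G(x) \geq D$. Dividing through by $m$ and using $m \geq (1-\eps)n/L$ gives
\[
d_R(j) \geq d^\alpha_{R,G}(j) - (d+3\eps)L.
\]
Because Stage 1 forces $d^\alpha_{R,G}(j) > L/3$ and $\eps \ll d$, we have $(d+3\eps)L \leq 2dL \leq 6d \cdot d^\alpha_{R,G}(j)$, proving (i). Finally for (ii) applied to $R$, I order $V(R) = \lbrace u_1, \ldots, u_L \rbrace$ by increasing $d_R$ and fix $i \leq L/3$. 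Since $d^\alpha_{R,G}$ is $(2\eta/3, L)$-good, among $\lbrace u_1, \ldots, u_i \rbrace$ some $u_k$ must satisfy $d^\alpha_{R,G}(u_k) \geq (1/3+2\eta/3)L + i + 1$ (only $i-1$ vertices can lie below this threshold); part (i) then gives
\[
d_R(u_i) \geq d_R(u_k) \geq (1-6d)\bigl[(1/3+2\eta/3)L + i + 1\bigr] \geq (1/3+\eta/2)L + i + 1,
\]
using $d \ll \eta$ to absorb the correction term.

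The main obstacle is the mutual dependence of (i) and (ii): the bound $d_R(j) \geq d^\alpha_{R,G}(j) - (d+3\eps)L$ is not strong enough on its own to imply (i) when $d^\alpha_{R,G}(j)$ is small (say $o(L)$), so (i) must genuinely wait until Stage 1 provides the uniform lower bound $d^\alpha_{R,G}(j) > L/3$. The deliberate choice to prove the stronger $(2\eta/3, L)$-goodness (rather than merely $(\eta/2, L)$-goodness) in Stage 1 is exactly what provides the slack needed in the final stage to absorb the $(1-6d)$ loss coming from (i).
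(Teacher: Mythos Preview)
Your proof is correct and follows essentially the same route as the paper's: establish the ``chief inequality'' $d_R(j)\cdot m \geq d_G(x) - (d+2\eps)n$ for $x \in V_j$, prove $(2\eta/3,L)$-goodness of $d^\alpha_{R,G}$ (the paper does this directly via Proposition~\ref{vertexdeg}(ii) rather than by contradiction, but the content is the same), deduce (i), and then combine (i) with the $(2\eta/3,L)$-goodness to get $(\eta/2,L)$-goodness of $R$.

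One small simplification the paper makes that you could adopt: the dependence of (i) on Stage~1 is not actually necessary. Since every vertex of $G$ has degree at least $(1/3+\eta)n$, the definition of core degree immediately gives $d^\alpha_{R,G}(j) \geq \delta(G)\cdot L/n \geq (1/3+\eta)L$ for every $j$, which is all you need to convert the additive loss $(d+3\eps)L$ into the multiplicative loss $6d\cdot d^\alpha_{R,G}(j)$. So (i) can be proved first, independently of (ii), and your remark that ``(i) must genuinely wait until Stage~1'' overstates the coupling.
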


\begin{proof}
Let $L := |R|$ and $\mathcal{R} := \lbrace V_1, \ldots, V_L \rbrace$ be the clusters of $G$ such that $V_j$ is associated with $j \in V(R)$.
Set $m := |V_1| = \ldots = |V_L|$.
Lemma~\ref{reg}(ii) implies that
\begin{equation}\label{nmL}
mL \leq n = mL + |V_0| \leq mL + \eps n.
\end{equation}
To prove (i), fix $j \in V(R)$ and let $D:= d^\alpha_{R,G}(j)$.
Note first that $D \geq \delta(G)L/n \geq (1/3+\eta)L$ since $G$ is $\eta$-good.
By the definition of $d_{R,G}^\alpha$, there is a set $X_j \subseteq V_j$ such that $|X_j| \geq \alpha m$ and $d_G(x) \geq Dn/L$ for all $x \in X_j$.
So by Lemma~\ref{reg}(iv), $d_{G'}(x) > Dn/L-(d+\eps)n$.
Given any vertex $x \in X_j$, the number of clusters $V_i \in \mathcal{R}$ containing a neighbour of $x$ in $G'$ is at least 
$$
\frac{Dn/L - (d+2\eps)n}{m} \stackrel{(\ref{nmL})}{\geq} D - \frac{(d+2\eps)n}{m} \stackrel{(\ref{nmL})}{\geq} D - 2dL \geq D(1-6d).
$$
(In the last inequality we used that $D \geq (1/3+\eta)L$.)
Lemma~\ref{reg}(vi) implies that $j$ is adjacent to each of the vertices corresponding to these clusters in $R$.
So $d_R(j) \geq D(1-6d)$, proving (i).

To prove (ii), fix $1 \leq i \leq L/3$ and $\mathcal{X} \subseteq V(R)$ with $|\mathcal{X}| = i$.
Let $X' := \bigcup_{j \in \mathcal{X}}V_j \subseteq V(G)$.
Then $|X'| = im \leq Lm/3 \leq n/3$ by (\ref{nmL}).
Since $G$ is $\eta$-good, Proposition~\ref{vertexdeg}(ii) implies that there is a subset $Y$ of $X'$ with $|Y| \geq \alpha i m $ such that
$\min_{y \in Y} \lbrace d_G(y)\rbrace \geq (1/3 + \eta)n + (1-\alpha)im+2$.
Observe further that there exists some $j \in \mathcal{X}$ such that $|Y \cap V_j| \geq \alpha m$.
Thus
\begin{eqnarray*}
d^\alpha_{R,G}(j) &\geq& \min_{y \in Y}\lbrace d_G(y) \rbrace \frac{L}{n} \geq \left(\frac{1}{3}+\eta\right)L+ \frac{(1-\alpha)imL}{n}\\
&\stackrel{(\ref{nmL})}{\geq}& \left(\frac{1}{3}+\eta\right)L + (1-\alpha)(1-\eps)i \geq \left(\frac{1}{3} + \frac{2\eta}{3}\right)L + i+1.
\end{eqnarray*}
Since $\mathcal{X}$ was arbitrary, this proves that $d^\alpha_{R,G}$ is $(2\eta/3,L)$-good and hence $(\eta/2,L)$-good.

Let $1\leq i\leq L/3$.
Now, by (i), the vertex $j_i$ of $R$ with $i$th smallest degree satisfies
$$
d_R(j_i) \geq (1-6d)\left(\left(\frac{1}{3}+\frac{2\eta}{3}\right)L+i+1\right) \geq \left(\frac{1}{3}+\frac{\eta}{2}\right)L + i+1.
$$
So $R$ is $(\eta/2,L)$-good, completing the proof of (ii).
\end{proof}

The next proposition shows that, given an $(\eta,n)$-good function $d$, after arbitrarily shrinking the domain of $d$ a little or by slightly reducing each of the values that $d$ takes, the function that remains is $(\eta/2,n)$-good.

\begin{proposition}\label{stoneage}
Let $n \in \mathbb{N}$ and $\eta > 0$ such that $0 < 1/n \ll \eta < 1$.
Let $V$ be a set of order $n$ and let $d : V \rightarrow \mathbb{R}$ be $(\eta,n)$-good.
Let $V' \subseteq V$ with $|V'| \geq (1-\eta/4)n$ and let $d' : V' \rightarrow \mathbb{R}$ be such that $d'(v) \geq d(v) - \eta n/4$ for all $v \in V'$.
Then $d'$ is $(\eta/2,n)$-good.
In particular, any graph obtained from an $\eta$-good graph $G$ on $n$ vertices by removing at most $\eta n/4$ vertices and $\eta n/4$ edges from each vertex is $(\eta/2,n)$-good. 
\end{proposition}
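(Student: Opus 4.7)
The plan is to compare the ordering of $V'$ under $d'$ with its ordering under $d$, and then transfer the resulting bound back to $V$ via the $(\eta,n)$-goodness hypothesis on $d$. Order $V'$ by increasing $d'$-value as $v_1', \ldots, v_{|V'|}'$ and by increasing $d$-value as $u_1, \ldots, u_{|V'|}$. Fix $1 \leq i \leq |V'|/3$; the goal is to show $d'(v_i') \geq (1/3+\eta/2)n + i + 1$.

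The key step is an order-statistic comparison. Since $d'(v) \geq d(v) - \eta n/4$ pointwise on $V'$, and the shifted function $v \mapsto d(v) - \eta n/4$ has the same ordering as $d$, the $i$-th smallest value of this shifted function on $V'$ is precisely $d(u_i) - \eta n/4$. The $i$-th order statistic of $d'$ dominates it, so $d'(v_i') \geq d(u_i) - \eta n/4$. To control $d(u_i)$, let $r$ denote the rank of $u_i$ in the full $d$-ordering of $V$. Since $|V \setminus V'| \leq \eta n/4$, we have $i \leq r \leq i + \eta n/4$. If $r \leq n/3$, the $(\eta,n)$-goodness of $d$ gives $d(u_i) \geq (1/3+\eta)n + r + 1 \geq (1/3+\eta)n + i + 1$. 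Otherwise $r > n/3$, and monotonicity combined with $(\eta,n)$-goodness at index $\lceil n/3 \rceil$ yields $d(u_i) \geq (2/3+\eta)n$, which still exceeds $(1/3+\eta)n + i + 1$ because $i \leq n/3$. In either case we conclude $d'(v_i') \geq (1/3+\eta)n + i + 1 - \eta n/4 = (1/3 + 3\eta/4)n + i + 1 \geq (1/3+\eta/2)n + i + 1$, establishing $(\eta/2,n)$-goodness.

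For the \emph{in particular} statement, removing at most $\eta n/4$ vertices and at most $\eta n/4$ edges at each remaining vertex reduces each surviving degree by at most $\eta n/2$ in total, so the new degree function $d'$ satisfies $d'(v) \geq d_G(v) - \eta n/2$ on $V'$. Repeating the argument above with $\eta n/2$ in place of $\eta n/4$ still yields $d'(v_i') \geq (1/3+\eta/2)n + i + 1$, since $(1/3+\eta)n - \eta n/2 = (1/3+\eta/2)n$. The argument is essentially bookkeeping once one notices the order-statistic comparison; the only mild subtlety is the case split on whether $r \leq n/3$, which is handled by trivial monotonicity and is not a real obstacle.
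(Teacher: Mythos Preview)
Your proof is correct and follows essentially the same route as the paper's: both relate the $d'$-values on $V'$ to the $d$-ordering of $V'$ (the paper via the subsequence $v_{i_1},\ldots,v_{i_k}$, you via an explicit order-statistic comparison), and then invoke $(\eta,n)$-goodness of $d$ at an index at most $n/3$. Your order-statistic step makes explicit something the paper leaves implicit, and your more careful treatment of the ``in particular'' clause (tracking the $\eta n/2$ total degree loss) is fine; on the other hand your case split on $r > n/3$ is unnecessary, since $r \geq i$ and monotonicity of the $d$-ordering already give $d(u_i)=d(v_r)\geq d(v_i)$ with $i\leq |V'|/3\leq n/3$, exactly as the paper does.
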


\begin{proof}
Let $v_1, \ldots, v_n$ be an ordering of $V$ such that $d(v_i) \leq d(v_j)$ whenever $1 \leq i \leq j \leq n$.
Then $d(v_i) \geq (1/3+\eta)n + i+1$ for all $1 \leq i \leq n/3$.

Let $i_1, \ldots, i_k$ be the subsequence of $1, \ldots, n$ corresponding to the vertices in $V'$.
So $k := |V'| \geq (1-\eta/4)n$.
Let $1 \leq j \leq k/3$ be arbitrary.
Since $j \leq k/3\leq n/3$ and $i_j \geq j$ we have
\begin{align*}
d'(v_{i_j}) &\geq d(v_{i_j})-\eta n/4 \geq d(v_{j})-\eta n/4 \geq (1/3+\eta)n + (j+1)-\eta n/4\\
&\geq (1/3+\eta/2)n + j+1.
\end{align*}
This implies that $d'$ is $(\eta /2,n)$-good.
The final assertion follows by taking $d := d_G$.
\end{proof}


\section{An almost perfect packing of heavy square paths}\label{sectionalmost}

The aim of this section is to prove the following lemma, which ensures that every sufficiently large $\eta$-good graph $G$ on $n$ vertices contains an almost perfect packing of square paths, and the number of these paths is bounded. As mentioned in Section~\ref{sketch}, a relatively simple application of Lemma~\ref{reg} and Theorems~\ref{blowup} and~\ref{trianglepack}
can achieve this.
However, we also require that the first and last two vertices of each of these paths have degree at least $(2/3+\eta)n$ in $G$, for which considerably more work is needed.
This property is crucial when, in Section~\ref{connectingsec}, we connect these paths to obtain an almost spanning square cycle.

\begin{lemma}\label{almostpath}
Let $0<\eps, \eta \ll 1$.
Then there exist $n_0,M \in \mathbb{N}$ such that the following holds.
For every $\eta$-good graph $G$ on $n \geq n_0$ vertices, $G$ contains a collection $\mathcal{P}$ of at most $M$ vertex-disjoint $\eta$-heavy square paths such that
$\sum_{P \in \mathcal{P}}|P| \geq (1-\eps)n$.
\end{lemma}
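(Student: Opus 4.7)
The plan is to apply the Regularity Lemma to $G$ and then construct an appropriate ``template'' in the reduced graph $R$ which can be pulled back to vertex-disjoint square paths in $G$ via the Blow-up Lemma. First I would apply Lemma~\ref{reg} with parameters $\eps' \ll d \ll \alpha \ll \eta$ to obtain a pure subgraph $G'$, exceptional set $V_0$ with $|V_0| \leq \eps' n$, clusters $V_1, \ldots, V_L$ of common size $m$, and reduced graph $R$. By Lemma~\ref{reduced}(ii), $R$ is $(\eta/2, L)$-good, so Theorem~\ref{trianglepack} applied to $R$ yields a perfect $K_3$-packing $\mathcal{T} = \lbrace T_1, \ldots, T_{L/3} \rbrace$. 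For each triangle $T_i = \lbrace a_i, b_i, c_i \rbrace$, Lemma~\ref{superslice} extracts $(\sqrt{\eps'}, d/2)$-superregular subclusters $V_{a_i}', V_{b_i}', V_{c_i}'$ of common size $(1-\sqrt{\eps'})m$, and Theorem~\ref{blowup} then embeds a square path of length $3k_i + 2$ (with $k_i + 1$ vertices drawn from each of $V_{a_i}', V_{b_i}'$ and $k_i$ from $V_{c_i}'$) covering all but $O(\sqrt{\eps'}m)$ vertices of the blown-up triangle. Choosing the length $\equiv 2 \pmod{3}$ ensures that both endpoint $2$-segments of the path lie in $V_{a_i}' \times V_{b_i}'$.

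The main difficulty is ensuring $\eta$-heaviness, i.e.\ that each endpoint lies in $V(G)_\eta$. Let $U := \lbrace j \in V(R) : |V_j \cap V(G)_\eta| \geq \alpha m \rbrace$. Double counting using $|V(G)_\eta| \geq 2n/3$ from \eqref{etagood} gives $|U| \geq (2/3 - 2\alpha)L$, and for each $j \in U$ I may preselect heavy vertices of $V_j$ and pin endpoints to them via the special-vertex feature of Theorem~\ref{blowup}. However, $|V(R) \setminus U|$ can slightly exceed $L/3$, so in an arbitrary perfect $K_3$-packing pigeonhole may force some triangles to contain two or three ``bad'' clusters from $V(R) \setminus U$, ruling out heavy endpoints within those triangles. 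To overcome this I would use the \emph{folded path} construction foreshadowed in Section~\ref{sketch}: instead of working with a rigid $K_3$-packing, find a small collection $\mathcal{F}$ of sequences $f_1, \ldots, f_\ell$ in $V(R)$ (with repetitions allowed at non-consecutive positions) such that $f_i f_{i+1} f_{i+2}$ is a triangle of $R$ for each $i$, such that $f_1, f_2, f_{\ell-1}, f_\ell \in U$, and such that the union of the folded paths uses each cluster approximately $m$ times. Lemma~\ref{partialembed} together with Theorem~\ref{blowup} then realise each $F \in \mathcal{F}$ as an $\eta$-heavy square path in $G$ with endpoints drawn from the $\alpha m$ heavy vertices preselected in each of $V_{f_1}, V_{f_2}, V_{f_{\ell-1}}, V_{f_\ell}$.

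Constructing $\mathcal{F}$ is the technical heart of the argument. The bulk of each folded path is built inside $R[U]$, which inherits enough goodness for Theorem~\ref{trianglepack} to produce many vertex-disjoint triangles, and the ``bad'' clusters of $V(R) \setminus U$ are absorbed by short detours of the form $u_1 v u_2$ with $u_1, u_2 \in U$ and $v \in V(R) \setminus U$. The $(\eta/2, L)$-goodness of $R$ guarantees that each bad vertex has many pairs of common neighbours in $U$ (by Proposition~\ref{2ndnbrhd}(i) applied inside $R$), so the detours can be chosen greedily and made disjoint; a careful balancing argument then controls how many times each cluster appears across $\mathcal{F}$, so that essentially all of $V(G) \setminus V_0$ is covered. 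The outcome is the required family $\mathcal{P}$ of at most $|\mathcal{F}| \leq M$ vertex-disjoint $\eta$-heavy square paths whose complement in $V(G)$ has size at most $\eps n$.
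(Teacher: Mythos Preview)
Your setup (Regularity Lemma, $R$ is $(\eta/2,L)$-good, perfect $K_3$-packing via Theorem~\ref{trianglepack}, Blow-up within superregular triples) matches the paper exactly, and you correctly identify that the only real issue is forcing the endpoint $2$-segments into $V(G)_\eta$. The divergence is in how this is achieved, and your proposed construction of $\mathcal{F}$ has a genuine gap.

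You want to build the bulk of each folded path inside $R[U]$ and invoke Theorem~\ref{trianglepack} there, but $R[U]$ need not inherit any usable goodness. Since $|V(R)\setminus U|$ can be as large as $(1/3+2\alpha)L$, a vertex of $R$ with degree $(1/3+\eta/2)L$ may retain only about $\eta L/2$ neighbours in $R[U]$; that is relative minimum degree roughly $3\eta/4$ in a graph on $2L/3$ vertices, nowhere near the $2/3$ threshold underlying Theorem~\ref{trianglepack}. The same low-degree obstruction undermines the detour step: for a bad cluster $v$ you need an edge inside $N_R(v)\cap U$, but $|N_R(v)\cap U|$ may only be about $\eta L/2$, and Proposition~\ref{largeset}(iii) gives no edges in sets this small. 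Finally, even granting the detours, the ``careful balancing argument'' is doing a great deal of work: each of the (up to $L/3$) bad clusters must be visited roughly $m$ times, and each visit consumes two specific good clusters, so keeping every cluster count near $m$ while maintaining the triangle-walk property is far from automatic.

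The paper sidesteps all of this by keeping the triangle packing in the \emph{full} reduced graph $R$ and using folded paths only as short ($\leq 5/\eta$) appendages at the two ends of each path. The key tool you are missing is the core degree function $d^\alpha_{R,G}$: Lemma~\ref{reduced}(ii) shows it is $(\eta/2,L)$-good, and Lemma~\ref{findfpath} then guarantees that from \emph{any} triangle $T_i$ one can walk along a short folded path in $R$ to a pair of clusters with $d^\alpha_{R,G}\geq(2/3+\eta)L$ (which automatically contain $\alpha m$ vertices of $V(G)_\eta$). Two such folded paths per triangle, realised in $G$ via Lemma~\ref{partialembed} (Lemmas~\ref{babyhorror} and~\ref{horror}), give a short $\eta$-tail-heavy path $P_i$ and a short $\eta$-head-heavy path $P_{\ell+i}$ with many admissible attachment pairs in the clusters of $T_i$; the long Blow-up path $S_i$ inside $T_i$ is then sandwiched as $P_{\ell+i}S_iP_i$. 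The balancing comes for free because each $S_i$ sits entirely within its own triangle, and the short appendages use only $O(\ell/\eta)\ll \eps m$ vertices in total. So the fix is not to globalise the folded-path construction but to localise it: keep the packing in $R$, and use Lemma~\ref{findfpath} (driven by the goodness of $d^\alpha_{R,G}$, not of $R[U]$) to escort each pair of endpoints out to heavy territory.
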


To prove Lemma~\ref{almostpath},
we will use the following result of the second author~\cite{triangle} which guarantees a perfect triangle packing in a sufficiently large $\eta$-good graph.

\begin{theorem}\label{trianglepack}\cite{triangle}
For every $\eta >0$, there exists $n_0 \in \mathbb{N}$ such that every $\eta$-good graph $G$ on $n \geq n_0$ vertices contains a perfect $K_3$-packing.
\end{theorem}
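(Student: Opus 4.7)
My plan follows the absorbing method combined with the Regularity Lemma. Assume first that $3\mid n$ (otherwise peel off one or two disjoint triangles using Proposition~\ref{largeset}(iii), which is possible since any $(1/3+\eta)n$-sized neighbourhood carries $\Omega(\eta^2 n^2)$ edges). The proof then splits into three steps: construct a small absorbing set $A$, find an almost-perfect $K_3$-packing on $G\setminus A$ via regularity, and use $A$ to swallow the leftover.

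\textbf{Absorbing set.} The goal is $A\subseteq V(G)$ with $|A|\le\eps n$, $3\mid |A|$, such that for every $W\subseteq V(G)\setminus A$ with $|W|\le\eps^2 n$ and $3\mid|W|$, the graph $G[A\cup W]$ admits a perfect $K_3$-packing. I would use constant-size absorbing gadgets — for each ordered triple $(u,v,w)$ of vertices, a fixed-size gadget that rearranges a local $K_3$-packing to additionally swallow $\{u,v,w\}$ — and choose $A$ at random so that enough gadgets survive for every triple. The input from the degree-sequence hypothesis is that every $v\in V(G)$ lies in $\Omega(\eta^2 n^2)$ triangles: even when $d_G(v)$ is only $(1/3+\eta)n$, Proposition~\ref{largeset}(iii) with $X=N(v)$ gives $\Omega(\eta^2 n^2)$ edges in $N(v)$. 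Combined with Proposition~\ref{2ndnbrhd} applied to subsequent gadget vertices, this yields a polynomial number of disjoint candidate gadgets per triple, and a standard Chernoff-plus-alteration argument produces the required $A$.

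\textbf{Almost-spanning packing.} Set $G':=G\setminus A$; Proposition~\ref{stoneage} shows $G'$ is $(\eta/2,n)$-good. Apply the Regularity Lemma (Lemma~\ref{reg}) with $\eps'\ll d\ll\eta$ to obtain clusters $V_1,\ldots,V_L$ and a reduced graph $R$ which, by Lemma~\ref{reduced}(ii), is $(\eta/4,L)$-good. I would then greedily extract an almost-$K_3$-factor of $R$: at each step pick the current minimum-degree vertex $i\in V(R)$, use Proposition~\ref{vertexdeg} plus Proposition~\ref{largeset}(iii) applied to $N_R(i)$ to produce an edge $jk$ inside $N_R(i)$, delete the triangle $\{i,j,k\}$, and iterate. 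Proposition~\ref{stoneage} guarantees the residual reduced graph remains sufficiently good at every step, so this procedure continues until $O(1)$ vertices remain. Each triangle $\{i,j,k\}$ is then upgraded to an $(\eps'',d/2)$-superregular triple via Lemma~\ref{superslice}, and the Blow-up Lemma (Theorem~\ref{blowup}) fills it with a $K_3$-factor. After a small balancing step to discard $O(m)$ vertices per cluster and enforce $3\mid|W|$ for the residual set $W:=V(G)\setminus(A\cup V(\text{packing}))$, we have $|W|\le\eps^2 n$, and the absorbing property completes the packing.

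\textbf{Main obstacle.} The hardest point is the near-perfect triangle tiling of $R$ under only a degree-sequence hypothesis. The classical Hajnal--Szemer\'edi threshold $\delta(R)\ge 2L/3$ need not hold, so one cannot directly invoke a minimum-degree $K_3$-factor theorem; instead the proof must exploit that the $(\eta/4,L)$-good condition is self-reproducing along the greedy deletion order — removing a triangle that covers the current lowest-degree vertex destroys only one ``weak'' cluster and preserves, up to a slight parameter decay, the good condition on the remaining graph. Equally delicate is guaranteeing enough absorbing gadgets through the genuinely minimum-degree vertices of $G$, whose neighbourhoods barely exceed $n/3$: standard pairwise intersection bounds from Proposition~\ref{2ndnbrhd} give only $\Omega(\eta n)$ vertices in a common neighbourhood, so the gadget count must be carried out using the quadratic edge bound inside $N(v)$ from Proposition~\ref{largeset}(iii) as the primary source of triangles, rather than via double-intersections of large degree sets.
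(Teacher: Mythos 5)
The paper does not prove Theorem~\ref{trianglepack}: it is imported verbatim from reference~\cite{triangle}, where it appears as a special case of a degree-sequence theorem for perfect $H$-packings, and the paper immediately uses it as a black box (e.g.\ applying it to the reduced graph $R$ in the proof of Lemma~\ref{almostpath}). So there is no internal proof to compare your attempt against, and any correct proof would genuinely be supplying content the paper omits.

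That said, your proposal has a real gap in the almost-spanning step, and you yourself flag it as ``the hardest point'' without resolving it. The claim that the $(\eta/4,L)$-good condition is self-reproducing along the greedy deletion order is false in the required strength. Proposition~\ref{stoneage} only guarantees goodness (with $\eta$ halved, and still measured against the original $L$) after deleting at most $\eta L/4$ vertices; iterating it gives a geometrically shrinking budget that caps out at roughly $\eta L/2$ deletions. More directly, if the degree sequence of $R$ is tight, then after $t$ greedy steps the current minimum-degree vertex has degree at least $(1/3+\eta)L + t - 3t = (1/3+\eta)L - 2t$ while the residual order is $L-3t$; the inequality $(1/3+\eta)L - 2t > (L-3t)/3$ forces $t < \eta L$, so the greedy can be guaranteed to run for only $O(\eta L)$ rounds, covering an $O(\eta)$ fraction of $R$ rather than all but $O(1)$ vertices. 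One could not escape this by invoking Theorem~\ref{trianglepack} on $R$ (as Lemma~\ref{almostpath} does), since that is exactly what you are trying to prove. A secondary soft spot is the absorbing-gadget count for a triple $\lbrace u,v,w\rbrace$ all of degree close to $n/3$: Proposition~\ref{largeset}(iii) does give $\Omega(\eta^2n^2)$ edges inside each of $N(u),N(v),N(w)$, but the cross-triangles tying the three local triangles together still require pairwise adjacencies between freely chosen neighbours of $u$, $v$ and $w$, and it is not clear these can be supplied by the edge count alone when those neighbours themselves have small degree. Finally, a small slip: peeling off triangles leaves $n$ unchanged modulo $3$; to reduce to $3\mid n$ you would peel off one or two \emph{vertices}, not triangles, and then invoke Proposition~\ref{stoneage}.
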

Theorem~\ref{trianglepack} is a special case of a more general result from~\cite{triangle}  on a degree sequence condition that forces a graph to contain a  perfect $H$-packing for arbitrary $H$.

To find a bounded number of vertex-disjoint square paths which together cover almost every vertex of $G$, we apply Szemer\'edi's Regularity lemma to $G$ and then apply Theorem~\ref{trianglepack} to the reduced graph $R$ of $G$ to find a perfect triangle packing $(T_j)_j$ in $R$.
Then we use the Blow-up lemma  to find a square path in $G$ for each triangle $T_j$ that covers almost all of the vertices in the clusters of $T_j$.

However, to guarantee that our paths are $\eta$-heavy, more work is needed.
We extend each triangle $T_j$ in $R$ into two `folded paths' $F_j, F' _j$. A folded path is essentially a sequence of triangles such that the $i$th triangle shares exactly two vertices with the $(i-1)$th triangle.
A folded path is therefore a generalisation of a square path.
We choose both  $F_j$ and $F' _j$ so that their final two clusters each contain many vertices of degree at least $(2/3+\eta)n$. Further, the initial triangle of both $F_j$ and $F' _j$ 
is $T_j$.
(These folded paths $F_j,F_j'$ are obtained by applying Lemma~\ref{findfpath}.)
These properties will allow us to find a square path $Q_j$ in $G$ so that:
\begin{itemize}
\item[(i)] $Q_j$ only contains vertices from the clusters in $F_j$ and $F' _j$;
\item[(ii)] $Q_j$ contains most of the vertices from the clusters in $T_j$;
\item[(iii)] $Q_j$ is $\eta$-heavy.
\end{itemize} 
To ensure (ii), we wind around the clusters of $T_j$, using almost all of their vertices, to find the large central part of $Q_j$.
To ensure (iii), we extend this square path in both directions, using a small number of additional vertices in clusters of $F_j$ and $F_j'$.
Clearly (i) is also satisfied.
Note that for distinct $T_j$, $T_{j'}$ in $R$, the folded paths $F_j$, $F' _j$, $F_{j'}$ and $F' _{j'}$ may intersect. Thus care is needed to ensure the square paths $Q_j, Q_{j'}$ constructed are vertex-disjoint: this is possible since only a small number of vertices in each $Q_j$ lie outside of the clusters of $T_j$.
The $Q_j$ are constructed in Lemmas~\ref{babyhorror} and~\ref{horror}.



\subsection{Folded paths}

Here we define a structure -- a `folded path' -- which will be useful when embedding square paths.
Indeed, if the reduced graph of $G$ contains a short folded path $F$, we can embed a short square path into $G$ using only vertices lying in the clusters which form the vertex set of $F$.

\usetikzlibrary{calc}

\newcommand*\rowsa{3}
\newcommand*\rowsb{2}
\newcommand*\rowsc{1}
\newcommand*\rowsd{3}

\begin{center}
\begin{figure}

\begin{tikzpicture}[every node/.style={draw,circle,fill=black,inner sep=0.5mm},scale=1.5]

\begin{scope}
    \foreach \row in {0, 1, ...,\rowsa} {

    \node[] (\row one) at ($\row*(1, 0)$) {};    
    \node[] (\row two) at ($\row*(1,0)+(0.5,{0.5*sqrt(3)})$) {};
        
        \draw ($\row*(1, 0)$) -- ($\row*(1,0)+(0.5,{0.5*sqrt(3)})$);        \draw (${\row+1}*(1, 0)$) -- (${\row+1}*(1,0)+(-0.5,{0.5*sqrt(3)})$);
    }
    \foreach \row in {0, 1} {
        \draw ($\row*(0.5, {0.5*sqrt(3)})$) -- ($({\rowsa+1},0)+\row*(-0.5, {0.5*sqrt(3)})$);
        }
        
\node[draw=none,label=below:$v_{1}$] at (0one) {};        
\node[draw=none,label=below:$v_3$] at (1one) {};
\node[draw=none,label=below:$v_5$] at (2one) {};
\node[draw=none,label=above:$v_7$] at (3one) {};

\node[draw=none,label=above:$v_2$] at (0two) {};        
\node[draw=none,label=above:$v_4$] at (1two) {};
\node[draw=none,label=above:$v_6$] at (2two) {};
\node[draw=none,label=above:$v_8$] at (3two) {};

\end{scope}

\begin{scope}[shift={($\rowsa*(1,0)+(0.5,{0.5*sqrt(3)})$)},rotate=240]

    \foreach \row in {0, 1, ...,\rowsb} {

    \node[] (\row three) at ($\row*(1, 0)$) {};
    \node[] (\row four) at ($\row*(1,0)+(0.5,{0.5*sqrt(3)})$) {};

        \draw ($\row*(1, 0)$) -- ($\row*(1,0)+(0.5,{0.5*sqrt(3)})$);        \draw (${\row+1}*(1, 0)$) -- (${\row+1}*(1,0)+(-0.5,{0.5*sqrt(3)})$);
    }
    \foreach \row in {0, 1} {
        \draw ($\row*(0.5, {0.5*sqrt(3)})$) -- ($({\rowsb+1},0)+\row*(-0.5, {0.5*sqrt(3)})$);
        }

\node[color=red,fill=red] at (1,0) {};

\node[draw=none,label=left:$v_{11}$] at (2three) {};

\node[draw=none,label=right:$v_{9}$] at (0four) {};        
\node[draw=none,label=right:$v_{10}$] at (1four) {};
\node[draw=none,label=right:$v_{12}$] at (2four) {};

\begin{scope}[shift={(\rowsb+1,0)},rotate=60]

    \foreach \row in {0, 1, ...,\rowsb} {

    \node[] (\row five) at ($\row*(1, 0)$) {};
    \node[] (\row six) at ($\row*(1,0)+(0.5,{0.5*sqrt(3)})$) {};

        \draw ($\row*(1, 0)$) -- ($\row*(1,0)+(0.5,{0.5*sqrt(3)})$);        \draw (${\row}*(1, 0)$) -- (${\row}*(1,0)+(-0.5,{0.5*sqrt(3)})$);
    }
    
        \draw (0,0) -- (({\rowsb},0);
        \draw ($(0.5, {0.5*sqrt(3)})$) -- ($({\rowsb+1},0)+(-0.5, {0.5*sqrt(3)})$);

\node[color=red,fill=red] at ($(0,0)+(0.5,{0.5*sqrt(3)})$) {};

\node[draw=none,label=below:$v_{15}$] at (1six) {};
\node[draw=none,label=below:$v_{17}$] at (2six) {};

\node[draw=none,label=left:$v_{13}$] at (0five) {};        
\node[draw=none,label=left:$v_{14}$] at (1five) {};
\node[draw=none,label=below:$v_{16}$] at (2five) {};

\begin{scope}[shift={($\rowsb*(1,0)+(0.5,{0.5*sqrt(3)})$)},rotate=120]

    \foreach \row in {0, 1, ...,\rowsc} {
    
        \node[] (\row seven) at ($\row*(1, 0)$) {};
    \node[] (\row eight) at ($\row*(1,0)+(0.5,{0.5*sqrt(3)})$) {};
    
        \draw ($\row*(1, 0)$) -- ($\row*(1,0)+(0.5,{0.5*sqrt(3)})$);        \draw (${\row+1}*(1, 0)$) -- (${\row+1}*(1,0)+(-0.5,{0.5*sqrt(3)})$);
    }
    \foreach \row in {0, 1} {
        \draw ($\row*(0.5, {0.5*sqrt(3)})$) -- ($({\rowsc+1},0)+\row*(-0.5, {0.5*sqrt(3)})$);
        }

\node[color=red,fill=red] at ($(0,0)+(0.5,{0.5*sqrt(3)})$) {};

\node[draw=none,label=right:$v_{18}$] at (1seven) {};

\node[draw=none,label=135:$v_{19}$] at (1eight) {};

\begin{scope}[shift={($\rowsc*(1,0)+(0.5,{0.5*sqrt(3)})$)},rotate=-60]

    \foreach \row in {0, 1, ...,\rowsd} {

    \node[] (\row nine) at ($\row*(1, 0)+(1,0)$) {};
    \node[] (\row ten) at ($\row*(1,0)+(0.5,{0.5*sqrt(3)})$) {};
    
        \draw ($\row*(1, 0)$) -- ($\row*(1,0)+(0.5,{0.5*sqrt(3)})$);        \draw (${\row+1}*(1, 0)$) -- (${\row+1}*(1,0)+(-0.5,{0.5*sqrt(3)})$);
    }
    \foreach \row in {0, 1} {
        \draw ($\row*(0.5, {0.5*sqrt(3)})$) -- ($({\rowsd+1},0)+\row*(-0.5, {0.5*sqrt(3)})$);
        }
        
\begin{scope}[shift={($(0,{-1*sqrt(3)})$)}]
        
  \node[label=below:$v_{28}$] (final) at ($\rowsd*(1,0)+(0.5,{0.5*sqrt(3)})$) {};

\end{scope}

\draw (2nine) -- (final) -- (3nine);

\node[draw=none,label=below:$v_{20}$] at (0nine) {};
\node[draw=none,label=below:$v_{23}$] at (1nine) {};
\node[draw=none,label=below:$v_{25}$] at (2nine) {};
\node[draw=none,label=below:$v_{27}$] at (3nine) {};

\node[draw=none,label=above:$v_{21}$] at (0ten) {};
\node[draw=none,label=above:$v_{22}$] at (1ten) {};
\node[draw=none,label=above:$v_{24}$] at (2ten) {};
\node[draw=none,label=above:$v_{26}$] at (3ten) {};


        \node[color=red,fill=red] at (1,0) {};
                
\end{scope}

\end{scope}

\end{scope}

\end{scope}

\usetikzlibrary{decorations.markings}

\tikzset{->-/.style={decoration={
  markings,
  mark=at position #1 with {\arrow[scale=1.5]{>}}},postaction={decorate}}}

        \node[color=red,fill=red] at (2nine) {};

\draw[very thick,color=red,->-=.5] (3one) -- (1four);
\draw[very thick,color=red,->-=.5] (3one) -- (2three);
\draw[very thick,color=red,->-=.5] (2four) -- (1six);
\draw[very thick,color=red,->-=.5] (1six) -- (1seven);
\draw[very thick,color=red,->-=.5] (1six) -- (1eight);
\draw[very thick,color=red,->-=.5] (0nine) -- (1nine);
\draw[very thick,color=red,->-=.5] (2nine) -- (final);

\node[draw=none,fill=none,label=$F$] at (0,-3) {};

\draw[<-] (3.5,-4) -- (3.5,-5);

\node[draw=none,fill=none,label=$g$] at (3.4,-4.7) {};
\node[draw=none,fill=none] at (3.5,-5.2) {};

\end{tikzpicture}

\begin{tikzpicture}[every node/.style={draw,circle,fill=black,inner sep=0.5mm},scale=1.5]

\begin{scope}
    \foreach \row in {0, 1, ...,\rowsa} {

    \node[] (\row one) at ($\row*(1, 0)$) {};    
    \node[] (\row two) at ($\row*(1,0)+(0.5,{0.5*sqrt(3)})$) {};
      }

\end{scope}

\begin{scope}[shift={($\rowsa*(1,0)+(0.5,{0.5*sqrt(3)})$)},rotate=240]

    \foreach \row in {0, 1, ...,\rowsb} {

    \node[] (\row three) at ($\row*(1, 0)$) {};
    \node[] (\row four) at ($\row*(1,0)+(0.5,{0.5*sqrt(3)})$) {};
}

\begin{scope}[shift={(\rowsb+1,0)},rotate=60]

    \foreach \row in {0, 1, ...,\rowsb} {

    \node[] (\row five) at ($\row*(1, 0)$) {};
    \node[] (\row six) at ($\row*(1,0)+(0.5,{0.5*sqrt(3)})$) {};
}

\begin{scope}[shift={($\rowsb*(1,0)+(0.5,{0.5*sqrt(3)})$)},rotate=120]

    \foreach \row in {0, 1, ...,\rowsc} {
    
        \node[] (\row seven) at ($\row*(1, 0)$) {};
    \node[] (\row eight) at ($\row*(1,0)+(0.5,{0.5*sqrt(3)})$) {};
 }

\begin{scope}[shift={($\rowsc*(1,0)+(0.5,{0.5*sqrt(3)})$)},rotate=-60]

    \foreach \row in {0, 1, ...,\rowsd} {

    \node[] (\row nine) at ($\row*(1, 0)+(1,0)$) {};
    \node[] (\row ten) at ($\row*(1,0)+(0.5,{0.5*sqrt(3)})$) {};

  }

  \begin{scope}[shift={($(0,{-1*sqrt(3)})$)}]
        
  \node[] (final) at ($\rowsd*(1,0)+(0.5,{0.5*sqrt(3)})$) {};

\end{scope}

              
\end{scope}

\end{scope}

\end{scope}

\end{scope}

\node[] (3one1) at ($(3one)+(0.1,-0.1)$) {};
\node[] (2four1) at ($(2four)+(0.1,-0.1)$) {};
\node[] (1five1) at ($(1five)+(0.1,-0.1)$) {};
\node[] (1six1) at ($(1six)+(0.1,-0.1)$) {};
\node[] (1six2) at ($(1six)+(0.1,0.1)$) {};
\node[] (1seven1) at ($(1seven)+(0.1,0.1)$) {};
\node[] (1eight1) at ($(1eight)+(0.1,0.1)$) {};
\node[] (0nine1) at ($(0nine)+(0.1,0.1)$) {};
\node[] (2nine1) at ($(2nine)+(0.1,-0.1)$) {};
\node[] (3nine1) at ($(3nine)+(0.1,-0.1)$) {};

 \definecolor{r1}{RGB}{100,200,100}
 \definecolor{r2}{RGB}{120,150,184}
 
\draw[fill=r1,opacity=0.5] ($(0one)$) -- ($(0two)$) -- ($(1one)$) -- ($(0one)$);
\draw[fill=r1,opacity=0.5] ($(1one)$) -- ($(1two)$) -- ($(0two)$) -- ($(1one)$);
\draw[fill=r1,opacity=0.5] ($(1one)$) -- ($(1two)$) -- ($(2one)$) -- ($(1one)$);
\draw[fill=r1,opacity=0.5] ($(2one)$) -- ($(2two)$) -- ($(1two)$) -- ($(2one)$);
\draw[fill=r1,opacity=0.5] ($(2one)$) -- ($(2two)$) -- ($(3one)$) -- ($(2one)$);
\draw[fill=r1,opacity=0.5] ($(3one)$) -- ($(3two)$) -- ($(2two)$) -- ($(3one)$);
\draw[fill=r1,opacity=0.5] ($(3one)$) -- ($(3two)$) -- ($(0four)$) -- ($(3one)$);
\draw[fill=r2,opacity=0.5] ($(3two)$) -- ($(3one1)$) -- ($(0four)$) -- ($(3two)$);
\draw[fill=r2,opacity=0.5] ($(3one1)$) -- ($(0four)$) -- ($(1four)$) -- ($(3one1)$);
\draw[fill=r2,opacity=0.5] ($(3one1)$) -- ($(1four)$) -- ($(2three)$) -- ($(3one1)$);
\draw[fill=r2,opacity=0.5] ($(1four)$) -- ($(2three)$) -- ($(2four)$) -- ($(1four)$);
\draw[fill=r2,opacity=0.5] ($(2three)$) -- ($(2four)$) -- ($(0five)$) -- ($(2three)$);
\draw[fill=r2,opacity=0.5] ($(2four)$) -- ($(0five)$) -- ($(1five)$) -- ($(2four)$);
\draw[fill=r1,opacity=0.5] ($(0five)$) -- ($(1five)$) -- ($(2four1)$) -- ($(0five)$);
\draw[fill=r1,opacity=0.5] ($(1five)$) -- ($(2four1)$) -- ($(1six)$) -- ($(1five)$);
\draw[fill=r2,opacity=0.5] ($(2four1)$) -- ($(1six)$) -- ($(1five1)$) -- ($(2four1)$);
\draw[fill=r2,opacity=0.5] ($(1six)$) -- ($(1five1)$) -- ($(2five)$) -- ($(1six)$);
\draw[fill=r1,opacity=0.5] ($(1five1)$) -- ($(2five)$) -- ($(1six1)$) -- ($(1five1)$);
\draw[fill=r1,opacity=0.5] ($(2five)$) -- ($(1six1)$) --($(2six)$) -- ($(2five)$);
\draw[fill=r1,opacity=0.5] ($(1six1)$) -- ($(2six)$) -- ($(1seven)$) -- ($(1six1)$);
\draw[fill=r2,opacity=0.5] ($(2six)$) -- ($(1seven)$) -- ($(1six2)$) -- ($(2six)$);
\draw[fill=r2,opacity=0.5] ($(1seven)$) -- ($(1six2)$) -- ($(1eight)$) -- ($(1seven)$);
\draw[fill=r1,opacity=0.5] ($(1six2)$) -- ($(1eight)$) -- ($(1seven1)$) -- ($(1six2)$);
\draw[fill=r1,opacity=0.5] ($(1eight)$) -- ($(1seven1)$) -- ($(0nine)$) -- ($(1eight)$);
\draw[fill=r2,opacity=0.5] ($(1seven1)$) -- ($(0nine)$) -- ($(1eight1)$) -- ($(1seven1)$);
\draw[fill=r2,opacity=0.5] ($(0nine)$) -- ($(1eight1)$) -- ($(0ten)$) -- ($(0nine)$);
\draw[fill=r1,opacity=0.5] ($(1eight1)$) -- ($(0ten)$) -- ($(0nine1)$) -- ($(1eight1)$);
\draw[fill=r1,opacity=0.5] ($(0ten)$) -- ($(0nine1)$) -- ($(1ten)$) -- ($(0ten)$);
\draw[fill=r1,opacity=0.5] ($(0nine1)$) -- ($(1ten)$) -- ($(1nine)$) -- ($(0nine1)$);
\draw[fill=r1,opacity=0.5] ($(1ten)$) -- ($(1nine)$) -- ($(2ten)$) -- ($(1ten)$);
\draw[fill=r1,opacity=0.5] ($(1nine)$) -- ($(2ten)$) -- ($(2nine)$) -- ($(1nine)$);
\draw[fill=r1,opacity=0.5] ($(2ten)$) -- ($(2nine)$) -- ($(3ten)$) -- ($(2ten)$);
\draw[fill=r1,opacity=0.5] ($(2nine)$) -- ($(3ten)$) -- ($(3nine)$) -- ($(2nine)$);
\draw[fill=r2,opacity=0.5] ($(3ten)$) -- ($(3nine)$) -- ($(2nine1)$) -- ($(3ten)$);

\draw[fill=r2,opacity=0.5] ($(2nine1)$) -- ($(3nine)$) -- ($(final)$) -- ($(2nine1)$);
\draw[fill=r1,opacity=0.5] ($(2nine1)$) -- ($(3nine1)$) -- ($(final)$) -- ($(2nine1)$);

\begin{scope}
    \foreach \row in {0, 1, ...,\rowsa} {

    \node[] (\row one) at ($\row*(1, 0)$) {};    
    \node[] (\row two) at ($\row*(1,0)+(0.5,{0.5*sqrt(3)})$) {};
      } 
        
\node[draw=none,label=below:$x_1$] at (0one) {};        
\node[draw=none,label=below:$x_3$] at (1one) {};
\node[draw=none,label=below:$x_5$] at (2one) {};
\node[draw=none,label=200:$x_7$] at (3one) {};

\node[draw=none,label=above:$x_2$] at (0two) {};        
\node[draw=none,label=above:$x_4$] at (1two) {};
\node[draw=none,label=above:$x_6$] at (2two) {};
\node[draw=none,label=above:$x_8$] at (3two) {};

\end{scope}

\begin{scope}[shift={($\rowsa*(1,0)+(0.5,{0.5*sqrt(3)})$)},rotate=240]
    \foreach \row in {0, 1, ...,\rowsb} {
    \node[] (\row three) at ($\row*(1, 0)$) {};
    \node[] (\row four) at ($\row*(1,0)+(0.5,{0.5*sqrt(3)})$) {};
}
\node[draw=none,label=left:$x_{12}$] at (2three) {};
\node[draw=none,label=right:$x_{9}$] at (0four) {};        
\node[draw=none,label=right:$x_{11}$] at (1four) {};
\node[draw=none,label=above:$x_{13}$] at (2four) {};
\begin{scope}[shift={(\rowsb+1,0)},rotate=60]
    \foreach \row in {0, 1, ...,\rowsb} {
    \node[] (\row five) at ($\row*(1, 0)$) {};
    \node[] (\row six) at ($\row*(1,0)+(0.5,{0.5*sqrt(3)})$) {};
}
\node[draw=none,label=170:$x_{17}$] at (1six) {};
\node[draw=none,label=below:$x_{21}$] at (2six) {};
\node[draw=none,label=left:$x_{14}$] at (0five) {};        
\node[draw=none,label=left:$x_{15}$] at (1five) {};
\node[draw=none,label=below:$x_{19}$] at (2five) {};
\begin{scope}[shift={($\rowsb*(1,0)+(0.5,{0.5*sqrt(3)})$)},rotate=120]
    \foreach \row in {0, 1, ...,\rowsc} {    
        \node[] (\row seven) at ($\row*(1, 0)$) {};
    \node[] (\row eight) at ($\row*(1,0)+(0.5,{0.5*sqrt(3)})$) {};
 }   
\node[draw=none,label=-60:$x_{22}$] at (1seven) {};
\node[draw=none,label=180:$x_{24}$] at (1eight) {};
\begin{scope}[shift={($\rowsc*(1,0)+(0.5,{0.5*sqrt(3)})$)},rotate=-60]
    \foreach \row in {0, 1, ...,\rowsd} {
    \node[] (\row nine) at ($\row*(1, 0)+(1,0)$) {};
    \node[] (\row ten) at ($\row*(1,0)+(0.5,{0.5*sqrt(3)})$) {};
  }         
\node[draw=none,label=-20:$x_{26}$] at (0nine) {};
\node[draw=none,label=below:$x_{31}$] at (1nine) {};
\node[draw=none,label=225:$x_{33}$] at (2nine) {};
\node[draw=none,label=-25:$x_{36}$] at (2nine1) {};
\node[draw=none,label=below:$x_{37}$] at (final) {};
\node[draw=none,label=right:$x_{38}$] at (3nine1) {};
\node[draw=none,label=above:$x_{35}$] at (3nine) {};
\node[draw=none,label=above:$x_{28}$] at (0ten) {};
\node[draw=none,label=above:$x_{30}$] at (1ten) {};
\node[draw=none,label=above:$x_{32}$] at (2ten) {};
\node[draw=none,label=above:$x_{34}$] at (3ten) {};
\node[draw=none,label=-5:$x_{10}$] at (3one1) {};
\node[draw=none,label=right:$x_{16}$] at (2four1) {};
\node[draw=none,label=200:$x_{18}$] at (1five1) {}; 
\node[draw=none,label=below:$x_{20}$] at (1six1) {};
\node[draw=none,label=15:$x_{23}$] at (1six2) {}; 
\node[draw=none,label=right:$x_{25}$] at (1seven1) {};
\node[draw=none,label=100:$x_{27}$] at (1eight1) {};
\node[draw=none,label=above:$x_{29}$] at (0nine1) {};
                   
\end{scope}

\end{scope}

\end{scope}

\end{scope}

\node[draw=none,fill=none,label=$P$] at (0,-3) {};

\end{tikzpicture}
\caption{A folded path $F$ with $|F|=28$, and a square path $P$ with $|P|=38$, drawn to show a homomorphism $g : [|P|] \rightarrow [|F|]$ (so, for example, $g(\lbrace 17,20,23\rbrace)=\lbrace 15 \rbrace$), as described in Lemma~\ref{foldedpath}. The final two vertices map to each other: $g(37) = 28$ and $g(38)=27$. The values $k_i$ for $F$ are represented by arrows: an arrow is drawn from $v_{k_i}$ to $v_i$ if and only if $k_i = k_{i-1}$ (so if a vertex $j$ has no incoming arrow, $k_j=j-2$).}
\label{foldedpathfig}
\end{figure}
\end{center}


\begin{definition} \emph{(Folded paths)}
We say a graph $F$ is a \emph{folded path} if there exists an ordered sequence $v_1, \ldots, v_n$ of distinct vertices and integers $k_3, \ldots, k_n$ such that
\begin{itemize}
\item $V(F) = \lbrace v_1, \ldots, v_n \rbrace$;
\item $k_3 := 1$ and $k_i \in \lbrace i-2, k_{i-1} \rbrace$ for $4 \leq i \leq n$;
\item $E(F) = \lbrace v_1v_2 \rbrace \cup \lbrace v_iv_{k_i}, v_iv_{i-1} : 3 \leq i \leq n \rbrace$.
\end{itemize}

\end{definition}

We implicitly assume that a folded path is equipped with ordered sequences $v_1, \ldots, v_n$ and $k_3, \ldots, k_n$.
We will sometimes write $F = v_1 \ldots v_n$, and say that
$k_3, \ldots, k_n$ is the \emph{ordering} of $F$.
Observe that
$k_3, \ldots, k_n$ is a non-decreasing sequence and $k_i \leq i-2$ for all $i \geq 3$.

A folded path $F$ is a generalisation of a square path.
Indeed, the special case when the ordering of $F$ is $1, \ldots, n-2$ (i.e. $k_i = i-2$ for all $i \geq 3$) corresponds to the square path on $n$ vertices.
When $k_i \neq i-2$, one can think of $v_{k_i}$ as a `pivot', at which the triangles that form the structure `change direction'.
The top of Figure~\ref{foldedpathfig} shows a folded path, with arrows drawn from a pivot $v_{k_i}$ to $v_i$.

Another way to view a folded path is as a sequence of square paths which are disjoint apart from initial and final triangles, which are shared by consecutive paths.
 Figure~\ref{foldedpathfig} gives an example of a homomorphism from a square path to a folded path.
One can visualise folding a square path so that triangles map onto triangles, using the pivots as directions for where to fold.
In Lemma~\ref{foldedpath}, we show that given any folded path $F$, there is a homomorphism from some square path $P$ to $F$ where $P$ `stretches along the length' of $F$ and where $|P|$ is not significantly greater than $|F|$.

In the next proposition, we prove that, in a folded path, every edge lies in a triangle.

\begin{proposition}\label{nbrpath}
Let $F := v_1 \ldots v_n$ be a folded path with ordering $k_3, \ldots, k_n$.
Then, for all $xy \in E(F)$, we have $N^2_F(x,y) \neq \emptyset$.
\end{proposition}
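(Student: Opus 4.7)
The plan is to show directly that for every edge $xy$ of $F$, the pair $\{x,y\}$ extends to a triangle by exhibiting an explicit common neighbour. The natural candidate triangles are the triples $T_i := \{v_{i-1}, v_{k_i}, v_i\}$ for $i \geq 3$, and the first step is to verify that each such $T_i$ really is a triangle of $F$.

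Two of the three edges of $T_i$ are immediate from the definition of $E(F)$, namely $v_i v_{i-1}$ and $v_i v_{k_i}$. The content is to check that $v_{i-1} v_{k_i} \in E(F)$ as well. I would split into cases on $k_i$. If $i=3$, then $k_3 = 1$ and the required edge is $v_2 v_1 \in E(F)$. If $i \geq 4$ and $k_i = i-2$, then $v_{k_i} v_{i-1} = v_{i-2} v_{i-1}$, which is the edge arising from the rule $v_j v_{j-1}$ applied with $j = i-1$. If $i \geq 4$ and $k_i = k_{i-1}$, then $v_{k_i} v_{i-1} = v_{k_{i-1}} v_{i-1}$, which is the edge arising from the rule $v_j v_{k_j}$ applied with $j = i-1$. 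Since $k_i \leq i-2$, the three vertices are pairwise distinct, so each $T_i$ is indeed a triangle.

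Given this, the proof of the proposition is a short dispatch. If $xy = v_1 v_2$, then $v_3 \in N_F(v_1) \cap N_F(v_2)$ by $T_3$ (using $k_3 = 1$), so $v_3 \in N^2_F(v_1,v_2)$. Otherwise $xy$ is either $v_i v_{i-1}$ or $v_i v_{k_i}$ for some $i \geq 3$; in the first case $v_{k_i} \in N^2_F(x,y)$, in the second $v_{i-1} \in N^2_F(x,y)$, both by the triangle $T_i$. In every case $N^2_F(x,y) \neq \emptyset$, as required.

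There is no real obstacle here beyond unwinding the definition carefully. The only point to watch is handling the two possibilities $k_i \in \{i-2, k_{i-1}\}$ uniformly, and treating the base case $i=3$ (where the rule $v_j v_{j-1}$ with $j=i-1=2$ is not applicable) separately via the edge $v_1 v_2$.
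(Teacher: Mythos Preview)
Your proof is correct and takes essentially the same approach as the paper: both arguments reduce to the case analysis on whether $k_i = i-2$ or $k_i = k_{i-1}$ to exhibit the third edge of the triangle $\{v_{k_i}, v_{i-1}, v_i\}$. Your organization (first proving each $T_i$ is a triangle, then dispatching edges) is slightly cleaner and handles the base edge $v_1v_2$ more explicitly than the paper's direct edge-by-edge case split, but the content is identical.
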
 

\begin{proof}
Write $x=:v_j$ and $y=:v_{\ell}$ where $j < \ell$.
Then $j \in \lbrace \ell-1,k_{\ell} \rbrace$.
Recall that $k_\ell \in \lbrace \ell-2,k_{\ell-1} \rbrace$.
For each of the four possible values of $(j,k_\ell)$, we will exhibit a vertex $z \in N^2_F(v_j,v_\ell)$.
Suppose first that $k_\ell = \ell-2$.
If $j=\ell-1$, then we set $z := v_{\ell-2}$.
If $j=k_\ell$, then we set $z := v_{\ell-1}$.
Suppose instead that $k_\ell = k_{\ell-1}$.
If $j=\ell-1$, then we set $z := v_{k_{\ell-1}}$.
If $j=k_\ell$, then we set $z := v_{\ell-1}$.
\end{proof}

\subsection{Embedding a square path into a folded path}

The next lemma guarantees a homomorphism from a square path $P$ into a given folded path $F$, with some special properties.
Later (in the proof of Lemma~\ref{babyhorror}), we will use this lemma in combination with Lemma~\ref{partialembed} to embed a square path $P$ into a graph $G$ whose reduced graph contains a copy of some folded path $F$.
Figure~\ref{foldedpathfig} illustrates the idea of the proof.

\begin{lemma}\label{foldedpath}
Let $n \geq 3$ be an integer and let $F = v_{1} \ldots v_{n}$ be a folded path.
Then there exists $p \in \mathbb{N}$ and a mapping $g:[p] \rightarrow [n]$ such that $n \leq p \leq 2n+1$ and if  $P := x_1 \ldots x_p$ is a square path, we have that
\begin{itemize}
\item[(i)] $v_{g(i)}v_{g(j)} \in E(F)$ whenever $x_{i}x_{j} \in E(P)$;
\item[(ii)] $g(1)=1$, $g(2)=2$, $g(3)=3$, $g(\lbrace p-1,p \rbrace)= \lbrace n-1, n \rbrace$.
\end{itemize} 
\end{lemma}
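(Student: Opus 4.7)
The plan is to construct $g$ inductively, processing the vertices $v_4, v_5, \ldots, v_n$ one at a time. After the $m$-th stage I will have a map $g$ on $[p_m]$ into $[m]$ that is a valid square-path homomorphism into $F[\lbrace v_1, \ldots, v_m \rbrace]$, with $g(p_m) = m$, and whose last three entries realise one of two prescribed orderings of the triangle $T_m := \lbrace v_m, v_{m-1}, v_{k_m} \rbrace$: \emph{State Y}, where the last three read $(k_m, m-1, m)$, or \emph{State X}, where they read $(m-1, k_m, m)$. The base case $m=3$ is simply $g=(1,2,3)$, which is State Y because $k_3=1$.

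The key observation that powers every case of the induction is that $v_{m-1}v_{k_m}$ is always an edge of $F$: this is immediate when $k_m=m-2$, while if $k_m=k_{m-1}$ it follows because $v_{m-1}$ and $v_m$ both belong to the triangle $T_{m-1}$ sharing the pivot $v_{k_{m-1}}=v_{k_m}$. Using this, the four possible transitions break down as follows. From State Y: if $k_{m+1}=m-1$ one appends the single vertex $v_{m+1}$ and remains in State Y; if $k_{m+1}=k_m$ one appends the pair $(v_{k_m}, v_{m+1})$ and moves to State X. From State X: if $k_{m+1}=m-1$ one appends $(v_{m-1}, v_{m+1})$ and remains in State X; if $k_{m+1}=k_m$ one appends just $v_{m+1}$ and moves to State Y. In each case, the newly introduced square-path edges, namely $x_{p_m}x_{p_m+1}$, $x_{p_m-1}x_{p_m+1}$, and (when two new entries are appended) $x_{p_m}x_{p_m+2}$ and $x_{p_m+1}x_{p_m+2}$, all map to edges of $F$, as is checked directly from the key observation together with the adjacencies within $T_m$ and $T_{m+1}$.

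At $m=n$, if the process ends in State Y then $g(\lbrace p-1, p \rbrace) = \lbrace n-1, n \rbrace$ automatically and we are done. If it ends in State X then the last two values are $(k_n, n)$, which does not satisfy (ii); appending one more vertex $v_{n-1}$ repairs this, since the new edges $v_nv_{n-1}$ and $v_{k_n}v_{n-1}$ both lie in $F$ (the latter by the key observation), and the final two values then form the set $\lbrace n-1, n \rbrace$. Together with $g(1)=1$, $g(2)=2$, $g(3)=3$ inherited from the base case, (i) and (ii) hold.

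For the length bound, each of the $n-3$ inductive steps contributes one or two new entries and the optional final repair adds at most one more, so $n \leq p_n \leq 2n-3$ and hence $n \leq p \leq 2n-2 \leq 2n+1$. The main obstacle is isolating the two \emph{awkward} transitions (State Y with $k_{m+1}=k_m$, and State X with $k_{m+1}=m-1$), where a single new vertex does not suffice and a buffer pivot vertex must be inserted to keep the square-path structure intact; it is exactly at these steps that the identity $v_{m-1}v_{k_m} \in E(F)$ is indispensable.
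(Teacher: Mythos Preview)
Your proof is correct and follows essentially the same approach as the paper's. Your State~Y/State~X bookkeeping is exactly the paper's dichotomy $g(p_m-1)\in\{m-1,k_m\}$, your key observation $v_{m-1}v_{k_m}\in E(F)$ is the paper's triangle claim $\{k_t,t-1,t\}\in E'(F)$, and your final repair step matches theirs; the only difference is that you track three trailing entries explicitly while the paper tracks two (yielding your slightly tighter bound $p\le 2n-2$).
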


\begin{proof}
Let $k_3, \ldots, k_n$ be the ordering of $F$.
We will begin by finding a square path $P' := x_1 \ldots x_{p'}$ with $n \leq p' \leq 2n$ and a function $g : [p'] \rightarrow [n]$ such that $v_{g(i)}v_{g(j)} \in E(F)$ whenever $x_ix_j \in E(P')$; $g(1)=1$, $g(2)=2$, $g(3)=3$ and $g(p')=n$.
We prove this iteratively.
Suppose that for some $3 \leq i \leq n-1$ there exists $p_i \in \mathbb{N}$ such that $i \leq p_i \leq 2i$ and a function $g : [p_i] \rightarrow [n]$ such that
\begin{itemize}
\item[($A_i$)] $g(1)=1$, $g(2)=2$, $g(3)=3$, $g(p_i) = i$, $g([p_i]) = [i]$, and $g(j) < i$ for all $j < p_i$;
\item[($B_i$)] $v_{g(j)}v_{g(j')} \in E(F)$ whenever $1 \leq j < j' \leq p_i$ and $j'-j \in \lbrace 1,2 \rbrace$.
\end{itemize}
Observe that $(B_i)$ is equivalent to the statement that $v_{g(j)}v_{g(j')} \in E(F)$ whenever $x_jx_{j'} \in E(P_i)$, where $P_i := x_1 \ldots x_{p_i}$ is a square path.
So our aim is to find $p' := p_n \in \mathbb{N}$ and $g : [p'] \rightarrow [n]$ such that $(A_n)$ and $(B_n)$ hold.
Certainly $(A_3)$ and $(B_3)$ hold. Assume that $(A_i)$ and $(B_i)$ hold for some fixed $3 \leq i \leq n-1$.
We will extend the domain of $g$ by defining $p_{i+1} \geq p_i+1$ and $g(p_i+1), \ldots, g(p_{i+1})$ so that ($A_{i+1}$) and ($B_{i+1}$) hold.

We first give some motivation. Ideally,
we would like to set $p_{i+1} := p_i+1$, so $g(p_i+1)=i+1$.
But we can only do so if $v_{g(p_i-1)},v_{g(p_i)}$ are both neighbours of $v_{i+1}$ in $F$ such that $g(p_i-1),g(p_i) \leq i$, and by definition of $F$, this only occurs if
  $g(\lbrace p_i-1,p_i \rbrace) = \lbrace k_{i+1},i \rbrace$.
By $(A_i)$, $g(p_i)=i$, so we need that $g(p_i-1)=k_{i+1}$.
If this is not the case, we cannot set $p_{i+1}=p_i+1$, but it turns out that we can take $p_{i+1}=p_i+2$ (by taking a single intermediate step via $k_{i+1}$).

Indeed, set
\begin{alignat}{2}
\label{def1} p_{i+1} &:= p_i+1\ \ \text{ and }\ \ g(p_i+1) := i+1\ \ &\text{ if }\ \ k_{i+1} = g(p_i-1);\\
\label{def2} p_{i+1} &:= p_i+2\ \ \text{ and }\ \ g(p_i+1) := k_{i+1},\ g(p_i+2) := i+1\ \ &\text{ if }\ \ k_{i+1} \neq g(p_i-1).
\end{alignat}
First we check that $(A_{i+1})$ holds.
We have $g(1)=1$, $g(2)=2$, $g(3)=3$ and $p_{i+1} \geq p_i+1 \geq i+1$.
Moreover, $p_{i+1} \leq p_i+2 \leq 2(i+1)$ and $g(p_{i+1}) = i+1$.
Furthermore,
$$
g([p_{i+1}]) = g([p_i]) \cup g([p_i+1,p_{i+1}]) \in \lbrace [i] \cup \lbrace i+1 \rbrace , [i] \cup \lbrace k_{i+1}, i+1 \rbrace \rbrace = [i+1],
$$
and $g(j) < i+1$ for all $j < p_{i+1}$.
(Here we used that $k_{i+1} \leq i-1$.)
So ($A_{i+1}$) holds.

Now we show that ($B_{i+1}$) holds.
Let $E'(F)$ be the set of all triples $\lbrace j,j',j'' \rbrace$ such that $v_jv_{j'}v_{j''}$ is a triangle in $F$.
It suffices to prove that $g(\lbrace j-2,j-1,j \rbrace) \in E'(F)$ for all $p_i+1 \leq j \leq p_{i+1}$.
We claim that
\begin{equation}\label{ki}
\lbrace k_t,t-1,t \rbrace \in E'(F)
\end{equation}
for all $3 \leq t \leq n$.
To see this, note that $\lbrace v_{k_{t-1}}v_{t-1}, v_{t-2}v_{t-1}, v_{k_t}v_t, v_{t-1}v_t \rbrace \subseteq E(F)$ by definition and $k_t \in \lbrace k_{t-1},t-2 \rbrace$, proving the claim.

By the definition of a folded path,
\begin{equation}\label{NFvi}
N_F(v_i) \cap \lbrace v_{g(1)}, \ldots, v_{g(p_i)} \rbrace \stackrel{(A_i)}{=} N_F(v_i) \cap \lbrace v_1, \ldots, v_{i} \rbrace = \lbrace v_{k_i}, v_{i-1} \rbrace.
\end{equation}
 ($B_i$) implies that $v_{g(p_i-1)}v_{g(p_i)} \in E(F)$, i.e. that
$v_{g(p_i-1)} \in N_F(v_{i})$. 
So (\ref{NFvi}) implies that
\begin{equation}\label{inset}
g(p_i-1) \in \lbrace k_{i}, i-1 \rbrace,\ \ \text{ and also }\ \ k_{i+1} \in \lbrace k_{i}, i-1 \rbrace.
\end{equation}
There are now two cases to consider depending on the value of $g(p_i-1)$.

Suppose first that
$k_{i+1} = g(p_i-1)$.
Then $p_{i+1} = p_i+1$ by (\ref{def1}).
We have that
$$
g(\lbrace p_i-1,p_i,p_i+1 \rbrace) \stackrel{(\ref{def1})}{=} \lbrace k_{i+1}, i, i+1 \rbrace \stackrel{(\ref{ki})}{\subseteq} E'(F),
$$
proving ($B_{i+1}$) in this case.

Suppose instead that
$k_{i+1} \neq g(p_i-1)$.
Then $p_{i+1} = p_i+2$ by (\ref{def2}).
Now (\ref{inset}) gives that
\begin{equation}\label{ki+1}
\lbrace k_{i+1},g(p_i-1) \rbrace = \lbrace k_i,i-1 \rbrace.
\end{equation}
Therefore
\begin{align*}
g(\lbrace \lbrace p_i-1,p_i,p_i+1\rbrace, \lbrace p_i,p_i+1,p_i+2 \rbrace \rbrace)
&\stackrel{(\ref{def2})}{=}
\lbrace \lbrace g(p_i-1), i, k_{i+1} \rbrace,
\lbrace i, k_{i+1}, i+1 \rbrace
\rbrace\\
&\stackrel{(\ref{ki+1})}{=} \lbrace \lbrace k_i, i-1, i \rbrace,
\lbrace k_{i+1}, i, i+1 \rbrace
\rbrace\\
&\stackrel{(\ref{ki})}{\subseteq} E'(F),
\end{align*}
proving ($B_{i+1}$) here.

We have proved that we can obtain $p' := p_n$ where $n \leq p' \leq 2n$ and $g:[p'] \rightarrow [n]$ such that ($A_{n}$) and ($B_{n}$) hold.
Now $g(p')=n$ by $(A_n)$.
Therefore if $g(p'-1)=n-1$, we are done.
So suppose not (see e.g. Figure~\ref{foldedpathfig} with $p'=37$).
The definition of $F$ implies that $N_F(v_n) = \lbrace v_{k_n}, v_{n-1} \rbrace$.
So $v_n$ has exactly one neighbour $v_{k_n}$ in $F$ which is not $v_{n-1}$.
Then $(B_n)$ implies that $g(p'-1)=k_n$ and $g(p'-2)=n-1$.
Now (\ref{ki}) implies that $\lbrace k_n,n-1,n \rbrace \in E'(F)$,
i.e. $v_{k_n}v_{n-1}v_n$ is a triangle in $F$.
We are done by setting $p := p'+1$ and $g(p):=n-1$.
\end{proof}

\subsection{Finding a folded path in an $\eta$-good graph}

Recall that we can use Theorem~\ref{trianglepack} to find a triangle packing $(T_i)_i$ in the reduced graph $R$ of $G$, and then apply the Blow-up lemma (Theorem~\ref{blowup})
so that for each $i$ we find a 
 square path $P_i$  in $G$ that almost spans the vertex set of $G$ corresponding to $T_i$.
So $(P_i)_i$ covers almost all the vertices of $G$.
However, to prove Lemma~\ref{almostpath}, we require that each $P_i$ is both head- and tail-heavy.
We will extend each $P_i$ both forwards and backwards by finding square paths $R_i,R_i'$ such that $R_iP_iR_i'$ is a head- and tail-heavy square path, and $|R_i|,|R_i'|$ are small.
 To do so, we will find folded paths $F_i$ and $F_i'$ in $R$ which will form the `framework' for $R_i$ and $R_i'$ respectively.

This is achieved in Lemma~\ref{findfpath}, whose proof is the aim of this subsection.
Given a triangle $T_i$, in order to find two `types' of paths $R_i,R_i'$,  Lemma~\ref{findfpath} `produces' two folded paths such that the first three clusters in both of these folded paths are the clusters from $T_i$, but the order of these clusters differs. Further, in both folded paths the last two vertices  correspond to clusters containing many high-degree vertices in $G$.

We  use standard cycle notation for permutations, so, for example, $(12)$ maps $1$ to $2$ and $2$ to $1$ (and fixes everything else).

\begin{lemma}\label{findfpath}
Let $\gamma,\eta > 0$ and $n \in \mathbb{N}$ where $0 < 1/n \ll \gamma \ll \eta \ll 1$.
Suppose that $G$ is a graph on $n$ vertices.
Let $d_G' : V(G) \rightarrow \mathbb{R}$ be an $(\eta,n)$-good function such that
\begin{equation}\label{d'G}
d_G(x) \geq (1-\gamma)d_G'(x)
\end{equation}
for all $x \in V(G)$.
Let $T$ be the vertex set of a triangle in $G$.
Then there exists $8 \leq t \leq 5/\eta$ and an ordering $v_1,v_2,v_3$ of $T$ such that $G$ contains a folded path $F = v_{1}v_2 v_3 \ldots v_{t}$
such that $d'_G(v_{t-1}),d'_G(v_t) \geq (2/3 + \eta)n$.
Moreover, there exists $\sigma \in \lbrace (12),(23) \rbrace$ and $8 \leq t' \leq 5/\eta$  such that $G$ contains a folded path $H = v_{\sigma(1)} v_{\sigma(2)} v_{\sigma (3)} v'_4 \ldots v'_{t'}$
such that $d'_G(v'_{t'-1}),d'_G(v'_{t'}) \geq (2/3 + \eta)n$.
\end{lemma}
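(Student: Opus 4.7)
Set $H_0 := \{v \in V(G) : d_G'(v) \geq (2/3+\eta)n\}$. Since $d_G'$ is $(\eta,n)$-good we have $d_G'(v_i) \geq (1/3+\eta)n+i+1$ for $i \leq n/3$, hence $|H_0|\geq 2n/3$. Combined with $d_G(x) \geq (1-\gamma)d_G'(x)$ and $\gamma \ll \eta$, this gives $d_G(v) \geq (2/3+\eta/2)n$ for $v \in H_0$, and the uniform lower bound $d_G(v) \geq (1/3+\eta/2)n$ for every $v \in V(G)$. I would construct both folded paths by greedy extension from $T$: at each step $i\geq 4$ there are two permissible values for the pivot $k_i$, namely $i-2$ or $k_{i-1}$, corresponding to two candidate ``active edges'' $\{v_{i-1},v_{i-2}\}$ and $\{v_{i-1},v_{k_{i-1}}\}$; the new vertex $v_i$ is selected as a common neighbour of whichever active edge we pick.

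\textbf{Extension inside $H_0$.} Once an active edge $\{x,y\}\subseteq H_0$ is reached, Proposition~\ref{2ndnbrhd}(i) gives $|N_G(x)\cap N_G(y)| \geq d_G(x)+d_G(y)-n \geq (1/3+\eta)n$, and intersecting with $H_0$ using $|H_0|\geq 2n/3$ still leaves at least $\eta n/2$ candidates, even after excluding the bounded number of vertices already placed. We may therefore pick the next $v_i \in H_0$ and repeat, extending the folded path inside $H_0$ for $O(1)$ more steps until $t \geq 8$ and $v_{t-1},v_t \in H_0$. The problem thus reduces to choosing an ordering of $T$ and a short prefix such that some active edge ends up in $H_0 \times H_0$ within at most $O(1/\eta)$ steps, comfortably inside the bound $5/\eta$.

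\textbf{Prefix, second ordering, and main obstacle.} Split by $|T\cap H_0|$. If $|T\cap H_0|\geq 2$, order $T$ so that two of its $H_0$-vertices occupy positions~$2$ and~$3$, and the initial active edge $\{v_2,v_3\}$ already lies in $H_0 \times H_0$. For $|T\cap H_0|\leq 1$ the argument is more delicate: a direct count gives $|N_G(x)\cap N_G(y)\cap H_0| \geq d_G(x)+d_G(y)-n+|H_0|-n$, which is negative when both $x,y$ lie outside $H_0$, so one cannot hope to jump into $H_0$ in a single step from a low-degree active edge. The plan is to use the universal bound $|N_G(v)\cap H_0| \geq d_G(v)-|V(G)\setminus H_0| \geq \eta n/2$ together with the two-way pivot flexibility: at each step, one of the two available active edges can be arranged to contain a vertex of strictly higher $d_G'$-value than the previous triangle, and a potential argument on the maximum $d_G'$-value among the last three vertices forces an edge in $H_0 \times H_0$ to appear within $O(1/\eta)$ steps. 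For the second folded path $H$ I would repeat the same construction after reordering $T$ by $\sigma\in\{(12),(23)\}$, choosing whichever adjacent transposition preserves a convenient vertex of $T$ at position~$3$. The principal obstacle is this low-$H_0$ case: common-neighbourhood counts between two low-degree vertices are too weak to land directly in $H_0$, and one must carefully exploit both the pivot freedom and the full $(\eta,n)$-good degree sequence, not just the crude bound $|H_0|\geq 2n/3$, to guarantee deterministic progress and hence the bound $t\leq 5/\eta$.
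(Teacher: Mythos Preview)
Your high-level plan matches the paper's: reduce to reaching an active edge inside $H_0 \times H_0$, then greedily extend inside $H_0$. Your treatment of that extension and of the case $|T \cap H_0| \geq 2$ is fine. But the hard case $|T \cap H_0| \leq 1$ is only identified, not resolved, and the mechanism you sketch has a genuine gap.

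You correctly note that the pairwise bound $|N_G(x) \cap N_G(y)| \geq d_G(x)+d_G(y)-n$ can be vacuous when $x,y \notin H_0$. But ``pivot flexibility plus a potential on the maximum $d_G'$-value'' does not get around this: both active edges available at step $i$ contain $v_i$, and both pairwise common neighbourhoods can be empty simultaneously, so you cannot guarantee any extension at all, let alone one that increases the maximum. The paper's key device is instead the three-vertex bound (Proposition~\ref{2ndnbrhd}(ii)):
\[
|N_G^2(T_i)| + |N_G^3(T_i)| \;\geq\; \sum_{v \in T_i} d_G(v) - n \;\approx\; c(T_i)\,\eta n,
\]
where $c(T_i)$ is the \emph{sum} (not the max) of the $\alpha$-values over $T_i$; this quantity is always positive. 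The paper then cases on which of $N^3(T_i)$ or $N^2(T_i)$ is large. If $N^3$ is large one picks $v_{m+1}$ adjacent to all three and may \emph{replace} $v_m$ when the two retained neighbours are $v_{k_m},v_{m-1}$ --- an operation you do not mention but which is essential, since without it the low-degree $v_m$ is forced to stay in the active triangle. If $N^2$ is large, the $(\eta,n)$-goodness of $d_G'$ (applied to the whole set, not just to $H_0$) yields a new vertex of $\alpha$-value at least $\alpha_2+\alpha_3+1/2$. In either case $c(T_{i+1}) \geq c(T_i)+1/2$, giving termination in $O(1/\eta)$ steps with one vertex $v_s \in H_0$.

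A second point: reaching one $H_0$-vertex is not the same as reaching an $H_0 \times H_0$ active edge. The paper handles this with a separate Step~2: once $v_s \in H_0$, iterate with $k_{s+j}=s$ so that $v_s$ stays on every active edge; now the pairwise bound $|N^2(v_s,v_{s+j})| \geq d_G'(v_{s+j})\eta n$ (roughly) is nontrivial, and $(\eta,n)$-goodness drives $d_G'(v_{s+j})$ up by $\eta n$ per step until a second $H_0$-vertex appears. Your outline collapses this into the first phase and thus misses why having \emph{one} high-degree vertex on the active edge is precisely what rescues the pairwise count.
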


\begin{proof}
We split the proof into three steps.
In the first step, we find a short folded path $F'$ whose final vertex $v_s$ has $d'_{G}(v_s) \geq (2/3+\eta)n$.
In the second step, we extend $F'$ into $F''$ so that the final vertex $v_r$ of $F''$ is a neighbour of $v_s$ in $F''$, and $d'_G(v_r) \geq (2/3+\eta)n$.
Finally, in the third step, we extend $F''$ into $F$ by appending four additional vertices. 
Simultaneously we will construct $H$ (using the same process used to construct $F$).

\medskip
\noindent
\emph{Step 1.}
Obtaining an ordering $v_1,v_2,v_3$ of $T$ such that there exists a folded path $F' = v_1v_2v_3 \ldots v_{s}$ where $d'_G(v_{s}) \geq (2/3+\eta)n$ and $3 \leq s \leq 4/\eta$.
Obtaining $\sigma \in \lbrace (12),(23) \rbrace$ such that there exists a folded path $H' = v_{\sigma(1)}v_{\sigma(2)}v_{\sigma(3)}v'_4 \ldots v'_{s'}$ where $3 \leq s' \leq 4/\eta$ and $d'_G(v) \geq (2/3+\eta)n$ for the final vertex $v$ on $H'$.

\medskip
\noindent
Consider any $S\subseteq V(G)$ with $|S|=3$ and write $S=\{x_1,x_2,x_3\}$ where $d'_G (x_1)\leq d'_G (x_2) \leq d'_G (x_3)$. 
We define $\mathcal{C}(S) := (\alpha_1, \alpha_2,\alpha_3)$ where  $d'_G(x_i) = (1/3+\alpha_i\eta)n$ for all $1 \leq i\leq 3$.
So $1 \leq \alpha_1 \leq \alpha_2 \leq \alpha_3 \leq 2/(3\eta)$.
We also write $c(S) := \alpha_1 + \alpha_2 + \alpha_3$.

Suppose that there exists $z \in T$ with $d'_G(z) \geq (2/3+\eta)n$.
Then, writing $T := \lbrace v_1,v_2,v_3 := z \rbrace$, we are done by setting $F' := v_1v_2v_3$ and $H':=v_2v_1v_3$ (so $\sigma = (12))$.

Therefore we may assume that $d'_G(z) < (2/3+\eta)n$ for all $z \in T$.
Note that $c(T) \geq 3$ since $d'_G$ is $(\eta,n)$-good.
Further, the minimum degree of $G$ implies that there are two vertices $v_2,v_3$ in $T$ with common neighbour $v_4 \in N^2_G(T) \setminus T$.
Let $v_1$ be the vertex in $T \setminus \lbrace v_2,v_3 \rbrace$.
Set $F_1' := v_1v_2v_3$ and $F'_2 := v_1v_2v_3v_4$.
Then $F'_2$ is a folded path with $\lbrace v_1,v_2,v_3 \rbrace = T$; and  $k_3, k_4$ is the ordering of  $F'_2$ where $k_3:=1$ and $k_4:=2$.
Since $d'_G$ is $(\eta,n)$-good, we have $c(\lbrace v_2,v_3,v_4 \rbrace) \geq 3$.
Note that the ordering $v_2,v_3$ was arbitrary.

To achieve Step~1, we will now concentrate on achieving Step~1$'$.

\medskip
\noindent
\emph{Step 1$'$.}
Obtaining a folded path $F' = v_1 v_2v_3 \ldots v_{s}$ where  $d'_G(v_{s}) \geq (2/3+\eta)n$ and $4 \leq s \leq 4/\eta$.

\medskip
\noindent
Suppose that for some $2 \leq i \leq  3/\eta $, we have defined a folded path $F'_i$ such that the following hold. 
\begin{itemize}
\item[($A_i$)] $F'_i := v_1 \ldots v_{m_i}$ for some $4 \leq m_i \leq i+2$;
\item[($B_i$)] $T_i := \lbrace v_{m_i}, v_{m_i-1}, v_{k_{m_i}} \rbrace$ is such that $c(T_i) \geq i/2$.
\end{itemize}
Note that $T_i = \lbrace v_{m_i} \rbrace \cup N_{F_i'}(v_{m_i})$. We have shown that ($A_2$) and ($B_2$) hold.

If $d'_G(v_{m_i}) \geq (2/3+\eta)n$,
set $s := m_i$ and $F' := F_i'$.
Otherwise,
we will obtain $F'_{i+1}$ from $F'_i$ so that $F'_{i+1}$ satisfies ($A_{i+1}$) and ($B_{i+1}$).
Write $m := m_i$ and let $k_3, \ldots, k_m$ be the ordering of $F'_i$.
Note that if there exist
\begin{equation}\label{need}
k_{m+1} \in \lbrace m-1,k_m \rbrace\ \ \text{ and }\ \ v_{m+1} \in N^2_{G}(v_m,v_{k_{m+1}}) \setminus V(F_i'),
\end{equation}
then $v_1 \ldots v_{m+1}$ is a folded path in $G$ with ordering $k_3, \ldots, k_{m+1}$.

Proposition~\ref{2ndnbrhd}(ii) and (\ref{d'G}) imply that
\begin{eqnarray}\label{N2N3}
|N^2_G(T_i)|+|N^3_G(T_i)| &\geq& (1-\gamma)\sum_{x \in T_i}d'_G(x)-n = (1-\gamma)c(T_i)\eta n - \gamma n.
\end{eqnarray}
Write 
\begin{equation}\label{CT0}
\mathcal{C}(T_i) =: (\alpha_1,\alpha_2,\alpha_3)\ \ \text{ so }\ \ \alpha_1+\alpha_2+\alpha_3 \stackrel{(B_i)}{\geq} i/2.
\end{equation}
There are two cases to consider, depending on the sizes of $N^2_G(T_i)$ and $N^3_G(T_i)$.

\medskip
\noindent
\textbf{Case 1.}
\emph{$|N^3_G(T_i)| \geq (1-\gamma)\alpha_1\eta n - \gamma n/2$.}

\medskip
\noindent
Suppose that there is no vertex $v_{m+1} \in N^3_G(T_i) \setminus V(F_i')$ with $d'_G(v_{m+1}) \geq (2/3+\eta)n$.
Then
$$
(1-\gamma)\alpha_1\eta n - \gamma n/2 - |F_i'| \leq |N_G^3(T_i) \setminus V(F_i')| \leq n/3
$$
by Proposition~\ref{vertexdeg}(i).
So
Proposition~\ref{vertexdeg}(ii) implies that we can choose $v_{m+1} \in N^3_G(T_i) \setminus V(F_i')$ with
\begin{eqnarray}\label{degbi+1}
d'_G(v_{m+1}) &\geq& \left(\frac{1}{3} + \eta\right)n + (1-\gamma)\alpha_1 \eta n  - \gamma n/2 - |F_i'|\\
\nonumber &\stackrel{(A_i)}{\geq}& \left(\frac{1}{3} + \left(\alpha_1 + 1\right)\eta\right)n -  2\gamma n - \left( \frac{3}{\eta} +2 \right)\\
\nonumber &\geq& \left(\frac{1}{3} + \left(\alpha_1 + 1/2\right)\eta\right)n.
\end{eqnarray}
Let $v_j,v_\ell \in T_i$ be such that $d'_G(v_j)= (1/3 + \alpha_2\eta) n$ and
$d'_G(v_\ell)= (1/3 + \alpha_3\eta) n$.
Observe that $v_{m+1} \in N^2_G(v_j,v_\ell)$.
Set $T_{i+1} := \lbrace v_{m+1},v_j,v_\ell \rbrace$.
Then, by (\ref{degbi+1}), we have $c(T_{i+1}) \geq (\alpha_1+1/2)+\alpha_2 +\alpha_3$.
Thus we have shown
\begin{equation}\label{CT1}
c(T_{i+1}) \geq (\alpha_1+1/2)+\alpha_2 +\alpha_3\ \ \text{ or }\ \ d'_G(v_{m+1}) \geq (2/3+\eta)n.
\end{equation}

\medskip
\noindent
\textbf{Case 2.}
\emph{$|N_G^2(T_i)| \geq (1-\gamma)(\alpha_2+\alpha_3)\eta n - \gamma n/2$.}

\medskip
\noindent
Note that this is indeed the only other case by (\ref{N2N3}) and (\ref{CT0}).
Then, similarly as above, Proposition~\ref{vertexdeg}(i) and (ii) imply that we can choose $v_{m+1} \in N^2_G(T_i) \setminus V(F_i')$ with
\begin{align}\label{degbi+1'}
d'_G(v_{m+1}) &\geq \min \left\lbrace \left( \frac{2}{3} + \eta \right)n, \left(\frac{1}{3} + \left(\alpha_2 + \alpha_3 + 1/2\right)\eta\right)n \right\rbrace.
\end{align}
Let $v_j,v_\ell$ be two neighbours of $v_{m+1}$ in $T_i$, where $d'_G(v_j) \leq d'_G(v_\ell)$.
So $d'_G(v_j) \geq (1/3 + \alpha_1\eta) n$ and
$d'_G(v_\ell) \geq (1/3 + \alpha_2\eta) n$.
In this case we set $T_{i+1} := \lbrace v_{m+1},v_j,v_\ell \rbrace$.
Then
\begin{equation}\label{CT2}
c(T_{i+1}) \geq \alpha_1+\alpha_2+(\alpha_2+\alpha_3+1/2)\ \ \text{ or }\ \ d'_G(v_{m+1}) \geq (2/3+\eta)n.
\end{equation}

\bigskip

\noindent
We now consider both Cases 1 and 2 together. In both cases, $v_{m+1} \notin V(F'_i)$ and $\lbrace j,\ell \rbrace \subseteq \lbrace k_m,m-1,m \rbrace$.
If $\lbrace j,\ell \rbrace = \lbrace k_m,m-1 \rbrace$ then we obtain $F'_{i+1}$ from $F_i'$ by replacing $v_m$ with $v_{m+1}$ (and $k_m$ is unchanged).
Then $F'_{i+1}$ is certainly a folded path in $G$.
Otherwise, one of $j,\ell$ equals $m$,
and we choose $k_{m+1}$ so that $\lbrace m,k_{m+1} \rbrace = \lbrace j,\ell \rbrace$.
Note that $k_{m+1} \in \lbrace k_m,m-1 \rbrace$.
So $F_{i+1}':=v_1\dots v_m v_{m+1}$ is a folded path in $G$ by (\ref{need}).
In both cases, $F'_{i+1}$ is a folded path with
$$
4 \leq m \leq |F'_{i+1}| \leq m+1 \leq i+3.
$$
Moreover, since $m = m_i \geq 4$, the first three vertices of $F'_{i+1}$ are $v_1,v_2,v_3$, as required.
So ($A_{i+1}$) holds.

If $d'_G(v_{m+1}) \geq (2/3+\eta)n$ then we set $F' := F'_{i+1}$ and Step~1$'$  is complete.
Otherwise, (\ref{CT0}), (\ref{CT1}), (\ref{CT2}) and ($B_i$) imply that  $c(T_{i+1}) \geq (i+1)/2$.
So ($B_{i+1}$) holds.
We have thus defined $F_{i+1}'$ so that $(A_{i+1})$ and ($B_{i+1}$) both hold.

Therefore after repeating this process at most $S:= 3/\eta $ times  either we obtain a folded path $F'$ as desired in Step~1$'$ or we obtain a folded path $F'_S=v_1 \dots v_{s}$ (where $s := m_S$) that satisfies $(A_S)$ and $(B_S)$ and so $c(T_S) \geq 3/(2\eta)$. In the latter case, we may assume that $d'_G (v_j) < (2/3+\eta)n$ for all $4\leq j \leq s-1$ (otherwise setting $F':=v_1\dots v_j$ yields our desired folded path). 
Note that
\begin{equation*}
\frac{1}{3}\sum_{x \in T_S}d'_G(x) = \frac{1}{3}(1 + c(T_{S})\eta)n \stackrel{(B_S)}{\geq} \frac{1}{3}(1+3/2)n =5n/6 \geq (2/3+\eta)n
\end{equation*}
and so 
\begin{equation}\label{vs6}
d'_G (v_{s})\geq (2/3+\eta)n.
\end{equation}
Observe that
\begin{equation}\label{VF'}
s \stackrel{(A_S)}{\leq} S+2 \leq  3/\eta+2 \leq 4/\eta.
\end{equation}
Set $F' := F'_S$.
This completes the proof of Step~1$'$. Since the choice of the ordering $v_2,v_3$ was arbitrary, we can argue precisely as in Step~1$'$, now with $F'_2$ replaced with $H'_2:=v_1v_3v_2v_4$ to obtain a folded path $H'$ as desired in Step~1 (so $\sigma = (23)$).
This completes Step~1.
From now on we only extend $F'$ to $F$ since the process of extending $H'$ to $H$ is identical.

\medskip
\noindent
\emph{Step 2.}
Obtaining a folded path $F'' := v_1v_2v_3 \ldots v_r$ where $4 \leq s+1 \leq r \leq 49/(10\eta)$, where $k_r = s$ and $d'_G(v_s), d'_G(v_r) \geq (2/3+\eta)n$. 
\medskip
\noindent

Let $F_0 := F' = v_1 \ldots v_s$.
Suppose that for some $0 \leq i \leq  1/3\eta $, we have  defined folded paths $F_0, \ldots, F_i$ such that
$F_i := v_1 \ldots v_{s+i}$ where for all $1 \leq j \leq i$ we have $v_{s+j} \in N_{F_i}(v_s)$
and $d'_G(v_{s+i}) \geq (1/3+i\eta)n$.
So $k_{s+j} = s$ for all $2 \leq j \leq i$.

By choosing an arbitrary $v_{s+1} \in N_G ^2 (v_{s-1}, v_s) \setminus V(F')$ we can find our desired folded path $F_1=v_1 \dots v_s v_{s+1}$. (Such a vertex $v_{s+1}$ exists by (\ref{d'G}) and (\ref{vs6}) and since $d'_G$ is $(\eta,n)$-good.)
Thus, we may assume that $i \geq 1$.

If there exists $1 \leq j \leq i$ with $d'_G(v_{s+j}) \geq (2/3+\eta)n$, we are done by setting $r := s+j \geq 1$ and $F'' := F_j$.
Otherwise, Proposition~\ref{2ndnbrhd}(i) implies that
\begin{eqnarray*}
|N^2_G(v_s,v_{s+i}) \setminus V(F_i)| &\stackrel{(\ref{VF'})}{\geq}& (d_G(v_s)+d_G(v_{s+i})) - n - (4/\eta+i)\\
&\stackrel{(\ref{d'G})}{\geq}& (1-\gamma)(d'_G(v_s) + d'_G(v_{s+i}))-n - 5/\eta\\
&\stackrel{(\ref{vs6})}{\geq}& (1-\gamma)(i+1)\eta n -\gamma n -5/\eta \geq i\eta n.
\end{eqnarray*}
Since $i \leq 1/3\eta$, Proposition~\ref{vertexdeg}(ii) implies that $N^2_G(v_s,v_{s+i}) \setminus V(F_i)$ contains a vertex $v_{s+i+1}$ with
$$
d'_G(v_{s+i+1}) \geq (1/3 + (i+1)\eta)n.
$$
Therefore (\ref{need}) implies that $F_{i+1} := v_1 \ldots v_{s+i+1}$ is a folded path with $k_{s+i+1}=s$.

After  $r-s \leq  1/3\eta+1$ steps we obtain $F'' := F_{r-s} = v_1 \ldots v_{r}$, where $v_{r} \in N_{F''}(v_s)$ and
\begin{equation}\label{vt}
d'_G(v_{r}) \geq (1/3 + (1/3\eta+1)\eta)n=(2/3+\eta)n.
\end{equation}
Now (\ref{VF'}) implies that
$
4 \leq r \leq 4/\eta +  1/3\eta +1 \leq 49/(10\eta)
$.
This completes the proof of Step~2.

\medskip
\noindent
\emph{Step 3.}
Obtaining $F$.

\medskip
\noindent
Let $a,b \in V(G)_{\eta/2}$ be arbitrary.
By Proposition~\ref{2ndnbrhd}(i), we have that
$$
|N^2_G(a,b) \setminus V(F'')| \geq \left(\frac{1}{3} + \eta\right)n - \left(\frac{4}{\eta} + \frac{1}{3\eta} + 1 \right) \geq \left(\frac{1}{3} + \frac{\eta}{2}\right) n.
$$
Proposition~\ref{vertexdeg}(i) implies that there exists a set $K(a,b) \subseteq N^2_G(a,b) \setminus V(F'')$ with $|K(a,b)| \geq \eta n/2$, such that for each $x \in K(a,b)$ we have $d'_G(x) \geq (2/3+\eta)n$. 
Furthermore, (\ref{d'G}) implies that $K(a,b) \subseteq V(G)_{\eta/2}$.
Observe that $\lbrace v_s,v_r \rbrace \subseteq V(G)_{\eta/2}$.
So, for each $1 \leq j \leq 4$, we can find a distinct vertex $v_{r+j}$ so that
$v_{r+1} \in K(v_s,v_r)$,
and $v_{r+j} \in K(v_{r+j-2},v_{r+j-1})$
for all $2 \leq j \leq 4$.
Let $k_1, \ldots, k_r$ be the ordering of $F''$.
Then $F := v_1 \ldots v_{r+4}$ is a folded path with ordering $k_1, \ldots, k_{r+4}$, where 
$$
k_{r+1} := s,\ \  k_{r+2} := r,\ \  k_{r+3} := r+1,\ \ k_{r+4} := r+2.
$$ 
To see this, observe that $k_i\in \lbrace i-2,k_{i-1} \rbrace$ for all $r+1 \leq i \leq r+4$ since $k_r=s < r$.

Let $t := r+4$.
Then Step~2 implies that $8 \leq t \leq 49/(10\eta) + 4 \leq 5/\eta$, as required.
Finally, $d'_G(v_{t-1}), d'_G(v_t) \geq (2/3+\eta)n$, as required.
\end{proof}

Note that in Step~3 of the proof of Lemma~\ref{findfpath} we add $4$ vertices only to ensure that the folded path $F$ has length at least $8$ (this property will be useful later on). In particular, we could have guaranteed that the last two vertices of $F$ have `large' degree by only adding a single vertex in this final step.

\subsection{The proof of Lemma~\ref{almostpath}}

The next lemma shows that, given a suitable framework in the reduced graph $R$ of $G$, we can find a tail-heavy square path $P$ such that, in two (or three) given clusters, there are many pairs (or triples) of vertices that can be added to the start of $P$ to extend the square path.
The necessary framework is a folded path whose first three vertices correspond to these given clusters, and whose final two vertices have large core degree. (Recall that the $\alpha$-core degree $d^\alpha_{R,G}$ is defined in Section~\ref{coredegree}.)
The proof is essentially just an application of Lemmas~\ref{partialembed} and~\ref{foldedpath}; its length is due to technical issues.

\begin{lemma}\label{babyhorror}
Let $n,L \in \mathbb{N}$ and suppose that $0 < 1/n \ll 1/L \ll \eps \ll c \ll d \ll \eta \ll 1$.
Let $R$ be a graph with $V(R) = [L]$.
Let $G$ be a graph on $n$ vertices with vertex partition $V_0,V_1, \ldots, V_{L}$ such that $|V_0| \leq \eps n$ and so that there exists $m \in \mathbb{N}$ with $|V_i|=(1 \pm \eps)m$ for all $1 \leq i \leq L$. Further, suppose that  $G[V_i,V_j]$ is $(\eps,d)$-regular whenever $ij \in E(R)$.
Let $F = i_1 \ldots i_t$ be a folded path in $R$ with $8 \leq t \leq 5/\eta$ such that $d^{2c}_{R,G}(i_{t-1}), d^{2c}_{R,G}(i_t) \geq (2/3+\eta)L$.
Then 
\begin{itemize}
\item[(1)] $G$ contains an $\eta$-tail-heavy square path $Q$ with $|Q| \leq 11/\eta$, and sets $A_k \subseteq V_{i_k} \setminus V(Q)$ for $k=1,2$  with $|A_k| \geq cm$, such that for any
$z_k \in A_k$ where $z_1z_2 \in E(G)$, we have that $z_1z_2Q$ is a square path;
\item[(2)] $G$ contains an $\eta$-tail-heavy square path $P$ with $|P| \leq 11/\eta$, and sets $B_k \subseteq V_{i_k} \setminus V(P)$ for $k=1,2,3$  with $|B_k| \geq cm$, such that for any
$z_k \in B_k$ where $z_1z_2z_3$ is a triangle in $G$, we have that $z_1z_2z_3P$ is a square path. Further, for any $z_2 \in B_2$, $z_3 \in B_3$ such that $z_2z_3 \in E(G)$ we have that $z_2z_3P$ is a square path.
\end{itemize}
Moreover, neither $P$ nor $Q$ contain any vertices from $V_0$.
\end{lemma}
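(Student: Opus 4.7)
I prove (1) in detail; part (2) follows the same approach with a different choice of the auxiliary vertex set. First apply Lemma~\ref{foldedpath} to obtain a square path $H = x_1 x_2 \ldots x_p$ with $t \leq p \leq 2t+1 \leq 11/\eta$ and a function $g : [p] \to [t]$ such that $g(j) = j$ for $j \in \{1,2,3\}$, $g(\{p-1,p\}) = \{t-1,t\}$, and the map $f : V(H) \to V(R)$ defined by $f(x_j) := i_{g(j)}$ is a graph homomorphism. I then apply Lemma~\ref{partialembed} to $H$ with $X := \{x_3, \ldots, x_{p-2}\}$ and $Y := \{x_1, x_2, x_{p-1}, x_p\}$, producing an injection $\tau : X \to V(G)$ and candidate sets $C_{x_1}, C_{x_2}, C_{x_{p-1}}, C_{x_p}$ in the appropriate clusters, each of size at least $cm$. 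Setting $Q := \tau(x_3) \ldots \tau(x_p)$, $A_1 := C_{x_1}$ and $A_2 := C_{x_2}$, the prepending property in (1) is immediate from Lemma~\ref{partialembed}(ii) since $A_1 \subseteq N_G(\tau(x_3))$ and $A_2 \subseteq N_G(\tau(x_3)) \cap N_G(\tau(x_4))$. The bound $|Q| \leq 11/\eta$ is clear, and $V(Q) \cap V_0 = \emptyset$ because $\tau(x) \in V_{f(x)}$ with $f(x) \geq 1$.

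The main obstacle is ensuring $Q$ is $\eta$-tail-heavy, i.e.\ that the images $\tau(x_{p-1})$ and $\tau(x_p)$ lie in the sets $W_{t-1} \subseteq V_{i_{t-1}}$ and $W_t \subseteq V_{i_t}$ of vertices of $G$-degree at least $(2/3+\eta)n$. The core-degree hypothesis gives $|W_{t-1}|, |W_t| \geq 2cm$, but nothing in Lemma~\ref{partialembed} forces the candidate sets $C_{x_{p-1}}, C_{x_p}$ (of size only $\geq cm$) to meet these sets.

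To resolve this, I would pre-process $G$ before invoking Lemma~\ref{partialembed}. Let $j_1 := i_{g(p-3)}$ and $j_2 := i_{g(p-2)}$. By Fact~\ref{supereasy}, at most $\eps m$ vertices of $V_{j_1}$ fail to have at least $(d-\eps)|W_{i_{g(p-1)}}|$ neighbours in $W_{i_{g(p-1)}}$; similarly for $V_{j_2}$ applied to both $W_{i_{g(p-1)}}$ and $W_{i_{g(p)}}$. Discard these atypical vertices to obtain $V_{j_1}^*, V_{j_2}^*$. By Proposition~\ref{superslice2} the pairs of $G$ involving $V_{j_1}^*, V_{j_2}^*$ remain $(2\eps, d-\eps)$-regular, which is comfortably within the hierarchy required by Lemma~\ref{partialembed}. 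Applying Lemma~\ref{partialembed} to this modified graph, $\tau(x_{p-3})$ and $\tau(x_{p-2})$ automatically have many neighbours in $W_{i_{g(p-1)}}$, and $\tau(x_{p-2})$ also in $W_{i_{g(p)}}$. A short greedy step using $(\eps,d)$-regularity of $G[V_{i_{g(p-1)}}, V_{i_{g(p)}}]$ then picks $\tau(x_{p-1}) \in W_{i_{g(p-1)}} \cap N_G(\tau(x_{p-3})) \cap N_G(\tau(x_{p-2}))$, a set of size at least $(d-\eps)^2 \cdot 2cm$; a further typicality argument ensures it has many neighbours in $W_{i_{g(p)}}$, so that $\tau(x_p) \in W_{i_{g(p)}} \cap N_G(\tau(x_{p-2})) \cap N_G(\tau(x_{p-1}))$ is non-empty. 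This yields a tail-heavy square path $Q$ as required.

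For part (2), I would apply exactly the same procedure with $Y := \{x_1, x_2, x_3, x_{p-1}, x_p\}$ and $X := \{x_4, \ldots, x_{p-2}\}$, setting $B_k := C_{x_k}$ for $k = 1, 2, 3$ and $P := \tau(x_4) \ldots \tau(x_p)$. The attachment conditions follow from Lemma~\ref{partialembed}(ii): $B_1 \subseteq V_{i_1}$ has no further constraint because $N_H(x_1) \cap X = \emptyset$; $B_2 \subseteq N_G(\tau(x_4))$ since $N_H(x_2) \cap X = \{x_4\}$; and $B_3 \subseteq N_G(\tau(x_4)) \cap N_G(\tau(x_5))$ since $N_H(x_3) \cap X = \{x_4, x_5\}$. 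Hence any triangle $z_1 z_2 z_3 \in B_1 \times B_2 \times B_3$ can be prepended to $P$ to form a square path, and any edge $z_2 z_3$ with $z_2 \in B_2$, $z_3 \in B_3$ likewise produces the square path $z_2 z_3 P$.
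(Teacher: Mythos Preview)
Your overall architecture matches the paper: apply Lemma~\ref{foldedpath} to get the square path $H$ and the homomorphism $f$, then apply Lemma~\ref{partialembed} with the first two (or three) and last two vertices of $H$ in $Y$, and take $A_k$ (resp.\ $B_k$) to be the candidate sets $C_{x_k}$. The prepending verification you give for both parts is correct.

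The gap is in your tail-heaviness argument. Your pre-processing guarantees that $\tau(x_{p-3})$ and $\tau(x_{p-2})$ each have at least $(d-\eps)|W_{i_{g(p-1)}}|$ neighbours in $W_{i_{g(p-1)}}$, but this does \emph{not} give the bound $(d-\eps)^2\cdot 2cm$ for the triple intersection $W_{i_{g(p-1)}}\cap N_G(\tau(x_{p-3}))\cap N_G(\tau(x_{p-2}))$. To get that, you would need $\tau(x_{p-2})$ to be typical for the set $S:=W_{i_{g(p-1)}}\cap N_G(\tau(x_{p-3}))$, but $S$ depends on $\tau(x_{p-3})$, which is only determined \emph{after} Lemma~\ref{partialembed} has already fixed $\tau(x_{p-2})$. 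Your pre-processing, done before the embedding, cannot anticipate this particular $S$. The same problem recurs when you try to choose $\tau(x_p)$ in $W_{i_{g(p)}}\cap N_G(\tau(x_{p-2}))\cap N_G(\tau(x_{p-1}))$.

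The paper sidesteps this entirely. It does not try to force $\tau(x_{p-1}),\tau(x_p)$ into the high-degree sets. Instead, it keeps $x_{p-1},x_p$ in $Y$, obtains candidate sets $C_{p-1}\subseteq V_{i_{t-1}}$ and $C_p\subseteq V_{i_t}$ (each of size $\ge 2c(1-\eps)m$) from Lemma~\ref{partialembed}, and then \emph{extends} the embedded path $P_1:=\tau(x_4)\ldots\tau(x_{p-2})$ by five further vertices. The key observation (Proposition~\ref{nbrpath}) is that the edge $i_{t-1}i_t$ of $F$ lies in a triangle $i_si_{t-1}i_t$; hence $V_{i_s},V_{i_{t-1}},V_{i_t}$ form a regular triple. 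Proposition~\ref{triangleembed} then supplies a square path $P_2=y_1y_2y_3y_4y_5\in C_{p-1}\times C_p\times V_{i_s}\times C^{t-1}\times C^t$, where $C^{t-1},C^t$ are the high-degree subsets of $V_{i_{t-1}},V_{i_t}$. The concatenation $P:=P_1P_2$ is the required tail-heavy square path, and $|P|=p\le 11/\eta$. This replaces your two problematic greedy steps by a single application of Proposition~\ref{triangleembed}, which handles all the intersection issues at once inside a regular triple.
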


\begin{proof}
We will only prove (2) since the proof of (1) is very similar.
Apply Lemma~\ref{foldedpath} with $t$ playing the role of $n$ to obtain a square path $P' := x_1 \ldots x_p$ where $p$ satisfies
\begin{equation}\label{pwhat}
8 \leq t \leq p \leq 2t+1 \leq 10/\eta+1
\end{equation}
and a mapping $g : [p] \rightarrow [t]$ such that $i_{g(j)}i_{g(k)} \in E(F)$ whenever $x_jx_k \in E(P')$; and $g(1)=1$, $g(2)=2$, $g(3)=3$ and $g(\lbrace p-1,p \rbrace ) = \lbrace t-1, t \rbrace$.
Let $f : V(P') \rightarrow V(F)$ be such that $f(x_j) = i_{g(j)}$ for all $1 \leq j \leq p$.
So $f(x)f(y) \in E(F)$ whenever $xy \in E(P')$.
Moreover,
\begin{equation}\label{gwhere}
f(x_1) = i_1, \ \ f(x_2) = i_2, \ \ f(x_3) = i_3 \ \ \text{ and }\ \  f(\lbrace x_{p-1},x_p \rbrace ) = \lbrace i_{t-1}, i_t \rbrace.
\end{equation}

Let
\begin{equation}\label{YX}
Y := (P')^-_3 \cup (P')^+_2 = \lbrace x_1,x_2,x_3, x_{p-1},x_p \rbrace\ \ \text{ and let }\ \ X := V(P') \setminus Y.
\end{equation}
Observe that $X \neq \emptyset$ by (\ref{pwhat}).
Then Lemma~\ref{partialembed} with $G\setminus V_0, R,P',X,Y,2c,f$ playing the roles of $G,R,H,X,Y,c,f$
 implies that there exists an injective mapping $\tau : X \rightarrow V(G)$ with $\tau(x_j) \in V_{f(x_j)}$ for all $4 \leq j \leq p-2$, such that there exist sets
\begin{equation}\label{Ck}
C_k \subseteq V_{f(x_k)} \setminus \tau(X)\ \ \text{ for all }\ \ k \in \lbrace 1,2,3,p-1,p \rbrace
\end{equation}
such that
\begin{itemize}
\item[(i)] if $x,x' \in X$ and $xx' \in E(P')$ then $\tau(x)\tau(x') \in E(G)$;
\item[(ii)] for all $k \in \lbrace 1,2,3,p-1,p \rbrace$ we have that $C_k \subseteq N_G(\tau(x))$ for all $x \in N_{P'}(x_k) \cap X$;
\item[(iii)] $|C_k| \geq 2c(1-\eps)m$ for all $k \in \lbrace 1,2,3,p-1,p \rbrace$.
\end{itemize}
Property (i) implies that $\tau(X)$ spans a square path $P_1 := \tau(x_4) \ldots \tau(x_{p-2})$ in $G$ (which contains at least three vertices by (\ref{pwhat})).
We would like to use $P_1$ to find an $\eta$-tail-heavy path.
To do this, we will find a short square path whose first two vertices lie in $C_{p-1},C_p$ respectively, and whose last two vertices both lie in $V(G)_\eta$.
Since $d^{2c}_{R,G}(i_t), d^{2c}_{R,G}(i_{t-1}) \geq (2/3+\eta)L$, there exist sets $B^{t-1} \subseteq V_{i_{t-1}}$ and $B^t \subseteq V_{i_t}$ such that $|B^{t-1}|,|B^t| \geq 2c(1-\eps)m$ and $B^{t-1} \cup B^t \subseteq V(G)_\eta$. 
Let $C^{t-1} := B^{t-1} \setminus \tau(X)$ and define $C^t$ similarly.
Then
$$
|C^{t-1}|,|C^t| \geq 2c(1-\eps)m - |\tau(X)| \geq 2c(1-\eps)m - p + 5 \stackrel{(\ref{pwhat})}{\geq} 2c(1-\eps)m - 10/\eta + 4 \geq cm.
$$
Recall that $i_{t-1}i_t \in E(F)$ by the definition of a folded path.
Proposition~\ref{nbrpath} implies that there exists $i_s \in N^2_F(i_{t-1},i_t)$, i.e. $i_si_{t-1}i_t$ is a triangle in $F$.

We will assume that $f(x_{p-1})=i_{t-1}$ and so $f(x_p) = i_t$ by (\ref{gwhere}) (the other case, when $f(x_{p-1}) = i_t$ and $f(x_{p})=i_{t-1}$, is almost identical).
Then (\ref{Ck}) implies that $C_{p-1} \subseteq V_{f(x_{p-1})} \setminus \tau(X) = V_{i_{t-1}} \setminus \tau(X)$.
Similarly $C_p \subseteq V_{i_t} \setminus \tau(X)$.
Proposition~\ref{triangleembed} applied with $V_{i_{t-1}},V_{i_t},V_{i_s},C_{p-1},C_p,C^{t-1},C^t, \tau(X)$ playing the roles of $X_1,X_2,X_3,A_1,A_2,B_1,B_2,W$ implies that $G$ contains
a square path $P_2 \in C_{p-1} \times C_p \times V_{i_s} \times C^{t-1} \times C^t$.
Write $P_2 := y_1y_2y_3y_4y_5$.
Observe that, by construction, $V(P_2) \cap \tau(X) = \emptyset$, and $P_2$ is $\eta$-tail-heavy.
We claim that
$$
P := P_1P_2 = \tau(x_4) \ldots \tau(x_{p-2}) y_1y_2y_3y_4y_5
$$
is an $\eta$-tail-heavy square path.
Since $P_1$ and $P_2$ are vertex-disjoint square paths each containing at least two vertices (by (\ref{pwhat})), it suffices to show that the necessary edges between $P_1$ and $P_2$ are present, i.e. that the necessary edges between $\tau(x_{p-3}),\tau(x_{p-2})$ and $y_1,y_2$ are present.
Observe that $N_{P'}(x_{p-1}) \cap X = \lbrace x_{p-3},x_{p-2} \rbrace$.
Then (ii) implies that $y_1 \in C_{p-1} \subseteq N_G(\tau(x_{p-3})) \cap N_G((\tau(x_{p-2}))$, as required.
Similarly $y_2 \in C_{p} \subseteq N_G((\tau(x_{p-2}))$, as required.
So $P$ is a square path. Further, by construction, $P$ is disjoint from $V_0$.

Note further that
$$
|P| = |P_1| + |P_2| = p \stackrel{(\ref{pwhat})}{\leq} 11/\eta.
$$
Let $B_k := C_k \setminus V(P_2)$ for $k=1,2,3$.
Then (\ref{Ck}) implies that
$$
B_k \subseteq V_{f(x_k)} \setminus (\tau(X) \cup V(P_2)) \stackrel{(\ref{gwhere})}{=} V_{i_k} \setminus V(P).
$$
Moreover, (iii) implies that, for $k=1,2,3$, we have $|B_k| \geq 2c(1-\eps)m - |P_2| =2c(1-\eps)m - 5 \geq cm$.
Let $z_k \in B_k$ for $k=1,2,3$ such that $z_1z_2z_3$ is a triangle in $G$.
We must show that $z_1z_2z_3P$ is a square path.
That is, we need to show that $z_2 \in N_G(\tau(x_4))$ and $z_3 \in N_G(\tau(x_4)) \cap N_G(\tau(x_5))$.
But, since $z_k \in C_k$ for all $k=1,2,3$, this is implied by (ii). Similarly, given any $z_2 \in B_2$, $z_3 \in B_3$ such that $z_2z_3 \in E(G)$, (ii) implies that $z_2z_3P$ is a square path.
\end{proof}

In the next lemma, given a small collection of folded paths, we obtain a small collection of short square paths, with certain useful properties.
We find a pair of square paths in $G$ corresponding to each of the $\ell$ triangles $T_i = (i,1)(i,2)(i,3)$ in the reduced graph $R$ of $G$.
The first, $P_i$, is tail-heavy, and there are many pairs of vertices in $(i,1) \times (i,2)$ which can precede $P_i$.
The second, $P_{\ell+i}$, is head-heavy, and there are many pairs of vertices in $(i,3) \times (i,1)$ which can succeed $P_{\ell+i}$.
The proof is by repeated application of Lemma~\ref{babyhorror}.

In the proof of Lemma~\ref{almostpath}, we will find a square path $Q_i$ containing most of the vertices in $T_i$ which will be sandwiched between $P_{\ell+i}$ and $P_i$.
In order to connect $Q_i$ with $P_i$ and $P_{\ell+i}$ we need many pairs of possible start- and endpoints.

\begin{lemma}\label{horror}
Suppose that  $0 < 1/n \ll 1/\ell \ll \eps \ll c \ll d \ll \eta \ll 1$.
Let $R$ be a graph with $V(R) = [\ell] \times [3]$.
Suppose that $G$ is a graph on $n$ vertices with vertex partition $\lbrace V_0 \rbrace \cup \lbrace V_{i,j} : (i,j) \in [\ell] \times [3] \rbrace$ such that $|V_0| \leq \eps n$ and $|V_{i,j}| =: m$ for all $(i,j) \in [\ell] \times [3]$, and $G[V_{i,j},V_{i',j'}]$ is $(\eps,d)$-regular whenever $(i,j)(i',j') \in E(R)$.
Define $\mathcal{V} := \lbrace (i,j) : d^{2c}_{R,G}(V_{i,j}) \geq (2/3+\eta)3\ell \rbrace$.
Let $F_1, \ldots, F_{\ell}, F'_1, \ldots, F'_{\ell}$ be a collection of folded paths in $R$ such that, for all $1 \leq i \leq \ell$ we have
\begin{itemize}
\item[(F1)] $F_i := v_{i,1} \ldots v_{i,t_i}$ and  $F'_i := u_{i,1} \ldots u_{i,s_i}$ where $8 \leq s_i, t_i \leq 5/\eta$;
\item[(F2)]   $\lbrace v_{i,t_{i}-1},v_{i,t_i}, u_{i,s_{i}-1}, u_{i,s_i} \rbrace \subseteq \mathcal{V}$;
\item[(F3)] for all $1 \leq j \leq 3$ we have $v_{i,j} = (i,j)$ and 
there exists $\sigma_i \in \lbrace (12), (23) \rbrace$ such that $u_{i,\sigma_i(1)}=v_{i,1}$,  $u_{i,\sigma_i(2)}=v_{i,2}$ and $u_{i,\sigma_i(3)}=v_{i,3}$.
\end{itemize}
Then $G$ contains a collection $\mathcal{P} := \lbrace P_1, \ldots, P_{2\ell} \rbrace$ of vertex-disjoint square paths such that, for all $1 \leq s \leq \ell$, the following hold.
\begin{itemize}
\item[(P1)] $|P_s| , |P_{\ell+s}| \leq 11/\eta$;
\item[(P2)] $P_s$ is $\eta$-tail-heavy and $P_{\ell+s}$ is $\eta$-head-heavy;
\item[(P3)] for  $k =1,2$, there are sets $A^s_k \subseteq V_{s,k} $ such that $|A^s_k| \geq cm/2$, with the following property:
for any $x_k \in A^s_k$ where $x_1x_2 \in E(G)$ we have that $x_1x_2P_s$ is a square path;
\item[(P4)] for  $j =3,1$, there are sets $B^s_j \subseteq V_{s,j} $ such that $|B^s_j| \geq cm/2$, with the following property:
for any $y_j \in B^s_j$ where $y_3y_1 \in E(G)$ we have that $P_{\ell+s}y_3y_1$ is a square path.
\end{itemize}
\end{lemma}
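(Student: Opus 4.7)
The plan is to construct the $2\ell$ square paths one at a time by repeated application of Lemma~\ref{babyhorror} to the folded paths $F_s$ and $F_s'$, letting the permutation $\sigma_s$ dictate which part of the lemma to invoke. For each $1 \leq s \leq \ell$, the path $P_s$ comes from Lemma~\ref{babyhorror}(1) applied to $F_s$: since (F3) forces the first three clusters of $F_s$ to be $(s,1), (s,2), (s,3)$, this immediately yields a short $\eta$-tail-heavy square path together with anchor sets $A^s_1 \subseteq V_{s,1}$ and $A^s_2 \subseteq V_{s,2}$ of size at least $cm$, which verifies (P1)--(P3).

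To build the head-heavy path $P_{\ell+s}$, I will first construct a tail-heavy square path $Q_s$ whose first two clusters are $(s,1)$ and $(s,3)$, with anchor sets $B^s_1 \subseteq V_{s,1}$ and $B^s_3 \subseteq V_{s,3}$ such that $y_1 y_3 Q_s$ is a square path whenever $y_1 \in B^s_1$, $y_3 \in B^s_3$, and $y_1 y_3 \in E(G)$. Setting $P_{\ell+s} := Q_s^*$ then inherits (P1)--(P2) from $Q_s$, and reversing the anchor condition is exactly (P4). When $\sigma_s = (23)$, the folded path $F_s'$ begins with the clusters $(s,1), (s,3), (s,2)$, so Lemma~\ref{babyhorror}(1) applied to $F_s'$ produces $Q_s$ directly. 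When $\sigma_s = (12)$, the folded path $F_s'$ instead begins with $(s,2), (s,1), (s,3)$; here I invoke Lemma~\ref{babyhorror}(2) and use its ``moreover'' clause, whose sets $B_2 \subseteq V_{s,1}$ and $B_3 \subseteq V_{s,3}$ provide exactly the anchor pairs in $V_{s,1} \times V_{s,3}$ we need. The handling of these two cases via distinct parts of Lemma~\ref{babyhorror} is the one genuinely non-mechanical step in the argument.

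To make the resulting $2\ell$ square paths vertex-disjoint, the construction is iterative: before each application of Lemma~\ref{babyhorror}, I move the vertices of all previously constructed $P_k$ into the exceptional set $V_0$ of the lemma's hypothesis. By (P1) each path contains at most $11/\eta$ vertices, so in total at most $22\ell/\eta$ vertices are relocated. Under the hierarchy $0 < 1/n \ll 1/\ell \ll \eps \ll c \ll d \ll \eta \ll 1$, this quantity is much smaller than $\eps n$ and much smaller than $\eps m$ per cluster; hence the size bound $|V_0| \leq \eps n$, the cluster-size bound $|V_{i,j}| = (1 \pm \eps)m$, the $(\eps,d)$-regularity of relevant pairs (via Proposition~\ref{superslice2}), and the $2c$-core-degree hypothesis on $v_{i,t_i-1}, v_{i,t_i}, u_{i,s_i-1}, u_{i,s_i}$ all remain valid at every step, with only mildly degraded constants. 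This degradation is absorbed by the factor of two in the conclusion's bound $|A^s_k|, |B^s_j| \geq cm/2$. The remaining work is therefore just bookkeeping to confirm that all cumulative losses stay within the slack of Lemma~\ref{babyhorror}'s hypotheses, which follows routinely from the parameter hierarchy.
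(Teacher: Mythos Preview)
Your proposal is correct and follows essentially the same approach as the paper's proof: an iterative construction applying Lemma~\ref{babyhorror} to each folded path in turn, with the three-way case split (tail-heavy $P_s$ via part~(1) on $F_s$; head-heavy $P_{\ell+s}$ via part~(1) on $F_s'$ when $\sigma_s=(23)$, and via the ``moreover'' clause of part~(2) when $\sigma_s=(12)$), taking $P_{\ell+s}:=Q_s^*$, and absorbing previously built paths into the exceptional set so that Proposition~\ref{superslice2}(i) preserves regularity with halved parameters. The only cosmetic difference is that the paper explicitly tracks the degraded constants ($2\eps$, $d/2$, $c/2$, and the drop from $2c$-core degree to $c$-core degree) rather than appealing to ``mild degradation,'' but your bookkeeping sketch is accurate.
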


\begin{proof}
Suppose, for some $1 \leq r \leq 2\ell$, we have obtained a collection $\mathcal{P}' = \lbrace P_1, \ldots, P_{r-1} \rbrace$ of vertex-disjoint square paths, such that each $P_i$ with $1 \leq i \leq r-1$ satisfies the required properties.
We will find a suitable embedding of $P_r$ into $G$.

Observe that
\begin{equation}\label{nmL2}
3m\ell \leq n = 3m\ell + |V_0| \leq 3m\ell + \eps n \leq 4m\ell.
\end{equation}
For $(i,j) \in [\ell] \times [3]$, let $V_{i,j}' := V_{i,j} \setminus \bigcup_{P \in \mathcal{P}'}V(P)$.
Then
\begin{equation}\label{Vj'size}
|V_{i,j} \setminus V_{i,j}'| \leq \frac{11}{\eta}(r-1) \leq \frac{22}{\eta}\ell \leq \frac{22\eps^2}{\eta}\frac{n}{\ell} \stackrel{(\ref{nmL2})}{\leq} \frac{88\eps^2}{\eta}m \leq \frac{\eps m}{3}.
\end{equation}
Proposition~\ref{superslice2}(i) implies that $G[V_{i,j}',V_{i',j'}']$ is $(2{\eps},d/2)$-regular whenever $(i,j)(i',j') \in E(R)$. 

Define $V'_0$ so that $\lbrace V'_0 \rbrace \cup \lbrace V'_{i,j} : (i,j) \in [\ell] \times [3] \rbrace$ is a partition of $V(G)$. Thus, (\ref{Vj'size}) implies that 
$|V'_0|\leq \eps n +\eps m \ell \leq 2 \eps n$. We can view the vertices $(i,j)$ in $R$ as corresponding to the clusters $V'_{i,j}$. In particular, if $d^{2c}_{R,G}(V_{i,j}) \geq (2/3+\eta)3\ell$ then (\ref{Vj'size}) implies
that $d^{c}_{R,G}(V'_{i,j}) \geq (2/3+\eta)3\ell$. 

We will consider three cases  depending on the properties required of $P_r$.

\medskip
\noindent
\textbf{Case 1.}
\emph{$1 \leq r \leq \ell$.}

\medskip
\noindent
Apply Lemma~\ref{babyhorror} to $G$ with $V'_0,V_{i,j}',2{\eps},c/2,d/2,\eta,F_r$ playing the roles of $V_0,V_{i},\eps,c,d,\eta,F$.
Thus Lemma~\ref{babyhorror}(1) implies that $G$ contains an $\eta$-tail-heavy square path $P_r$ with $|P_r| \leq 11/\eta$ and sets $A^s_k \subseteq V_{s,k}' \setminus V(P_r)$ for $k=1,2$ with $|A^s_k|\geq cm/2$ such that for any $x_k \in A^s_k$ where $x_1x_2 \in E(G)$, we have that $x_1x_2P_r$ is a square path.
Note that $P_r$ shares no vertex with any square path we have previously embedded (since it is disjoint from $V'_0$).
Therefore $P_r$ has the required properties.

\medskip
\noindent
\textbf{Case 2.}
\emph{$\ell+1 \leq r \leq 2\ell$ and $\sigma_{r-\ell} = (23)$.}

\medskip
\noindent
Let $s := r-\ell$.
The first three vertices in $F'_s$ are $(s,1),(s,3),(s,2)$ in that order.
Apply Lemma~\ref{babyhorror} to $G$ with $V'_0,V_{i,j}',2{\eps},c/2,d/2,\eta,F'_s$ playing the roles of $V_0,V_{i},\eps,c,d,\eta,F$.
Thus Lemma~\ref{babyhorror}(1) implies that $G$ contains an $\eta$-tail-heavy square path $Q_s$ with $|Q_s| \leq 11/\eta$ and sets $B^s_{j} \subseteq V_{s,j} \setminus V(Q_s)$ for $j=1,3$ with $|B^s_j|\geq cm/2$ such that for any $y_j \in B^s_{j}$ where $y_1y_3 \in E(G)$, we have that $y_1y_3Q_s$ is a square path.
Note that $Q_s$ shares no vertex with any square path we have previously embedded (since it is disjoint from $V'_0$).
Finally, observe that $P_r := Q_s^*$ is precisely the required square path.

\medskip
\noindent
\textbf{Case 3.}
\emph{$\ell+1 \leq r \leq 2\ell$ and $\sigma_{r-\ell} = (12)$.}

\medskip
\noindent
Let $s := r - \ell$.
The first three vertices of $F'_s$ are $(s,2),(s,1),(s,3)$ in that order.
Apply Lemma~\ref{babyhorror} to $G$ with $V'_0,V_{i,j}',2{\eps},c/2,d/2,\eta,F'_s$ playing the roles of $V_0,V_{i},\eps,c,d,\eta,F$.
Thus Lemma~\ref{babyhorror}(2) implies that $G$ contains an $\eta$-tail-heavy square path $Q_s$ with $|Q_s| \leq 11/\eta$ and sets $B^s_{j} \subseteq V_{s,j} \setminus V(Q_s)$ for $j=1,3$ such that for any $y_j \in B^s_{j}$ where $y_1y_3\in E(G)$, we have that $y_1y_3Q_s$ is a square path.
Note that $Q_s$ shares no vertex with any square path we have previously embedded.
Finally, observe that $P_r := Q_s^*$ is precisely the required square path (and $B^s_1,B^s_3$ the required sets).
\end{proof}

The final step in this section is to combine Theorem~\ref{trianglepack} and Lemmas~\ref{findfpath} and~\ref{horror} to prove Lemma~\ref{almostpath}.

\medskip
\noindent
\emph{Proof of Lemma~\ref{almostpath}.}
Without loss of generality, we may suppose that $0 < \eps \ll \eta \ll 1$ since proving the lemma for $\eps' \leq \eps$ implies the lemma for $\eps$.
Choose further constants $d, \alpha$ with $\eps \ll d \ll \alpha \ll \eta$.
Apply Theorem~\ref{trianglepack} to obtain $L_0 \in \mathbb{N}$ such that every $(\eta/2)$-good graph on $L \geq L_0$ vertices contains a perfect $K_3$-packing.
Without loss of generality, we may assume that $1/L_0 \ll \eps$, and that the conclusion of Lemma~\ref{findfpath} holds with $L_0/2,7d,\eta/4$ playing the roles of $n,\gamma,\eta$.
Apply Lemma~\ref{reg} with parameters $\eps' := \eps^5,L_0$ to obtain $M,n_0$.
Without loss of generality, assume that $1/n_0 \ll 1/M \ll 1/L_0$.
We therefore have the hierarchy
$$
0 < 1/n_0 \ll 1/M \ll 1/L_0 \ll \eps \ll d \ll \alpha \ll \eta \ll 1.
$$

Let $G$ be a graph of order $n \geq n_0$ such that $G$ is $\eta$-good.
Apply the Regularity lemma (Lemma~\ref{reg}) to $G$ with parameters $\eps',d,L_0$ to obtain clusters $V_1, \ldots, V_{L}$ of size $m$, an exceptional set $V_0$, a pure graph $G'$ and a reduced graph $R$.
So $|R|=L$ and $L_0 \leq L \leq M$ and $|V_0| \leq \eps' n$;
and $G'[V_j,V_{j'}]$ is $(\eps',d)$-regular whenever $jj' \in E(R)$.
By Lemma~\ref{reduced}(ii), $d^\alpha_{R,G}$ and $R$ are both $(\eta/2,L)$-good.
Moreover, Lemma~\ref{reduced}(i) implies that, for all $j \in V(R)$,
\begin{equation}\label{dalphaR}
d_R(j) \geq (1-6d)d^\alpha_{R,G}(j).
\end{equation}
Theorem~\ref{trianglepack} implies that $R$ contains a perfect $K_3$-packing $\mathcal{T}$.
So there exists an integer $\ell$ with
\begin{equation}\label{Lell}
0 \leq L-3\ell \leq 2
\end{equation}
so that $\mathcal{T} := \lbrace T_1, \ldots, T_\ell \rbrace$ contains exactly $\ell$ triangles.
Let $R' := R[V(\mathcal{T})]$.
Then $\mathcal{T}$ is a $2$-regular spanning subgraph of $R'$.
We have that
\begin{equation}\label{nval}
n = mL + |V_0| \leq mL + \eps' n \stackrel{(\ref{Lell})}{\leq} m(3\ell+2) + \eps' n\ \ \text{ and so }\ \ n \leq 4m\ell.
\end{equation}
Relabel the vertices in $R'$ so that the $i$th triangle of $\mathcal{T}$ has vertex set $T_i := \lbrace(i,1),(i,2),(i,3)\rbrace$.
So $V(R') = [\ell] \times [3]$.
Relabel those clusters of $G$ which correspond to vertices of $R'$ by writing $X_{i,j}$ for the cluster corresponding to $(i,j)$.
Choose $X_0$ so that $\lbrace X_0 \rbrace \cup \lbrace X_{i,j} : (i,j) \in [\ell] \times [3] \rbrace$ is a partition of $V(G)$.
Note that
$|X_0| \leq |V_0|+2m \leq 2\eps'n$.

Notice that since $G'$ is the pure graph of $G$, the definition of core degree implies that for all $X = (i,j) \in V(R')$,
\begin{align}\label{newlab}d^\alpha _{R',G} (X) \geq d^\alpha _{R',G'} (X) \geq d^{\alpha} _{R',G} (X)-(d+\eps)|R'|
\end{align} and
 $(d^\alpha_{R',G}(X))/|R'| = (d^\alpha_{R,G}(X))/|R|$.
Thus, Proposition~\ref{stoneage} implies that $d^\alpha_{R',G'}$ and $R'$ are both $(\eta/4,L)$-good.
Then (\ref{dalphaR}) implies that, for all $X \in V(R')$, we have
$$
d_{R'}(X) \geq d_R(X)-2 \stackrel{(\ref{dalphaR})}{\geq} (1-6d)d_{R,G}^\alpha(X)-2 \geq (1-7d)d_{R',G}^\alpha(X) \stackrel{(\ref{newlab})}{\geq} (1-7d)d_{R',G'}^\alpha(X).
$$
Let
\begin{equation}\label{Xdef}
\mathcal{X} := \lbrace (i,j) \in V(R'): d^\alpha_{R',G'}((i,j)) \geq (2/3+\eta/4)L \rbrace.
\end{equation}
For each $1 \leq i \leq \ell$, apply Lemma~\ref{findfpath} with $R',3\ell,T_i, \eta/4, 7d, d^\alpha_{R',G'}$ playing the roles of $G,n,T,\eta, \gamma, d'_G$, to show that $R'$ contains
a folded path $F_i := v^i_1 \ldots v^i_{t_i}$ where $8 \leq t_i \leq 20/\eta$ and $\lbrace v^i_1, v^i_2, v^i_3 \rbrace = T_i$;
and $\lbrace v^i_{t_i-1}, v^i_{t_i} \rbrace \subseteq \mathcal{X}$.
Without loss of generality, we may assume that
$$
v_j^i = (i,j)\ \ \text{ for }\ \ (i,j) \in [\ell] \times [3].
$$
Moreover, 
for each $1 \leq i \leq \ell$, $R'$ contains
a folded path $F'_i := u^i_1 \ldots u^i_{s_i}$ where $8 \leq s_i \leq 20/\eta$, $\lbrace u^i_1, u^i_2, u^i_3 \rbrace = T_i$;
and $\lbrace u^i_{s_i-1}, u^i_{s_i} \rbrace \subseteq \mathcal{X}$. Further, there exists $\sigma_i \in \{(12), (23)\}$ such that
$$
u_j^i = (i,\sigma_i(j))\ \ \text{ for }\ \ (i,j) \in [\ell] \times [3].
$$
Therefore the properties (F1)--(F3) as stated in Lemma~\ref{horror} hold with $\eta/4$ playing the role of $\eta$.

Therefore Lemma~\ref{horror} applied with $R',G',X_0,X_{i,j},\eps',\alpha/2,d,\eta/4,\mathcal{X},F_i,F_i'$ playing the roles of $R,G,$ $V_0,V_{i,j},\eps,c,d,\eta,\mathcal{V},F_i,F_i'$, implies that $G'$ contains a collection $\mathcal{P} := \lbrace P_1, \ldots, P_{2\ell} \rbrace$ of vertex-disjoint square paths which satisfy (P1)--(P4) with $\eta/4,\alpha/2$ playing the roles of $\eta,c$ respectively.
In particular, (P1) implies that $|P| \leq 44/\eta$ for all $P \in \mathcal{P}$.

For each $1 \leq i \leq \ell$, write $[P_i]^+_2 =: u_iv_i$ and $[P_{\ell+i}]^-_2 =: v_i'u_i'$.
So (P2) implies that
$$
\lbrace u_i,v_i,u_i',v_i' \rbrace \subseteq V(G')_{\eta/4} \subseteq V(G)_{\eta/4}.
$$
Now Lemma~\ref{horror} implies that
\begin{equation}\label{shortpaths}
\sum_{P \in \mathcal{P}}|P|  \leq \frac{88\ell}{\eta} \leq \frac{\eps' m}{2}.
\end{equation}
Let $a,b \in V(G)_{\eta/4}$ be arbitrary.
By Propositions~\ref{2ndnbrhd}(i) and~\ref{largeset}(i),
$$
|N^2_G(a,b)_\eta| \geq \left(\frac{1}{3} + \frac{\eta}{2}\right)n - \frac{n}{3} = \frac{\eta n}{2} \stackrel{(\ref{shortpaths})}{>} \sum_{P \in \mathcal{P}}|P| + 4\ell.
$$
So we can find a collection $\lbrace w_i,x_i,w_i',x_i' : i \in [\ell] \rbrace$ of distinct vertices disjoint from $\mathcal P$ such that $u_iv_iw_ix_i$ is an $\eta$-tail-heavy square path in $G$, and $x_i'w_i'v_i'u_i'$ is an $\eta$-head-heavy square path in $G$.
Therefore, for $1 \leq i \leq \ell$, setting $Q_i := P_iw_ix_i$ and $Q_{\ell+i} := w_i'x_i'P_{\ell+i}$, we have that $\mathcal{Q} := \lbrace Q_1, \ldots, Q_{2\ell} \rbrace$ is a collection of vertex-disjoint square paths in $G$ such that $|Q| \leq 44/\eta+4 \leq 45/\eta$ for all $Q \in \mathcal{Q}$; for all $1 \leq i \leq \ell$ we have that $Q_i$ is $\eta$-tail-heavy and $Q_{\ell+i}$ is $\eta$-head-heavy; and $\mathcal{Q}$ satisfies (P3) and (P4) with $\alpha/2$ playing the role of $c$.
For each $1 \leq i \leq \ell$ and $k=1,2$, let $A_k^i\subseteq X_{i,k}$ be the sets guaranteed by (P3), and for each $j=3,1$, let $B_j^i\subseteq X_{i,j}$ be the sets guaranteed by (P4).
So $|A^i_k|,|B^i_j| \geq \alpha m/4$.

For $(i,j) \in [\ell] \times [3]$, let $X_{i,j}' := X_{i,j} \setminus \bigcup_{Q \in \mathcal{Q}}V(Q)$.
So
$$
(1-\eps')m \leq (1-\eps'/2)m - 4\ell \stackrel{(\ref{shortpaths})}{\leq} |X_{i,j}'| \leq m.
$$
Lemma~\ref{superslice2}(i) implies that, whenever $(i,j)(i',j') \in E(R')$, $G'[X_{i,j}',X_{i',j'}']$ is $(2{\eps'},d/2)$-regular.

Recall that $E(\mathcal{T}) = \lbrace (i,j)(i,j') : 1 \leq i \leq \ell, 1 \leq j < j' \leq 3 \rbrace$.
Apply Lemma~\ref{superslice} with $R',G',X_{i,j}',3\ell,\mathcal{T},2,2{\eps'},d/2$ playing the roles of $R,G,V_j,L,H,\Delta,\eps,d$ to obtain a collection $\lbrace Y_{i,j}: (i,j) \in [\ell] \times [3] \rbrace$ of disjoint subsets of $V(G)$ so that, for all $(i,j) \in [\ell] \times [3]$, $Y_{i,j} \subseteq X_{i,j}'$ (so $Y_{i,j} \cap \bigcup_{Q \in \mathcal{Q}}V(Q) = \emptyset$); $G'[Y_{i,j},Y_{i,j'}]$ is $(\eps'^{1/3},d/4)$-superregular for all $1 \leq i \leq \ell$ and $1 \leq j<j' \leq 3$; and
\begin{equation}\label{Xjsize}
|Y_{i,j}| =: m' \geq (1-\eps'^{1/3})m\ \ \text{ for all }\ \ (i,j) \in [\ell] \times [3].
\end{equation}
Lemma~\ref{horror}(P3) implies that, for $k \in \lbrace 1,2 \rbrace$,
$$
|A^{i}_k \cap Y_{i,k}| \geq (\alpha/4-\eps'^{1/3})m \geq \alpha m'/5,
$$
and similarly (P4) implies that, for $j \in \lbrace3,1\rbrace$, $|B^i_j \cap Y_{i,j}| \geq \alpha m'/5$.

Write $P^2_{3m'} = z_1 \ldots z_{3m'}$ for the square path on $3m'$ vertices.
Let $\phi_i: V(P^2_{3m'}) \rightarrow T_i$ be defined as follows.
For all integers $0 \leq j < m'$, we set $\phi_i(z_{3j+1})=(i,3)$, $\phi_i(z_{3j+2})=(i,1)$, and $\phi_i(z_{3j+3})=(i,2)$.
It is easy to check that, for all $1 \leq i \leq \ell$, $\phi_i$ is a graph homomorphism; and $|\phi^{-1}_i(x)| = m'$ for all $x \in T_i$.

For each $1 \leq i \leq \ell$ we will (independently) do the following.
Apply Theorem~\ref{blowup} to the subgraph of $G'$ spanned by $Y_{i,1} \cup Y_{i,2} \cup Y_{i,3}$, with $P^2_{3m'}$ playing the role of $H$ (so $\Delta := 4$) and $\phi_i$ playing the role of $\phi$.
(So the remaining parameters are given by $d/4,\alpha/5,\eps'^{1/3}$ playing the roles of $d,c,\eps$.)
Identify special vertices $z_1,z_2,z_{3m'-1},z_{3m'}$ to the corresponding special sets $B^i_3\cap Y_{i,3},B^i_1\cap Y_{i,1},A^i_1\cap Y_{i,1},A^i_2\cap Y_{i,2}$.

Thus obtain a square path
$$
S_i := x^i_{1,3}x^i_{1,1}x^i_{1,2}x^i_{2,3} \ldots x^i_{m',3}x^i_{m',1}x^i_{m',2}
$$
in $G'$
with $V(S_i) = Y_{i,1} \cup Y_{i,2} \cup Y_{i,3}$ such that
\begin{align*}
x^i_{1,3} \in B^i_3 \cap Y_{i,3};\ \ x^i_{1,1} \in B^i_1 \cap Y_{i,1};\ \ x^i_{m',1} \in A^i_1 \cap Y_{i,1}\ \ \text{ and }\ \ x^i_{m',2} \in A^i_2 \cap Y_{i,2}.
\end{align*}
Lemma~\ref{horror}(P3) implies that $x^i_{m',1}x^i_{m',2}Q_i$ is a square path and (P4) implies that $Q_{\ell+i}x^i_{1,3}x^i_{1,1}$ is  a square path.

Let $\mathcal{P}' := \lbrace Q_{\ell+i}S_iQ_i : 1 \leq i \leq \ell \rbrace$.
Observe that $\mathcal{P}'$ is a collection of vertex-disjoint square paths.
We saw earlier that
$Q_i$ is $\eta$-tail-heavy and $Q_{\ell+i}$ is $\eta$-head-heavy.
Therefore each path in $\mathcal{P}'$ is $\eta$-heavy.
Finally,
\begin{eqnarray*}
\sum_{P \in \mathcal{P}'}|P| &\geq& \sum_{1 \leq i \leq \ell}|S_i| = 3m'\ell \stackrel{(\ref{Xjsize})}{\geq} 3(1-\eps'^{1/3})m\ell \stackrel{(\ref{nval})}{\geq}  (1-\eps)n.
\end{eqnarray*}
This completes the proof of Lemma~\ref{almostpath}.
\hfill$\square$
\medskip


\section{Connecting heavy square paths into an almost spanning  square cycle}\label{connectingsec}

Lemma~\ref{almostpath} implies that we can obtain a collection $(P_i)_i$ of vertex-disjoint $\eta$-heavy square paths, which together cover almost all of the vertices of our $\eta$-good graph $G$.
The next stage -- the goal of this section -- is to connect these paths together into a square cycle, which necessarily covers almost all of the vertices of $G$.
Roughly speaking, we will show that one can connect square paths $P$ and $Q$ into a new square path whose initial segment is $P$ and whose final segment is $Q$, provided that $P$ is $\eta$-tail-heavy and $Q$ is $\eta$-head-heavy. 
This new square path will only contain a small number of vertices which do not lie in $P$ or $Q$.
Then, provided that the additional vertices lie outside of $(P_i)_i$, we can repeat this process to obtain an almost spanning square cycle.

Given a graph $G$ with $ab,cd \in E(G)$, we define an \emph{$(ab,cd)$-path} to be a square path $P$ in $G$ such that $[P]^-_2=ab$ and $[P]^+_2=cd$.
 Note that an $(ab,cd)$-path is not, for example, a $(ba,cd)$-path. Given a set of vertices $W$, we say that a square path $P$ \emph{avoids $W$} if $V(P) \cap W = \emptyset$.

\begin{definition}\emph{($\eta$-flexibility)}
Given $\eta > 0$, we say that a square path $P$ in a graph $G$ is \emph{$\eta$-head-flexible} if $P$ is $\eta$-head-heavy and $G[(P)^-_4] \cong K_4$.
We say that $P$ is \emph{$\eta$-tail-flexible} if $P$ is $\eta$-tail-heavy and $G[(P)^+_4] \cong K_4$.
If $P$ is both $\eta$-head- and $\eta$-tail-flexible, we say that it is \emph{$\eta$-flexible}.
We drop the prefix $\eta$- if it is clear from the context.
\end{definition}

This concept is useful for the following reason.
Suppose that $P = x_1 \ldots x_\ell$ is a tail-heavy square path and $\ell \geq 4$.
If $P$ is tail-flexible, then $P' := x_1 \ldots x_{\ell-2}x_\ell x_{\ell-1}$ is also a tail-heavy square path.
So we have more flexibility (in the literal sense) in connecting $P$ (or rather a square path containing the vertices of $P$) to another square path.

Our first aim will be to extend a tail-heavy square path to a tail-flexible square path.

\subsection{Finding flexible square paths}

Our aim in this subsection is to prove the following lemma, which implies that, given a tail-heavy square path $P$ and a head-heavy square path $P'$, either $P$ and $P'$ can be `connected' or $P$ and $P'$ can be extended to tail- and head-flexible square paths respectively. Recall that in an $\eta$-good graph $G$ on $n$ vertices, $V(G)_\eta$ is the set of all vertices $x \in V(G)$ with $d_G (x)\geq (2/3+\eta)n$.

\begin{lemma}\label{flexlemma}
Let $n \in \mathbb{N}$ and $\eta > 0$ such that $0 < 1/n \ll \eta < 1$.
Suppose that $G$ is an $\eta$-good graph on $n$ vertices.
Let $a,b,c,d$ be distinct vertices in $V(G)_{\eta}$, and suppose that $ab,cd \in E(G)$. 
Let $W \subseteq V(G) \setminus \lbrace a,b,c,d \rbrace$ with $|W| \leq \eta n/8$.
Suppose that $G$ contains no $(ab,cd)$-path $P$ such that $|P| \leq 17$ and $P$ avoids $W$.
Then there exist square paths $S,S'$ such that all of the following hold.
\begin{itemize}
\item[(i)] $[S]^-_2=ab$, $[S']^+_2=cd$ and $S,S'$ avoid $W$;
\item[(ii)] $|S|,|S'| \leq 10$ and $V(S) \cap V(S') = \emptyset$;
\item[(iii)] $S$ is $\eta$-tail-flexible and $S'$ is $\eta$-head-flexible.
\end{itemize}
\end{lemma}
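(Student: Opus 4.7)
The plan is to construct $S$ and $S'$ separately, then ensure they are disjoint. By symmetry (reversing a square path swaps head- and tail-flexibility, and swaps the roles of $ab$ and $cd$), it suffices to detail the construction of a tail-flexible $S$ extending $ab$ that avoids $W\cup\{c,d\}$; the construction of $S'$ is analogous and additionally avoids $V(S)$, which is harmless as $|V(S)|\le 10 \ll \eta n/8$.

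For $S$, I would aim for $|S|\le 6$ of the form $S = abg_1 g_2 f_1 f_2$ in which $\{g_1,g_2,f_1,f_2\}$ is a $K_4$ and $f_1,f_2\in V_\eta:=V(G)_\eta$. Since $|V_\eta|\ge 2n/3$ by \eqref{etagood}, such $K_4$'s are abundant: first pick an edge $f_1f_2$ in $G[V_\eta]$ (which is dense since $\delta(G[V_\eta])\ge n/3+\eta n$); then Proposition~\ref{2ndnbrhd}(i) gives $|N(f_1)\cap N(f_2)|\ge (1/3+2\eta)n$, and Proposition~\ref{largeset}(iii) with $k=2\eta n$ yields at least $2\eta^2 n^2$ candidate edges $g_1 g_2$ inside this common neighbourhood. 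The extra conditions needed for $S$ to be a valid square path, namely $g_1\in N(a)\cap N(b)$, $g_2\in N(b)$ and the one remaining $K_4$-edge $g_1 f_2\in E(G)$, restrict the pool only by constant factors. Avoidance of $W\cup\{c,d\}$ (of size at most $\eta n/8+2$) is negligible against this $\Omega(n^2)$ pool, so a valid $S$ exists (and similarly a valid $S'$).

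The main obstacle is that in extremal configurations several of the required intersections, for instance the 4-way intersection $N(a)\cap N(b)\cap N(f_1)\cap N(f_2)$, can a priori be as small as $O(\eta n)$, so a pure counting argument may fail if the surviving candidates all happen to be ``bad'', i.e.\ to lie in $N(c)\cup N(d)\cup\{c,d\}$. This is where the no-short-$(ab,cd)$-path hypothesis enters: a bad candidate $v$ would itself yield an $(ab,cd)$-path through $v$ of length at most $17$ that avoids $W$ (concatenating the partial $S$ built so far with a short connection $v\cdots cd$), contradicting the hypothesis. Hence good candidates exist at every stage and the greedy construction succeeds. The same contradiction handles the disjointness $V(S)\cap V(S')=\emptyset$: any shared vertex $v$ must lie in the $K_4$-tail of $S$ and the $K_4$-head of $S'$, and the $K_4$-flexibility at both ends lets us reorder each so that $v$ serves as the junction, producing a square $(ab,cd)$-path of length at most $|S|+|S'|-3\le 17$ avoiding $W$ --- again contradicting the hypothesis. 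Thus disjoint $S,S'$ with the required properties must exist.
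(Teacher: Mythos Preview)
Your proposal has a genuine gap at its core. You aim to find $g_1\in N(a)\cap N(b)\cap N(f_1)\cap N(f_2)$ for some edge $f_1f_2$ in $G[V_\eta]$, and you acknowledge that this $4$-way intersection may be small. In fact it may be empty: in (a perturbation of) the balanced complete tripartite graph, with $a$ in class~$1$, $b$ in class~$2$, $f_1$ in class~$3$, $f_2$ in class~$1$, one has $N(a)\cap N(b)=\text{class }3$ and $N(f_1)\cap N(f_2)=\text{class }2$, so the $4$-way intersection is empty. Your remedy is to say that any surviving candidate must be ``bad'', i.e.\ lie in $N(c)\cup N(d)$, and that this yields a short $(ab,cd)$-path. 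But when the intersection is empty there are no candidates, bad or otherwise; and even when a candidate $v$ does lie in $N(c)\cup N(d)$, a single adjacency to $c$ or $d$ is nowhere near enough to produce a \emph{square} path to $cd$. The no-short-path hypothesis is simply not engaged by your construction as written. Your disjointness argument has the same flaw: if $S$ and $S'$ share a single vertex $v$ in their respective $K_4$'s, reordering so that $S$ ends at $v$ and $S'$ begins at $v$ still leaves you needing an edge between the penultimate vertex of $S$ and the second vertex of $S'$, which lie in different $K_4$'s and need not be adjacent; so the claimed $(ab,cd)$-path of length at most $|S|+|S'|-3$ does not exist in general.

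The paper's proof takes a completely different route. It does not attempt a direct construction at all, but rather assumes that \emph{neither} a short $(ab,cd)$-path \emph{nor} a disjoint flexible pair exists, and then grows $P$ from $ab$ and $P'$ towards $cd$ one vertex at a time. At each stage the two negative assumptions translate (via Claim~1) into structural constraints: common neighbourhoods of appropriate $2$-segments must be pairwise disjoint, and certain sets in $V_\eta$ must be independent. After building up enough structure (Claims~2 and~3 and the passage to $Tyz$, $z'y'T'$), these constraints force three pairwise-disjoint subsets of $V(G)$ each of size exceeding $n/3$, which is the contradiction. The point is that the hypothesis is used repeatedly and structurally, not as a one-shot exclusion of bad candidates in a counting argument.
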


\begin{proof}
Throughout the proof, we will write tail-flexible (head-flexible) for $\eta$-tail-flexible ($\eta$-head-flexible) and will similarly write tail-heavy and head-heavy. We
say that a square path $S$ is \emph{$ab$-good} if $|S| \leq 10$; $[S]_2^-=ab$; $S$ avoids $W\cup \{c,d\}$; and $S$ is tail-flexible.
Analogously, we say that a path $S'$ is \emph{$cd$-good} if $|S'| \leq 10$; $[S']_2^+=cd$; $S'$ avoids $W\cup\{a,b\}$; and $S'$ is head-flexible.
Suppose that 
$G$ contains no pair $S,S'$ of vertex-disjoint square paths such that $S$ is $ab$-good and $S'$ is $cd$-good. We must show that this implies that $G$ contains an $(ab,cd)$-path which has at most $17$ vertices and avoids $W$.

Suppose that there is a square path $S$ in $G$ that is $ab$-good. By our assumption any $cd$-good square path $S'$ in $G$ is such that $V(S) \cap V(S') \not = \emptyset$. By adding the vertices in $V(S) \setminus \lbrace a,b \rbrace$ to $W$ we now have that $|W|\leq \eta n/7$ and there is no $cd$-good square path $S'$ in $G$. Otherwise, we have that there is no  square path $S$ in $G$ that is $ab$-good (and $|W| \leq \eta n/8$).  Without loss of generality, assume that $|W|\leq \eta n/7$ and there is no $cd$-good square path $S'$ in $G$. (The proof in the other case is essentially identical.)

At every step of the proof, we will have two vertex-disjoint square paths $P,P'$ such that $[P]^-_2=ab$ and $[P']^+_2=cd$, and $|P|,|P'| \leq 8$; and a set $U := V(G) \setminus (W \cup V(P) \cup V(P'))$ which we call the \emph{surround} of $P,P'$.
Initially, we take $P := ab$ and $P' := cd$.
In each step, we modify $P,P'$ so that any new additional vertices were taken from $U$, and $P,P'$ still satisfy the specified properties.
Then we update the surround $U$ of the new $P,P'$.
Note that $P'$ is not head-flexible at any stage (otherwise it is $cd$-good).
Further,  in every step we have  $|U| \geq (1-\eta/4)n$.
Proposition~\ref{stoneage} implies that the graph with vertex set $V(G)$ containing every edge of $G$ with at least one endpoint in $U$ is $(\eta/2,n)$-good.
Moreover, for all $x \in V(G)$,
\begin{equation}\label{n'deg}
d_{G}(x,U) \geq d_G(x)-\eta n/4.
\end{equation}
Assume, for a contradiction, that there is no $(ab,cd)$-path in $G$ which has at most $17$ vertices and avoids $W$.

\medskip
\noindent
\textbf{Claim 1.}
\emph{
Suppose that $P,P'$ are vertex-disjoint square paths that avoid $W$ with $|P|,|P'| \leq 8$ and $[P]^-_2=ab, [P']^+_2=cd$.
Let $U$ be the surround of $P,P'$.
Then the following hold:
\begin{itemize}
\item[(A)] for any $4$-segment $x_1x_2y_1y_2$ of $P'$ with $x_1,x_2 \in V(G)_{\eta}$, we have $N_U^2(x_1,x_2) \cap N_U^2(y_1,y_2) = \emptyset$;
\item[(B)] for any $2$-segment $x_1x_2$ of $P'$ we have that $N_U^2(x_1,x_2)_{\eta}$ is an independent set in $G$;
\item[(C)] for any $2$-segments $x_1x_2,y_2y_1$ of $P,P'$ respectively, where $x_2y_2 \in E(G)$, we have that $N_U^2(x_1,x_2) \cap N_U^2(y_1,y_2) = \emptyset$. 
\end{itemize}
}

\medskip
\noindent
We now prove Claim~1.
If (A) does not hold, there is some $u \in N_U^2(x_1,x_2) \cap N_U^2(y_1,y_2)$ and then $G$ contains a $cd$-good path $Q$ (with $|Q| \leq |P'|+1 \leq 9$ and $[Q]^-_5=x_1x_2uy_1y_2$),  a contradiction.
If (B) does not hold, there is an edge $uv \in E(G[N_U^2(x_1,x_2)_{\eta}])$ and then $G$ contains a $cd$-good path $Q$ (with $|Q|\leq|P'|+2 \leq 10$ and $[Q]^-_4=uvx_1x_2$),  a contradiction.
If (C) does not hold, there is some $z \in N_U^2(x_1,x_2) \cap N_U^2(y_1,y_2)$ and then $G$ contains an $(ab,cd)$-path $Q$ with $|Q| \leq |P|+|P'|+1 \leq 17$ which avoids $W$,  a contradiction.
This completes the proof of the claim.

\medskip
\noindent
Observe that, by Propositions~\ref{2ndnbrhd}(i) and~\ref{largeset}(i), for all distinct $u,v \in V(G)_{\eta}$,
\begin{equation}\label{Nab}
|N^2_{U}(u,v)| \stackrel{(\ref{n'deg})}{\geq} 2(2/3+3\eta/4)n-n \geq (1/3+\eta)n\ \ \text{ and }\ \ |N^2_{U}(u,v)_{\eta}| \geq \eta n.
\end{equation}

\medskip
\noindent
\textbf{Claim 2.}
\emph{
There exist vertex-disjoint square paths $T,T'$ in $G$ such that
$|T|,|T'| \leq 5$; $T$ is tail-heavy and $T'$ is head-heavy; $[T]^-_2=ab$, $[T']^+_2=cd$; $T,T'$ avoid $W$; and the final vertex of $T$ is adjacent to the initial vertex of $T'$.
}

\medskip
\noindent
We now prove Claim~2.
Let $U$ be the surround of $ab,cd$.
By (\ref{Nab}), there exist $d' \in N^2_{U}(c,d)_{\eta}$ and $c' \in N^2_{U}(d',c)_{\eta}$ (which are necessarily distinct).
Then $ab$ and $c'd'cd$ are vertex-disjoint square paths avoiding $W$.
Remove $c',d'$ from $U$.
So $U$ is the surround of $ab,c'd'cd$.
Since $c',d' \in V(G)_{\eta}$, Claim~1(A) applied to $c'd'cd$ implies that $N^2_U(c',d') \cap N^2_U(c,d)=\emptyset$.
Let
$$
N := N^2_{U}(a,b)\ \ \text{ and } \ \ N' := N^2_{U}(c',d') \cup N^2_{U}(c,d).
$$
Then
$$
|N'|= |N^2_{U}(c',d')| + |N^2_{U}(c,d)| \stackrel{(\ref{Nab})}{\geq} (2/3+2\eta)n.
$$
Proposition~\ref{largeset}(i) implies that
\begin{equation}\label{N'L}
|N'_{\eta}| \geq (1/3+2\eta)n.
\end{equation}

Let $y \in N_{\eta}$ be arbitrary ($N_{\eta} \neq \emptyset$ by (\ref{Nab})).
Then 
$$
d_{G}(y,N'_\eta) \geq d_G(y)  - n + |N_{\eta}'| \stackrel{(\ref{N'L})}{\geq} (2/3+\eta)n - (2/3-2\eta)n = 3\eta n.
$$
So there is some $z \in N'_{\eta} \cap N_G(y)$.
Set
$T := aby$ and take $T' := zcd$ if $z \in N^2_{U}(c,d)$, or $T' := zc'd'cd$ if $z \in N^2_{U}(c',d')$.
This completes the proof of Claim~2.

\medskip
\noindent
Let $T$ and $T'$ be as in Claim~2.
Write $[T]^+_2:=wx$ and $[T']^-_2 := x'w'$, where $xx' \in E(G)$ (see Figure~\ref{connectfig}).
Let $t := |T| \leq 5$ and $t' := |T'| \leq 5$.
Let $U$ be the surround of $T,T'$ and let $Y := N^2_{U}(w,x)$ and $Y' := N^2_{U}(x',w')$.
Claim~1(C) applied with $T,T',wx,x'w'$ playing the roles of $P,P',x_1x_2,y_2y_1$ implies that $Y \cap Y' = \emptyset$. 
Therefore, by Proposition~\ref{largeset}(i),
$$
|(Y \cup Y')_{\eta}| \geq |Y \cup Y'| - n/3 = |Y|+|Y'|-n/3 \stackrel{(\ref{Nab})}{\geq} (1/3+2\eta)n.
$$ 
Now $G[(Y\cup Y')_{\eta}]$ contains no isolated vertices by Proposition~\ref{largeset}(ii).
Observe that $Y_{\eta}' \neq \emptyset$ by (\ref{Nab}).
Moreover, Claim~1(B) implies that $Y'_{\eta}$ is an independent set in $G$.
Therefore every vertex of $Y'_{\eta}$ has a neighbour in $Y_{\eta}$.
Choose $y' \in Y'_{\eta}$ and $y \in Y_{\eta}$ with $y'y \in E(G)$.

We have obtained vertex-disjoint square paths
\begin{equation}\label{Ty}
Ty = [T]^-_{t-2}wxy\ \ \text{ and }\ \ y'T' = y'x'w'[T']^+_{t'-2}\ \ \text{ such that }\ \ xx',yy' \in E(G),
\end{equation}
and $Ty$ is tail-heavy and $y'T'$ is head-heavy.
Remove $y,y'$ from $U$.
So $U$ is the surround of $Ty,y'T'$.
Let $Z := N^2_{U}(x,y)$ and $Z' := N^2_{U}(y',x')$.
Claim~1(C) applied with $Ty,y'T',xy,y'x'$ playing the roles of $P,P',x_1x_2,y_2y_1$ implies that
\begin{equation}\label{ZZ'}
Z \cap Z' = \emptyset.
\end{equation}
Let $A^{xy} := Z \cap N^1_{U}(y',x')$ be the set of vertices in $U$ adjacent to both $x,y$ and at least one of $y',x'$.
Define $A^{y'x'} := Z' \cap N^1_{U}(x,y)$ similarly.
So certainly
\begin{equation}\label{A'L}
A^{y'x'}_{\eta} \subseteq Z'_{\eta}.
\end{equation}
\medskip
\noindent
\textbf{Claim 3.}
\emph{$E(G[A^{xy}_{\eta},Z_{\eta}']) \neq \emptyset$.}

\medskip
\noindent
Now (\ref{ZZ'}) and (\ref{disjoint}) imply that
\begin{equation}\label{intersections}
\emptyset = N^4_{U}(x,y,y',x') = A^{xy} \cap A^{y'x'} = Z' \cap A^{xy} = Z \cap A^{y'x'}.
\end{equation}
Let $A := N^3_{U}(x,y,y',x')$.
Observe that $A = A^{xy} \cup A^{y'x'}$ and $U \cap \lbrace x,y,y',x' \rbrace = \emptyset$.
Then
$$
(8/3+3\eta)n \stackrel{(\ref{n'deg})}{\leq} \sum_{v \in \lbrace x,y,y',x' \rbrace}d_G(v,U) = \sum_{u \in U}d_G(u,{\lbrace x,y,y',x' \rbrace}) \stackrel{(\ref{intersections})}{\leq} 3|A| + 2(n-|A|) = |A| + 2n,
$$
and hence $|A| \geq (2/3+3\eta)n$. By Proposition~\ref{largeset}(i),
\begin{equation}\label{AL}
|A_{\eta}| \geq (1/3+3\eta)n.
\end{equation}
(\ref{Nab}) implies that
$|Z'_{\eta}| \geq \eta n$.
Claim~1(B) applied with $Ty,y'T',y'x'$ playing the roles of $P,P',x_1x_2$ implies that $Z_{\eta}'$ is an independent set in $G$.
Suppose that $E(G[A^{xy}_{\eta},Z'_{\eta}]) = \emptyset$.
Then no vertex in $Z'_{\eta}$ has a neighbour in $A^{xy}_{\eta}$.
Therefore, for all $z \in Z_{\eta}'$, $N_{U}(z) \cap ((A^{xy} \cup Z')_{\eta}) = \emptyset$.
So
\begin{eqnarray*}
|A_{\eta}| &\stackrel{(\ref{intersections})}{=}& |A_{\eta}^{xy}| + |A_{\eta}^{y'x'}| \stackrel{(\ref{A'L})}{\leq} |A^{xy}_{\eta}| + |Z'_{\eta}| \stackrel{(\ref{intersections})}{=} |(A^{xy} \cup Z')_{\eta}| \leq |U \setminus N_{U}(Z'_{\eta})| \leq n - \max_{z \in Z_{\eta}'}d_{G}(z,U)\\
&\stackrel{(\ref{n'deg})}{\leq}& (1/3-3\eta/4)n,
\end{eqnarray*}
a contradiction to (\ref{AL}).
This proves Claim~3.

\medskip
\noindent
By Claim 3, we may choose $z \in A^{xy}_{\eta}$ and $z' \in Z'_{\eta}$ such that $zz' \in E(G)$.
We have shown that $G$ contains vertex-disjoint $W$-avoiding square paths
\begin{align}\label{Tyz}
&Tyz=[T]^-_{t-2}wxyz\ \ \text{ and}\ \ z'y'T'=z'y'x'w'[T']^+_{t'-2};
\end{align}
such that $xx',yy',zz' \in E(G)$ and one of $zy',zx' \in E(G)$;
where $\lbrace w,x,y,z,z',y',x',w' \rbrace \subseteq V(G)_{\eta}$ (see Figure~\ref{connectfig}).
Claim~2 implies that $|Tyz|,|z'y'T'| \leq 7$.
Remove $z,z'$ from $U$.
So $U$ is the surround of $Tyz,z'y'T'$.

\tikzstyle{every node}=[fill=black,draw,circle,minimum width=1pt,outer sep = 0, inner sep = 1]

\begin{center}
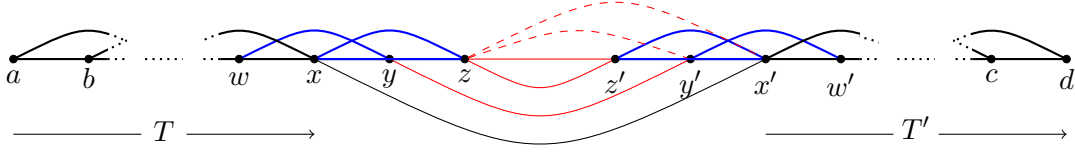
\begin{figure}
\begin{tikzpicture}
\clip (-3.5,-1.5) rectangle (11.5,1);

\begin{scope}

\node[label=below:$w$] (w) at (0,0) {};
\node[label=below:$x$] (x) at (1,0) {};
\node[label=below:$y$] (y) at (2,0) {};
\node[label=below:$z$] (z) at (3,0) {};

\node[label=below:$z'$] (z') at (5,0) {};
\node[label=below:$y'$] (y') at (6,0) {};
\node[label=below:$x'$] (x') at (7,0) {};
\node[label=below:$w'$] (w') at (8,0) {};

\node[label=below:$b$] (b) at (-2,0) {};
\node[label=below:$a$] (a) at (-3,0) {};
\node[label=below:$c$] (c) at (10,0) {};
\node[label=below:$d$] (d) at (11,0) {};

\begin{scope}
\clip (9.75,-1) rectangle (13,3);
\draw[thick] (d) .. controls (10,0.5) .. (9,0);
\draw[thick] (c) .. controls (9,0.5) .. (8,0);
\end{scope}

\begin{scope}
\clip (9.5,-3) rectangle (9.75,1);
\draw[thick,dotted] (d) .. controls (10,0.5) .. (9,0);
\draw[thick,dotted] (9.75,0) -- (9.5,0);
\draw[thick,dotted] (c) .. controls (9,0.5) .. (8,0);
\end{scope}

\begin{scope}
\clip (-4,6) rectangle (-1.75,-1);
\draw[thick] (a) .. controls (-2,0.5) .. (-1,0);
\draw[thick] (b) .. controls (-1,0.5) .. (w);
\end{scope}

\begin{scope}
\clip (-1.75,-1) rectangle (-1.5,1);
\draw[thick,dotted] (a) .. controls (-2,0.5) .. (-1,0);
\draw[thick,dotted] (-1.75,0) -- (-1.5,0);
\draw[thick,dotted] (b) .. controls (-1,0.5) .. (w);
\end{scope}

\draw[thick] (a) -- (-1.75,0);
\draw[thick] (d) -- (9.75,0);

\draw[thick,dotted] (-1.25,0) -- (-0.75,0);
\draw[thick,dotted] (8.75,0) -- (9.25,0);

\draw[thick] (-0.25,0) -- (z);
\draw[thick] (z') -- (8.25,0);

\draw[color=blue,thick] (x) -- (z);
\draw[color=blue,thick] (z') -- (x');

\draw[color=red] (z') -- (z);

\draw[color=blue,thick] (y) .. controls (1,0.5) .. (w);
\draw[color=blue,thick] (z) .. controls (2,0.5) .. (x);

\draw[color=blue,thick] (z') .. controls (6,0.5) .. (x');
\draw[color=blue,thick] (y') .. controls (7,0.5) .. (w');

\draw[color=black] (x) .. controls (4,-1.5) .. (x');
\draw[color=red] (y) .. controls (4,-1) .. (y');
\draw[color=red] (z) .. controls (4,-0.5) .. (z');

\draw[color=red,dashed] (z) .. controls (4.5,0.5) .. (y');
\draw[color=red,dashed] (z) .. controls (5,1) .. (x');

\begin{scope}
\clip (-0.25,-1) rectangle (3,3);
\draw[thick] (x) .. controls (0,0.5) .. (-1,0);
\end{scope}

\begin{scope}
\clip (6,6) rectangle (8.25,-1);
\draw[thick] (x') .. controls (8,0.5) .. (9,0);
\end{scope}

\begin{scope}
\clip (-0.5,-3) rectangle (-0.25,1);
\draw[thick,dotted] (x) .. controls (0,0.5) .. (-1,0);
\draw[thick,dotted] (-0.25,0) -- (-0.5,0);
\end{scope}

\begin{scope}
\clip (8.25,-1) rectangle (8.5,1);
\draw[thick,dotted] (x) .. controls (8,0.5) .. (9,0);
\draw[thick,dotted] (8.25,0) -- (8.5,0);
\end{scope}

\draw[] (-3,-1) -- (-1.3,-1);
\draw[<-] (1,-1) -- (-0.7,-1);
\node[draw=none,fill=none,label=$T$] at  (-1,-1.25) {}; 

\draw[] (7,-1) -- (8.7,-1);
\draw[<-] (11,-1) -- (9.3,-1);
\node[draw=none,fill=none,label=$T'$] at  (9,-1.25) {}; 

\end{scope}

\end{tikzpicture}

\caption{The structure obtained at (\ref{Tyz}). We first obtain the black edges (Claim~2), then the blue edges, then the red edges, where at least one of the dashed red edges is present after (\ref{Tyz}).}\label{connectfig}
\end{figure}
\end{center}

We consider two cases, depending on whether $zx' \in E(G)$ or $zy' \in E(G)$.

\medskip
\noindent
\textbf{Case 1.}
\emph{$zx' \in E(G)$.}

\medskip
\noindent
We will apply Claim~1(A) and~(C) with $Tyz,z'y'T'$ playing the roles of $P,P'$.
Claim~1(A) applied with $z'y'x'w'$ playing the role of $x_1x_2y_1y_2$ implies that $N^2_U(z',y') \cap N^2_U(x',w') = \emptyset$.
Claim~1(C) applied with $yz,z'y'$ playing the roles of $x_1x_2,y_2y_1$ implies that $N^2_U(y,z) \cap N^2_U(z',y') = \emptyset$.
Claim~1(C) applied with $yz,x'w'$ playing the roles of $x_1x_2,y_2y_1$ implies that $N^2_U(y,z) \cap N^2_U(x',w') = \emptyset$.
Therefore $N^2_U(y,z),N^2_U(z',y'),N^2_U(x',w')$ are pairwise vertex-disjoint subsets of $U$.
But (\ref{Nab}) implies that each set has size at least $(1/3+\eta)n$, a contradiction.
So we are done in Case~1.

\medskip
\noindent
\textbf{Case 2.}
\emph{$zy' \in E(G)$.}

\medskip
\noindent
This case is similar to Case~1.
Observe that now $Ty,zz'y'T'$ are vertex-disjoint square paths each containing at most eight vertices, and $U$ is the surround of $Ty,zz'y'T'$.
We will apply Claim~1(A) and~(C) with $Ty,zz'y'T'$ playing the roles of $P,P'$.
Claim~1(A) applied with $zz'y'x'$ playing the role of $x_1x_2y_1y_2$ implies that $N^2_U(z,z') \cap N^2_U(y',x') = \emptyset$.
Claim~1(C) applied with $xy,zz'$ playing the roles of $x_1x_2,y_2y_1$ implies that $N^2_U(x,y) \cap N^2_U(z,z') = \emptyset$.
Claim~1(C) applied with $xy,y'x'$ playing the roles of $x_1x_2,y_2y_1$ implies that $N^2_U(x,y) \cap N^2_U(y',x') = \emptyset$.
Therefore $N^2_U(x,y),N^2_U(z,z'),N^2_U(y',x')$ are pairwise vertex-disjoint subsets of $U$.
But (\ref{Nab}) implies that each set has size at least $(1/3+\eta)n$, a contradiction.
So we are done in Case~2.

\medskip
\noindent
In both cases we obtain a contradiction to our assumption that there is no $(ab,cd)$-path in $G$ which has at most $17$ vertices and avoids $W$.
This completes the proof of the lemma.
\end{proof}

\subsection{Connecting flexible square paths}

The proof of the next result is similar to that of Lemma~21 in~\cite{cdk} (although there the graph $G$ has minimum degree not much less than $2n/3$ and is `non-extremal').

\begin{lemma}[Connecting lemma]\label{abcd}
Let $n \in \mathbb{N}$ and $\delta,\eta > 0$ such that $0 < 1/n \ll \delta \ll \eta < 1$.
Suppose that $G$ is an $\eta$-good graph on $n$ vertices.
Let $a',b',c',d'$ be distinct vertices in $V(G)_{\eta}$ where $a'b',c'd' \in E(G)$.
Let $W \subseteq V(G) \setminus \lbrace a',b',c',d' \rbrace$ with $|W| \leq \delta n$.
Then $G$ contains an $(a'b',c'd')$-path on at most $22$ vertices which avoids $W$.
\end{lemma}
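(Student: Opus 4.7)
The plan is to invoke Lemma~\ref{flexlemma} first — either it already hands us a short $(a'b',c'd')$-path, or it gives us two flexible square paths whose ends we then connect with just two intermediate vertices, exploiting the $K_4$'s at the flexible ends.

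Apply Lemma~\ref{flexlemma} with input $(a',b',c',d',W)$; its hypothesis $|W| \leq \eta n/8$ holds since $\delta \ll \eta$. Either $G$ already contains an $(a'b',c'd')$-path on at most $17 \leq 22$ vertices avoiding $W$ (and we are done), or we obtain vertex-disjoint square paths $S, S'$, each avoiding $W$, with $|S|,|S'| \leq 10$, where $S$ is $\eta$-tail-flexible with $[S]^-_2 = a'b'$ and $S'$ is $\eta$-head-flexible with $[S']^+_2 = c'd'$. Assume the latter alternative.

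Write $(S)^+_4 = x_1x_2x_3x_4$ and $(S')^-_4 = y_1y_2y_3y_4$ in square-path order, so both quadruples induce $K_4$'s and $x_3,x_4,y_1,y_2 \in V(G)_\eta$. Let $W' := (W \cup V(S) \cup V(S')) \setminus \{x_3,x_4,y_1,y_2\}$, so $|W'| \leq 2\delta n$. The tail- and head-flexibilities let us independently reverse the pairs $(x_3,x_4)$ and $(y_1,y_2)$ while preserving the square-path structure. Unwinding the required edges on the connector, it suffices to find two distinct vertices $u_1, u_2 \notin W' \cup \{x_3,x_4,y_1,y_2\}$ such that
\begin{itemize}
\item $u_1 \in N^2(x_3,x_4) \cap (N(y_1) \cup N(y_2))$,
\item $u_2 \in N^2(y_1,y_2) \cap (N(x_3) \cup N(x_4))$,
\item $u_1 u_2 \in E(G)$.
\end{itemize}
Choosing the orderings of $(x_3,x_4)$ and $(y_1,y_2)$ according to which $x_j$ is adjacent to $u_2$ and which $y_i$ is adjacent to $u_1$ then glues $S,u_1,u_2,S'$ into an $(a'b',c'd')$-path of length $|S|+|S'|+2 \leq 22$ avoiding $W$.

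To locate such $u_1,u_2$, Proposition~\ref{2ndnbrhd}(i) gives $|N^2(x_3,x_4)|, |N^2(y_1,y_2)| \geq (1/3+2\eta)n$, and Proposition~\ref{2ndnbrhd}(ii) applied to a heavy triple yields $|N^3(x_3,x_4,y_i)|, |N^3(y_1,y_2,x_j)| \geq 3\eta n$. Set $I := N^2(x_3,x_4) \cap N^2(y_1,y_2) = N^4(x_3,x_4,y_1,y_2)$; note that \emph{every} vertex of $I$ already satisfies the adjacency requirements in the first two bullets, so any edge inside $I \setminus W'$ provides the pair $(u_1,u_2)$. If $|I| \geq (1/3+\eta)n$, Proposition~\ref{largeset}(iii) gives $e(G[I]) > (\eta n)^2/2$, and since $\delta \ll \eta$ the number of edges touching $W'$ is negligible, producing the desired edge. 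Otherwise $|N^2(x_3,x_4) \setminus I|, |N^2(y_1,y_2) \setminus I| > \eta n$; we then combine Proposition~\ref{largeset}(iv) applied to carefully chosen disjoint subsets of $N^2(x_3,x_4)$ and $N^2(y_1,y_2)$ with the lower bounds on the four $N^3$-sets above to produce a cross-edge between $N^2(x_3,x_4)$ and $N^2(y_1,y_2)$ whose endpoints also hit the required $N(y_i)$ and $N(x_j)$. The main obstacle will be this second subcase: arranging a cross-edge that simultaneously respects both auxiliary adjacency conditions and avoids $W'$ — this is where all four choices of ordering granted by the flexibility are used together, and where the hypothesis $\delta \ll \eta$ absorbs the cost of discarding vertices of $W'$ from sets of size $\Theta(\eta n)$.
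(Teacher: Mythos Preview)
Your overall strategy is the paper's: invoke Lemma~\ref{flexlemma}, then use the $K_4$'s at the flexible ends to reduce to finding two connector vertices $u_1,u_2$. This reduction is correct and is precisely the content of the paper's Claim (there stated as: find a square path $P$ on at most $6$ vertices with $(P)^-_2=\{a,b\}$ and $(P)^+_2=\{c,d\}$, where $ab=[Q]^+_2$ and $cd=[Q']^-_2$). Your Case~1, where $|I|=|N^4(x_3,x_4,y_1,y_2)|\ge(1/3+\eta)n$, is fine.

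Case~2, however, is not a proof. You write ``combine Proposition~\ref{largeset}(iv) applied to carefully chosen disjoint subsets of $N^2(x_3,x_4)$ and $N^2(y_1,y_2)$ with the lower bounds on the four $N^3$-sets'', but you neither specify the subsets nor run the argument, and the tools you name do not obviously assemble into one. The bounds $|N^3(x_3,x_4,y_i)|\ge 3\eta n$ only give $|A|,|B|\ge 3\eta n$ for $A:=N^2(x_3,x_4)\cap N^1(y_1,y_2)$ and $B:=N^2(y_1,y_2)\cap N^1(x_3,x_4)$; since $A\cap B=I$ may still have size close to $(1/3+\eta)n$, there is no useful pair of \emph{disjoint} subsets to feed into Proposition~\ref{largeset}(iv). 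You yourself flag this as ``the main obstacle''; a sketch here is not enough.

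The paper closes this gap with a different decomposition. Writing $S_i$ for the set of vertices in $U$ with exactly $i$ neighbours in $\{x_3,x_4,y_1,y_2\}$ (so $S_4=I$), a degree-sum count gives $|S_3|+2|S_4|\ge(2/3+\eta)n$. If $E(G[S_4,S_3\cup S_4])\ne\emptyset$ one is done immediately (this already subsumes your Case~1). Otherwise Proposition~\ref{largeset}(iv) applied with $X=S_4$ and $Y=S_3\cup S_4$ forces $S_4=\emptyset$, whence $|S_3|\ge(2/3+\eta)n$ and $S_3=T_{x_3x_4}\sqcup T_{y_1y_2}$ with $T_{x_3x_4}:=N^2(x_3,x_4)\cap S_3$ and $T_{y_1y_2}:=N^2(y_1,y_2)\cap S_3$. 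A second application of Proposition~\ref{largeset}(iv) to $(T_{x_3x_4},T_{y_1y_2})$, together with $|N^2(x_3,x_4)|\ge(1/3+\eta)n$, now yields a cross-edge or a contradiction. Note that in this language your target sets are exactly $A=T_{x_3x_4}\cup S_4$ and $B=T_{y_1y_2}\cup S_4$, so your goal (an edge in $G[A,B]$) is identical to the paper's; what you are missing is the $S_i$-decomposition that makes the two applications of Proposition~\ref{largeset}(iv) land.
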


\begin{proof}
Suppose that $G$ contains no $(a'b',c'd')$-path on at most $17$ vertices which avoids $W$.
Apply Lemma~\ref{flexlemma} to obtain vertex-disjoint square paths $Q,Q'$ such that (i)--(iii) hold. (Where $a'b',c'd', Q,Q'$ play the roles of $ab,cd,S,S'$ respectively.)
Let $q := |Q| \leq 10$ and $q' := |Q'| \leq 10$.
Write $[Q]^+_2 =: ab$ and $[Q']^-_2 =: cd$ and set $X := \lbrace a,b,c,d \rbrace \subseteq V(G)_{\eta}$.
Let $U := V(G) \setminus (W \cup V(Q) \cup V(Q'))$.
Observe that $X \cap U = \emptyset$, and $|U| \geq (1-2\delta)n$.
Proposition~\ref{stoneage} implies that $G[U]$ is $(\eta/2,n)$-good.
Moreover, for all $x \in V(G)$,
\begin{equation}\label{Udeg2}
d_{G}(x,U) \geq d_G(x)-\eta n/2.
\end{equation}

\medskip
\noindent
\textbf{Claim.}
\emph{It suffices to find a path $P$ with $(P)^-_2=\lbrace a,b \rbrace$, $(P)^+_2=\lbrace c,d \rbrace$; $V(P) \setminus X \subseteq U$ and $|P| \leq 6$.}

\medskip
\noindent
To prove the claim, suppose we have such a path $P$.
Note that
$$
[Q]^-_{q-2}ab,\ \ [Q]^-_{q-2}ba,\ \ cd[Q']^+_{q'-2},\ \ dc[Q']^+_{q'-2}
$$
are square paths by Lemma~\ref{flexlemma}(iii).
Then $P' := [Q]^-_{q-2}P[Q']^+_{q'-2}$ is an $(a'b',c'd')$-path which avoids $W$ by Lemma~\ref{flexlemma}(i).
Finally, Lemma~\ref{flexlemma}(ii) implies that $|P'| \leq |Q|+|Q'|+|P|-4 \leq 22$.
This completes the proof of the claim.

\medskip
\noindent
For all $1 \leq i \leq 4$, let $S_i := \lbrace v \in U: d_G(v,X)=i \rbrace$.
Then
$$
(8/3+2\eta)n \stackrel{(\ref{Udeg2})}{\leq} \sum_{x \in X}d_G(x,U) = \sum_{u \in U}d_G(u,X) = \sum_{1 \leq i \leq 4}i|S_i| \leq 4|S_4|+3|S_3|+2(n-|S_3|-|S_4|),
$$
and therefore
\begin{equation}\label{Sbound}
|S_3|+2|S_4| \geq (2/3+\eta)n.
\end{equation}
Suppose that there is some $xy \in E(G[S_4,S_3 \cup S_4])$.
Then $G$ contains a square path $P$ with $V(P) = \lbrace a,b,x,y,c,d \rbrace$ which satisfies the claim.
(Indeed, if for example $a \notin N_G(y)$, then we can take $P := abxycd$ or $P := abxydc$; or if $c \notin N_G(x)$, then we can take $P := abxydc$ or $P := baxydc$.
The other cases are similar.)
Therefore we may assume that
\begin{equation}\label{S3S4}
E(G[S_4,S_3 \cup S_4]) = \emptyset.
\end{equation}
Suppose that $S_4 \neq \emptyset$.
Proposition~\ref{largeset}(iv) applied with $G[U],\eta/2,S_4,S_3 \cup S_4$ playing the roles of $G,\eta,X,Y$ implies that $|S_4|+(|S_3|+|S_4|) \leq (2/3-\eta/2)n$, a contradiction to (\ref{Sbound}).
Therefore (\ref{Sbound}) implies that
\begin{equation}\label{S3}
S_4 = \emptyset\ \ \text{ and }\ \ |S_3| \geq (2/3+\eta)n.
\end{equation}
Let
$$
T_{ab} := N^2_{U}(a,b) \cap S_3\ \ \text{ and }\ \ T_{cd} := N^2_{U}(c,d) \cap S_3.
$$

Suppose that there exists $x \in T_{ab}$ and $y \in T_{cd}$ such that $xy \in E(G)$.
Then $G$ contains a square path $P$ with $V(P) = \lbrace a,b,x,y,c,d \rbrace$ which satisfies the claim.
(For example, if $x \in N^3_{U}(a,b,c)$ and $y \in N^3_{U}(a,c,d)$ then
we can take $P := baxycd$, as in Figure~\ref{pathfig}. Observe that in this case and the other three cases, there is exactly one such $P$.)
So we may assume that 
\begin{equation}\label{Ts}
T_{ab} \cap T_{cd} = \emptyset\ ;\ \ T_{ab} \cup T_{cd} = S_3\ \ \text{ and }\ \ E(G[T_{ab},T_{cd}]) = \emptyset.
\end{equation}
(The first two assertions follow from (\ref{S3}) and the definitions.)
Now we will always obtain a contradiction.
Proposition~\ref{largeset}(iv) applied with $G[U],\eta/2,T_{ab},T_{cd}$ playing the roles of $G,\eta,X,Y$ implies that, if $T_{ab},T_{cd}$ are both non-empty, then $|T_{ab}|+|T_{cd}| \leq (2/3-\eta/2)n$, a contradiction to (\ref{S3}) and (\ref{Ts}).
Without loss of generality, we may assume that $T_{ab}=\emptyset$.
Therefore $|T_{cd}| \geq (2/3+\eta)n$.
Now, by Proposition~\ref{2ndnbrhd}(i) and (\ref{Udeg2}), we have that $|N^2_{U}(a,b)| \geq 2(2/3+\eta/2)n-n \geq (1/3+\eta)n$.
But together with~(\ref{S3}), this implies that $T_{ab} \neq \emptyset$, a contradiction.
\end{proof}

\begin{center}
\begin{figure}

\tikzstyle{every node}=[fill=black,draw,circle,minimum width=1pt,outer sep = 0, inner sep = 1]

\begin{tikzpicture}
\clip (-3.5,-1.5) rectangle (11.5,1.4);

\begin{scope}

\node[] (w) at (0,0) {};
\node[] (x) at (1,0) {};
\node[label={[anchor=south]above:$a$}] (a) at (2,1) {};
\node[label=below:$b$] (b) at (2,-1) {};

\node[label=above:$c$] (c) at (6,1) {};
\node[label=below:$d$] (d) at (6,-1) {};
\node[] (x') at (7,0) {};
\node[] (w') at (8,0) {};

\node[anchor=base,label={below:$b'$}] (b') at (-2,0) {};
\node[anchor=base,label={below:$a'$}] (a') at (-3,0) {};
\node[label={[anchor=south]below:$c'$}] (c') at (10,0) {};
\node[label={[anchor=south]below:$d'$}] (d') at (11,0) {};

\begin{scope}
\clip (9.75,-1) rectangle (13,3);
\draw[thick] (d') .. controls (10,0.5) .. (9,0);
\draw[thick] (c') .. controls (9,0.5) .. (w');
\end{scope}

\begin{scope}
\clip (9.5,-3) rectangle (9.75,1);
\draw[thick,dotted] (d') .. controls (10,0.5) .. (9,0);
\draw[thick,dotted,color=red] (9.75,0) -- (9.5,0);
\draw[thick,dotted] (c') .. controls (9,0.5) .. (w');
\end{scope}

\begin{scope}
\clip (-4,6) rectangle (-1.75,-1);
\draw[thick] (a') .. controls (-2,0.5) .. (-1,0);
\draw[thick] (b') .. controls (-1,0.5) .. (w);
\end{scope}

\begin{scope}
\clip (-1.75,-1) rectangle (-1.5,1);
\draw[thick,dotted] (a') .. controls (-2,0.5) .. (-1,0);
\draw[thick,dotted,color=red] (-1.75,0) -- (-1.5,0);
\draw[thick,dotted] (b') .. controls (-1,0.5) .. (w);
\end{scope}

\draw[thick,color=red] (a') -- (-1.75,0);
\draw[thick,color=red] (d') -- (9.75,0);

\draw[thick,dotted,color=red] (-1.25,0) -- (-0.75,0);
\draw[thick,dotted,color=red] (8.75,0) -- (9.25,0);

\draw[thick,color=red] (-0.25,0) -- (x);
\draw[thick,color=red] (x') -- (8.25,0);

\draw[thick,color=red] (x) -- (b) -- (a);
\draw[thick] (w) -- (b);
\draw[thick] (w) -- (a) -- (x);

\draw[thick,color=red] (x') -- (d) -- (c);
\draw[thick] (w') -- (d);
\draw[thick] (w') -- (c) -- (x');

\begin{scope}
\clip (-0.25,-1) rectangle (3,3);
\draw[thick] (x) .. controls (0,0.5) .. (-1,0);
\end{scope}

\begin{scope}
\clip (6,6) rectangle (8.25,-1);
\draw[thick] (x') .. controls (8,0.5) .. (9,0);
\end{scope}

\begin{scope}
\clip (-0.5,-3) rectangle (-0.25,1);
\draw[thick,dotted] (x) .. controls (0,0.5) .. (-1,0);
\draw[thick,dotted,color=red] (-0.25,0) -- (-0.5,0);
\end{scope}

\begin{scope}
\clip (8.25,-1) rectangle (8.5,1);
\draw[thick,dotted] (x) .. controls (8,0.5) .. (9,0);
\draw[thick,dotted,color=red] (8.25,0) -- (8.5,0);
\end{scope}

\node[draw=none,fill=none,label=$Q$] at  (-1,-1.25) {};

\node[draw=none,fill=none,label=$Q'$] at  (9,-1.25) {}; 

\node[label=below:$x$] (x) at (3.5,0) {};
\node[label=below:$y$] (y) at (4.5,0) {};

\draw[thick,color=red] (x) -- (y);
\draw[thick,color=red] (a) -- (x);
\draw[thick] (x) -- (b);
\draw[thick] (x) -- (c);
\draw[thick,color=red] (c) -- (y);
\draw[thick] (y) -- (d);
\draw[thick] (y) -- (a);

\end{scope}

\end{tikzpicture}
\caption{A tail-flexible path $Q$ and head-flexible path $Q'$ with $[Q]^+_2=ab$ and $[Q']^-_2=cd$ and adjacent vertices $x \in N^3(a,b,c) \subseteq T_{ab}$ and $y \in N^3(a,c,d) \subseteq T_{cd}$. The red line represents the ordering of a square path with vertex set $V(Q) \cup V(Q') \cup \lbrace x,y \rbrace$.}\label{pathfig}
\end{figure}
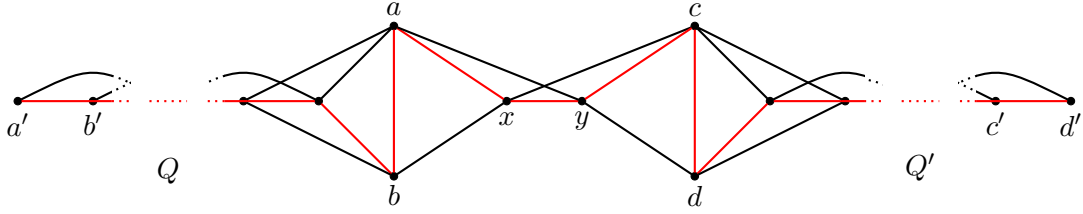
\end{center}

\subsection{An almost spanning square cycle}

The aim of this section is to prove Lemma~\ref{almostcycle}, which states that every sufficiently large $\eta$-good graph $G$ on $n$ vertices contains a square cycle that covers almost every vertex in $G$. The idea is to first apply Lemma~\ref{almostpath} to $G$ to find a collection $\mathcal P$ of heavy square paths that cover most of $G$. Then we repeatedly apply Lemma~\ref{abcd} to connect together these square paths into a single almost spanning square cycle in $G$. If we just apply Lemma~\ref{abcd} to $G$, then when connecting two square paths together we may be forced to use some vertices from other square paths from $\mathcal P$. To avoid this problem we in fact connect the square paths from $\mathcal P$ together using \emph{only} vertices from a small set $R \subseteq V(G)$ that is disjoint from $\mathcal P$. We refer to $R$ as a \emph{reservoir}. $R$ will be constructed in Lemma~\ref{reservoir} so that $G[R]$ `inherits' the degree sequence of $G$ (that is $G[R]$ is 
 $(\eta/2,|R|)$-good). This will allow us to apply Lemma~\ref{abcd} to $G[R]$ rather that $G$ itself. The idea of connecting paths through a reservoir has been used, for example, in~\cite{cdk, deore, lev, rodl}.

The hypergeometric random variable $X$ with parameters $(n,m,k)$ is
defined as follows. We let $N$ be a set of size $n$, fix $S \subseteq N$ of size
$|S|=m$, pick a uniformly random $T \subseteq N$ of size $|T|=k$,
then define $X=|T \cap S|$. Note that $\mathbb{E}X = km/n$.
To prove Lemma~\ref{reservoir} we will use the following standard Chernoff-type bound (see e.g.~Theorem 2.10 in~\cite{JLR}).

\begin{proposition}\label{chernoff}
Suppose $X$ has hypergeometric distribution and $0<a<3/2$. Then
$$\mathbb{P}(|X - \mathbb{E}X| \ge a\mathbb{E}X) \le 2 e^{-\frac{a^2}{3}\mathbb{E}X}.$$
\end{proposition}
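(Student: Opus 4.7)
The plan is to reduce to the binomial case and then apply a standard Chernoff--Hoeffding argument. Recall that if $X$ is hypergeometric with parameters $(n,m,k)$, then $X$ can be interpreted as the number of red balls drawn when $k$ balls are sampled without replacement from an urn containing $m$ red and $n-m$ blue balls. Let $Y$ be the analogous count when the $k$ draws are instead made with replacement; so $Y$ is binomial with parameters $(k,m/n)$ and $\mathbb{E}Y=\mathbb{E}X=km/n$.

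The first step is the classical observation of Hoeffding that, for every convex $f:\mathbb{R}\to\mathbb{R}$, one has $\mathbb{E}f(X)\le \mathbb{E}f(Y)$. In particular, taking $f(x)=e^{tx}$ shows that the moment generating function of $X$ is dominated by that of $Y$. This can be established by a standard coupling/martingale argument which rewrites $X$ as a sum of conditional indicators and compares it term by term with the independent sum underlying $Y$.

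The second step is the usual Markov-inequality argument applied to $e^{tX}$. For the upper tail, optimising over $t>0$ together with the MGF bound from the previous step yields
\[
\mathbb{P}(X\ge(1+a)\mathbb{E}X)\le \exp\bigl(-\mathbb{E}X\cdot((1+a)\ln(1+a)-a)\bigr).
\]
The elementary inequality $(1+a)\ln(1+a)-a\ge a^2/3$, valid on $0\le a<3/2$, then gives the upper-tail bound $\exp(-a^2\mathbb{E}X/3)$. An analogous computation with $t<0$ handles the lower tail, actually furnishing the stronger exponent $a^2/2$ (valid for $0<a<1$ and extending to the full range by monotonicity); summing the two tail estimates produces the factor of $2$ in the conclusion.

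I do not expect any genuine obstacle: every ingredient is textbook. The only mildly technical point is the Hoeffding convex-order reduction, which is precisely why in practice I would simply invoke Theorem~2.10 of Janson--\L uczak--Ruci\'nski~\cite{JLR} in place of reproducing these steps.
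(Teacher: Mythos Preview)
Your sketch is correct and follows the standard route (Hoeffding's convex-order reduction to the binomial case, then the usual Chernoff optimisation together with the elementary inequality $(1+a)\ln(1+a)-a\ge a^2/3$ on $[0,3/2)$). However, the paper itself does not supply a proof of this proposition at all: it simply states the bound and refers the reader to Theorem~2.10 of Janson--\L uczak--Ruci\'nski~\cite{JLR}. So your final sentence---invoking~\cite{JLR} directly---is exactly what the paper does, and the preceding outline, while accurate, goes beyond what is expected here.
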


\begin{lemma}\label{reservoir}\emph{(Reservoir lemma)}
Let $n \in \mathbb{N}$ and let $\delta,\eta > 0$ such that $0 < 1/n \ll \delta \ll \eta \ll 1$.
Suppose that $G$ is an $\eta$-good graph on $n$ vertices.
Then there exists $R \subseteq V(G)$ such that $|R|=\delta n$ and
\begin{itemize}
\item for all $v \in V(G)$ we have $d_G(v,R) \geq (d_G(v)/n-\eta/8)|R|$;
\item $G[R]$ is $(\eta/2,\delta n)$-good.
\end{itemize}
\end{lemma}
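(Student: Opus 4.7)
The plan is to choose $R \subseteq V(G)$ uniformly at random among subsets of size $\delta n$ and show that both properties hold simultaneously with positive probability. Both verifications are Chernoff-style applications of Proposition~\ref{chernoff}, together with a short combinatorial deduction at the end.

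For the first bullet, fix $v \in V(G)$. Then $d_G(v,R)$ is hypergeometric with mean $d_G(v)\delta$, and since $G$ is $\eta$-good we have $d_G(v) \geq (1/3+\eta)n$. Applying Proposition~\ref{chernoff} with $a := \eta n/(8 d_G(v)) \in [\eta/8, 3\eta/8] \subseteq (0, 3/2)$ yields
$$\pr\bigl(d_G(v,R) < (d_G(v)/n - \eta/8)|R|\bigr) \leq 2\exp(-a^2 d_G(v)\delta/3) \leq 2\exp(-c\eta^2 \delta n)$$
for some absolute $c>0$. A union bound over $v \in V(G)$ shows the first bullet holds with probability $1-o(1)$.

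For the second bullet, I first control the $d_G$-degree sequence restricted to $R$. For each integer $i \in [1, \delta n/3]$ set $T_i := \{v \in V(G) : d_G(v) < (1/3+3\eta/4)n + i/\delta\}$. Using $\eta$-goodness (Proposition~\ref{vertexdeg}), every $v \in T_i$ is among the vertices of $G$ with the $k$ smallest $d_G$-values for some $k \leq i/\delta - \eta n/4$, so $|T_i| \leq i/\delta - \eta n/4$, giving $\ex|R \cap T_i| \leq i - \eta\delta n/4$. A concentration inequality for the hypergeometric distribution with deviation $t := \eta\delta n/4$ yields $\pr(|R \cap T_i| \geq i) \leq \exp(-c'\eta^2 \delta n)$, and a union bound over $i \leq \delta n/3$ shows that with probability $1-o(1)$, for every such $i$ the $i$-th smallest $d_G$-value among vertices of $R$ is at least $(1/3 + 3\eta/4)n + i/\delta$.

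Assume both high-probability events hold and order $R$ as $u_1, \ldots, u_{\delta n}$ with $d_G(u_1) \leq \ldots \leq d_G(u_{\delta n})$. For $i \leq \delta n/3$ and any $j \geq i$, by the first bullet,
$$d_{G[R]}(u_j) = d_G(u_j,R) \geq d_G(u_j)\delta - \eta\delta n/8 \geq d_G(u_i)\delta - \eta\delta n/8 \geq (1/3+\eta/2)\delta n + i + 1,$$
where the last step uses $d_G(u_i) \geq (1/3 + 3\eta/4)n + i/\delta$ together with $\eta \delta n/8 \geq 1$. Hence at least $\delta n - i + 1$ vertices of $R$ have $G[R]$-degree at least $(1/3+\eta/2)\delta n + i + 1$, so the $i$-th smallest $G[R]$-degree is at least this value, and $G[R]$ is $(\eta/2, \delta n)$-good.

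The main technical point is applying the concentration bound for $|R \cap T_i|$ uniformly in $i$: when $\ex|R \cap T_i|$ is small, Proposition~\ref{chernoff}'s restriction $a < 3/2$ is tight, and one should instead appeal to the Hoeffding-type bound $\pr(|R \cap T_i| \geq \ex + t) \leq \exp(-2t^2/|R|)$ valid for the hypergeometric distribution. Once this concentration is in hand, both the deduction of the first bullet from Proposition~\ref{chernoff} and the translation of the $d_G$-control on $R$ into the $(\eta/2, \delta n)$-goodness of $G[R]$ are straightforward.
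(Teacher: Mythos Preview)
Your proof is correct and follows the same strategy as the paper: choose $R$ uniformly at random of size $\delta n$, use hypergeometric concentration to show that degrees into $R$ are preserved and that the $d_G$-degree sequence restricted to $R$ is preserved, and then combine the two to deduce $(\eta/2,\delta n)$-goodness of $G[R]$. The only notable difference is dual parametrisation: the paper applies Proposition~\ref{chernoff} to the \emph{large} sets $T_{j,\eta}(G[R],G)$ of vertices in $R$ with $d_G$ above a threshold (so the expectation is always $\Theta(\delta n)$ and the constraint $a<3/2$ is never an issue), whereas you apply concentration to the complementary \emph{small} sets $T_i$, which forces you to invoke a separate Hoeffding-type tail bound when $\mathbb{E}|R\cap T_i|$ is tiny. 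You flag this correctly, and with that caveat your argument goes through; your final deduction of $(\eta/2,\delta n)$-goodness from the two concentration events is in fact a little more streamlined than the paper's.
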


\begin{proof}
Choose $R \subseteq V(G)$ uniformly at random from all $\binom{n}{\delta n}$ subsets of $V(G)$ with size $\delta n$.
We first show that the probability that, for all $v \in V(G)$, we have
\begin{equation}\label{Rdeg}
d_G(v,R) \geq \left( 1 - \frac{\eta}{8} \right) \delta d_G(v)
\end{equation}
is more than $1/2$.
Indeed, for each $v \in V(G)$, we have
$$
\mathbb{E}(d_G(v,R)) = d_G(v)|R|/n = \delta d_G(v).
$$
Proposition~\ref{chernoff} implies that
$$
\mathbb{P}\left(d_G(v,R) < \left(1-\frac{\eta}{8}\right)\delta d_G(v)\right) \leq 2e^{-\eta^2\delta d_G(v)/192} < 2e^{-\eta^2\delta n/576} \leq e^{-\sqrt{n}}.
$$
(For the penultimate inequality, we used the fact that $G$ is $\eta$-good and so $\delta(G) > n/3$.)
So taking a union bound over all $v \in V(G)$, we see that the probability that some vertex fails to satisfy (\ref{Rdeg}) is at most $ne^{-\sqrt{n}} < 1/2$, as required.

Given $j,\lambda > 0$ and $H \subseteq H' \subseteq G$, let
$$
T_{j,\lambda}(H,H') := \lbrace x \in V(H): d_{H'}(x) \geq (1/3+\lambda)|H'|+j +1 \rbrace.
$$
Note that 
for all $\kappa \in \mathbb{R}$, whenever $j>\kappa|H'|$ and $\lambda+\kappa > 0$, we have
\begin{equation}\label{kappa}
T_{j,\lambda}(H,H')=T_{j-\kappa |H'|,\lambda+\kappa}(H,H').
\end{equation}
Observe that $H$ is $\lambda$-good if and only if, for all integers $1 \leq j \leq |H|/3$ we have $|T_{j,\lambda}(H,H)| \geq |H|- j+1$.
So, since $G$ is $\eta$-good, for all integers $1 \leq j \leq n/3$ we have $|T_{j,\eta}(G,G)| \geq n- j+1$.
Observe that
$$
\mathbb{E}(|T_{j,\eta}(G[R],G)|) = \delta |T_{j,\eta}(G,G)| \geq \delta(n-j).
$$
Proposition~\ref{chernoff} implies that, for fixed $1 \leq j \leq n/3$,
$$
\mathbb{P}\left(|T_{j,\eta}(G[R],G)| <\left(1-\frac{\eta}{8}\right)\delta(n- j)\right) \leq 2e^{-\eta^2\delta (n-j)/192} \leq 2e^{-\eta^2\delta n/288} \leq e^{-\sqrt{n}}.
$$
So the probability that $|T_{j,\eta}(G[R],G)|\leq (1-\eta/8)\delta (n-j)$ for some integer $1 \leq j \leq n/3$ is at most $ne^{-\sqrt{n}}/3 < 1/2$.

Thus there is some choice of $R$ such that, for all $v \in V(G)$, we have
\begin{equation}\label{prop1}
d_G(v,R) \geq \left(1-\frac{\eta}{8}\right)\delta d_G(v) = \frac{(1-\eta/8)|R|d_G(v)}{n}\geq \left( \frac{d_G(v)}{n} - \frac{\eta}{8} \right) |R|,
\end{equation}
and for all integers $1 \leq j \leq n/3$ we have
\begin{equation}\label{prop2}
|T_{j,\eta}(G[R],G)| \geq \left(1-\frac{\eta}{8}\right)\delta(n- j).
\end{equation}
To complete the proof, it remains to show that $G[R]$ is $(\eta/2,\delta n)$-good.
By an earlier observation, it suffices to show that
\begin{equation}\label{aim}
|T_{i,\eta/2}(G[R],G[R])| \geq \delta n - i+1 \ \ \text{ for all integers }\ 1 \leq i \leq \delta n/3.
\end{equation}
Let $x \in R$ be arbitrary.
Then (\ref{prop1}) and the fact that $G$ is $\eta$-good imply that 
$$
d_G(x,R) \geq \left( \frac{1}{3} + \frac{7\eta}{8} \right)\delta n \geq \left(\frac{1}{3}+\frac{5\eta}{6}\right)\delta n + 1.
$$
A simple rearrangement implies that $|T_{\delta\eta n/3,\eta/2}(G[R],G[R])| = |R|=\delta n$.
So, for all $1 \leq i \leq \delta\eta n/3$ we have that $|T_{i,\eta/2}(G[R],G[R])|= \delta n \geq \delta n - i +1$.
Thus, to show (\ref{prop2}), we may assume that $\delta\eta n/3 < i \leq \delta n/3$ for the remainder of the proof.

By definition, for all $x \in T_{i/\delta,2\eta/3}(G[R],G)$, we have that
$d_G(x) \geq (1/3+2\eta/3)n + i/\delta + 1$.
Therefore
(\ref{prop1}) implies that for such $x$,
$$
d_G(x,R) \geq \left(1-\frac{\eta}{8}\right)\delta \left( \left(\frac{1}{3} + \frac{2\eta}{3} \right) n + i/\delta + 1 \right) \geq \left(\frac{1}{3}+\frac{\eta}{2}\right)\delta n + i + 1.
$$
Thus
$$
T_{i,\eta/2}(G[R],G[R]) \supseteq T_{i/\delta,2\eta/3}(G[R],G).
$$
Therefore for all $\delta\eta n/3 < i \leq \delta n/3$,
\begin{eqnarray*}
|T_{i,\eta/2}(G[R],G[R])| &\geq& |T_{i/\delta,2\eta/3}(G[R],G)| \stackrel{(\ref{kappa})}{=} |T_{i/\delta-\eta n/3,\eta}(G[R],G)|\\
&\stackrel{(\ref{prop2})}{\geq}& \left(1-\frac{\eta}{8}\right)\delta\left(n - \frac{i}{\delta}+\frac{\eta n}{3}\right) = \delta n - i + \frac{\delta \eta n}{24}(5- \eta) + \frac{\eta i }{8} \geq \delta n-i+1.
\end{eqnarray*}
So (\ref{aim}) holds, as required.
\end{proof}

We will now combine Lemmas~\ref{almostpath},~\ref{abcd} and~\ref{reservoir} to prove the main result of this section.

\begin{lemma}\label{almostcycle}
Let $n \in \mathbb{N}$ and $0 < 1/n \ll \eps \ll \eta \ll 1$.
Then every $\eta$-good graph $G$ on $n$ vertices contains a square cycle $C$ with $|C| \geq (1-\eps)n$.
\end{lemma}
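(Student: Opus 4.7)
The plan is to extract an almost-spanning collection of heavy square paths via Lemma~\ref{almostpath}, and then stitch them together into a single square cycle by inserting short connecting square paths through a carefully chosen reservoir, using Lemma~\ref{abcd}. Choose constants $0 < 1/n \ll \delta'' \ll \delta \ll \eps_0 \ll \eps \ll \eta \ll 1$.

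First I would apply Lemma~\ref{reservoir} with parameters $\delta$ and $\eta$ to obtain $R \subseteq V(G)$ of size $\delta n$ such that $d_G(v,R) \geq (d_G(v)/n - \eta/8)|R|$ for each $v \in V(G)$ and $G[R]$ is $(\eta/2,\delta n)$-good. Set $G' := G \setminus R$, which has $n' := (1-\delta)n$ vertices. Using Proposition~\ref{vertexdeg}(ii) applied to $G$ and the fact that $d_{G'}(x) \geq d_G(x) - |R|$, I would check directly that $G'$ is $(\eta/2)$-good on $n'$ vertices: the bound $(1/3+\eta-\delta)n+i+1 \geq (1/3+\eta/2)n'+i+1$ follows for $\delta \ll \eta$. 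Applying Lemma~\ref{almostpath} to $G'$ with parameters $\eps_0$ and $\eta/2$ then produces a bounded constant $M$ and a collection $\mathcal{P} = \{P_1,\ldots,P_M\}$ of vertex-disjoint $(\eta/2)$-heavy square paths in $G'$ with $\sum_i|P_i| \geq (1-\eps_0)n'$. For each $i$ write $[P_i]^-_2 = c_id_i$ and $[P_i]^+_2 = a_ib_i$; each of these four vertices has $G'$-degree at least $(2/3+\eta/2)n'$ and hence $G$-degree at least $(2/3+\eta/4)n$.

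Next I connect the $P_i$ cyclically through $R$. For $i=1,\ldots,M$ (indices mod $M$), set $H_i := G[R \cup \{a_i,b_i,c_{i+1},d_{i+1}\}]$. The key claim is that $H_i$ is $(\eta/32)$-good and $\{a_i,b_i,c_{i+1},d_{i+1}\} \subseteq V(H_i)_{\eta/32}$. For the marked vertices, property (a) of Lemma~\ref{reservoir} gives $d_{H_i}(v) \geq d_G(v,R) \geq (2/3+\eta/4-\eta/8)|R| \geq (2/3+\eta/16)|H_i|$ for large $n$. For the goodness of $H_i$, given any $X \subseteq V(H_i)$ with $|X| = i' \leq |H_i|/3$: if $X$ meets $\{a_i,b_i,c_{i+1},d_{i+1}\}$ then any such element $s$ satisfies $d_{H_i}(s) \geq (2/3+\eta/16)|H_i| \geq (1/3+\eta/32)|H_i|+i'+1$, while if $X \subseteq R$ the $(\eta/2,|R|)$-goodness of $G[R]$ supplies $x \in X$ with $d_{H_i}(x) \geq d_{G[R]}(x) \geq (1/3+\eta/2)|R|+i'+1 \geq (1/3+\eta/32)|H_i|+i'+1$. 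Let $W_i \subseteq R$ be the set of reservoir vertices already used in the previously constructed connectors $Q_1,\ldots,Q_{i-1}$; then $|W_i| \leq 22(M-1) \leq \delta''|H_i|$ since $M$ is an absolute constant and $|H_i| \geq \delta n$. Applying Lemma~\ref{abcd} to $H_i$ with parameters $\delta''$ and $\eta/32$, with $a_i,b_i,c_{i+1},d_{i+1}$ in the roles of $a',b',c',d'$ and $W_i$ as the forbidden set, yields an $(a_ib_i, c_{i+1}d_{i+1})$-path $Q_i$ on at most $22$ vertices, avoiding $W_i$ and contained in $R \cup \{a_i,b_i,c_{i+1},d_{i+1}\}$.

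Concatenating $P_1Q_1P_2Q_2 \cdots P_MQ_M$, identifying the shared pair $a_ib_i$ at the start of $Q_i$ and the shared pair $c_{i+1}d_{i+1}$ at its end, yields a square cycle $C$ in $G$. Since every vertex of every $P_i$ lies in $C$, we have $|C| \geq \sum_i|P_i| \geq (1-\eps_0)(1-\delta)n \geq (1-\eps)n$, as required. The main technical obstacle is the verification that each $H_i$ is $(\eta/32)$-good and the marked vertices are heavy in it; once this is in hand, Lemma~\ref{abcd} supplies the connectors and Lemma~\ref{almostpath} already provides almost all the vertices, so the cycle assembles itself.
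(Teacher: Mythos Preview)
Your proposal is correct and follows essentially the same approach as the paper: reserve a small set $R$ via Lemma~\ref{reservoir}, apply Lemma~\ref{almostpath} to $G\setminus R$ to obtain a bounded collection of heavy square paths covering almost everything, and then cyclically connect consecutive paths through $R$ by applying Lemma~\ref{abcd} inside $G[R\cup\{a_i,b_i,c_{i+1},d_{i+1}\}]$ after verifying this auxiliary graph is $\eta'$-good and the four endpoints are heavy there. The only differences are cosmetic (your choice of constants and your direct verification of goodness where the paper cites Proposition~\ref{stoneage}); just be careful that the number of paths is \emph{at most} $M$ rather than exactly $M$, and that your hierarchy implicitly allows $\delta''$ to be chosen after $M$ is determined.
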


\begin{proof}
Apply Lemma~\ref{almostpath} with $\eta/2,\eps/2$ playing the roles of $\eta,\eps$ to obtain  $n_0, M \in \mathbb N$ such that every $(\eta/2)$-good graph $H$ on at least $ n_0$ vertices contains a collection of at most $M$ vertex-disjoint ($\eta/2$)-heavy square paths which together cover at least $(1-\eps/2)|H|$ vertices.
Note that we may assume that $1/n \ll 1/n_0 \ll 1/M \ll \eps$.
Further, choose $\delta$ so that $1/M \ll \delta \ll \eps$.

Apply Lemma~\ref{reservoir} to $G$ to obtain a set $R$
such that $|R|=\delta n$; for all $v \in V(G)$ we have
\begin{equation}\label{dRv}
d_G(v,R) \geq \left(\frac{d_{G}(v)}{n}-\frac{\eta}{8}\right)|R|;
\end{equation}
and $G[R]$ is $(\eta/2,\delta n)$-good.

Note that $|G \setminus R| = (1-\delta)n \geq (1-\eta/4)n$.
Proposition~\ref{stoneage} implies that $G \setminus R$ is $(\eta/2,n)$-good.
Lemma~\ref{almostpath} and the choice of $M$ above implies that $G \setminus R$ contains a collection $\mathcal{P}$ of $m \leq M$ vertex-disjoint $(\eta/2)$-heavy square paths such that \begin{equation}\label{pclass}
\sum_{P \in \mathcal{P}}|P| \geq (1-\eps/2)(1-\delta)n \geq (1-\eps)n.
\end{equation}

Write $\mathcal{P} := \lbrace P_1, \ldots, P_m \rbrace$. 
Let $P_0 = Q_0 := \emptyset$ and $P_{m+1} := P_1$.
For each $0\leq i \leq m$, we will find a square path $Q_i$ in $G[R]$ such that $P_iQ_iP_{i+1}$ is an $(\eta/2)$-heavy square path in $G$.
Suppose, for some $0 \leq i \leq m-1$, we have obtained vertex-disjoint square paths $Q_0,\ldots,Q_i$ in $G[R]$ such that, for all $0 \leq j \leq i$ we have that $P_{j}Q_{j}P_{j+1}$ is an $(\eta/2)$-heavy square path in $G$, and $|Q_j| \leq 19$.
Let $[P_{i+1}]^+_2=a'b'$ and $[P_{i+2}]^-_2=c'd'$.

Set $G' := G[R \cup \lbrace a',b',c',d' \rbrace]$ and $n' := |G'|=\delta n+4$. 
We claim that $G'$ is $(\eta/4,n')$-good.
First note that, for all $v \in V(G)_{\eta/2}$, (\ref{dRv}) implies that
$$
d_{G}(v,V(G')) \geq d_G(v,R) \geq \left(\frac{2}{3}+\frac{3\eta}{8}\right)|R| \geq \left(\frac{2}{3}+\frac{\eta}{4}\right)n'+1.
$$ 
So, since $P_{i+1}$ and $P_{i+2}$ are $(\eta/2)$-heavy square paths in $G$, we have
$\lbrace a',b',c',d' \rbrace \subseteq V(G')_{\eta/4}$.
Fix $1 \leq i \leq n'/3$ and let $X_i \subseteq V(G')$ be such that $|X_i|=i$.
We need to show that $\max_{x \in X_i}d_{G'}(x) \geq (1/3+\eta/4)n' + i +1$.
So we may assume that $\lbrace a',b',c',d' \rbrace \cap X_i = \emptyset$, i.e. $X_i \subseteq R$ (otherwise we are done).
Since $G[R]$ is $(\eta/2)$-good, we have that
$$
\max_{x \in X_i}d_{G'}(x) \geq \max_{x \in X_i}d_G(x,R) \geq \left(\frac{1}{3}+\frac{\eta}{2}\right)\delta n+i+1 \geq \left(\frac{1}{3}+\frac{\eta}{4}\right)n'+i+1,
$$
as required.
So $G'$ is $(\eta/4,n')$-good.

Let $W := \bigcup_{0 \leq j \leq i}V(Q_j)$.
Then $|W| \leq 19M \leq \delta n'$.
So we can apply Lemma~\ref{abcd} with $G',n',\delta,\eta/4$ playing the roles of $G,n,\delta,\eta$ to find in $G'$ an $(a'b',c'd')$-path $P'$ on at most $22$ vertices which avoids $W$.
We take $Q_{i+1}$ to be the square path such that $P' = a'b'Q_{i+1}c'd'$.
So $Q_{i+1} \subseteq G[R]$.
Then $Q_{i+1}$ is vertex-disjoint from $Q_0, \ldots, Q_i$; $|Q_{i+1}| \leq 19$ and  $P_{i+1}Q_{i+1}P_{i+2}$
is an $(\eta/2)$-heavy square path in $G$.

Follow this procedure until we have obtained $Q_0, \ldots, Q_m$ in $G[R]$ with the required properties.
It is easy to see that $C := P_1Q_1P_2\ldots P_mQ_m$ is a square cycle in $G$.
Finally, (\ref{pclass}) implies that $|C| \geq \sum_{P \in \mathcal{P}}|P| \geq (1-\eps)n$.
\end{proof}



\section{An almost spanning triangle cycle}\label{sectc}

In order to find the  square of a Hamilton cycle in $G$, we will first show that the reduced graph $R$ of $G$ contains an almost spanning subgraph $Z_\ell$ which itself contains a spanning square cycle, but with some specific additional edges.
We call this structure $Z_\ell$ an `$\ell$-triangle cycle'. The structure $Z_\ell$ in $R$ will act as a `framework' for embedding the square of a Hamilton cycle in $G$.
Given $c \in \mathbb{N}$, write $C^2_c$ for the square cycle on $c$ vertices. So $V(C^2 _c) = \lbrace x_1, \ldots, x_c \rbrace$, and $x_ix_j \in E(C^2 _c)$ whenever $|i-j| \in \lbrace 1,2 \rbrace$ modulo $c$. We will often write $C^2_c =: x_1 \ldots x_c$.

\begin{definition}\emph{($\ell$-triangle cycle $Z_\ell$)}\label{ltricycle}
Write $Z_\ell$ for the graph with vertex set $[\ell] \times [3]$ such that for all $1 \leq i \leq \ell$ and distinct $1 \leq j,j' \leq 3$, we have $(i,j)(i,j') \in E(Z_\ell)$, and $(i,j)(i+1,j') \in E(Z_\ell)$, where addition is modulo $\ell$. We call $Z_\ell$ an \emph{$\ell$-triangle cycle.}

Let $T_\ell$ be the spanning subgraph of $Z_\ell$ such that for all $1 \leq i \leq i' \leq \ell$ and $1\leq j < j' \leq 3$, $(i,j)(i',j') \in E(T_\ell)$ whenever $i=i'$.
So $T_\ell$ is a collection of $\ell$ vertex-disjoint triangles.
\end{definition}

So $Z_\ell$ consists of a cyclically ordered collection of $\ell$ vertex-disjoint triangles $T_\ell$, and between any pair of consecutive triangles, there is a complete bipartite graph minus a perfect matching.
We observe the following properties of $Z_\ell$:
\begin{itemize}
\item $|Z_\ell|=3\ell$ and $Z_\ell$ is $6$-regular;
\item $Z_\ell \supseteq C^2_{3\ell}$, i.e. $Z_\ell$ contains the square of a Hamilton cycle;
\item $Z_\ell$ is a $3$-partite graph (where the vertex $(i,j)$ belongs to the $j$th colour class);
\item $Z_\ell$ is invariant under permutation of the second index $j$.
\end{itemize}
This final property will be crucial when using a copy of $Z_{\ell}$ in $R$ to embed the square of a Hamilton cycle in $G$.
We explain this further in Section~\ref{sec:square}.

The following lemma states that a large $\eta$-good graph $G$ contains a copy of $Z_\ell$ which covers almost every vertex of $G$.
Its proof is a consequence of Theorem~\ref{blowup} and Lemma~\ref{almostcycle}.

\begin{lemma}\label{trianglecycle}
Let $n \in \mathbb{N}$ and $0 < 1/n \ll \eps \ll \eta \ll 1$.
Then, for every $\eta$-good graph $G$ on $n$ vertices, there exists an integer $\ell$ with $(1-\eps) n \leq 3\ell \leq n$ such that $G \supseteq Z_\ell$.
\end{lemma}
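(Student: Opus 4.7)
The plan is to apply the Regularity method and then use the Blow-up lemma to embed $Z_\ell$ into $G$ via the structure of an almost-spanning square cycle in the reduced graph. First I apply Szemer\'edi's Regularity lemma (Lemma~\ref{reg}) to $G$ with parameters $0 < 1/n \ll 1/M' \ll \eps' \ll d \ll \eta$, obtaining clusters $V_0, V_1, \ldots, V_L$ (with $|V_i| = m$ for $i \ge 1$), pure graph $G'$, and reduced graph $R$; by Lemma~\ref{reduced}, $R$ is $(\eta/2, L)$-good. Applying Lemma~\ref{almostcycle} to $R$ yields an almost-spanning square cycle $C^2_{3\ell'} \subseteq R$ with $3\ell' \ge (1-\eps/3)L$, whose clusters I label cyclically as $W_0, W_1, \ldots, W_{3\ell'-1}$. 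Then Lemma~\ref{superslice}, applied with $C^2_{3\ell'}$ in the role of $H$, produces subsets $W_i' \subseteq W_i$ of size $m^* := (1-\sqrt{\eps'})m$ such that $G[W_i', W_j']$ is $(\eps^*, d/2)$-superregular for every $ij \in E(C^2_{3\ell'})$; I adjust slightly so that $m^* \in 3\mathbb{N}$.

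The core step is to construct a graph homomorphism $\phi : V(Z_\ell) \to V(C^2_{3\ell'})$ with $\ell := \ell' m^*$ and $|\phi^{-1}(W_k)| = m^*$ for each $k$, and then to invoke the Blow-up lemma. I use a \emph{shift-by-one} scheme in which the $r$-th triangle $T_r$ of $Z_\ell$ is assigned to the overlapping cluster triple $\{W_{r-1}, W_r, W_{r+1}\}$ (indices mod $3\ell'$), so that the $\ell = (m^*/3) \cdot 3\ell'$ triangles wind through the cluster cycle $m^*/3$ times. A direct enumeration shows that between consecutive triangles $T_r, T_{r+1}$ (whose cluster triples overlap in $\{W_r, W_{r+1}\}$), the nine possible cluster pairs split as six edges of $C^2_{3\ell'}$ and three ``bad'' pairs: two same-cluster pairs from the overlap, and the pair $\{W_{r-1}, W_{r+2}\}$ at cyclic distance $3$ (a non-edge of $C^2_{3\ell'}$). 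These three bad pairs form a perfect matching between the two triangles, paralleling the three missing edges $(r, j)(r+1, j)$ of $Z_\ell$. I iteratively choose permutations $c^r$ of cluster assignments so that the missing edges of $Z_\ell$ map exactly onto the bad pairs; the resulting permutations follow a period-$3$ pattern whose cluster indices advance by $3$ every three shifts, closing up cyclically precisely because $m^* \in 3\mathbb{N}$.

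Each cluster $W_k$ lies in three consecutive cluster triples per lap, so it receives exactly $3 \cdot (m^*/3) = m^*$ preimages under $\phi$. Moreover $\phi^{-1}(W_k)$ is an independent set in $Z_\ell$: any two distinct preimages $(r, j), (r', j')$ must satisfy $|r - r'| \le 1$ and $j = j'$ by construction, and $(r, j)(r \pm 1, j)$ is a non-edge of $Z_\ell$. I then apply Theorem~\ref{blowup2} (the Blow-up lemma for large $k$) with $J := C^2_{3\ell'}$ (of maximum degree $4$), $H := Z_\ell$ (of maximum degree $6$), equal parts of size $m^*$, and homomorphism $\phi$, obtaining the embedding $Z_\ell \subseteq G$. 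Since $3\ell = 3\ell' m^* \ge (1-\eps)n$ for suitable parameters, the lemma follows.

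The main obstacle is constructing the homomorphism $\phi$: although the local forcing of cluster permutations between two consecutive triangles is essentially unique once one matches the three missing edges of $Z_\ell$ with the three bad cluster pairs, one must then track the resulting sequence of permutations to verify that it closes up consistently around the full cycle of $\ell$ triangles. The key observation is that the pattern of permutations has period $3$ and advances the cluster indices by $3$ every three shifts, so it cycles back to its starting configuration after $3\ell'$ shifts provided $m^* \in 3\mathbb{N}$; this divisibility condition is arranged by minor parameter adjustment.
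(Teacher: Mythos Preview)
Your overall strategy matches the paper's almost step for step: Regularity lemma, then Lemma~\ref{almostcycle} applied to the reduced graph, then Lemma~\ref{superslice} to get superregular pairs along the square cycle, then a shift-by-one homomorphism from $Z_\ell$ into the square cycle, then Theorem~\ref{blowup2}. Your homomorphism $\phi$ is essentially the paper's map $\phi_2$ (there described as sending the $i$th triangle $S_i$ of $Z_\ell$ to the triangle $T_{[i]}$ spanned by three consecutive vertices of the target square cycle, preserving the ``second index'').

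However, there is one genuine gap. You write that Lemma~\ref{almostcycle} yields a square cycle $C^2_{3\ell'}$ in $R$, but that lemma only produces a square cycle $C^2_c$ with $c \ge (1-\eps')L$; nothing guarantees that $c$ is divisible by $3$. This matters because your shift-by-one scheme relies on a proper $3$-colouring of the target square cycle: the ``position'' of each colour cycles with period $3$, and the whole pattern only closes up when the cycle length is a multiple of $3$. When $3 \nmid c$, the square cycle $C^2_c$ is not even $3$-chromatic, and your construction cannot be carried out as stated. Nor can one simply shorten $C^2_c$ by a vertex or two to fix the residue.

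The paper handles exactly this obstruction by interposing a triple cover. It defines $\phi_1 : C^2_{3c} \to C^2_c$ by $x_i \mapsto w_{[i]_c}$, a graph homomorphism for any $c$, and then builds the shift-by-one map as $\phi_2 : Z_\ell \to C^2_{3c}$ (where now the target length $3c$ is certainly divisible by $3$). The composition $\phi = \phi_1 \circ \phi_2$ lands in the actual square cycle $C^2_c \subseteq R$, with each cluster receiving at most $3\lceil \ell/c \rceil$ preimages. To make the wrap-around work the paper chooses $\ell \in 3c\mathbb{N}+1$ (so $S_\ell$ and $S_1$ land on the \emph{same} triangle $T_1$ of $C^2_{3c}$), rather than your $\ell \in 3\ell'\mathbb{N}$. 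Inserting this triple cover into your argument and dropping the unjustified assumption $3 \mid c$ repairs the gap with essentially no other changes.
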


A  structure very similar to $Z_\ell$ was used in~\cite{bst} as a framework for embedding spanning subgraphs of small bandwidth and bounded maximum degree.
As such, we believe that Lemma~\ref{trianglecycle} could also be applied to embed such subgraphs into graphs satisfying the hypothesis of Theorem~\ref{mainthm} (see Section~\ref{concsec}).

\medskip
\noindent
\emph{Proof of Lemma~\ref{trianglecycle}.}
Let $M \in \mathbb{N}$ and let $d$ be a constant such that $1/n \ll 1/M \ll \eps \ll d \ll \eta$.
Apply Lemma~\ref{reg} (the Regularity lemma) to $G$ with parameters $\eps^4,d,M$ to obtain a reduced graph $R$ with $|R|=:L$ and pure graph $G'$.
So $G$ has a partition into $L$ clusters $V_1, \ldots, V_L$ each of size $m$, and an exceptional set $V_0$ of size at most $\eps^4 n$.
We may assume that $n$ is sufficiently large so that $1/n \ll 1/L \leq 1/M$.
Therefore we have the hierarchy
$$
0 < 1/n \ll 1/L \ll \eps \ll d \ll \eta \ll 1.
$$
Moreover,
\begin{equation}\label{oncemore}
L \geq \frac{(1-\eps^4)n}{m}.
\end{equation}
Lemma~\ref{reduced}(ii) implies that $R$ is $(\eta/2,L)$-good.
Lemma~\ref{almostcycle} applied with $\eta/2,\eps^4,L$ playing the roles of $\eta,\eps,n$ implies that $R$ contains a square cycle $C^2_c$ with
\begin{equation}\label{cdef}
|C^2_c| = c \geq (1-\eps^4)L.
\end{equation}
So each edge $ij \in E(C^2_c)$ corresponds to an $(\eps^4,d)$-regular pair $G'[V_{i},V_{j}]$ in $G'$.
Lemma~\ref{superslice} applied with $C^2_c,4,\eps^4,d$ playing the roles of $H,\Delta,\eps,d$ implies that each $V_i$ contains a set $V_i'$ with $|V_i'| = (1-\eps^2)m$ such that for every edge $ij$ of $C^2_c$, the graph $G'[V_i',V_j']$ is $(4\eps^2,d/2)$-superregular.
Now vertices in $R$ correspond naturally to the clusters $V_i'$.
Choose $\ell \in 3c\mathbb{N}+1$ such that
\begin{equation}\label{elldef}
\left(\frac{1}{3}-\frac{\eps}{3}\right)n < \ell < \left( \frac{1}{3} - \frac{\eps}{2} \right)n .
\end{equation}
(This is possible since $3c \leq 3L < \eps n/6$.)

Note that it suffices to find a graph homomorphism $\phi : V(Z_\ell) \rightarrow V(C^2_c)$ such that at most $(1-\eps^2)m$ vertices of $Z_\ell$ are mapped to the same vertex of $C^2_c$, i.e. that $|\phi^{-1}(w)| \leq (1-\eps^2)m$ for all $w \in V(C^2_c)$.
Then Theorem~\ref{blowup2} (the alternative Blow-up lemma) with $Z_\ell,V_i',(1-\eps^2)m,C^2_c$ playing the roles of $H,V_i,n/k,J$ implies that $G$ contains a copy of $Z_\ell$.

We will find $\phi$ in two stages.
We define graph homomorphisms $\phi_1 : V(C^2_{3c}) \rightarrow V(C^2_c)$ and $\phi_2 : V(Z_\ell) \rightarrow V(C^2_{3c})$.
Then $\phi := \phi_1 \circ \phi_2 : V(Z_\ell) \rightarrow V(C^2_c)$ is a graph homomorphism.

Write $C^2_c := w_1 \ldots w_c$ and $C^2_{3c} := x_1 \ldots x_{3c}$.
Given integers $k,N$, write $[k]_N$ for the unique integer in $[N]$ such that $k \equiv [k]_N \mod N$.
Let $\phi_1 : V(C^2_{3c}) \rightarrow V(C^2_{c})$ be defined by setting
$
\phi_1(x_i) = w_{[i]_c}
$
for all $1 \leq i \leq 3c$.
Then $\phi_1$ is a graph homomorphism, and
\begin{equation}\label{phi1}
|\phi_1^{-1}(w)| = 3\ \ \text{ for all }w \in V(C^2_c).
\end{equation} 
For each $1 \leq j \leq 3c$, relabel the vertex $x_j$ of $C^2_{3c}$ by the ordered pair $(\lceil j/3 \rceil, [j]_3)$.
(So the new vertex set is $[c] \times [3]$.)
For each $1 \leq j \leq 3c$, let $T_j$ be the triangle in $C^2_{3c}$ spanned by $x_j,x_{j+1},x_{j+2}$ (where $x_{3c+1}:=x_1$ and $x_{3c+2}:=x_2$).
So
$$
V(T_j) = \lbrace x_j,x_{j+1},x_{j+2} \rbrace = \left\lbrace \left(\left\lceil \frac{j}{3} \right\rceil, [j]_3 \right), \left(\left\lceil \frac{j+1}{3} \right\rceil, [j+1]_3 \right), \left(\left\lceil \frac{j+2}{3} \right\rceil, [j+2]_3 \right) \right\rbrace.
$$
So for any $j$, $T_j$ and $T_{j+1}$ have exactly two vertices in common.
Observe that  $\lbrace [j]_3, [j+1]_3, [j+2]_3 \rbrace = [3]$.
Let $\phi_2 : V(Z_\ell) \rightarrow V(C^2_{3c})$ be the map that takes a vertex $(i,j)$ to the unique vertex in $T_{[i]_{3c}}$ whose second index is $j$. This is illustrated in Figure~4.

To see why $\phi_2$ is a graph homomorphism, consider an edge $uv \in E(Z_\ell)$.
Let $S_i$ be the triangle in $Z_\ell$ spanned by $(i,1),(i,2),(i,3)$.
So $\phi_2$ maps each of the vertices of $S_i$ to a distinct vertex in $T_{[i]_{3c}}$.
Suppose first that there exists $1 \leq i \leq \ell$ such that $u$ and $v$ both lie in $S_i$.
Then $\phi_2$ maps both of $u$ and $v$ to different vertices of the same triangle $T_{[i]_{3c}}$ in $C^2_{3c}$.
So $\phi_2(u)\phi_2(v) \in E(C^2_{3c})$.
Suppose instead that $u$ and $v$ do not lie in the same triangle $S_i$.
Then, since $uv \in E(Z_\ell)$, $u$ and $v$ lie in consecutive triangles.
More precisely, there exist $1 \leq i \leq \ell$ and distinct $1 \leq j,j' \leq 3$ such that $u = (i,j)$ and $v = (i+1,j')$ (where $(\ell+1,j') := (1,j')$).

Suppose first that $i \leq \ell-1$.
Then by definition $\phi_2$ maps $u$ and $v$ to consecutive triangles $T_k$ and $T_{k+1}$ respectively.
It is not hard to see that every pair of the four vertices in $T_k \cup T_{k+1}$ is joined by an edge whenever their second index is different.
But the second indices of $\phi_2(u)$ and $\phi_2(v)$ are indeed different since $j \neq j'$.
So $\phi_2(u)\phi_2(v) \in E(C^2_{3c})$.

Suppose instead that $i = \ell$ (observe that we cannot have $i > \ell$).
So $u = (\ell,j)$ and $v = (1,j')$ for some distinct $1 \leq j,j' \leq 3$.
Since $\ell \in 3c\mathbb{N}+1$, we have $[\ell]_{3c} = 1 = [1]_{3c}$.
So, by the definition of $\phi_2$, $u$ is mapped to the unique vertex in $T_1$ with second index $j$ and $v$ is mapped to the unique vertex in $T_1$ with second index $j'$.
Since $j \neq j'$, we have $\phi_2(u)\phi_2(v)=(1,j)(1,j') \in E(C^2_{3c})$.

Therefore $\phi_2$, and hence $\phi$, is a graph homomorphism.
It remains to check that the preimage of each vertex of $C^2_c$ under $\phi$ is not too large. First note that
\begin{equation}\label{phi2}
\lfloor \ell/c \rfloor \leq |\phi_2^{-1}(x)| \leq \lceil \ell/c \rceil \ \ \text{ for all } x \in V(C^2_{3c}).
\end{equation}
Thus, for each $1 \leq j \leq c$ we have that
\begin{eqnarray*}
|\phi^{-1}(w_j)| &\leq& \left|\max_{x \in V(C^2_{3c})}\phi_2^{-1}(x)\right| \left|\max_{w \in V(C^2_c)}\phi_1^{-1}(w)\right| \stackrel{(\ref{phi1}),(\ref{phi2})}{\leq} 3\lceil \ell/c \rceil < \frac{3\ell}{c}+3\\
&\stackrel{(\ref{cdef}),(\ref{elldef})}{\leq}& \frac{(1-3\eps/2)n}{(1-\eps^4)L}+3 \stackrel{(\ref{oncemore})}{\leq} \frac{(1-\eps)m}{(1-\eps^4)^2} \leq (1-\eps^2)m,
\end{eqnarray*}
as required.
\hfill$\square$

\begin{center}
\begin{figure}

\includegraphics[scale=1.5]{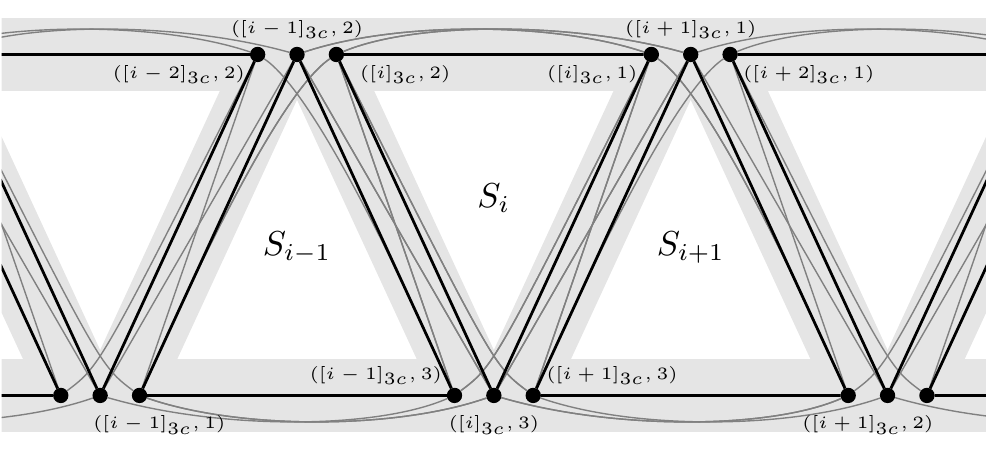}
\caption{The homomorphism $\phi_2$ maps triangles $S_i$ in $Z_\ell$ (drawn in black) to triangles in $C^2_{3c}$ (drawn in grey).}\label{homofig}
\end{figure}
\end{center}


\section{The square of a Hamilton cycle}\label{sec:square}

The final step in the proof of Theorem~\ref{mainthm} is to use the almost spanning triangle cycle guaranteed by Lemma~\ref{trianglecycle} to obtain the square of a Hamilton cycle.

Let $G$ be a large $\eta$-good graph on $n$ vertices.
Since the reduced graph $R$ almost inherits the degree sequence of $G$, we can find an almost spanning $\ell$-triangle cycle $Z_\ell$ in $R$ (whose vertices correspond to clusters and edges to ($\eps,d$)-regular pairs).
By removing a small number of vertices, we can ensure that the edges in the triangle packing $T_\ell \subseteq Z_\ell \subseteq R$ are superregular, and each of the $3\ell$ clusters has the same size.
We say that the collection of clusters now induces a \emph{cycle structure} $\mathcal{C}$ in $G$.
We colour the clusters and vertices in clusters of $\mathcal{C}$ according to the $3$-colouring of $Z_\ell$ (which is unique up to isomorphism), so that both $V_{i,j}$ and $x \in V_{i,j}$ have colour $j$.
It is now a fairly simple consequence of the Blow-up lemma that $G$ contains a square cycle whose vertex set contains precisely the vertices in the clusters of 
 $\mathcal{C}$.
In fact, this would still be true as long as the clusters in each triangle in $T_\ell$ each had the same size (in which case we say that $\mathcal{C}$ is $0$-\emph{balanced}).

However, there is a small set $V_0$ of vertices in $G$ which lie outside any of the clusters of $\mathcal{C}$.
We need to incorporate these into the clusters in an appropriate way, and also preserve the structure $\mathcal{C}$ (perhaps with slightly worse parameters).
So after any changes to the clusters we require that
\begin{itemize}
\item[(i)] regular pairs remain regular;
\item[(ii)] superregular pairs remain superregular;
\item[(iii)] $\mathcal{C}$ is $0$-balanced.
\end{itemize}
(i) is satisfied as long as no cluster gains or loses too many vertices.
For (ii), we need to ensure that, if we insert a vertex $v$ into a cluster $V$, then $v$ has many neighbours in the two other clusters which lie in the same triangle as $V$ in $T_{\ell}$.
(In this case we say that $v \rightarrow V$ is \emph{valid}.)
It turns out that since $\delta(G) \geq (1/3 + \eta)n$, for each vertex $v$ there are at least $\eta|R|$ clusters $V$ such that $v \rightarrow V$ is valid.
This appears promising, but recall that a necessary condition for (iii) is that the colour classes in $\mathcal{C}$ are all the same size.
However, we may not be able to assign the vertices of $v \in V_0$ so that this is even almost true. 
For example, every $v \in V_0$ might only have valid clusters in the first colour class.

Given any $v \in V(G)$, we can guarantee more valid clusters $V$ if $d_G(v)$ is larger.
In fact, if $v \in V(G)_\eta$, there are $\eta|R|$ triangles $T \in T_\ell$ such that $v \rightarrow V$ is valid for every $V \in T$ (see Proposition~\ref{replacelarge}). 
So, if it were true that $V_0 \subseteq V(G)_\eta$, then we could assign each $v \in V_0$ to a triangle in $T_\ell$ so that no triangle receives too many vertices, and then split the vertices in each triangle among its clusters as equally as possible.
Then $\mathcal{C}$ is very close to being $0$-balanced (the sizes of clusters in a triangle in $T_{\ell}$ differ by at most one).

In order to achieve that $V_0 \subseteq V(G)_\eta$ (see Lemma~\ref{swap}), we do the following.
Whenever there is $v \in V_0 \setminus V(G)_\eta$, we find many clusters $V$ such that $v \rightarrow V$ is valid, and $V$ contains many vertices $v'$ with $d_G(v') \geq d_G(v) + \eta n/4$.
Then we swap $v$ and $v'$ without destroying the cycle structure.
This process is repeated until no longer possible, in such a way that no cluster $X$ is the location of too many swaps.
 
Now we have achieved (i) and (ii), and $\mathcal{C}$ is almost $0$-balanced.
Note that a necessary condition for (iii) is that $3 | n$, so assume that this is true.
At this stage we appeal to those pairs in $\mathcal{C}$ which correspond to edges in $Z_\ell$ (not just those in $T_\ell$).
This is also the stage where having $Z_\ell \subseteq R$ (and not only $C^2_{3\ell} \subseteq R$) is useful.
Consider a cluster $V_{i,j}$.
Then the fact that $Z_\ell \subseteq R$ ensures that almost every vertex $v \in V_{i,j}$ is such that $v \rightarrow V_{i-1,j}$ and $v \rightarrow V_{i+1,j}$ are both valid.
Applying this repeatedly allows us to make a small number of arbitrary reallocations within a colour class $j$ (see Lemma~\ref{shuffle}).

However, unless the colour classes have equal size (that is, size $n/3$), this procedure can never ensure that $\mathcal{C}$ is $0$-balanced.
We currently have that the colour classes have close to equal size.
Suppose, for example, that colour class $3$ is larger than colour class $1$, and colour class $2$ has exactly the right size.
We identify a `feeder cluster' $X_3$ in $\mathcal{C}$, whose vertices are all coloured $3$, and which has large core degree.
Then $X_3$ contains many vertices of degree at least $(2/3+\eta)n$.
For each of these vertices $v$, there are many colour $1$ clusters $V$ such that $v \rightarrow V$ is valid.
So we can move a small number of these vertices $v$ to colour $1$ clusters so that all the colour classes have the same size (see Lemma~\ref{colourclass}).

\subsection{Cycle structures}\label{subsec:square}

We  begin by formally defining a cycle structure.

\begin{definition}\emph{(Cycle structure)}\label{cycstruc}
Given an $\ell \times 3$ integer matrix $M$, integers $n,\ell$, a graph $G$ on $n$ vertices, and constants $\eps,d$,
we say that $G$ has an \emph{$(R,\ell,M,\eps,d)$-cycle structure $\mathcal{C}$} if the following hold:
\begin{itemize}
\item[(C1)] $G$ has vertex partition $\lbrace V_0 \rbrace \cup \lbrace V_{i,j} : (i,j) \in [\ell]\times [3] \rbrace$ where the $(i,j)$th entry of $M$ is $|V_{i,j}|$ and $|V_0| \leq \eps n$. The sets $V_{i,j}$ are called the \emph{clusters} of $\mathcal{C}$, $V_0$ is called the \emph{exceptional set} of $\mathcal{C}$, and $M$ is called the \emph{size matrix} of $\mathcal{C}$;
\item[(C2)] $R$ has vertex set $[\ell] \times [3]$ and $R \supseteq Z_\ell$ and $G[V_{i,j},V_{i',j'}]$ is $(\eps,d)$-regular whenever $(i,j)(i',j') \in E(R)$;
\item[(C3)] $G[V_{i,j},V_{i,j'}]$ is $(\eps,d)$-superregular whenever $1 \leq i \leq \ell$ and $1 \leq j<j' \leq 3$.
\end{itemize}
We say that $\lbrace V_{i,j} : (i,j) \in [\ell]\times [3] \rbrace$ \emph{induces} $\mathcal{C}$.
If $V_0 = \emptyset$ we say that $\mathcal{C}$ is \emph{spanning}.
\end{definition}

Let $\mathcal{C}'$ be the cycle structure obtained from $\mathcal{C}$ by relabelling $V_{i,j}$ by $V_{i,\sigma(j)}$ for all $(i,j) \in [\ell] \times [3]$ and some permutation $\sigma$ of $[3]$.
Since $Z_\ell$ is invariant under permutation of the second index (as observed immediately after Definition~\ref{ltricycle}), $\mathcal{C}'$ is an $(R,\ell,M',\eps,d)$-cycle structure where the $(i,j)$th entry of $M'$ is $|V_{i,\sigma(j)}|$.

Often we will consider two different cycle structures, say an $(R, \ell, M, \eps, d)$-cycle structure $\mathcal C$ and an $(R, \ell, M', \eps', d')$-cycle structure $\mathcal C'$. Since 
the vertex set of $R$ corresponds to both the clusters of $\mathcal C$ and $\mathcal C'$, it is ambiguous in this case to talk about the core degree $d^{\alpha} _{R,G}$. Indeed, even though the graph $R$ is the same
for both cycle structures $\mathcal C$ and $\mathcal C'$, the clusters of   $\mathcal C$ and $\mathcal C'$ may be different. We therefore say that $d^{\alpha} _{R,G}$ is \emph{$(\eta, 3\ell)$-good with respect to $\mathcal C$} to mean that $d^{\alpha} _{R,G}$ is $(\eta, 3\ell)$-good when considering the vertices of $R$ as corresponding to the clusters of $\mathcal C$.

\begin{definition}\emph{(Size matrices)}
Given an $n_1 \times n_2$ integer matrix $M$, we write $M = (m_{i,j})$ if the $(i,j)$th entry of $M$ is $m_{i,j}$ for all $(i,j) \in [n_1] \times [n_2]$.

Given integers $k_1 \leq k_2$, we say that $M$ is $(k_1,k_2)$-bounded if $k_1 \leq m_{i,j} \leq k_2$ for all $(i,j) \in [n_1] \times [n_2]$.

For a non-negative integer $k$, whenever $|m_{i,j}-m_{i,j'}| \leq k$ for all $1 \leq i \leq n_1$ and $1 \leq j < j' \leq n_2$, we say that $M$ is $k$-\emph{balanced}.

If $\sum_{1 \leq i \leq n_1}m_{i,j} = \sum_{1 \leq i \leq n_1}m_{i,j'}$ for all $1 \leq j,j' \leq n_2$, we say that $M$ has \emph{equal columns}.
\end{definition}

So if $\mathcal{C}$ is an $(R,\ell,M,\eps,d)$-cycle stucture in which $M$ is $(k_1,k_2)$-bounded, then
\begin{equation}\label{3mln}
(1-\eps)n \leq 3\ell k_2 \ \ \text{ and } \ \ 3\ell k_1 \leq n.
\end{equation}
Observe that, if $\mathcal{C}$ is spanning and $M$ has equal columns, then $3 | n$.
The columns of $M$ correspond to the colour classes of $Z_\ell$.

The purpose of this section is to prove the following lemma, which states that any large $\eta$-good graph contains a spanning $0$-balanced cycle structure.

\begin{lemma}\label{0balanced}
Let $n \in 3\mathbb{N}$, $L_0,L' \in \mathbb{N}$, and let $0 < 1/n \ll 1/L_0 \ll 1/L' \ll \eps   \ll d \ll \eta \ll 1$.
Suppose that $G$ is an $\eta$-good graph on $n$ vertices.
Then there exists a spanning subgraph $G' \subseteq G$ and $\ell \in \mathbb{N}$ with $L' \leq \ell \leq L_0$ such that
 $G'$ has a spanning $(R,\ell,M,\eps,d)$-cycle structure where $M$ is $((1-\eps)m,(1+\eps)m)$-bounded and $0$-balanced.
\end{lemma}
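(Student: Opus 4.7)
\medskip
\noindent
\emph{Proof plan for Lemma~\ref{0balanced}.} The plan is to follow the blueprint described in the text: first build an almost spanning cycle structure from Lemma~\ref{trianglecycle}, then progressively absorb the exceptional vertices into clusters while preserving (super)regularity, and finally rebalance colour classes using vertices of large degree. Concretely, I would apply Lemma~\ref{reg} to $G$ with suitable parameters $\eps' \ll \eps$, $d$, $L'$ to obtain a reduced graph $R_0$ on $L$ vertices which, by Lemma~\ref{reduced}(ii), is $(\eta/2)$-good. Applying Lemma~\ref{trianglecycle} to $R_0$ yields some $\ell$ with $L' \le \ell \le L/3 \le L_0$ such that $R_0$ contains a copy $R$ of $Z_\ell$, and I relabel the associated clusters as $V_{i,j}^{0}$ for $(i,j) \in [\ell]\times[3]$. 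Applying Lemma~\ref{superslice} to the triangles of $T_\ell\subseteq R$ and throwing the discarded vertices into $V_0$, the pure graph $G'$ now carries an initial $(R,\ell,M_0,\eps'',d/2)$-cycle structure $\mathcal{C}_0$ for some $((1-\sqrt{\eps'})m,m)$-bounded $0$-balanced size matrix $M_0$, with exceptional set of size at most $2\eps' n$.

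The core of the argument is then the incorporation of $V_0$. For this I would formalise the notion of a \emph{valid insertion} $v\to V_{i,j}$, meaning that $v$ has at least (say) $(d/4)|V_{i,j'}|$ neighbours in each of $V_{i,j'}$, $j' \in [3]\setminus\{j\}$; by $(\eps,d)$-superregularity and Proposition~\ref{newsuperslice}, inserting/removing a bounded number of valid vertices per cluster preserves (super)regularity with only slightly worse parameters. The minimum degree condition $\delta(G)\ge (1/3+\eta)n$ combined with Lemma~\ref{reg}(iv) ensures that any $v\in V(G)$ has $\Omega(\eta L)$ valid clusters, and if additionally $v\in V(G)_\eta$ then $v$ has $\Omega(\eta L)$ \emph{triangles} $T\in T_\ell$ for which $v\to V$ is valid for every $V\in T$ (this is the proposition I would label Proposition~\ref{replacelarge}). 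Using this, I then establish a swapping lemma (Lemma~\ref{swap}): while there exists $v\in V_0$ with $d_G(v)<(2/3+\eta)n$, find a valid cluster $V_{i,j}$ containing many vertices $v'$ with $d_G(v')\ge d_G(v)+\eta n/4$, swap $v$ into $V_{i,j}$ and push $v'$ into $V_0$. A careful accounting of how often each cluster is touched, together with Proposition~\ref{stoneage} applied in the contrapositive, shows this process terminates after $O(n/\eta)$ steps with $V_0\subseteq V(G)_\eta$ and with $\mathcal{C}$ remaining a cycle structure (with slightly weakened parameters).

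With $V_0\subseteq V(G)_\eta$, every exceptional vertex has valid \emph{triangles} available, so I greedily distribute $V_0$ among triangles of $T_\ell$, capping at $\eps n/\ell$ insertions per triangle; and within each triangle, I split the new arrivals across the three clusters as evenly as possible. The resulting size matrix $M_1$ is $1$-balanced and $(1\pm 2\eps)m$-bounded, the cycle structure is spanning, and $(\eps''',d''')$-superregularity on triangles and $(\eps''',d''')$-regularity on $Z_\ell$-edges persist by Proposition~\ref{newsuperslice}. To reduce $1$-balance to $0$-balance I use a within-colour-class reshuffling (Lemma~\ref{shuffle}): consecutive clusters $V_{i,j}$ and $V_{i+1,j}$ span an $(\eps,d)$-regular pair (by the $Z_\ell$ edges orthogonal to $T_\ell$), so almost every vertex of $V_{i,j}$ may be moved to $V_{i+1,j}$ by a valid insertion, allowing arbitrary reallocations of $O(\eps n/\ell)$ vertices along each colour-class cycle.

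After this step the three column sums of $M$ may differ, but only by at most $\eps n$ since every reshuffling preserves column sums. The final and most delicate ingredient is the feeder-cluster argument (Lemma~\ref{colourclass}): because $d^{\alpha}_{R,G}$ is $(\eta/2,3\ell)$-good with respect to the current $\mathcal{C}$, for each colour $j$ I can locate a cluster $V_{i_0,j}$ whose core degree is at least $(2/3+\eta/2)3\ell$, hence containing many vertices $v$ with $d_G(v)\ge(2/3+\eta)n$. Each such $v$ then has valid clusters in both of the other colour classes, so I may move a small number of such feeder vertices from the overfull columns to the underfull ones — here we use the assumption $3\mid n$, which implies that the column sums can indeed be made equal after moving $O(\eps n)$ vertices. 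I then re-apply the shuffle lemma within each colour class to return to a $0$-balanced matrix. The main obstacle is the bookkeeping: each of the stages (swap, distribute, shuffle, feeder, re-shuffle) degrades the regularity and superregularity parameters, so a hierarchy $\eps\ll \eps_1\ll\eps_2\ll\eps_3\ll \eps_4 \ll d\ll \eta$ must be set up in advance and the total number of vertex moves per cluster must be kept within $O(\eps n/\ell)$ so that Proposition~\ref{newsuperslice} can be applied at every step without blowing up the error.
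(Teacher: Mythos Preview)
Your plan matches the paper's approach almost exactly: regularise, find $Z_\ell$ in the reduced graph via Lemma~\ref{trianglecycle}, make the $T_\ell$-triangles superregular, then run swap~$\to$~equalise columns~$\to$~shuffle. Two points need correcting.

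First, your justification for the shuffle step is wrong. You write that ``consecutive clusters $V_{i,j}$ and $V_{i+1,j}$ span an $(\eps,d)$-regular pair (by the $Z_\ell$ edges orthogonal to $T_\ell$)''. This is false: $(i,j)(i+1,j)$ is \emph{not} an edge of $Z_\ell$; the cross-triangle edges in $Z_\ell$ are precisely $(i,j)(i+1,j')$ for $j \neq j'$. The reason a vertex $v \in V_{i,j}$ can validly move to $V_{i+1,j}$ is that $G[V_{i,j},V_{i+1,j'}]$ is $(\eps,d)$-regular for both $j' \neq j$, so (by Fact~\ref{supereasy}) almost every $v \in V_{i,j}$ has $(d-\eps)m$ neighbours in each of $V_{i+1,j'}$, $j' \neq j$ --- exactly the condition for $v \to V_{i+1,j}$ to be valid. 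This is Proposition~\ref{goround} in the paper, and it is precisely the place where one needs $Z_\ell$ rather than merely $C^2_{3\ell}$.

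Second, your order is tangled. You apply the shuffle lemma to a $1$-balanced matrix \emph{before} equalising the column sums, claiming to ``reduce $1$-balance to $0$-balance'', and then immediately observe that the column sums may still differ --- but if they differ, $0$-balance is impossible, so that shuffle step achieved nothing. The paper's order is cleaner: after the swap/distribute step gives a spanning $1$-balanced structure (this is exactly Lemma~\ref{swap}), one first runs the feeder-cluster argument (Lemma~\ref{colourclass}), which takes a $1$-balanced matrix and outputs a $2\ell$-balanced matrix with equal columns, and only then applies the shuffle (Lemma~\ref{shuffle}) to reach $0$-balance. Note that Lemma~\ref{colourclass} requires $1$-balanced input so that the column sums differ by at most $\ell$, and Lemma~\ref{shuffle} requires equal columns so that the target sizes $n_i$ exist; the order matters for the hypotheses to chain correctly.
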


The next proposition will be used several times to show that cycle structures are robust in the following sense.
If a small number of vertices  in a cycle structure are reallocated, so that each of them has many neighbours in appropriate clusters, we still have a cycle structure (with slightly worse parameters).
Its proof is a consequence of Proposition~\ref{newsuperslice}.

\begin{proposition}\label{dull}
Let $n,\ell,m,r \in \mathbb{N}$ and $0 < 1/n \ll 1/\ell \ll \eps \leq \gamma \ll d < 1$.
Suppose that $G$ is a graph on $n$ vertices with an $(R,\ell,M,\eps,d)$-cycle structure, where $M = (m_{i,j})$ is $(m,(1+\eps)m)$-bounded.
Let $\lbrace V_{i,j} : (i,j) \in [\ell] \times [3] \rbrace$ be the set of clusters of $\mathcal{C}$, where $m_{i,j} := |V_{i,j}|$.
Suppose that there exists a collection $\mathcal{X} := \lbrace X_{i,j} : (i,j) \in [\ell] \times [3] \rbrace$ of vertex-disjoint subsets of $V(G)$ such that for all $(i,j) \in [\ell] \times [3]$,
\begin{itemize}
\item $|X_{i,j} \triangle V_{i,j}| \leq \gamma m/2$;
\item for all $x \in X_{i,j} \setminus V_{i,j}$ we have that $d_G(x,V_{i,j'}) \geq (d-\eps)m$ for all $j' \in [3] \setminus \lbrace j \rbrace$.
\end{itemize}
Let $N := (n_{i,j})$ where $n_{i,j} := |X_{i,j}|$.
Then, for any $\eps' \geq \eps +6 \sqrt{\gamma}$, we have that $\mathcal{X}$ induces an $(R,\ell,N,\eps',d/2)$-cycle structure $\mathcal{C}'$.
\end{proposition}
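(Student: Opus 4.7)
The plan is to verify each of the three defining properties (C1)--(C3) of a cycle structure for the new collection $\mathcal{X}$, the main tool being Proposition~\ref{newsuperslice}. First I would observe that since $|V_{i,j}| \geq m$ and $|X_{i,j}| \geq |V_{i,j}|-\gamma m/2 \geq m/2$, the hypothesis $|X_{i,j} \triangle V_{i,j}| \leq \gamma m/2$ yields $|X_{i,j} \triangle V_{i,j}| \leq \gamma \min\{|V_{i,j}|, |X_{i,j}|\}$, so we may apply Proposition~\ref{newsuperslice} with parameter $\alpha := \gamma$ to any pair of old clusters/new clusters appearing in $\mathcal{X}$.

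For (C2): given $(i,j)(i',j') \in E(R)$, $G[V_{i,j},V_{i',j'}]$ is $(\eps,d)$-regular by assumption, so Proposition~\ref{newsuperslice} gives that $G[X_{i,j},X_{i',j'}]$ is $(\eps+6\sqrt{\gamma}, d-4\gamma)$-regular, and since $\gamma \ll d$ and $\eps' \geq \eps + 6\sqrt{\gamma}$, this yields $(\eps',d/2)$-regularity.

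For (C3), the main obstacle, we need to check the extra minimum-degree condition in the superregular conclusion of Proposition~\ref{newsuperslice}: every $x \in X_{i,j}$ must have at least $(d-4\gamma)|X_{i,j'}|$ neighbours in $X_{i,j'}$ (for $1 \leq j < j' \leq 3$). I split into two cases. If $x \in X_{i,j} \cap V_{i,j}$, then $(\eps,d)$-superregularity of $G[V_{i,j},V_{i,j'}]$ gives $d_G(x,V_{i,j'}) \geq d|V_{i,j'}|$, and since $|V_{i,j'} \setminus X_{i,j'}| \leq \gamma m/2$ we obtain
\[
d_G(x,X_{i,j'}) \geq d|V_{i,j'}| - \gamma m/2 \geq (d-\gamma)|X_{i,j'}| \geq (d-4\gamma)|X_{i,j'}|,
\]
using $|V_{i,j'}|, |X_{i,j'}| = (1 \pm \gamma)m$. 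If instead $x \in X_{i,j} \setminus V_{i,j}$, the second hypothesis of the proposition gives $d_G(x,V_{i,j'}) \geq (d-\eps)m$, and again deleting at most $\gamma m/2$ vertices yields $d_G(x,X_{i,j'}) \geq (d-\eps-\gamma)|X_{i,j'}|/(1-\gamma) \geq (d-4\gamma)|X_{i,j'}|$ since $\eps \leq \gamma \ll d$. Now Proposition~\ref{newsuperslice} delivers that $G[X_{i,j},X_{i,j'}]$ is $(\eps+6\sqrt{\gamma}, d-4\gamma)$-superregular, hence $(\eps',d/2)$-superregular.

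Finally, for (C1), the new exceptional set is $X_0 := V(G) \setminus \bigcup_{(i,j)}X_{i,j}$, and
\[
|X_0| \leq |V_0| + \sum_{(i,j) \in [\ell]\times[3]}|V_{i,j} \triangle X_{i,j}| \leq \eps n + 3\ell \cdot \gamma m/2 \leq \eps n + \gamma n/2 \leq \eps' n,
\]
where we used $3\ell m \leq n$ (from $M$ being $(m,(1+\eps)m)$-bounded) and $\eps' \geq \eps + 6\sqrt{\gamma} \geq \eps + \gamma/2$. Together with the fact that $R \supseteq Z_\ell$ is unchanged, this completes the verification and shows $\mathcal{X}$ induces an $(R,\ell,N,\eps',d/2)$-cycle structure as claimed.
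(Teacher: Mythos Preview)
Your proof is correct and follows essentially the same approach as the paper: both arguments verify (C1)--(C3) directly, invoking Proposition~\ref{newsuperslice} for (C2) and for the regularity part of (C3), and handling the minimum-degree condition in (C3) by splitting into the cases $x \in X_{i,j} \cap V_{i,j}$ and $x \in X_{i,j} \setminus V_{i,j}$. The only cosmetic differences are that the paper aims for the cruder target $d|X_{i,j'}|/2$ rather than $(d-4\gamma)|X_{i,j'}|$, and your size estimate $|X_{i,j'}| = (1\pm\gamma)m$ is slightly loose (it should be $(1\pm 3\gamma/2)m$), but this is harmless since $\gamma \ll d$.
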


\begin{proof}
It is clear that, for all $(i,j) \in [\ell] \times [3]$,
\begin{equation}\label{dull1}
(1-\gamma)m \leq |X_{i,j}| \leq (1+2\gamma)m .
\end{equation}
We need to check that $\mathcal{C}'$ satisfies (C1)--(C3).
For (C1), it suffices to check that the exceptional set $X_0 := V(G) \setminus \bigcup_{X \in \mathcal{X}}X$ of $\mathcal{C}'$ is such that $|X_0| \leq \eps'n$.
Let $V_0$ be the exceptional set of $\mathcal{C}$.
Then $|X_0| \leq |V_0| + \sum_{(i,j) \in [\ell] \times [3]}|V_{i,j} \triangle X_{i,j}| \leq \eps n + 3\ell \gamma m \leq \eps 'n$ by (\ref{3mln}).
So (C1) holds.

Note that, since $M$ is $(m,(1+\eps)m)$-bounded, $|X_{i,j} \triangle V_{i,j}| \leq \gamma|X_{i,j}|$.
For (C2), let $(i,j)(i',j') \in E(R)$.
Then Proposition~\ref{newsuperslice} implies that $G[X_{i,j},X_{i',j'}]$ is $(\eps',d/2)$-regular, as required.

For (C3), let $1 \leq i \leq \ell$ and $1 \leq j<j' \leq 3$.
Then, since $(i,j)(i,j') \in E(R)$, Proposition~\ref{newsuperslice} implies that it suffices to show that, for all $x \in X_{i,j}$, we have $d_G(x,X_{i,j'}) \geq d|X_{i,j'}|/2$, and for all $y \in X_{i,j'}$, we have $d_G(y,X_{i,j}) \geq d|X_{i,j}|/2$.
Let $x \in X_{i,j}$.
Suppose first that $x \in V_{i,j}$.
Then, since $G[V_{i,j},V_{i,j'}]$ is $(\eps,d)$-superregular by (C3) for $\mathcal{C}$, we have that $d_G(x,V_{i,j'}) \geq d|V_{i,j'}| \geq dm$.
Suppose instead that $x \in X_{i,j} \setminus V_{i,j}$.
Then, by hypothesis, $d_G(x,V_{i,j'}) \geq (d-\eps)m$.
So for all $x \in X_{i,j}$ we have $d_G(x,V_{i,j'}) \geq (d-\eps)m$.
Therefore
$$
d_G(x,X_{i,j'}) \geq d_G(x,V_{i,j'}) - |X_{i,j'} \triangle V_{i,j'}| \geq (d-\eps)m - \gamma m  \stackrel{(\ref{dull1})}{\geq} \frac{d|X_{i,j'}|}{2},  
$$
as required.
The second assertion follows similarly.
This proves (C3).
\end{proof}

Our initial goal is to incorporate each vertex in the exceptional set into a suitable cluster.
However, we are only able to do this successfully for vertices with large degree.
The following proposition will be used to swap an exceptional vertex with a vertex in a cluster that has larger degree.
The cycle structure which remains has the same size matrix $M$ and the exceptional set has the same size.
The proposition will be applied repeatedly until every exceptional vertex has degree at least $(2/3+\eta)n$ (see Lemma~\ref{swap}).

\begin{proposition}\label{replacesmall}
Let $n,\ell,m \in \mathbb{N}$ and $0 < 1/n \ll 1/\ell \ll \eps \ll c \ll d \ll \eta < 1 \leq \alpha \leq 1/3\eta+3/4$.
Let $G$ be an $\eta$-good graph on $n$ vertices with an $(R,\ell,M,\eps,d)$-cycle structure where $M$ is $(m,m)$-bounded.
Let $\lbrace V_{i,j} : (i,j) \in [\ell] \times [3] \rbrace$ be the set of clusters of $\mathcal{C}$.
Suppose further that $d^c_{R,G}$ is $(\eta/2,3\ell)$-good with respect to $\mathcal C$.
Let $v \in V(G)$ with $d_G(v)\geq (1/3+\alpha\eta)n$.
Then there exists $I \subseteq V(R)$ with $|I| \geq \eta\ell/10$ such that, for all $(i,j) \in I$, the following hold:
\begin{itemize}
\item[(i)] for all $j' \neq j$, we have $d_{G}(v,V_{i,j'}) \geq (d-\eps)m$;
\item[(ii)] there are at least $cm$ vertices $x$ in $V_{i,j}$ such that $d_G(x) \geq (1/3+(\alpha+1/4)\eta)n$.
\end{itemize}
\end{proposition}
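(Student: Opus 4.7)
The plan is a double-counting argument. Let $m := |V_{1,1}|$ (so $|V_{i,j}| = m$ for all $(i,j)$, by the $(m,m)$-boundedness of $M$), let $T := (1/3 + (\alpha+1/4)\eta)n$ denote the degree threshold appearing in (ii), and for $k = 1,2$ let $I_k \subseteq V(R)$ be the set of $(i,j)$ satisfying property (k). I will lower-bound $|I_1|$ and $|I_2|$ separately and then conclude from $|I_1 \cap I_2| \geq |I_1| + |I_2| - 3\ell$. For $|I_1|$, call a cluster $(i,j)$ \emph{$v$-good} if $d_G(v, V_{i,j}) \geq (d-\eps)m$ and write $A$ for the set of $v$-good clusters. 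Bounding $d_G(v) \leq |V_0| + |A|m + (3\ell - |A|)(d-\eps)m$ and using $|V_0| \leq \eps n$, $3\ell m \leq n$, $d_G(v) \geq (1/3+\alpha\eta)n$ and $\eps, d \ll \eta$ gives $|A| \geq \ell + 3\alpha\eta\ell - 3d\ell$ after a short rearrangement. Now $(i,j) \in I_1$ precisely when the other two clusters in triangle $i$ both lie in $A$, so writing $a_i := |A \cap (\lbrace i \rbrace \times [3])|$, the number of $j \in [3]$ with $(i,j) \in I_1$ equals $3, 1, 0, 0$ according to whether $a_i = 3, 2, 1, 0$; this is at least $a_i - 1$ in every case, so summing over $i \in [\ell]$ gives $|I_1| \geq |A| - \ell \geq 3\alpha\eta\ell - 3d\ell$.

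For $|I_2|$, I use the $\eta$-goodness of $G$ directly. Let $L := \lbrace x \in V(G) : d_G(x) < T \rbrace$. Ordering $V(G) = \lbrace v_1, \ldots, v_n \rbrace$ by increasing degree, $\eta$-goodness forces $d_G(v_i) \geq (1/3+\eta)n + i + 1$ for $i \leq n/3$, so an index $i \leq n/3$ can yield $v_i \in L$ only when $i < (\alpha - 3/4)\eta n$; moreover, for $i > n/3$ we have $d_G(v_i) \geq (2/3+\eta)n$, which is at least $T$ precisely because $\alpha \leq 1/(3\eta) + 3/4$. Hence $|L| \leq (\alpha - 3/4)\eta n$. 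On the other hand, $(i,j) \notin I_2$ forces $|V_{i,j} \cap L| > (1-c)m$, so summing over the pairwise disjoint clusters outside $I_2$ and using $m \geq (1-\eps)n/(3\ell)$ yields $|V(R) \setminus I_2| \cdot (1-c)m \leq |L|$, and hence $|I_2| \geq 3\ell - 3(\alpha - 3/4)\eta\ell/((1-c)(1-\eps)) = 3\ell - 3(\alpha-3/4)\eta\ell - O(\eta\ell(c+\eps))$.

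Combining and using $c, d, \eps \ll \eta$, one gets $|I_1 \cap I_2| \geq |I_1| + |I_2| - 3\ell \geq 3\alpha\eta\ell + (3\ell - 3(\alpha - 3/4)\eta\ell) - 3\ell - O(\eta\ell(c+d+\eps)) = (9/4)\eta\ell - O(\eta\ell(c+d+\eps))$, which is at least $\eta\ell/10$ as required. The main delicate point is obtaining the bound $|L| \leq (\alpha - 3/4)\eta n$: the upper bound $\alpha \leq 1/(3\eta) + 3/4$ in the hypothesis is exactly what guarantees that no vertex outside the bottom third of the degree sequence of $G$ can lie in $L$, so that the $\eta$-good condition on $G$ applies directly to control $|L|$. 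Working only with the $(\eta/2, 3\ell)$-goodness of $d^c_{R,G}$ would yield a weaker bound on $|V(R) \setminus I_2|$ in the regime where $\alpha$ is near its upper bound, so it is essential that we have $G$ itself $\eta$-good as a hypothesis.
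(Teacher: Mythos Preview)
Your argument is correct. One small imprecision: the error term you write as $O(\eta\ell(c+d+\eps))$ should be $O(\ell(c+d+\eps))$, since $(\alpha-3/4)\eta$ can be as large as $1/3$ when $\alpha$ is near its upper bound, and the $3d\ell$ loss from your bound on $|I_1|$ carries no factor of $\eta$ either. This does not affect the conclusion, because $c,d,\eps \ll \eta$ still gives $\ell(c+d+\eps) = o(\eta\ell)$, so $(9/4)\eta\ell - o(\eta\ell) \geq \eta\ell/10$.

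Your route differs from the paper's in the second half. Both arguments first show (by essentially the same count) that your $I_1$ satisfies $|I_1| \geq (3\alpha - o(1))\eta\ell$; the paper writes this via the complementary degree $\bar d_G(v)$ and the quantities $k_p = |\{i : \text{exactly } p \text{ clusters in row } i \text{ are } v\text{-good}\}|$, but the content is the same as your $a_i$-count. For property~(ii), however, the paper does \emph{not} bound $|I_2|$ globally and then intersect. Instead it applies Proposition~\ref{vertexdeg}(ii) to the $(\eta/2,3\ell)$-good function $d^c_{R,G}$ \emph{restricted to $I_1$}, with $k=\eta\ell/10$: this yields $\eta\ell/10$ vertices $(i,j)\in I_1$ with
\[
d^c_{R,G}((i,j)) \;\geq\; \Bigl(\tfrac{1}{3}+\tfrac{\eta}{2}\Bigr)(3\ell)+|I_1|-\tfrac{\eta\ell}{10}+2 \;\geq\; 3\Bigl(\tfrac{1}{3}+(\alpha+\tfrac14)\eta\Bigr)\ell,
\]
which is exactly (ii). So the paper uses the core-degree hypothesis rather than the $\eta$-goodness of $G$ directly, contrary to your closing remark; what your argument actually demonstrates is that the core-degree hypothesis is \emph{unnecessary} here, since $\eta$-goodness of $G$ already forces $|I_2|$ to be large enough. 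Your observation that a global bound on $|V(R)\setminus I_2|$ via $d^c_{R,G}$ alone would be problematic near $\alpha = 1/(3\eta)+3/4$ is correct, but the paper sidesteps this by working inside $I_1$ rather than intersecting. Your approach is a bit more elementary and delivers a sharper constant ($\approx (9/4)\eta\ell$ instead of $\eta\ell/10$).
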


\begin{proof}
We begin by proving the following claim.

\medskip
\noindent
\textbf{Claim.}
\emph{Let $I' := \lbrace (i,j) \in V(R) : d_{G}(v,V_{i,j'}) \geq (d-\eps)m\ \text{ for all }\ j' \neq j \rbrace$. Then $|I'| \geq (3\alpha-1/10)\eta\ell$.}

\medskip
\noindent
To prove the claim, define
$\overline{d}_{G}(v) := n-d_{G}(v)$. 
For integers $0 \leq p \leq 3$, let 
$$
K_p := \lbrace 1 \leq i \leq \ell : d_{G}(v,V_{i,j}) \geq (d-\eps)m\ \text{ for exactly $p$ values }\ j \in [3] \rbrace
$$
and $k_p := |K_p|$.
Observe that $K_p \cap K_{p'} = \emptyset$ whenever $p \neq p'$.
So
\begin{equation}\label{ksum}
k_0+k_1+k_2+k_3=\ell.
\end{equation}
For each $i \in K_2$ there is exactly one $1 \leq j \leq 3$ such that $(i,j) \in I'$, and for each $i \in K_3$ we have $(i,j) \in I'$ for all $1 \leq j \leq 3$.
Therefore it suffices to show that $k_2+3k_3 \geq (3\alpha-1/10)\eta\ell$.
We have that
\begin{eqnarray*}
\overline{d}_G(v) &\geq& \sum_{0 \leq p \leq 3}\sum_{i \in K_p} \sum_{1 \leq j \leq 3} (m - d_{G}(v,V_{i,j})) \geq \sum_{0 \leq p \leq 3}\sum_{i \in K_p}  (3-p)(1-d-\eps)m\\
&=& (3k_0+2k_1+k_2)(1-d-\eps)m \geq (3k_0+2k_1+k_2)\left(1-2d\right)m\\
&\stackrel{(\ref{ksum})}{\geq}& (3\ell-(k_1+k_2)-(k_2+3k_3))\left(1-2d\right)m \stackrel{(\ref{ksum})}{\geq} (2\ell-(k_2+3k_3))\left(1-2d\right)m.
\end{eqnarray*}
Suppose that $k_2+3k_3 < (3\alpha-1/10)\eta\ell$.
Then
\begin{align*}
\overline{d}_G(v) &\geq (1-2d)\left(2-3\alpha\eta+\frac{\eta}{10}\right)m\ell \geq \left(2-3\alpha\eta+\frac{\eta}{11}\right)m\ell \stackrel{(\ref{3mln})}{\geq} (1-\eps)\left(\frac{2}{3}-\alpha\eta + \frac{\eta}{33} \right)n\\
&> (2/3-\alpha\eta)n,
\end{align*}
a contradiction.
This completes the proof of the claim.

\medskip
\noindent
Recall that $d^c_{R,G}$ is $(\eta/2,3\ell)$-good.
Proposition~\ref{vertexdeg}(ii) with $R,d^c_{R,G},I',\eta\ell/10$ playing the roles of $G,d'_G,X,k$ implies that there exists $I \subseteq I'$ with $|I| \geq \eta\ell/10$ such that for every $(i,j) \in I$, we have
$$
d^c_{R,G}((i,j)) \geq 3\left(\frac{1}{3}+\frac{\eta}{2}\right)\ell + 3\alpha\eta\ell - \frac{\eta\ell}{5}+2 \geq 3\left(\frac{1}{3}+\left(\alpha+\frac{1}{4}\right)\eta\right)\ell.
$$
The claim together with the fact that $I \subseteq I'$ imply that $I$ satisfies (i).
By the definition of core degree, for all $(i,j) \in I$, there are at least $c|V_{i,j}| = cm$ vertices $x \in V_{i,j}$ such that
$$
d_G(x) \geq \frac{d^c_{R,G}((i,j))n}{3\ell} \geq \left(\frac{1}{3}+\left(\alpha+\frac{1}{4}\right)\eta\right)n,
$$
so $I$ also satisfies (ii).
\end{proof}

The previous proposition will be used to modify our cycle structure slightly so that every exceptional vertex has large degree.
The next proposition will be used for incorporating these large degree exceptional vertices into the cycle structure $\mathcal{C}$.
It shows that, for each such vertex $v$, there are many triangles $T \in T_\ell$ such that $v$ can be added to any of the three clusters in $T$.

\begin{proposition}\label{replacelarge}
Let $n,\ell,m \in \mathbb{N}$  and $0<1/n \ll 1/\ell \ll \eps \ll d \ll \eta < 1$.
Suppose that $G$ is a graph on $n$ vertices with an $(R,\ell,M,\eps,d)$-cycle structure $\mathcal{C}$, where $M$ is $(m,(1+\eps)m)$-bounded.
Let $\lbrace V_{i,j} : (i,j) \in [\ell] \times [3] \rbrace$ be the set of clusters of $\mathcal{C}$.
Let $v \in V(G)$ with $d_G(v) \geq (2/3+\eta/2)n$.
Then there exists $I \subseteq [\ell]$ with $|I| \geq \eta\ell$ such that, for all $i \in I$ and all $j \in [3]$ we have $d_{G}(v,V_{i,j}) \geq (d-\eps)m$.
\end{proposition}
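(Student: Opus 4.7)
The plan is to carry out a counting argument in the spirit of the claim inside the proof of Proposition~\ref{replacesmall}, but tracking triangles of $T_\ell$ rather than pairs of clusters. For each integer $0 \leq p \leq 3$, let
\[
K_p := \{\, i \in [\ell] : \text{exactly $p$ of } V_{i,1},V_{i,2},V_{i,3} \text{ contain at least } (d-\eps)m \text{ neighbours of } v \,\},
\]
and set $k_p := |K_p|$, so $k_0+k_1+k_2+k_3=\ell$. We will take $I := K_3$, reducing the task to showing $k_3 \geq \eta\ell$.

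Suppose for a contradiction that $k_3 < \eta\ell$. Every neighbour of $v$ lies either in the exceptional set $V_0$ or in some cluster $V_{i,j}$, so
\[
d_G(v) \leq |V_0| + \sum_{(i,j)\in[\ell]\times[3]} d_G(v,V_{i,j}) \leq \eps n + \sum_{(i,j)\in[\ell]\times[3]} d_G(v,V_{i,j}).
\]
For each $i \in K_p$, I would bound the inner sum over $j \in [3]$ using the cluster-size bound $|V_{i,j}| \leq (1+\eps)m$ for the $p$ ``good'' clusters and the defining bound $d_G(v,V_{i,j}) < (d-\eps)m$ for the $(3-p)$ ``bad'' clusters. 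This yields a per-triangle contribution of at most $p(1+\eps)m + (3-p)(d-\eps)m$.

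The coefficient of $p$ in this expression is $(1+\eps)m-(d-\eps)m > 0$, so the total is maximised by pushing mass toward larger $p$; rewriting and using $k_0+k_1+k_2 = \ell-k_3$ with the extremal choice $k_2 = \ell - k_3$ gives
\[
\sum_{(i,j)} d_G(v,V_{i,j}) \leq (2+\eps+d)m\ell + k_3(1+2\eps-d)m.
\]
Now plugging in $3m\ell \leq n$ and $k_3 < \eta\ell$ (so $k_3 m < \eta m\ell \leq \eta n/3$), I obtain
\[
d_G(v) < \frac{(2+\eps+d)n}{3} + \frac{\eta(1+2\eps-d)n}{3} + \eps n = \left(\frac{2}{3} + \frac{\eta}{3} + \frac{\eps+d}{3} + \frac{2\eps\eta}{3} + \eps\right)n,
\]
and the hierarchy $\eps \ll d \ll \eta$ makes the parenthesised quantity strictly less than $2/3+\eta/2$, contradicting the hypothesis $d_G(v)\geq (2/3+\eta/2)n$.

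The only delicate point is the bookkeeping — one must keep track of the error terms from $|V_0|$ and $(1+\eps)m$ and arrange the inequality chain so that they are all absorbed into the gap $\eta/2-\eta/3 = \eta/6$ guaranteed by the hypothesis. I do not expect to need Proposition~\ref{vertexdeg} or any $\eta$-goodness assumption on the core degree here: unlike Proposition~\ref{replacesmall}, the stronger hypothesis $d_G(v)\geq (2/3+\eta/2)n$ directly forces many ``fully good'' triangles, so no auxiliary degree-sequence machinery is required.
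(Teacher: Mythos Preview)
Your proof is correct and essentially identical to the paper's: the paper defines $K := K_0 \cup K_1 \cup K_2$ directly and bounds the contribution of each $i \in K$ by $(2+\eps+d)m$ (which is exactly your ``extremal choice $p=2$'' bound) and of each $i \notin K$ by $3(1+\eps)m$, arriving at the same inequality chain. The only cosmetic difference is that you first introduce all four $K_p$ before collapsing to the $K_3$ versus $[\ell]\setminus K_3$ dichotomy, whereas the paper goes there immediately.
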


\begin{proof}
Let
$$
K := \lbrace 1 \leq i \leq \ell : \text{ there exists }j \in [3] \text{ such that }d_{G}(v,V_{i,j}) < (d-\eps)m \rbrace.
$$
It suffices to show that $|K| < (1-\eta)\ell$.
For all $1 \leq i \leq \ell$, let $U_i := \bigcup_{1 \leq j \leq 3}V_{i,j}$.
Then
\begin{align*}
d_G(v) &= \sum_{i \in K}d_{G}(v,U_i) + \sum_{i \notin K}d_{G}(v,U_i) + d_{G}(v,V_0) \leq |K|(2+\eps+d)m + 3(\ell-|K|)(1+\eps)m + \eps n\\
&= 3\ell m - (1-d+2\eps)|K|m + 3\eps \ell m + \eps n \stackrel{(\ref{3mln})}{\leq} 3\ell m - (1-\eta/3)|K|m + 2\eps n.
\end{align*}
Suppose, for a contradiction, that $|K| \geq (1-\eta)\ell$.
Then
\begin{align*}
d_G(v) &\leq \left(3-\left(1-\frac{\eta}{3}\right)(1-\eta)\right)\ell m + 2\eps n = 3\left( \frac{2}{3} + \frac{4\eta}{9}\left(1 - \frac{\eta}{4} \right) \right) \ell m + 2\eps n \stackrel{(\ref{3mln})}{<} \left(\frac{2}{3}+\frac{\eta}{2}\right)n,
\end{align*}
a contradiction.

\end{proof}

The following lemma is used to turn a cycle structure $\mathcal{C}$ which has a constant size matrix and non-empty exceptional set into a spanning $1$-balanced cycle structure $\mathcal{C}'$.
To prove it, we repeatedly apply Proposition~\ref{replacesmall} to swap vertices in and out of the exceptional set until every exceptional vertex has large degree.
We then apply Proposition~\ref{replacelarge} to allocate each of these vertices $v$ to a suitable triangle in $T_{\ell}$, such that $v$ can be placed in any of the three clusters in this triangle.
For each triangle, the allocated vertices are then split equally among the clusters so that they have size as equal as possible.

\begin{lemma}\label{swap}
Let $n,\ell,m \in \mathbb{N}$  and $0<1/n \ll 1/\ell \ll \eps \ll c \ll d \ll \eta < 1$.
Suppose that $G$ is an $\eta$-good graph on $n$ vertices with an $(R,\ell,M,\eps,d)$-cycle structure $\mathcal{C}$, where $M$ is $(m,m)$-bounded.
Suppose further that $d^c_{R,G}$ is $(\eta/2,3\ell)$-good with respect to $\mathcal C$.
Then $G$ has a spanning $(R,\ell,N,\eps^{1/3},d/2)$-cycle structure $\mathcal{C}'$, where $N$ is $(m, (1+\sqrt{\eps})m)$-bounded and $1$-balanced. Further,  $d^{c/2} _{R,G}$ is $(\eta/2,3\ell)$-good with respect to $\mathcal C'$.
\end{lemma}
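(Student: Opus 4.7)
The plan is to transform the partition inducing $\mathcal{C}$ in two stages. In Stage~1 we apply Proposition~\ref{replacesmall} repeatedly to swap low-degree exceptional vertices with higher-degree cluster vertices, until every remaining exceptional vertex has degree at least $(2/3+\eta/2)n$. In Stage~2 we apply Proposition~\ref{replacelarge} to distribute the remaining (now high-degree) exceptional vertices evenly among the triangles of $T_\ell$, then split each triangle's intake among its three clusters as evenly as possible to achieve $1$-balance. Throughout, each cluster is modified only slightly, and one application of Proposition~\ref{dull} at the end verifies that the resulting partition induces the required cycle structure.

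For Stage~1, maintain a `current' partition initialised to $\lbrace V_{i,j} \rbrace, V_0$. While there is a vertex $v$ in the current exceptional set with $d_G(v) < (2/3+\eta/2)n$, write $d_G(v) = (1/3+\alpha\eta)n$ (so $1 \leq \alpha \leq 1/(3\eta)+1/2$) and apply Proposition~\ref{replacesmall} to the \emph{original} $\mathcal{C}$ with parameters $v$ and $\alpha$, obtaining $I \subseteq V(R)$ with $|I| \geq \eta\ell/10$. Pick $(i,j) \in I$ whose cluster has so far been the site of fewer than $\sqrt{\eps}m/2$ swaps, and pick $x \in V_{i,j}$ of degree at least $(1/3+(\alpha+1/4)\eta)n$ that has not yet been moved; then exchange $v$ and $x$ between the current exceptional set and the current cluster at position $(i,j)$. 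Each chain of swaps initiated by a single original low-degree exceptional vertex terminates within $\lceil 4/(3\eta) \rceil$ steps, since each swap strictly raises the degree of the new exceptional vertex by at least $\eta n/4$, so the total number of swaps is at most $4\eps n/(3\eta)$. This bound ensures that fewer than $O(\sqrt{\eps}\ell/\eta) \ll \eta\ell/10$ clusters ever fill their quota, and within any selected cluster at least $cm - \sqrt{\eps}m/2 \geq cm/2$ legal choices for $x$ remain (using $\sqrt{\eps} \ll c$). Hence the procedure never stalls, and when it halts the resulting partition $\lbrace V^1_{i,j} \rbrace, V^1_0$ satisfies $|V^1_{i,j}|=m$, $|V^1_{i,j} \triangle V_{i,j}| \leq \sqrt{\eps}m/2$, every $v \in V^1_0$ has degree at least $(2/3+\eta/2)n$, and every $v \in V^1_{i,j} \setminus V_{i,j}$ has $d_G(v,V_{i,j'}) \geq (d-\eps)m$ for each $j' \neq j$ (Proposition~\ref{replacesmall}(i)).

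For Stage~2, apply Proposition~\ref{replacelarge} to $\mathcal{C}$ for each $v \in V^1_0$ to obtain $I_v \subseteq [\ell]$ with $|I_v| \geq \eta\ell$ such that $d_G(v,V_{i,j}) \geq (d-\eps)m$ for all $i \in I_v$ and $j \in [3]$. Greedily assign each $v \in V^1_0$ to a triangle index $i \in I_v$ currently holding the fewest assigned vertices; since $|V^1_0| \leq \eps n \leq 4\eps m\ell$ (by (\ref{3mln})) and $|I_v| \geq \eta\ell$, no triangle receives more than $\lceil 4\eps m/\eta \rceil \leq \sqrt{\eps}m/4$ assigned vertices. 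Inside each triangle split the assigned vertices among its three clusters as evenly as possible to obtain the final partition $\lbrace V^2_{i,j} \rbrace$, which is spanning, $1$-balanced, with $m \leq |V^2_{i,j}| \leq (1+\sqrt{\eps})m$ and $|V^2_{i,j} \triangle V_{i,j}| \leq \sqrt{\eps}m$. Moreover, every new vertex in $V^2_{i,j}$ satisfies the degree condition required by Proposition~\ref{dull} (from Proposition~\ref{replacesmall}(i) for Stage~1 insertions, from Proposition~\ref{replacelarge} for Stage~2 insertions). Applying Proposition~\ref{dull} to $\mathcal{C}$ with $\gamma := \sqrt{\eps}$ and $\eps' := \eps^{1/3}$ then yields that $\lbrace V^2_{i,j} \rbrace$ induces the required spanning $(R,\ell,N,\eps^{1/3},d/2)$-cycle structure $\mathcal{C}'$. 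For the core-degree assertion, since $|V^2_{i,j} \triangle V_{i,j}| \leq \sqrt{\eps}m \ll cm$, the $cm$ highest-degree vertices of $V_{i,j}$ retain at least $(c-\sqrt{\eps})m \geq (c/2)|V^2_{i,j}|$ representatives inside $V^2_{i,j}$, so the $(c/2)$-core degree of $V^2_{i,j}$ with respect to $\mathcal{C}'$ is at least the $c$-core degree of $V_{i,j}$ with respect to $\mathcal{C}$; hence $d^{c/2}_{R,G}$ inherits $(\eta/2,3\ell)$-goodness from $d^c_{R,G}$. The main subtlety is the bookkeeping in Stage~1---keeping the swapping procedure from stalling and the chains of swaps short enough---which is resolved by the quantitative bounds in Propositions~\ref{replacesmall} and~\ref{replacelarge} together with the hierarchy $\eps \ll c \ll d \ll \eta$.
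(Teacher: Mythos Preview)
Your two-stage plan is exactly the paper's: swap low-degree exceptional vertices out via Proposition~\ref{replacesmall} until every exceptional vertex lies in $V(G)_{\eta/2}$, then distribute the survivors with Proposition~\ref{replacelarge} and finish with Proposition~\ref{dull}. Your per-vertex ``chain'' bound on the number of swaps is a legitimate alternative to the paper's potential function $S_k = \sum_{v\in V_0^k} d'_G(v)/|V_0|$, and both yield the same total of at most $4\eps n/(3\eta)$ swaps.

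There is, however, a genuine quantitative gap in your final step. With a quota of $\sqrt{\eps}m/2$ swaps per cluster, each swap moving one vertex in and one vertex out, you get $|V^2_{i,j}\triangle V_{i,j}|$ of order $\sqrt{\eps}m$ (so your stated bound $\sqrt{\eps}m/2$ is already off by a factor of two). You then invoke Proposition~\ref{dull} with $\gamma=\sqrt{\eps}$ and $\eps'=\eps^{1/3}$, but that proposition requires $\eps' \geq \eps + 6\sqrt{\gamma} = \eps + 6\eps^{1/4}$, and $6\eps^{1/4} > \eps^{1/3}$ for small $\eps$; so the stated conclusion does not follow. The fix is simply to tighten the quota. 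Since the total number of swaps is at most $4\eps n/(3\eta) \leq 6\eps m\ell/\eta$, a per-cluster symmetric-difference cap of order $\eps m/\eta^2$ (the paper uses $81\eps m/\eta^2$) still leaves far fewer than $\eta\ell/10$ clusters at capacity, and then $\gamma = O(\eps/\eta^2)$ gives $\eps + 6\sqrt{\gamma} = O(\sqrt{\eps}/\eta) \leq \eps^{1/3}$ from the hierarchy $\eps \ll \eta$. With this adjustment your argument goes through.
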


\begin{proof}
Write $V_{i,j}$ for the cluster corresponding to $(i,j) \in V(R)$.
Given a vertex $v \in V(G)$ and $(i,j) \in [\ell] \times [3]$, we say that $v \rightarrow V_{i,j}$ is \emph{valid} if $d_{G}(v,V_{i,j'}) \geq (d-\eps)m$ for all $j' \in [3] \setminus \lbrace j \rbrace$.
As an initial step, we will prove 
the following claim.

\medskip
\noindent
\textbf{Claim.}
\emph{There exist subsets $X_0, X_{i,j}$ of $V(G)$ (for $(i,j) \in [\ell] \times [3]$) so that the following hold: 
\begin{itemize}
\item[(i)] $\lbrace X_0 \rbrace \cup \lbrace X_{i,j} : (i,j) \in [\ell]\times[3] \rbrace$ is a partition of $V(G)$;
\item[(ii)] $|X_0|=|V_0|$ and $|X_{i,j}|=m$ for all $(i,j) \in [\ell] \times [3]$;
\item[(iii)] $|V_{i,j} \triangle X_{i,j}| \leq 81\eps m/\eta^2$;
\item[(iv)] for all $v \in X_{i,j} \setminus V_{i,j}$ we have that $v \rightarrow V_{i,j}$ is valid;
\item[(v)] $X_0 \subseteq V(G)_{\eta/2}$.
\end{itemize}
}

\medskip
\noindent
To prove the claim, let
$K := 4\eps n/3\eta$.
Suppose that, for some $0 \leq k \leq K$, we have obtained vertex sets $V_0^k, V_{i,j}^k$ for $(i,j)\in [\ell]\times[3]$ such that the following properties hold:
\begin{itemize}
\item[($\alpha_k$)] $\lbrace V_0^k \rbrace \cup \lbrace V_{i,j}^k : (i,j) \in [\ell]\times[3] \rbrace$ is a partition of $V(G)$;
\item[($\beta_k$)] $|V_0^k|=|V_0|$ and $|V_{i,j}^k|=m$ for all $(i,j) \in [\ell] \times [3]$;
\item[($\gamma_k$)] $|V_{i,j} \triangle V_{i,j}^k| \leq 81\eps m/\eta^2$ and $\sum_{(i,j) \in V(R)}|V_{i,j} \triangle V_{i,j}^k| \leq 2k$;
\item[($\delta_k$)] for all $v \in V_{i,j}^k \setminus V_{i,j}$ we have that $v \rightarrow V_{i,j}$ is valid;
\item[($\eps_k$)] $S_k := \sum_{v \in V_0^k}d'_G(v)/|V_0| \geq (1/3+\eta)n + k\eta/4\eps$,
where $d'_G(v) := \min \lbrace d_G(v), (2/3+\eta)n \rbrace$.
\end{itemize}

\noindent
Observe that setting $V_0^0 := V_0$ and $V_{i,j}^0 := V_{i,j}$ for all $(i,j) \in [\ell] \times [3]$ satisfies ($\alpha_0$)--($\eps_0$).
Indeed,
properties ($\alpha_0$)--($\delta_0$) are clear; ($\eps_0$) follows from the fact that $G$ is $\eta$-good and therefore $\delta(G) \geq (1/3+\eta)n$.
So $S_0 \geq (1/3+\eta)n$.

We will show that there is some $k \leq K$ for which we can set $X_0 := V_0^k$ and $X_{i,j} := V_{i,j}^k$ for all $(i,j) \in [\ell] \times [3]$.
Observe that ($\alpha_k$)--($\eps_k$) imply that we can do this as long as $V_0^k \subseteq V(G)_{\eta/2}$.

So suppose that $V_0^k \not\subseteq V(G)_{\eta/2}$.
In particular, $V_0^k \neq \emptyset$.
Let $v_0 \in V_0^k$ be such that $\min_{v \in V_0^k} \lbrace d_G(v) \rbrace = d_G(v_0)$.
Then $d_G(v_0) < (2/3+\eta/2)n$, so there is some $1 \leq \alpha < 1/3\eta+1/2$ such that $d_G(v_0)=(1/3+\alpha\eta)n$.
Proposition~\ref{replacesmall} implies that there exists $I \subseteq V(R)$ with $|I| \geq \eta\ell/10$ such that, for all $(i,j) \in I$, the following hold:
\begin{itemize}
\item $v_0 \rightarrow V_{i,j}$ is valid;
\item there are at least $cm$ vertices $x$ in $V_{i,j}$ such that $d_G(x) \geq (1/3+(\alpha+1/4)\eta)n$.
\end{itemize}
We claim that $\min_{(i,j) \in I}\lbrace |V_{i,j} \triangle V_{i,j}^k| \rbrace \leq 81\eps m/\eta^2-2$.
Suppose not.
Then
$$
\sum_{(i,j) \in I}|V_{i,j} \triangle V_{i,j}^k| \geq |I|\frac{81\eps  m}{\eta^2} - 6\ell \stackrel{(\ref{3mln})}{\geq} (1-\eps)\frac{81\eps n}{30\eta} - 6\ell > \frac{8\eps n}{3\eta} = 2K \geq 2k,
$$
a contradiction to ($\gamma_k$).
Therefore we can choose $(i',j') \in I$ with
\begin{equation}\label{tkchoice}
|V_{i',j'} \triangle V_{i',j'}^k| \leq 81\eps m/\eta^2-2.
\end{equation}
Let $U$ be the collection of vertices in $V_{i',j'}$ with degree at least $(1/3+(\alpha+1/4)\eta)n$ in $G$.
Then
$$
|U \cap V_{i',j'}^k| \geq |U|-|V_{i',j'}^k \triangle V_{i',j'}| \stackrel{(\ref{tkchoice})}{\geq} \left(c-\frac{81\eps}{\eta^2}\right)m+2 \geq \frac{cm}{2} > 0, 
$$
so we can choose $v_1 \in U \cap V_{i',j'}^k$.
For each $(i,j) \in [\ell]\times [3]$, set
\begin{equation}\label{tdef}
V^{k+1}_{i,j} := \left\{
  \begin{array}{l l}
    V^k_{i,j} \cup \lbrace v_0 \rbrace \setminus \lbrace v_1 \rbrace & \quad \text{if $(i,j)=(i',j')$}\\
    V^k_{i,j} & \quad \text{otherwise};
  \end{array} \right.
\end{equation}
and
\begin{equation}\label{V0def}
V_0^{k+1} := V_0 \cup \lbrace v_1 \rbrace \setminus \lbrace v_0 \rbrace.
\end{equation}
 We need to check that $(\alpha_{k+1}$)--($\eps_{k+1}$) hold.
First note that $(\alpha_{k+1}$) and ($\beta_{k+1}$) follow immediately from ($\alpha_k$) and ($\beta_k$) respectively.
Property ($\gamma_{k+1}$) follows easily from ($\gamma_k$), (\ref{tkchoice}) and (\ref{tdef}).
To see ($\delta_{k+1}$), (\ref{tdef}) implies that it suffices to show that $v_0 \rightarrow V_{i',j'}$ is valid.
But this follows since $(i',j') \in I$.

It remains to prove that ($\eps_{k+1}$) holds. 
Recall that the choice of $v_0$ implies that $d_G(v_0) = (1/3+\alpha\eta)n < (2/3+\eta/2)n$.
In particular, $d'_G(v_0)=d_G(v_0)$.
Suppose first that $d'_G(v_1) = (2/3+\eta)n$.
Then $d'_G(v_1)-d'_G(v_0) > \eta n/2$.
Suppose instead that $d'_G(v_1) = d_G(v_1)$.
Then
$$
d'_G(v_1)-d'_G(v_0) \geq \left(\frac{1}{3}+ \left(\alpha+\frac{1}{4}\right)\eta\right)n - \left(\frac{1}{3}+\alpha\eta\right)n = \frac{\eta n}{4}.
$$
So this latter bound holds in both cases.
Therefore
$$
S_{k+1} \stackrel{(\ref{V0def})}{=} \sum_{v \in V_0^k}{\left( \frac{d'_G(v)}{|V_0|} \right)}  + \frac{d'_G(v_1) - d'_G(v_0)}{|V_0|}
 \geq S_k + \frac{\eta n}{4|V_0|} \geq S_k + \frac{\eta}{4\eps} \stackrel{\text{($\eps_k$)}}{\geq} \left(\frac{1}{3}+\eta\right)n + (k+1)\frac{\eta}{4\eps},
$$
as required.

So, for each $0 \leq k < K$, either the procedure has terminated, or we are able to proceed to step $k+1$.
Observe that, for all $k$, we have $S_k \leq (2/3+\eta)n$.
Moreover, $S_k = (2/3+ \eta)n$ if and only if $V_0 \subseteq V(G)_\eta \subseteq V(G)_{\eta/2}$.
Suppose that this iteration does not terminate in at most $K$ steps.
Then ($\eps_K$) implies that
$$
S_K \geq \left(\frac{1}{3} + \eta \right)n + K\eta/4\eps = \left( \frac{2}{3} + \eta \right) n,
$$
as required.
So the iteration terminates at some $p \leq 4\eps n/3\eta$.
Let $X_0 := V_0^p$ and $X_{i,j} := V_{i,j}^p$ for all $(i,j) \in [\ell] \times [3]$.
This completes the proof of the claim.

\medskip
\noindent
Now we will use the claim to prove the lemma.
For each $x \in X_0$, let
\begin{equation}\label{Sx}
S_x := \lbrace 1 \leq i \leq \ell : x \rightarrow V_{i,j}\ \text{ is valid for all }\ 1 \leq j \leq 3 \rbrace.
\end{equation}
Property (v) of the claim together with Proposition~\ref{replacelarge} imply that $|S_x| \geq \eta \ell$.
Therefore, for each $x \in X_0$ we can choose $i_x \in S_x$, such that for each $i \in [\ell]$, there are at most $|X_0|/\eta \ell$ vertices $x \in X_0$ such that $i = i_x$.
For the collection of $x \in X_0$ with $i_x=i$, choose $j_x$ as evenly as possible from $[3]$.
More precisely, for each $x \in X_0$, choose $j_x \in [3]$ so that
\begin{equation}\label{def1*}
||\lbrace x \in X_0 : i=i_x, j=j_x \rbrace| - |\lbrace x \in X_0 : i=i_x, j'=j_x \rbrace|| \leq 1 \ \ \text{ for }\ \ 1 \leq j,j' \leq 3.
\end{equation}
Define a partition $\lbrace U_{i,j} : (i,j) \in [\ell] \times [3] \rbrace$ of $V(G)$ by setting
\begin{equation}\label{def2*}
U_{i,j} := X_{i,j} \cup \lbrace x \in X_0: (i_x,j_x)=(i,j) \rbrace.
\end{equation}
Then for all $(i,j) \in [\ell] \times [3]$, part (ii) of the claim implies that
\begin{equation}\label{swap1}
0 \leq |U_{i,j}|-m \leq \frac{|X_0|}{\eta \ell} \stackrel{(\ref{3mln})}{\leq} \frac{3\eps m}{(1-\eps)\eta} \leq \frac{4\eps m}{\eta} \leq \sqrt{\eps}m.
\end{equation}
Therefore the $[\ell] \times [3]$ matrix $N = (n_{i,j})$ with $n_{i,j} := |U_{i,j}|$ is $(m,(1+\sqrt{\eps})m)$-bounded.
Moreover,
\begin{align*}
 |U_{i,j} \triangle V_{i,j}| &\leq |U_{i,j} \triangle X_{i,j}| + |X_{i,j} \triangle V_{i,j}| \stackrel{\text{(iii)}}{\leq} \frac{81\eps m}{\eta^2} + \frac{|V_0|}{\eta\ell} \stackrel{(\ref{swap1})}{\leq} \left( \frac{81\eps}{\eta^2} + \frac{4\eps}{\eta} \right)m \leq \frac{82\eps}{\eta^2}m
\stackrel{(\ref{swap1})}{\leq} \frac{82\eps}{\eta^2}|U_{i,j}|.
\end{align*}
Observe that (\ref{def1*}), (\ref{def2*}) and part (ii) of the claim imply that $N$ is $1$-balanced.
Then Proposition~\ref{dull} (with ${164\eps}/{\eta^2}$ playing the role of $\gamma$) implies that $G$ contains an
$(R,\ell,N,\eps^{1/3},d/2)$-cycle structure $\mathcal{C}'$, which is spanning.

Now the vertices in $R$ correspond to the clusters $U_{i,j}$. Since $|U_{i,j} \triangle V_{i,j}|\leq 82\eps m/\eta ^2$ and $\eps \ll c, \eta$, $d^{c/2} _{R,G}$ is $(\eta/2,3\ell)$-good with respect to $\mathcal C'$.
\end{proof}

The following easy fact is a consequence of the triangle inequality.

\begin{fact}\label{easy}
Let $a_1, \ldots, a_n \in \mathbb{R}$.
Then for all $1 \leq i \leq n$,
$$
\left|a_i-\frac{1}{n}\sum_{1 \leq j \leq n}a_j\right| \leq \frac{n-1}{n}\max_{1 \leq j< k \leq n}|a_j-a_k|.
$$
\end{fact}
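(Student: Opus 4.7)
The plan is to prove this directly from the triangle inequality, which is the hint given in the statement. Write $\bar{a} := \frac{1}{n}\sum_{j=1}^n a_j$ and $M := \max_{1 \leq j < k \leq n}|a_j - a_k|$. The key observation is that
$$
a_i - \bar{a} = a_i - \frac{1}{n}\sum_{j=1}^n a_j = \frac{1}{n}\sum_{j=1}^n (a_i - a_j) = \frac{1}{n}\sum_{j \neq i}(a_i - a_j),
$$
since the $j=i$ term contributes zero.

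Applying the triangle inequality to the last expression gives
$$
|a_i - \bar{a}| \leq \frac{1}{n}\sum_{j \neq i}|a_i - a_j| \leq \frac{n-1}{n}M,
$$
where the second inequality uses that each of the $n-1$ terms $|a_i - a_j|$ is bounded above by $M$ (by definition of $M$, since $|a_i - a_j|$ equals $|a_j - a_i|$ and the maximum is taken over unordered pairs). This completes the proof. There is no real obstacle here: the entire argument is a two-line calculation, and the factor $(n-1)/n$ arises naturally because only $n-1$ of the $n$ terms in the expansion of $a_i - \bar{a}$ are nonzero.
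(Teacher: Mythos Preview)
Your proof is correct and follows exactly the approach the paper indicates (the paper does not give a proof, only states that the fact is a consequence of the triangle inequality). The two-line computation you give is precisely the intended argument.
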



In the next lemma, we make some small changes to ensure that the sizes of the colour classes in our cycle structure $\mathcal C$ are equal, i.e. the size matrix has equal columns.
Note that this is a necessary condition for $\mathcal{C}$ to be $0$-balanced.
The proof is as follows.
We assume that $M$ is $1$-balanced. 
So the sum of entries in each column is almost equal (to within $\pm \ell$).
We show that for each of the three colours (columns) $j=1,2,3$, we can find a `feeder cluster' $X_j$ of this colour which has large core degree.
Each feeder cluster has the property that it contains many vertices $x$ such that, for each $j,j'$, there are many clusters $Y_{j'}$ of colour $j'$ for which $x \rightarrow Y_{j'}$ is valid.
So if the $(j')$th column has sum which is too small, and the $j$th column has sum which is too large, we remove some vertices of large degree which lie in $X_{j}$ and add them to a cluster of colour $j'$.

\begin{lemma}\label{colourclass}
Let $n \in 3\mathbb{N}$ and $\ell,m \in \mathbb{N}$ and $0 < 1/n \ll 1/\ell \ll \eps \ll c \ll d \ll \eta < 1$.
Suppose that $G$ is a graph on $n$ vertices with a spanning $(R,\ell,M,\eps,d)$-cycle structure $\mathcal{C}$, where $M$ is $(m,(1+\eps)m)$-bounded and $1$-balanced.
Suppose further that $d^c_{R,G}$ is $(\eta/2,3\ell)$-good with respect to $\mathcal C$.
Then $G$ contains a spanning $(R,\ell,N,\eps ^{1/3},d/2)$-cycle structure $\mathcal{C}'$, where $N$ is $2\ell$-balanced, is $((1-\eps)m,(1+2\eps)m)$-bounded, and has equal columns.
\end{lemma}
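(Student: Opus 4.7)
The plan is to rebalance the three colour classes of $\mathcal{C}$ by relocating a small number of high-degree vertices out of carefully chosen ``feeder'' clusters. First I would set $s_j := \sum_{i=1}^\ell m_{i,j}$, the size of the $j$th colour class, and let $d_j := s_j - n/3$. Since $M$ is $1$-balanced we have $|s_j - s_{j'}| \leq \ell$ for all $j,j' \in [3]$, and since $\mathcal{C}$ is spanning with $3 \mid n$ we have $d_1+d_2+d_3 = 0$; a short calculation then gives $|d_j| \leq 2\ell/3$ for each $j$. Solve the trivial transportation problem to fix non-negative integers $t_{j,j'}$ for $(j,j') \in [3]^2$ which are positive only when $d_j > 0 > d_{j'}$ and which satisfy $\sum_{j'}t_{j,j'} = d_j^+$ and $\sum_{j}t_{j,j'} = d_{j'}^-$; write $M^\ast := \sum_{j,j'}t_{j,j'} \leq 2\ell/3$ for the total number of moves. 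Under the stated hierarchy, $M^\ast$ is much smaller than both $\eps m$ and $cm$.

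Next I would identify a feeder cluster $X_j$ in each colour class. Applying Proposition~\ref{vertexdeg}(ii) with $k=1$ to the $(\eta/2, 3\ell)$-good function $d^c_{R,G}$ restricted to the $j$th colour class of $R$ (which has size exactly $3\ell/3$) yields a cluster $X_j$ of colour $j$ with
$$
d^c_{R,G}(X_j) \geq (1/3+\eta/2) \cdot 3\ell + (\ell - 1) + 2 \geq (2/3+\eta/2) \cdot 3\ell.
$$
By the definition of core degree, $X_j$ contains a set $L_j$ of at least $cm$ vertices $v$ with $d_G(v) \geq (2/3+\eta/2)n$. For each such $v$, Proposition~\ref{replacelarge} supplies $I(v) \subseteq [\ell]$ with $|I(v)| \geq \eta \ell$ such that $d_G(v,V_{i,j''}) \geq (d-\eps)m$ for every $i \in I(v)$ and every $j'' \in [3]$; in particular, for every colour $j'$ and every $i \in I(v)$, the relocation $v \to V_{i,j'}$ is valid.

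I would then perform the moves greedily. For each pair $(j,j')$ with $t_{j,j'}>0$, process $t_{j,j'}$ hitherto-unused vertices $v$ from $L_j$ (possible since $|L_j| \geq cm \gg M^\ast$), and for each such $v$ pick some $i \in I(v)$ into which to send it, maintaining the invariant that no cluster has received more than $\lceil M^\ast/(\eta\ell) \rceil + 1$ new vertices. Such an $i$ always exists, since otherwise the moves into the $\eta\ell$ colour-$j'$ clusters indexed by $I(v)$ would already total at least $\eta\ell \cdot (\lceil M^\ast/(\eta\ell) \rceil + 1) > M^\ast$. Let $X_{i,j'}$ be $V_{i,j'}$ together with its new arrivals, let the three feeder clusters be $V_j$ minus their departures, and leave every other cluster unchanged; write $N = (n_{i,j})$ for the resulting sizes. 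By construction the $j$th column sum of $N$ is $s_j - d_j^+ + d_j^- = n/3$, so $N$ has equal columns; every single-entry change $|n_{i,j}-m_{i,j}|$ is at most $2\ell/3 \leq \eps m/10$, so $N$ is $((1-\eps)m,(1+2\eps)m)$-bounded; and the row imbalance, which was $1$ in $M$, grows by at most $d_j^+ + \lceil M^\ast/(\eta\ell) \rceil + 1 \leq 2\ell$, so $N$ is $2\ell$-balanced.

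Finally I would invoke Proposition~\ref{dull} with $\gamma := \eps$, noting that $\eps+6\sqrt{\gamma} = \eps+6\sqrt{\eps} \leq \eps^{1/3}$ for $\eps$ sufficiently small. Its first hypothesis holds because $|X_{i,j} \triangle V_{i,j}| \leq 2\ell/3 \leq \gamma m/2 = \eps m/2$ for $n$ sufficiently large (since $1/\ell \ll \eps$ forces $\ell \ll \eps m$); its second hypothesis --- that every newly-inserted $x \in X_{i,j}$ has $d_G(x,V_{i,j''}) \geq (d-\eps)m$ for every $j'' \neq j$ --- is precisely the validity ensured by our use of Proposition~\ref{replacelarge}. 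Thus $\{X_{i,j}\}$ induces an $(R,\ell,N,\eps^{1/3},d/2)$-cycle structure $\mathcal{C}'$ with all the desired properties, spanning because no vertex was discarded. The principal obstacle is guaranteeing a feeder cluster in \emph{every} colour class: this is a borderline application of Proposition~\ref{vertexdeg}(ii) at $|X| = n/3$, and it is here that having $d^c_{R,G}$ --- not merely $d_R$ --- be $(\eta/2, 3\ell)$-good is essential.
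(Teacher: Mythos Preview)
Your proposal is correct and follows essentially the same approach as the paper: locate a feeder cluster of large core degree in each colour class, use Proposition~\ref{replacelarge} to route a small number of high-degree vertices from overfull to underfull colour classes, and then invoke Proposition~\ref{dull}. Your argument for finding feeder clusters is in fact cleaner than the paper's --- you apply Proposition~\ref{vertexdeg}(ii) once with $k=1$ and $|X|=\ell=|R|/3$ to each colour class directly, whereas the paper combines parts (i) and (ii) of Proposition~\ref{vertexdeg} and then pigeonholes; your transportation/load-balancing formulation is also more general than the paper's WLOG case analysis, though the extra care in bounding the number of arrivals per cluster is not strictly needed (the paper just places each moved vertex arbitrarily and still gets $|X_{i,j}\triangle V_{i,j}|\le 2\ell$).
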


\begin{proof}
Write $V_{i,j}$ for the cluster of $\mathcal{C}$ corresponding to $(i,j) \in V(R)$, and $M := (m_{i,j})$, where $m_{i,j} := |V_{i,j}|$.
As before, given a vertex $v \in V(G)$ and $(i,j) \in [\ell] \times [3]$, we say that $v \rightarrow V_{i,j}$ is \emph{valid} if $d_G(v,V_{i,j'}) \geq (d-\eps)m$ for all $j' \in [3] \setminus \lbrace j \rbrace$.

For $1 \leq j \leq 3$, let $M_j := \sum_{1 \leq i \leq \ell}m_{i,j}$ be the sum of the entries in the $j$th column of $M$.
Since $\mathcal{C}$ is spanning,
\begin{equation}\label{eq0}
M_1+M_2+M_3 = n.
\end{equation}
Since $M$ is $1$-balanced, for all $1 \leq j < j' \leq 3$ we have
$$
|M_j-M_{j'}| \leq \sum_{1 \leq i \leq \ell}|m_{i,j}-m_{i,j'}| \leq \ell.
$$
Therefore Fact~\ref{easy} applied with $3,M_j$ playing the roles of $n,a_i$ together with (\ref{eq0}) imply that
\begin{equation}\label{eq1}
\left|M_j-\frac{n}{3}\right| \leq \frac{2\ell}{3}\ \ \text{ for }\ \ 1 \leq j \leq 3.
\end{equation}
Since $d^c_{R,G}$ is $(\eta/2,|R|)$-good, 
Proposition~\ref{vertexdeg}(i) applied with $R,d^c_{R,G},V(R)$ playing the roles of $G,d'_G,X$ implies that
there exists $\mathcal{X} \subseteq V(R)$ with $|\mathcal{X}| = 2|R|/3$ and $d^c_{R,G}(X) \geq (2/3+\eta/2)|R|$ for all $X \in \mathcal{X}$.
Proposition~\ref{vertexdeg}(ii) applied with $R,d^c_{R,G},V(R) \setminus \mathcal{X}, \eta|R|/4$ playing the roles of $G,d'_G,X,k$ implies that
there exists $\mathcal{Y} \subseteq V(R) \setminus \mathcal{X}$ with $|\mathcal{Y}| \geq \eta|R|/4$ such that every $Y \in \mathcal{Y}$ has $d^c_{R,G}(Y) \geq (2/3+\eta/4)|R|+2$.
Therefore there are at least $(2/3+\eta/4)|R|$ vertices $U \in V(R)$ with $d^c_{R,G}(U) \geq (2/3+\eta/4)|R|$.
Then, for each $1 \leq j \leq 3$, there is some $i_j \in [\ell]$ and a \emph{feeder cluster} $X_j := V_{i_j,j}$ such that $d^c_{R,G}((i_j,j)) \geq (2/3+\eta/4)|R|$.
Let $I' := \lbrace i_1,i_2,i_3 \rbrace$.
By definition of core degree, there exists $C_j \subseteq X_j$ such that $|C_j| \geq c|X_j| \geq cm$ and $d_G(x) \geq (2/3+\eta/4)n$ for all $x \in C_j$.
Proposition~\ref{replacelarge} applied with $\eta/2$ playing the role of $\eta$ implies that  for $x \in C_j$, there exists $I_x(j) \subseteq [\ell]$ with $|I_x(j)| \geq \eta \ell/2$ such that, for all $i' \in I_x(j)$ and $j' \in [3]$, we have $d_G(x,V_{i',j'}) \geq (d-\eps)m$.

$M$ has equal columns if $M_1=M_2 = M_3 = n/3$.
By the observation immediately after Definition~\ref{cycstruc}, we may suppose without loss of generality that $M_1 \leq M_2 \leq M_3$.
So $M_1 \leq M_2,n/3 \leq M_3$.
In fact we will assume that
\begin{equation}\label{eq2}
M_1 \leq M_2 \leq \frac{n}{3} \leq M_3.
\end{equation}
(The other case is similar.)
We wish to move some suitable vertices from the feeder cluster $X_3$ into clusters of colours $1$ and $2$ so that the new column sums are equal.
Choose $B_{3,2} \subseteq C_3$ with
\begin{equation}\label{eq3}
|B_{3,2}| =\frac{ n}{3}-M_2 \stackrel{(\ref{eq1})}{\leq} \frac{2\ell}{3} \leq  |C_3|.
\end{equation}
For each $x \in B_{3,2}$, we can choose an arbitrary $i_x \in I_x(3) \setminus I'$ so that $x \rightarrow V_{i_x,2}$ is valid.
We have
$$
M_3-|B_{3,2}| \stackrel{(\ref{eq0}),(\ref{eq3})}{=} \frac{2n}{3} - M_1 \stackrel{(\ref{eq2})}{\geq} M_1.
$$
Choose $B_{3,1} \subseteq C_3 \setminus B_{3,2}$ with
\begin{equation}\label{eq4}
|B_{3,1}|=\frac{n}{3}-M_1 \stackrel{(\ref{eq1})}{\leq} \frac{2\ell}{3}  \stackrel{(\ref{eq3})}{\leq} |C_3 \setminus B_{3,2}|.
\end{equation}
For each $x \in B_{3,1}$, we can choose an arbitrary $i_x \in I_x(3) \setminus I'$ so that $x \rightarrow V_{i_x,1}$ is valid.

For $j=1,2$, let $X_{i,j} := V_{i,j} \cup \lbrace x \in B_{3,j} : i_x=i \rbrace$ and let $X_{i,3} := V_{i,3} \setminus (B_{3,1} \cup B_{3,2})$.
For all $(i,j) \in [\ell] \times [3]$, let $n_{i,j} := |X_{i,j}|$ and let $N := (n_{i,j})$.
Now $\lbrace X_{i,j} : (i,j) \in [\ell] \times [3] \rbrace$ is a partition of $V(G)$.
We claim that it induces a spanning cycle structure $\mathcal{C}'$.

Observe that
\begin{equation}\label{cc1}
M_1+|B_{3,1}| = M_2+|B_{3,2}|=M_3-|B_{3,1}|-|B_{3,2}| = \frac{n}{3}.
\end{equation}
So, for $j=1,2$ we have
$$
\sum_{1 \leq i \leq \ell}n_{i,j} = \sum_{1 \leq i \leq \ell}m_{i,j}+|B_{3,j}|=M_j+|B_{3,j}| \stackrel{(\ref{cc1})}{=} \frac{n}{3}
$$
and similarly $\sum_{1 \leq i \leq \ell}n_{i,3}=n/3$.
So $N$ has equal columns.
Note that $X_{i_3,1}=V_{i_3,1}$ and $X_{i_3,2}=V_{i_3,2}$ and $X_{i_3,3} = V_{i_3,3} \setminus( B_{3,1} \cup B_{3,2})$.
So
$$
|n_{i_3,j}-n_{i_3,j'}| \leq |m_{i_3,j}-m_{i_3,j'}| + |B_{3,1}| + |B_{3,2}| \stackrel{(\ref{eq3}),(\ref{eq4})}{\leq} 1+\frac{4\ell}{3} \leq 2\ell.
$$
Suppose that $i \neq i_3$.
Then $X_{i,3}=V_{i,3}$ and
$$
|n_{i,j}-n_{i,j'}| \leq |m_{i,j}-m_{i,j'}| + \max \lbrace |B_{3,1}|, |B_{3,2}| \rbrace \leq 1+\frac{2\ell}{3} \leq 2\ell.
$$
So $N$ is $2\ell$-balanced.
Similar calculations show that, for all $(i,j) \in [\ell] \times [3]$,
$$
|X_{i,j} \triangle V_{i,j}| \leq |B_{3,1}| + |B_{3,2}| \leq 2\ell.
$$
Thus,
$$
(1-\eps)m \leq m - 2\ell \leq |X_{i,j}| \leq (1+\eps)m + 2\ell \leq (1+2\eps)m.
$$
So $N$ is $((1-\eps)m,(1+2\eps)m)$-bounded.
For all $v \in X_{i,j} \setminus V_{i,j}$ we have $i \in I_v(3) \subseteq C_3$, so $d_G(v,V_{i,j'}) \geq (d-\eps)m$ for all $j' \in [3]$.
Then Proposition~\ref{dull} implies that the partition into $X_{i,j}$s induces a spanning $(R,\ell,N,\eps^{1/3},d/2)$-cycle structure $\mathcal{C}'$.
\end{proof}

The next proposition shows that $Z_\ell \subseteq R$ implies that one can slightly change the size of clusters in the same colour class in our cycle structure.
That is, given $V_{i,j}$ and $V_{k,j}$, we can increase $|V_{k,j}|$ by $b$ and decrease $|V_{i,j}|$ by $b$, so long as $b$ is not too large.
We achieve this by successively moving vertices from $V_{i,j}$ to $V_{i+1,j}$, then $V_{i+1,j}$ to $V_{i+2,j}$, and so on, until we reach $V_{k,j}$.
In terms of size matrices, this means we can redistribute the weight within a column.

\begin{proposition}\label{goround}
Let $n,\ell,m \in \mathbb{N}$ and $0 < 1/n \ll 1/\ell \ll \eps \ll d \ll \eta < 1$.
Suppose that $G$ is a graph on $n$ vertices with a spanning $(R,\ell,M,\eps,d)$-cycle structure $\mathcal{C}$, where $M$ is $((1-\eps)m,(1+2\eps)m)$-bounded.
Let $(i,j) \in [\ell] \times [3]$.
Then there exist at least $(1-8\eps)m$ vertices  $v \in V_{i,j}$ such that $d_G(v,V_{i \pm 1,j'}) \geq (d-2\eps)m$ for all $j' \in [3] \setminus \lbrace j \rbrace$ (and addition is modulo $\ell$).
\end{proposition}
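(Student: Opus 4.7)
The plan is straightforward and hinges on the definition of the cycle structure together with Fact~\ref{supereasy}. Since $\mathcal{C}$ is an $(R,\ell,M,\eps,d)$-cycle structure and $R \supseteq Z_\ell$, each of the four pairs $(V_{i,j},V_{i+1,j'})$ and $(V_{i,j},V_{i-1,j'})$ with $j' \in [3]\setminus\{j\}$ corresponds to an edge of $Z_\ell$, so all four bipartite graphs $G[V_{i,j},V_{i\pm 1,j'}]$ are $(\eps,d)$-regular by (C2). The proposition then follows from a routine union bound over these four pairs.

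More precisely, fix any $j' \in [3]\setminus\{j\}$ and any sign. I would apply Fact~\ref{supereasy} to the $(\eps,d)$-regular pair $G[V_{i\pm 1,j'},V_{i,j}]$ taking the set $X$ to be all of $V_{i\pm 1,j'}$ (which has size $(1\pm O(\eps))m$ by the bounds on $M$). This gives fewer than $\eps|V_{i,j}|$ vertices $v \in V_{i,j}$ with
$$d_G(v,V_{i\pm 1,j'}) < (d-\eps)|V_{i\pm 1,j'}| \leq (d-\eps)(1-\eps)m \leq (d-2\eps)m,$$
where the last inequality uses $\eps \ll d$.

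Taking a union bound over the four choices of $(\pm,j')$, the number of vertices in $V_{i,j}$ that are ``bad'' for at least one of these pairs is less than $4\eps|V_{i,j}| \leq 4\eps(1+2\eps)m \leq 5\eps m$. Since $|V_{i,j}| \geq (1-\eps)m$, the number of ``good'' vertices is at least $(1-\eps)m - 5\eps m \geq (1-8\eps)m$, as required.

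There is no real obstacle here: the work was done in setting up $Z_\ell \subseteq R$ and in the definition of a cycle structure (which guarantees exactly the regular pairs we need), so this proposition is essentially just an unpacking of those definitions via the standard regularity fact.
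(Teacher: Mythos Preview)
Your approach is essentially identical to the paper's: use $Z_\ell \subseteq R$ to get the four $(\eps,d)$-regular pairs, apply Fact~\ref{supereasy} to each, and take a union bound/intersection. One small slip: in your displayed chain the inequalities go the wrong way---you need $(d-\eps)|V_{i\pm1,j'}| \geq (d-\eps)(1-\eps)m \geq (d-2\eps)m$ (using $|V_{i\pm1,j'}|\geq(1-\eps)m$), so that \emph{good} vertices from the Fact are good for the conclusion; with that correction the argument matches the paper's proof exactly.
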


\begin{proof}
Recall that, since $Z_\ell \subseteq R$ by (C2), we have that $(i,j)(i \pm 1,j') \in E(R)$ for all $j' \in [3] \setminus \lbrace j \rbrace$.
Then Fact~\ref{supereasy} implies that there exist four sets $X_{j'}^\pm \subseteq V_{i,j}$ with $|X_{j'}^\pm| \geq (1-\eps)|V_{i,j}| \geq (1-2\eps)m$ such that every $x \in X_{j'}^\pm$ has $d_G(x,V_{i\pm1,j'}) \geq (d-\eps)|V_{i\pm1,j'}| \geq (d-2\eps)m$.
Observe that the intersection of these sets has size at least $(1-8\eps)m$, and every vertex within has the required properties.
\end{proof}
Suppose that, instead of $Z_\ell \subseteq R$, we could only guarantee that $C^2 _{3\ell} \subseteq R$. Then the conclusion of the previous proposition may fail to hold. For example,
neither $(i,2)(i-1,1)$ nor $(i,2)(i+1,3)$ may be edges of $R$. Then it could be that every vertex $x \in V_{i,2}$ has $d_G (x, V_{i-1,1})=d_G (x, V_{i+1,3})=0$. So in this case no vertex in $V_{i,2}$ can be moved to $V_{i-1,2}$ or $V_{i+1,2}$.

Now, given a cycle structure that has a $2\ell$-balanced size matrix with equal columns, we repeatedly apply Proposition~\ref{goround} to obtain a $0$-balanced cycle structure.

\begin{lemma}\label{shuffle}
Let $n \in 3\mathbb{N}$ and $\ell,m \in \mathbb{N}$ and $0 < 1/n \ll 1/\ell \ll \eps \ll d \ll \eta < 1$.
Suppose that $G$ is a graph on $n$ vertices with a spanning $(R,\ell,M,\eps,d)$-cycle structure $\mathcal{C}$, where $M$ is $((1-\eps)m,(1+\eps)m)$-bounded, $2\ell$-balanced, and has equal columns.
Then $G$ has a spanning $(R,\ell,N,\eps^{1/3},d/2)$-cycle structure $\mathcal{C}'$ such that $N$ is $((1-2\eps)m,(1+2\eps)m)$-bounded and $0$-balanced.
\end{lemma}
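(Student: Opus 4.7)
The plan is to define a target size matrix $N$ that is $0$-balanced with equal columns, then redistribute vertices within each column of $\mathcal{C}$ to realise these sizes (invoking Proposition~\ref{goround}), and finally verify that the resulting partition still induces a cycle structure via Proposition~\ref{dull}.

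First I would build the target $N=(n_{i,j})$. Writing $V_{i,j}$ for the clusters of $\mathcal C$ and $m_{i,j}:=|V_{i,j}|$, set $s_i := m_{i,1}+m_{i,2}+m_{i,3}$ so $\sum_i s_i=n$. Choose integers $r_i$ with $|r_i-s_i/3|\le 1$ and $\sum_i r_i=n/3$ (possible since $3\mid n$), and let $n_{i,j}:=r_i$. Since $M$ is $2\ell$-balanced, Fact~\ref{easy} applied to each row yields $|m_{i,j}-s_i/3|\le 4\ell/3$, hence $|n_{i,j}-m_{i,j}|\le 2\ell$. So $N$ is $0$-balanced, has equal columns summing to $n/3$, and is $((1-2\eps)m,(1+2\eps)m)$-bounded by the hierarchy $1/\ell\ll\eps$.

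Next, process each column $j\in[3]$ independently. Writing $d^j_i:=m_{i,j}-r_i$ we have $\sum_i d^j_i=0$, so there is a ``flow'' $f^j_i$ along the cycle of row indices (mod $\ell$) satisfying $f^j_i-f^j_{i-1}=d^j_i$; by choosing the additive constant, $|f^j_i|\le \sum_k|d^j_k|/2\le\ell^2$. Proposition~\ref{goround} supplies, for each $(i,j)$, a set $B_{i,j}\subseteq V_{i,j}$ with $|B_{i,j}|\ge(1-8\eps)m$ such that every $v\in B_{i,j}$ has $d_G(v,V_{i\pm1,j'})\ge(d-2\eps)m$ for all $j'\neq j$. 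Since $(1-8\eps)m\ge 2\ell^2$, I can select disjoint subsets $L^+_{i,j},L^-_{i,j}\subseteq B_{i,j}$ of sizes $\max(f^j_i,0)$ and $\max(-f^j_{i-1},0)$ respectively, and define
\[
X_{i,j}:=\bigl(V_{i,j}\setminus(L^+_{i,j}\cup L^-_{i,j})\bigr)\cup L^+_{i-1,j}\cup L^-_{i+1,j}.
\]
The identity $f^j_i-f^j_{i-1}=d^j_i$ gives $|X_{i,j}|=m_{i,j}-f^j_i+f^j_{i-1}=r_i=n_{i,j}$, and since the moves stay within each colour class, $\{X_{i,j}\}$ is a partition of $\bigcup_{i,j}V_{i,j}=V(G)$.

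Finally, I would apply Proposition~\ref{dull} to $\{X_{i,j}\}$. By construction $|X_{i,j}\triangle V_{i,j}|\le 4\ell^2\le\eps m/2$, and every $x\in X_{i,j}\setminus V_{i,j}$ lies in $B_{i-1,j}\cup B_{i+1,j}$, so satisfies $d_G(x,V_{i,j'})\ge(d-2\eps)m$ for all $j'\neq j$. Invoking Proposition~\ref{dull} with $2\eps$ in place of $\eps$ (and absorbing the $((1-\eps)m,(1+\eps)m)$-boundedness of $M$ by rescaling $m$ to $(1-\eps)m$), with $\gamma:=\eps$ and $\eps':=\eps^{1/3}\ge 2\eps+6\sqrt{\eps}$, produces the required $(R,\ell,N,\eps^{1/3},d/2)$-cycle structure $\mathcal C'$.

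The main obstacle here is purely bookkeeping: controlling the cumulative size of the flows $f^j_i$ (keeping them $O(\ell^2)\ll m$) and verifying that a single pool $B_{i,j}$ of ``bi-movable'' vertices accommodates all the moves out of $V_{i,j}$ without conflict. No new structural input is needed beyond Propositions~\ref{goround} and~\ref{dull}; in particular, the fact that $Z_\ell\subseteq R$ is what allows the flow to travel both ways around each column in the first place.
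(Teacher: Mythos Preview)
Your proposal is correct and follows the same overall architecture as the paper: construct the same $0$-balanced target matrix $N$ using Fact~\ref{easy}, redistribute within each column via Proposition~\ref{goround}, and certify the result with Proposition~\ref{dull}. The one genuine difference is in how the redistribution is carried out. The paper runs an iterative augmenting-path procedure: at each of at most $3\ell^2$ steps it finds a too-large cluster $(i^+,j_0)$ and a too-small cluster $(i^-,j_0)$ in the same column and shifts a single vertex along the chain $i^+\to i^++1\to\cdots\to i^-$, tracking the invariant $|V_{i,j}^k\triangle V_{i,j}|\le 2k$. Your flow formulation collapses this into a single batch move by solving $f_i^j-f_{i-1}^j=m_{i,j}-r_i$ once and moving $|f_i^j|$ vertices across each edge $(i,j)\to(i\pm1,j)$; this is cleaner bookkeeping and gives the slightly better symmetric-difference bound $4\ell^2$ versus $6\ell^2$, but both approaches rely on exactly the same structural ingredients and the same hierarchy $\ell^2\ll \eps m$. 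One small point to tidy: Proposition~\ref{goround} only guarantees $d_G(x,V_{i,j'})\ge(d-2\eps)m$ rather than $(d-\eps)m$, so when you invoke Proposition~\ref{dull} you are implicitly using that its proof goes through verbatim with $2\eps$ in place of $\eps$ in that hypothesis (the paper does the same without comment).
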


\begin{proof}
Write $\lbrace V_{i,j} : (i,j) \in [\ell] \times [3] \rbrace$ for the collection of clusters in $\mathcal{C}$, and write $M =: (m_{i,j})$, where $m_{i,j} := |V_{i,j}|$.
Given a vertex $v \in V(G)$ and $(i,j) \in [\ell] \times [3]$, we say that $v \rightarrow V_{i,j}$ is \emph{valid} if $d_G(v,V_{i,j'}) \geq (d-2\eps)m$ for all $j' \in [3] \setminus \lbrace j \rbrace$.

We claim that, for each $1 \leq i \leq \ell$, there exists $n_i \in \mathbb{N}$ so that
\begin{equation}\label{shuf1}
|n_i - m_{i,j}| \leq 2\ell\ \ \text{ for all }\ \ j=1,2,3,\ \ \text{ and }\ \ \sum_{1 \leq i \leq \ell}n_i = \frac{n}{3}.
\end{equation}
To see this, let $\overline{m_i} := (m_{i,1}+m_{i,2}+m_{i,3})/3$.
As an initial try, take $n_i := \lceil \overline{m_i} \rceil$ for all $i$.
Then, since $\mathcal{C}$ is spanning,
$$
\frac{n}{3} = \frac{1}{3} \sum_{(i,j) \in [\ell] \times [3]}m_{i,j} \leq \sum_{1 \leq i \leq \ell}n_i  < \sum_{1 \leq i \leq \ell}(\overline{m_i}+1) = \frac{n}{3} + \ell
$$
and so $0 \leq \sum_{1 \leq i \leq \ell}n_i - n/3 \leq \ell-1$.
Since this value is less than the number of $n_i$s, we can reduce exactly $\sum_{1 \leq i \leq \ell}n_i-n/3$ of them by one.
So, for each $i$ we have $n_i \in \lbrace \lceil \overline{m_i} \rceil, \lceil \overline{m_i} \rceil-1 \rbrace$.
Therefore $|n_i-\overline{m_i}| \leq 1$ for all $1 \leq i \leq \ell$.
Recall that $M$ is $2\ell$-balanced.
Fact~\ref{easy} applied for $1 \leq j \leq 3$ with $3,m_{i,j}$ playing the roles of $n,a_j$ implies that $|m_{i,j} - \overline{m_i}| \leq 4\ell/3$.
But then, for each $1 \leq i \leq \ell$ we have
$$
|n_i - m_{i,j}| \leq |n_i - \overline{m_i}| + |\overline{m_i} - m_{i,j}| \leq 1 + 4\ell/3 \leq 2\ell,
$$
proving the claim.

In the remainder of the proof, we will adjust $\mathcal{C}$ until it has size matrix $N = (n_{i,j})$ where $n_{i,j} := n_i$ for all $(i,j) \in [\ell] \times [3]$.
Let $K := 3\ell^2$.
Suppose, for some $0 \leq k < K$, we have found for each $(i,j) \in V(R)$ subsets $V^k_{i,j} \subseteq V(G)$ such that the following hold:
\begin{itemize}
\item[($\alpha_k$)] $\lbrace V^k_{i,j} : (i,j) \in [\ell] \times [3] \rbrace$ is a partition of $V(G)$;
\item[($\beta_k$)] for all $v \in V^k_{i,j} \setminus V_{i,j}$ we have that $v \rightarrow V_{i,j}$ is valid;
\item[($\gamma_k$)] for all $(i,j) \in [\ell] \times [3]$ we have $|V^k_{i,j} \triangle V_{i,j}| \leq 2k$;
\item [($\delta_k$)] for all $1 \leq j \leq 3$ we have $\sum_{1 \leq i \leq \ell}|V_{i,j}^k| = n/3$, and $\sum_{(i,j) \in [\ell] \times [3]}||V_{i,j}^k|-n_i| \leq 6\ell^2-2k$.
\end{itemize}

Notice that we can set $V_{i,j}^0 := V_{i,j}$ for all $(i,j) \in [\ell] \times [3]$.
Indeed, since $\mathcal{C}$ is spanning, ($\alpha_0$) holds.
Properties ($\beta_0$) and ($\gamma_0$) are vacuous.
To see ($\delta_0$), note that, for all $1 \leq j \leq 3$,
$\sum_{1 \leq i \leq \ell}|V_{i,j}^0| = \sum_{1 \leq i \leq \ell}m_{i,j} = n/3$ since $M$ has equal columns.
Furthermore,
$$
\sum_{(i,j) \in [\ell] \times [3]}||V^0_{i,j}|-n_i| = \sum_{(i,j) \in [\ell] \times [3]}|m_{i,j}-n_i| \stackrel{(\ref{shuf1})}{\leq} 6\ell^2.
$$

If $|V_{i,j}^k|=n_i$ for all $(i,j) \in [\ell] \times [3]$, then we stop.
Otherwise, we will obtain sets $V_{i,j}^{k+1}$ from $V_{i,j}^k$.
Since $\sum_{(i,j) \in [\ell] \times [3]}|V_{i,j}^k| = 3\sum_{1 \leq i \leq \ell}n_i = n$ by ($\alpha_k$) and (\ref{shuf1}), and $n_i \in \mathbb{N}$,
there exists some $(i^+,j_0) \in [\ell] \times [3]$ such that $|V^k_{i^+,j_0}| \geq n_{i^+}+1$.
Now, since $\sum_{1 \leq i \leq \ell}|V_{i,j_0}^k| = n/3 = \sum_{1 \leq i \leq \ell}n_i$ by ($\delta_k$), there exists $1 \leq i^- \leq \ell$ such that $|V^k_{i^-,j_0}| \leq n_{i^-}-1$.

Proposition~\ref{goround} applied repeatedly implies that, for all integers $r \geq 0$, there exist $(1-8\eps)m$ vertices $v \in V_{i^++r,j_0}$ such that $v \rightarrow V_{i^++r+1,j_0}$ is valid (where, here and for the rest of the proof, addition is modulo $\ell$).
Let $r_0$ be the least non-negative integer such that $i^++r_0+1 \equiv i^- \mod \ell$.
So $0 \leq r \leq \ell-1$. 
Now $(1-8\eps)m - 2K = (1-8\eps)m - 6\ell^2 > m/2$ so ($\gamma_k$) implies that for each $0 \leq r \leq r_0$, we can find $x_r \in V^k_{i^++r,j_0}$ such that $x_r \rightarrow V_{i^++r+1,j_0}$ is valid.

For each $(i,j) \in [\ell]\times [3]$, set
\begin{equation*}
V^{k+1}_{i,j} := \left\{
  \begin{array}{l l}
    V^k_{i,j} \setminus \lbrace x_0 \rbrace & \quad \text{if $(i,j)=(i^+,j_0)$}\\
    V^k_{i,j} \cup \lbrace x_{i-1} \rbrace \setminus \lbrace x_i \rbrace & \quad \text{if $i^++1 \leq i \leq i^++r_0$ and $j=j_0$}\\
    V^k_{i,j} \cup \lbrace x_{i-1} \rbrace & \quad \text{if $(i,j)=(i^-,j_0)$}\\
    V^k_{i,j} & \quad \text{otherwise}.
  \end{array} \right.
\end{equation*}

The definition of $r_0$ implies that ($\alpha_{k+1}$) holds.
The choice of $x_r$ implies that ($\beta_{k+1}$) holds.
We have
$$
|V_{i,j}^{k+1} \triangle V_{i,j}| \leq |V^{k+1}_{i,j} \triangle V^k_{i,j}| + |V^k_{i,j} \triangle V_{i,j}| \stackrel{\text{($\gamma_k$)}}{\leq} 2(k+1),
$$
proving ($\gamma_{k+1}$).
Finally, observe that $||V^{k+1}_{i^\pm,j_0}|-n_{i^\pm}| =||V^k_{i^\pm,j_0}|-n_{i^\pm}|-1$ and $|V^{k+1}_{i,j}|=|V^k_{i,j}|$ for all other $(i,j)$.
Therefore
$$
\sum_{(i,j) \in [\ell] \times [3]}||V_{i,j}^{k+1}|-n_i| = \sum_{(i,j) \in [\ell] \times [3]}||V_{i,j}^{k}|-n_i| - 2 \stackrel{\text{($\delta_k$)}}{\leq} 6\ell^2-2(k+1),
$$
proving ($\delta_{k+1}$).

So, for each $0 \leq k \leq K$, either the procedure has terminated, or we are able to proceed to step $k+1$.
Therefore there is some $p \leq K = 3\ell^2$ such that $\sum_{(i,j) \in [\ell] \times [3]}||V_{i,j}^{p}|-n_i| = 0$.
So $|V_{i,j}^p|=n_i$ for all $(i,j) \in [\ell] \times [3]$.
Set $X_{i,j} := V_{i,j}^p$ for all $(i,j) \in [\ell] \times [3]$.

We claim that the partition into $X_{i,j}$s induces a spanning cycle structure $\mathcal{C}'$.
Let $N := (n_{i,j})$ where $n_{i,j} := n_i$ for all $(i,j) \in [\ell] \times [3]$.
Then $N$ is the size matrix of $\mathcal{C}'$ and is $0$-balanced by definition.
Note that, by ($\gamma_p$), for all $(i,j) \in [\ell] \times [3]$ we have
$$
(1-2\eps)m \leq (1-\eps)m - 2K \leq |X_{i,j}| \leq (1+\eps)m + 2K \leq (1+2\eps)m.
$$
So $N$ is $((1-2\eps)m,(1+2\eps)m)$-bounded.
Finally, Proposition~\ref{dull}  implies that $\mathcal{C}'$ is an $(R,\ell,N,\eps^{1/3},d/2)$-cycle structure $\mathcal{C}'$.
\end{proof}

We are now able to prove the main result of this section, Lemma~\ref{0balanced}.

\medskip
\noindent
\emph{Proof of Lemma~\ref{0balanced}.} Suppose that $G$ is a sufficiently large graph on $n$ vertices as in the statement of the lemma.
Apply Lemma~\ref{reg} (the Regularity lemma) with parameters $\eps^{100},4L'$ to obtain $L^* \in \mathbb{N}$.
Since $L^*$ depends only on $\eps$ and $L'$, which appear to the right of $L_0$ in the hierarchy, we may assume that $1/L_0 \leq 1/L^*$.
Apply Lemma~\ref{reg}  to $G$ with parameters $\eps^{100},16d,4L'$ to obtain clusters $V_1, \ldots, V_L$ of size $m$, an exceptional set $V_0$, a pure graph $G'$ and a reduced graph $R'$.
So $|R'|=L$ where $4L' \leq L \leq L^*$ and $|V_0| \leq \eps^{100} n$; and $G'[V_i,V_j]$ is $(\eps^{100},16d)$-regular whenever $ij \in E(R')$.
Lemma~\ref{reg}(iv) states that $d_{G'}(x) > d_G(x)-(d+\eps)n$ for all $x \in V(G)$.
Then Proposition~\ref{stoneage} implies that $G'$ is $(\eta/2,n)$-good.
Choose $\alpha$ such that $\eps \ll \alpha \ll d$.
By Lemma~\ref{reduced}(ii), $d_{R',G}^\alpha$ and $R'$ are $(\eta/2,L)$-good.
Further, Lemma~\ref{reduced} applied to $G'$ implies that $d_{R',G'}^\alpha$ is $(\eta/4,L)$-good.

Apply Lemma~\ref{trianglecycle} with $L,\eta/2,R',\eps^{100}$ playing the roles of $n,\eta,G,\eps$ to obtain $\ell \in \mathbb{N}$ with $(1-\eps^{100})L \leq 3\ell \leq L$ such that $R' \supseteq Z_\ell$ (where $Z_\ell$ is the $\ell$-triangle cycle).
Observe that $L' \leq L/4 \leq \ell \leq L/3 \leq L^* \leq L_0$, as required. 
Let $R := R'[V(Z_\ell)]$.
Let also $V_0'' := V_0 \cup \bigcup_{i \in V(R') \setminus V(R)}V_i$.
Then
\begin{equation}\label{V0''}
|V_0''| \leq \eps^{100} n + \eps^{100} L m \leq 2\eps^{100} n.
\end{equation}
Relabel the vertices of $Z_\ell$ (and hence $R$) in the canonical way given in Definition~\ref{ltricycle}.
So $V(R) = [\ell] \times [3]$, and for all $1 \leq i \leq \ell$ and distinct $1 \leq j,j' \leq 3$ we have $(i,j)(i,j') \in E(Z_\ell)$ and $(i,j)(i+1,j') \in E(Z_\ell)$, where addition is modulo $\ell$.
Then $R \supseteq Z_\ell \supseteq T_\ell$, where $T_\ell$ consists of the triangles $(i,1)(i,2)(i,3)$ for $1 \leq i \leq \ell$.
Given a vertex $(i,j)$ in $R$ write $V_{i,j}$ for the cluster in $G$ corresponding to $(i,j)$.

Apply Lemma~\ref{superslice} with $R,3\ell,\eps^{100},16d,G',T_\ell,2$ playing the roles of $R,L,\eps,d,G,H,\Delta$ to obtain for each $(i,j) \in V(T_\ell) = V(R)$ a subset $V_{i,j}' \subseteq V_{i,j}$ of size
\begin{equation}\label{m'}
m' := (1-\eps^{50})m
\end{equation}
such that for all $1 \leq i \leq \ell$ and $1 \leq j < j' \leq 3$, the graph $G'[V_{i,j}',V_{i,j'}']$ is $(4\eps^{50},8d)$-superregular.
Let $(i,j)(i',j') \in E(R)$ be arbitrary.
Then Proposition~\ref{superslice2}(i) with $\eps^{100},16d,\eps^{50}$ playing the roles of $\eps,d,\eps '$ implies that $G'[V'_{i,j},V'_{i',j'}]$ is $(2\eps^{50},8d)$-regular and hence $(4\eps^{50},8d)$-regular.
Let $V_0'$ be the set of all those vertices of $G$ not contained in any $V'_{i,j}$.
Then
$$
|V_0'| \stackrel{(\ref{m'})}{\leq} |V_0''| + 3\eps^{50}\ell m \stackrel{(\ref{V0''})}{\leq} (2\eps^{100} + \eps^{50})n \leq 2\eps^{50}n.
$$

Let $N_0$ be the $\ell \times 3$ matrix in which every entry is $m'$.
It is now clear that $G'$ has an $(R,\ell,N_0,4\eps^{50},8d)$-cycle structure $\mathcal{C}_0$ where the $V'_{i,j}$ are the clusters of $\mathcal C_0$ and $V'_0$ is the exceptional set.
In particular, now we view the vertices in $R$ as corresponding to the clusters $V'_{i,j}$. Recall that $d^{\alpha} _{R',G'}$ is $(\eta/4,L)$-good when we view the vertices in $R'$ 
 as corresponding to the clusters $V_{i,j}$. Thus, Proposition~\ref{stoneage} implies that $d_{R,G'}^\alpha$ is $(\eta/8,L)$-good when we view the vertices in $R$ 
 as corresponding to the clusters $V_{i,j}$. So by definition of core degree, $d^{\alpha/2} _{R,G'}$ is $(\eta/8,3\ell)$-good when we view the vertices in $R$ 
 as corresponding to the clusters $V'_{i,j}$ (i.e. $d^{\alpha/2} _{R,G'}$ is $(\eta/8,3\ell)$-good with respect to $\mathcal C_0$).

We may therefore
apply Lemma~\ref{swap} with $n,\eta/4,G',m',4\eps^{50},8d,\alpha/2$ playing the roles of $n,\eta,G,m,$ $\eps,d,c$ to show that $G'$ has a spanning $(R,\ell,N_1,\eps^{9},4d)$-cycle structure $\mathcal{C}_1$, where
$N_1$ is $(m',(1+2\eps^{25})m')$-bounded and $1$-balanced. Moreover, $d^{\alpha/4} _{R,G'}$ is $(\eta/8,3\ell)$-good with respect to $\mathcal C_1$.

Apply Lemma~\ref{colourclass} with $G',\mathcal{C}_1, \alpha /4$ playing the roles of $G,\mathcal{C},c$ to show that $G'$ has a spanning $(R,\ell,N_2,\eps^{3},2d)$-cycle structure $\mathcal{C}_2$, where
$N_2$ is $2\ell$-balanced, $((1-\eps^{9})m',(1+2\eps^{9})m')$-bounded, and has equal columns.

Finally, apply Lemma~\ref{shuffle} with $G',\mathcal{C}_2$ playing the roles of $G,\mathcal{C}$ to show that $G'$ has a spanning $(R,\ell,M,\eps,d)$-cycle structure $\mathcal{C}$ where $M$ is $0$-balanced and $((1-2\eps^{3})m',(1+2\eps^{3})m')$-bounded, and hence $((1-\eps)m,(1+\eps)m)$-bounded by (\ref{m'}).
\hfill$\square$

\medskip

\subsection{Embedding the square of a Hamilton cycle}

Given $t \in \mathbb{N}$, recall that  $C^2_{3t}$ denotes the square cycle on $3t$ vertices. In this section we will always assume implicitly that $ C^2_{3t}$ has 
 vertex set $[t] \times [3]$ such that for all $1 \leq i \leq t$ and distinct $1 \leq j,j' \leq 3$, we have $(i,j)(i,j') \in E(C^2_{3t})$ and $(i,2)(i+1,1), (i,3)(i+1,1), (i,3)(i+1,2) \in E(C^2_{3t})$, where addition is modulo $t$.
Observe that $T_t \subseteq C^2_{3t}$.

\newcommand*\num{4}

\begin{center}
\begin{figure}[H]
\begin{tikzpicture}[every node/.style={draw,circle,fill=black,inner sep=0.5mm},scale=5]

\node[draw=none,fill=none] (albert) at ($(1.5,{(1/6)*sqrt(3)})$) {};

\begin{scope}[shift=(albert)]
\draw[draw=none,fill=gray!40] ($(0,{(1/3)*sqrt(3)-0.2})$) ellipse (0.1cm and 0.25cm);

\begin{scope}[rotate=120]
\draw[draw=none,fill=gray!40] ($(0,{(1/3)*sqrt(3)-0.2})$) ellipse (0.1cm and 0.25cm);
\end{scope}

\begin{scope}[rotate=240]
\draw[draw=none,fill=gray!40] ($(0,{(1/3)*sqrt(3)-0.2})$) ellipse (0.1cm and 0.25cm);
\end{scope}

\end{scope}

\node[draw=none,fill=none] (monkey) at ($(3,{(1/3)*sqrt(3)})$) {};

\begin{scope}[shift=(monkey)]
\draw[draw=none,fill=gray!40] ($(0,{-(1/3)*sqrt(3)+0.2})$) ellipse (0.1cm and 0.25cm);

\begin{scope}[rotate=120]
\draw[draw=none,fill=gray!40] ($(0,{-(1/3)*sqrt(3)+0.2})$) ellipse (0.1cm and 0.25cm);
\end{scope}

\begin{scope}[rotate=240]
\draw[draw=none,fill=gray!40] ($(0,{-(1/3)*sqrt(3)+0.2})$) ellipse (0.1cm and 0.25cm);
\end{scope}

\end{scope}

\begin{scope}

\clip(0.75,-0.3) rectangle (3.85,1.04);

    \foreach \row in {0, 1, ...,\num} {

    \node[] (\row one) at ($\row*(1, 0)$) {};    
    \node[] (\row two) at ($\row*(1,0)+(0.5,{0.5*sqrt(3)})$) {};
        
       }
        
\node[draw=none,fill=none] at ($(1one)-(0.15,0)$) {\small$(p_i,1)$};
\node[draw=none,fill=none] at ($(2one)+(0.25,0.05)$) {\small$(p_i,3) = x_i$};
\node[draw=none,fill=none] at ($(3one)+(0.21,0.00)$) {\small$(p_i+1,2)$};
      
\node[draw=none,fill=none] at ($(1two)+(-0.15,0.04)$) {\small$(p_i,2)$};
\node[draw=none,fill=none] at ($(2two)-(0.3,0.05)$) {\small$y_{i+1}=(p_i+1,1)$};
\node[draw=none,fill=none] at ($(3two)+(0.21,-0.05)$) {\small$(p_i+1,3)$};

\node[draw=none,fill=none] at ($(1one)-(0,0.1)$) {\large$V_{i,1}$};
\node[draw=none,fill=none] at ($(2one)-(0,0.1)$) {\large$V_{i,3}$};
\node[draw=none,fill=none] at ($(3one)-(0,0.1)$) {\large$V_{i+1,2}$};
 
\node[draw=none,fill=none] at ($(1two)+(0,0.1)$) {\large$V_{i,2}$};
\node[draw=none,fill=none] at ($(2two)+(0,0.1)$) {\large$V_{i+1,1}$};
\node[draw=none,fill=none] at ($(3two)+(0,0.1)$) {\large$V_{i+1,3}$};

\node[draw=none,fill=none] at ($(1one)+(0.425,0.23)$) {\small$y_i$};
      \node[draw=none,fill=none] at ($(3two)-(0.44,0.24)$) {\small$x_{i+1}$};

\begin{scope}[]

    \foreach \x in {0,1,2,3,4} {

    \node[] (\x blah1) at ($(1,0)+(30:{\x*0.1})$) {};    
    \node[] (\x blah3) at ($(2,0)+(150:{\x*0.1})$) {};
    \node[] (\x blah2) at ($(1.5,{0.5*sqrt(3)})+(0,{\x*-0.1})$) {};
        
\draw (\x blah1) -- (\x blah2) -- (\x blah3) -- (\x blah1);

}

\draw[thick,color=red] (4blah3) -- (4blah1) -- (4blah2) -- (4blah3);

\draw (1two) -- (1blah3) -- (1one);
\draw (1blah2) -- (2blah3) -- (1blah1);
\draw (2blah2) -- (3blah3) -- (2blah1);
\draw (3blah2) -- (4blah3) -- (3blah1);

\draw (1blah2) -- (1one);
\draw (2blah2) -- (1blah1);
\draw (3blah2) -- (2blah1);
\draw (4blah2) -- (3blah1);

\end{scope}

\begin{scope}[]

    \foreach \x in {0,1,2,3,4} {

    \node[] (\x bla1) at ($(2.5,{0.5*sqrt(3)})+(-30:{\x*0.1})$) {}; 
    \node[] (\x bla2) at ($(3,0)+(0,{\x*0.1})$) {};
    \node[] (\x bla3) at ($(3.5,{0.5*sqrt(3)})+(210:{\x*0.1})$) {};
        
\draw (\x bla1) -- (\x bla2) -- (\x bla3) -- (\x bla1);

}

\draw[thick,color=red] (4bla2) -- (4bla3) -- (4bla1) -- (4bla2);

\draw (3one) -- (1bla1) -- (3two);

\draw (2bla2) -- (1bla3) -- (2bla1);
\draw (3bla2) -- (2bla3) -- (3bla1);
\draw (4bla2) -- (3bla3) -- (4bla1);

\draw (1bla2) -- (3two);

\draw (1bla2) -- (2bla1);
\draw (2bla2) -- (3bla1);
\draw (3bla2) -- (4bla1);

\end{scope}

\draw[thick,color=blue]  (2two) -- (2one);

\draw[thick,color=red] (1two) -- (2two); 
\draw[thick,color=red] (2one) -- (3one);
\draw[thick,color=red] (1two) -- (2one) -- (1one);
\draw[thick,color=red] (3two) -- (2two) -- (3one);

\draw[thick,color=red] (0blah1) -- (0blah2);
\draw[thick,color=red] (0bla2) -- (0bla3);

\begin{scope}
\clip(1,-0.3) rectangle (3.6,1.04);
\draw[thick,color=red] (4blah2) -- (0two);
\draw[thick,color=blue] (0two) -- (4blah1);
\draw[thick,color=red] (4blah1) -- (0one);
\draw[thick,color=red] (4bla2) -- (4one);
\draw[thick,color=blue] (4one) -- (4bla3);
\draw[thick,color=red] (4bla3) -- (4two);
\end{scope}

\begin{scope}
\clip(0.75,-0.3) rectangle (1,1.04);
\draw[dotted,thick,color=red] (4blah2) -- (0two) -- (4blah1) -- (0one);
\draw[dotted,thick,color=blue] (0two) -- (4blah1);
\draw[dotted,thick,color=red] (4blah1) -- (0one);
\end{scope}

\begin{scope}
\clip(3.6,-0.3) rectangle (3.85,1.04);
\draw[dotted,thick,color=red] (4bla2) -- (4one);
\draw[dotted,thick,color=blue] (4one) -- (4bla3);
\draw[dotted,thick,color=red] (4bla3) -- (4two);
\end{scope}

\end{scope}

\end{tikzpicture}

\caption{The square path $y_{i}P_ix_iy_{i+1}P_{i+1}x_{i+1}$ which forms part of the square cycle $C^2_n$; and the desired embedding into the clusters of $G$. The edges in $C^2_n[X]$ are coloured blue and the remaining edges in $J_\ell$ are coloured red.}\label{embedfig}
\end{figure}
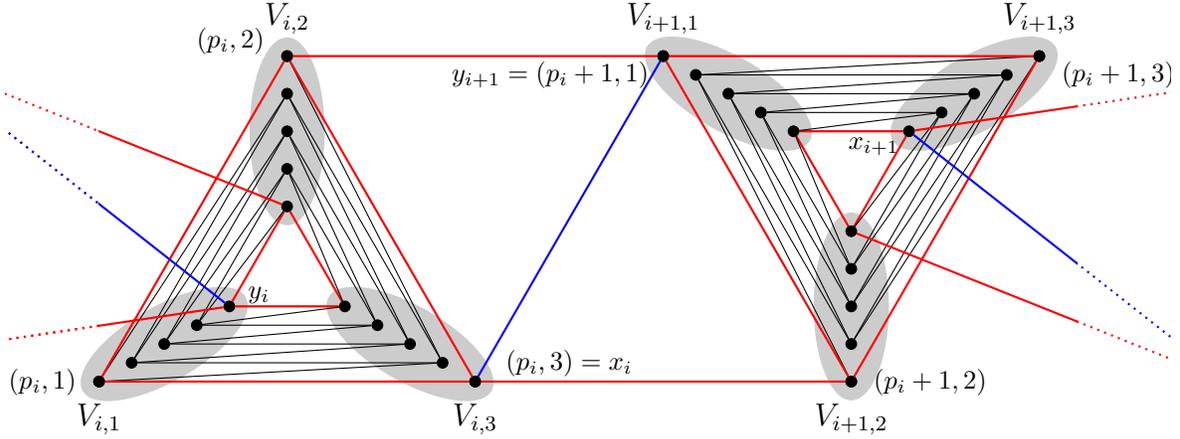
\end{center}

The following is essentially a special case of an argument in~\cite[from page 10]{3partite} and is a standard application of the Blow-up lemma, but we prove it here for completeness.

\begin{lemma}\label{embedHC2}
Let $0 < 1/n \ll 1/\ell \ll \eps \ll d \ll 1$.
Suppose that $G$ is a graph on $n$ vertices with a spanning $(R,\ell,M,\eps,d)$-cycle structure $\mathcal{C}$ such that $M$ is $((1-\eps)m,(1+\eps)m)$-bounded and $0$-balanced.
Then $G$ contains the square of a  Hamilton cycle.
\end{lemma}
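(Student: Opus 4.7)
The plan is to directly embed the square cycle $C^2_n$ into $G$ by producing an explicit graph homomorphism $\phi : V(C^2_n) \to V(Z_\ell) \subseteq V(R)$ and then realising it via the Blow-up lemma triangle by triangle. Because $M$ is $0$-balanced and $\mathcal{C}$ is spanning, the three clusters $V_{i,1}, V_{i,2}, V_{i,3}$ of each triangle $T_i \subseteq T_\ell$ share a common size $n_i$, and $\sum_{i=1}^\ell n_i = n/3$. Partition the vertices of $C^2_n$ cyclically into $\ell$ consecutive blocks of sizes $3n_1, \ldots, 3n_\ell$, and within block $i$ use the colour pattern $1,2,3,1,2,3,\ldots$, so block $i$ receives exactly $n_i$ vertices of each colour, to be mapped to $(i,1), (i,2), (i,3)$ respectively. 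Consecutive vertices within a block go to distinct vertices of $T_i$ (an edge of $T_i \subseteq R$), and the three edges of $C^2_n$ crossing the boundary between blocks $i$ and $i+1$ map to $(i,2)(i+1,1)$, $(i,3)(i+1,1)$ and $(i,3)(i+1,2)$, all of which lie in $Z_\ell$ by Definition~\ref{ltricycle}. Hence $\phi$ is indeed a graph homomorphism with $|\phi^{-1}((i,j))|=n_i=|V_{i,j}|$.

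To realise $\phi$, I first fix a short ``bridge'' square path across each boundary. For each $1 \leq i \leq \ell$ I pick vertices $a_i \in V_{i,2}$, $b_i \in V_{i,3}$, $c_{i+1} \in V_{i+1,1}$, $d_{i+1} \in V_{i+1,2}$ such that $a_i b_i c_{i+1} d_{i+1}$ is a square path in $G$, chosen sequentially in $i$ to remain vertex-disjoint from previous choices. The existence of these boundary quadruples follows from iterating the sketch argument of Proposition~\ref{triangleembed} applied to the five $(\eps,d)$-regular pairs spanned by $\{V_{i,2}, V_{i,3}, V_{i+1,1}, V_{i+1,2}\}$. Moreover, by Fact~\ref{supereasy} applied to each of these pairs, at the cost of forbidding only $O(\eps m)$ atypical vertices per cluster, I can additionally arrange that the four target sets
\[
S^1_i := N_G(c_i) \cap N_G(d_i) \cap V_{i,3}, \quad  S^4_i := N_G(a_i) \cap N_G(b_i) \cap V_{i,1},
\]
\[
S^2_i := N_G(d_i) \cap V_{i,1}, \quad S^3_i := N_G(a_i) \cap V_{i,3}
\]
each have size at least $c n_i$ for some $c=c(d)>0$: the single-neighbourhood intersections with a fixed cluster are of size $\geq dn_i$ by the superregularity of $T_i$ (property (C3)), and the further intersection with $N_G(c_i)$ or $N_G(b_i)$ is of size $\geq (d-\eps)d n_i$ by the $(\eps,d)$-regularity of $V_{i,1}V_{i,3}$ (respectively $V_{i,2}V_{i,3}$) together with Fact~\ref{supereasy}, provided $c_i$ (respectively $b_i$) is chosen typical.

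For each $i$, let $V_{i,1}' := V_{i,1}\setminus\{c_i\}$, $V_{i,2}' := V_{i,2}\setminus\{a_i,d_i\}$ and $V_{i,3}' := V_{i,3}\setminus\{b_i\}$, of sizes $n_i-1, n_i-2, n_i-1$. Proposition~\ref{newsuperslice} shows that the three pairs within $T_i$ remain $(4\eps, d/2)$-superregular on these reduced clusters. The ``middle'' of block $i$ under $\phi$ is the square path on exactly these $3n_i-4$ vertices, starting in $V_{i,3}$ and ending in $V_{i,1}$. I apply the Blow-up lemma (Theorem~\ref{blowup}) to embed this middle path into $G$ restricted to $V_{i,1}' \cup V_{i,2}' \cup V_{i,3}'$, declaring its first two and last two vertices special with target sets $S^1_i \cap V_{i,3}'$, $S^2_i \cap V_{i,1}'$, $S^3_i \cap V_{i,3}'$, $S^4_i \cap V_{i,1}'$ (still of size $\Omega(n_i)$). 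Concatenating the $\ell$ embedded middle paths with the $\ell$ prefixed bridges $a_i b_i c_{i+1} d_{i+1}$ yields the square of a Hamilton cycle in $G$, as desired.

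The main obstacle is purely a coordination issue: the boundary vertices must simultaneously support a square bridge $a_i b_i c_{i+1} d_{i+1}$ spanning to the next block \emph{and} have large target sets for the Blow-up lemma application on their own block. This is handled by choosing the $4\ell$ boundary vertices greedily in $i$, at each step forbidding only the $O(\eps m)$ atypical vertices in each cluster identified by Fact~\ref{supereasy}; no new idea beyond standard regularity calculations is needed.
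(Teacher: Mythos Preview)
Your approach is correct and essentially the same as the paper's: define the natural homomorphism $\phi:V(C^2_n)\to V(Z_\ell)$ mapping block $i$ cyclically onto triangle $T_i$, pre-embed a short bridge across each block boundary, and then apply the Blow-up lemma (Theorem~\ref{blowup}) separately inside each triangle with the bridge-neighbourhoods as target sets for the four endpoints of each middle path.

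The only difference is cosmetic. The paper uses just two bridge vertices $x_i\in V_{i,3}$, $y_{i+1}\in V_{i+1,1}$ per boundary (rather than your four), and instead of building the bridges and target sets by hand via Fact~\ref{supereasy}, it invokes the Partial Embedding Lemma (Lemma~\ref{partialembed}) once, with $H=J_\ell$ the union of the $\ell$ six-vertex square paths $X_i\cup Y_i$, $X=\bigcup X_i$ the vertices to be embedded and $Y=\bigcup Y_i$ the vertices receiving target sets $C_y$. This packages all the greedy-typicality arguments you spell out into a single black box. One small slip in your write-up: to guarantee $|S^1_i|=|N_G(c_i)\cap N_G(d_i)\cap V_{i,3}|\ge (d-\eps)dn_i$ you need typicality of the \emph{later}-chosen of $c_i,d_i$ with respect to the neighbourhood of the earlier one, so the order of selection within the bridge matters; but this is easily arranged and does not affect the argument.
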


\begin{proof}
By (C2), we have that $Z_\ell \subseteq R$.
Then $C^2_{3\ell} \subseteq R$.
This is all we require in the proof.
Write $\lbrace V_{i,j} : (i,j) \in [\ell] \times [3] \rbrace$ for the collection of clusters of $\mathcal{C}$, where $V_{i,j}$ corresponds to $(i,j) \in V(R)$.
So this is a partition of $V(G)$.
Since $M$ is $0$-balanced, for each $1 \leq i \leq \ell$ there exists $m_i \in \mathbb{N}$ such that
\begin{equation}\label{mi}
|V_{i,j}|=m_i = (1 \pm \eps)m\ \ \text{ for all }\ \ 1 \leq j \leq 3.
\end{equation}
Let $p_0 := 0$ and $p_i := \sum_{1 \leq r \leq i}m_r$ for all $1 \leq i \leq \ell$.
Note that
$
3p_\ell = n,
$
and, in particular, $n$ is divisible by $3$.
To prove the lemma, we will find an embedding $h : V(C^2_n) \rightarrow V(G)$, where we write
$$
C^2_n = (1,1)(1,2)(1,3)(2,1) \ldots (p_\ell,1)(p_\ell,2)(p_\ell,3).
$$
The embedding will map the first $3p_1$ vertices of $C^2_n$ to distinct vertices in $V_{1,1} \cup V_{1,2} \cup V_{1,3}$, and the $(3p_1+1)$th to $(3p_2)$th vertices of $C^2_n$ to distinct vertices in $V_{2,1} \cup V_{2,2} \cup V_{2,3}$, and so on.
For each $1 \leq i \leq \ell$, define
\begin{equation}\label{xiyi}
x_i := (p_i,3)\ \ \text{ and } \ \ y_i := (p_{i-1}+1,1).
\end{equation}
Define also
\begin{align}
\nonumber X_i &:= \left\lbrace x_i, y_{i+1} \right\rbrace\ \ \text{ and }\ \ Y_i := \lbrace (p_i,1),(p_i,2),(p_i+1,2),(p_i+1,3) \rbrace = N_{C^2_n}(X_i) \setminus X_i;\\
\label{Pi} P_i &:= (p_{i-1}+1,2)(p_{i-1}+1,3) \ldots (p_i,1)(p_i,2),
\end{align}
where $P_i$ is a square path.
Let $X := \bigcup_{1 \leq i \leq \ell}X_i$ and $Y := \bigcup_{1 \leq i \leq \ell}Y_i = N_{C^2_{n}}(X) \setminus X$.
Note further that $(P_i)^+_2 \cup (P_{i+1})^-_2 = Y_i$. 
We have that $C^2_{n} = y_1P_1x_1y_2P_2x_2y_3P_3 \ldots P_\ell x_\ell$.
(Figure~\ref{embedfig} shows the square path $y_iP_ix_iy_{i+1}P_{i+1}x_{i+1}$.)

Our strategy is as follows: first embed the vertices in $X \cup Y$ using the partial embedding lemma (Lemma~\ref{partialembed}), so that there are many choices for the embedding of each $y \in Y$.
Then, for each $1 \leq i \leq \ell$, apply the Blow-up lemma (Theorem~\ref{blowup}) to embed $P_i$ into $V_{i,1} \cup V_{i,2} \cup V_{i,3}$ in such a way that the two embeddings align.


Define $f : V(C^2_n) \rightarrow V(C^2_{3\ell})$ by $f((k,j)) = (g(k),j)$, where $g(k) \in [\ell]$ is such that 
$$
p_{g(k)-1} < k \leq p_{g(k)}.
$$ 
It is not hard to check that $f$ is a graph homomorphism, i.e. $f(x)f(y) \in E(C^2_{3\ell})$ whenever $xy \in E(C^2_n)$.
By a slight abuse of notation, we will write $V_{f((k,j))}$ for $V_{g(k),j}$.
We will find an embedding
$h : V(C^2_{n}) \rightarrow V(G)$ such that $h(x) \in V_{f(x)}$ for all $x \in C^2_n$.

For all $1 \leq i \leq \ell$, since $X_i \cup Y_i$ is a collection of $6$ consecutive vertices on $C^2_n$, we have that $J_\ell := C^2_{n}[X \cup Y]$ is a collection of $\ell$ vertex-disjoint square paths of order $6$. 
So $|J_\ell| = 6\ell \leq \eps m$ and $\Delta(J_\ell) = 4$.
Choose $c$ such that $\eps \ll c \ll d \ll 1$.
Apply Lemma~\ref{partialembed} with $C^2_{3\ell},G,\lbrace V_{i,j} : (i,j) \in [\ell] \times [3] \rbrace, J_\ell,c$ playing the roles of $R,G,\lbrace V_i : 1 \leq i \leq L \rbrace, H,c$.
Thus obtain an injective mapping $\tau : X \rightarrow V(G)$ with $\tau(x) \in V_{f(x)}$ for all $x \in X$, such that 
for all $y \in Y$ there exist sets $C_y \subseteq V_{f(y)} \setminus \tau(X)$ such that the following hold:
\begin{itemize}
\item[(i)] for all $1 \leq i \leq \ell$ (where addition is modulo $\ell$), we have that $\tau(x_i)\tau(y_{i+1}) \in E(G)$;
\item[(ii)] for all $y \in Y$ we have that $C_y \subseteq N_G(\tau(x))$ for all $x \in N_{C^2_{n}}(y) \cap X$;
\item[(iii)] $|C_y| \geq c|V_{f(y)}|$ for all $y \in Y$.
\end{itemize}

Note that for each $1 \leq i \leq \ell$, as displayed in Figure~\ref{embedfig},
$$
V_{i,1} \cap \tau(X) = \lbrace y_{i} \rbrace, \ \ V_{i,2} \cap \tau(X) = \emptyset\ \ \text{ and }\ \ V_{i,3} \cap \tau(X) = \lbrace x_i \rbrace.
$$
For all $(i,j) \in [\ell] \times [3]$, let $V'_{i,j} := V_{i,j} \setminus \tau(X)$.
So $|V'_{i,j}|=m_i-1$ for $j=1,3$; and $|V'_{i,2}|=m_i$.
Proposition~\ref{superslice2}(ii) implies that $G[V_{i,j}',V_{i,j'}']$ is $(2\eps,d/2)$-superregular for all $1 \leq i \leq \ell$ and $1 \leq j < j' \leq 3$.

Note that for each $1 \leq i \leq \ell$,
$P_i$ is a $3$-partite graph with $\Delta(P_i)=4$ and with vertex classes $W_1^i,W_2^i,W_3^i$ of sizes $m_i-1,m_i,m_i-1$ respectively, where $(k,j) \in W^i_j$ for all $(k,j) \in V(P_i)$.
Observe that $V(P_i) \cap Y = (P_i)^-_2 \cup (P_i)^+_2$.
So, by (iii), for each $y \in ((P_i)^-_2 \cup (P_i)^+_2) \cap W^i_j$, there is a set $C_y \subseteq V'_{i,j}$ with $|C_y| \geq cm_i$ that satisfies (ii).
Let $T_i$ be the triangle in $R$ spanned by $(i,1),(i,2),(i,3)$.  Let $f_i$ denote the restriction of $f$ on $P_i$. So
$f_i : V(P_i) \rightarrow V(T_i)$ where $f_i ((k,j)) = (i,j)$ for all $(k,j) \in V(P_i)$.

For each $1 \leq i \leq \ell$, apply Theorem~\ref{blowup} with $3,m_i-1,m_i,m_i-1,2\eps,V_{i,j}',T_i,d/2,P_i,W_j^i,4,f_i$ playing the roles of $k,n_1,n_2,n_3,\eps,V_j,J,d,H,W_j,\Delta,\phi$ with special vertices $y \in (P_i)^-_2 \cup (P_i)^+_2$ and associated sets $C_y$ playing the role of $S_y$.
Thus obtain an embedding of $P_i$ into $G[V'_{i,1} \cup V'_{i,2} \cup V'_{i,3}]$ such that every  $y \in (P_i)^-_2 \cup (P_i)^+_2$ is mapped to a vertex in $C_y$.
Note that, for $1 \leq i < i' \leq \ell$, every pair $z_i \in V(P_i)$ and $z_{i'} \in V(P_{i'})$ are mapped to different vertices of $G$.
By considering the union of these embeddings, we obtain a bijective mapping $\sigma : \bigcup_{1 \leq i \leq \ell}V(P_i) \rightarrow V(G) \setminus \tau(X)$ such that \begin{equation}\label{sigmaedge}
\sigma(x)\sigma(x') \in E(G)\ \ \text{ whenever }xx' \in \bigcup_{1 \leq i \leq \ell}E(P_i) \stackrel{(\ref{Pi})}{=} E(C^2_{n} \setminus X).
\end{equation}
In particular, we have that
\begin{equation}\label{wherey}
\sigma(y) \in C_y\ \ \text{ for all}\ \ y \in Y.
\end{equation}
Let $h : V(C^2_{n}) \rightarrow V(G)$ be defined by
\begin{equation}\label{overtaudef}
h(x) = \left\{
  \begin{array}{l l}
    \tau(x) & \quad \text{if $x \in X$}\\
    \sigma(x) & \quad \text{if $x \in V(C^2_{n}) \setminus X$.}
  \end{array} \right.
\end{equation}
It remains to show that $h$ is an embedding of $C^2_{n}$ in $G$.
Let $xy \in E(C^2_{n})$.
We consider three cases.
Suppose first that $x,y \in X$.
Then, without loss of generality, there is some $1 \leq i \leq \ell$ such that $x=x_i$ and $y=y_{i+1}$.
So $h(x)h(y)=\tau(x_i)\tau(y_{i+1}) \in E(G)$ by (i).
Suppose secondly that $x \in X$ and $y \in V(C^2_{n}) \setminus X$.
Then $y \in N_{C^2_{n}}(x) \setminus X \subseteq Y$, and so
$$
h(y) \stackrel{(\ref{overtaudef})}{=} \sigma(y) \stackrel{(\ref{wherey})}{\in} C_y \stackrel{\text{(ii)}}{\subseteq} N_G(\tau(x)) \stackrel{(\ref{overtaudef})}{=} N_G(h(x)),
$$
i.e. $h(x)h(y) \in E(G)$.
Suppose finally that $x,y \in V(C^2_{n}) \setminus X$.
Then $h(x)h(y)=\sigma(x)\sigma(y) \in E(G)$ by (\ref{sigmaedge}).
\end{proof}


\section{Proof of Theorem~\ref{mainthm}}\label{sectionend}

We will first prove Theorem~\ref{mainthm} for graphs whose order is divisible by three.

\begin{theorem}\label{almost}
Let $n \in 3\mathbb{N}$ and let $0 < 1/n \ll \eta \ll 1$.
Suppose that $G$ is an $\eta$-good graph on $n$ vertices.
Then $G$ contains the square of a Hamilton cycle.
\end{theorem}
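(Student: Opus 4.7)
The plan is to deduce Theorem~\ref{almost} almost immediately by combining Lemma~\ref{0balanced} (which produces a spanning $0$-balanced cycle structure in $G$) with Lemma~\ref{embedHC2} (which converts such a structure into the square of a Hamilton cycle). Since the hard work has already been done in Sections~\ref{sectc} and~\ref{sec:square}, essentially all that remains is to choose constants consistent with both hierarchies and invoke the lemmas in the correct order.

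First I would fix constants $L_0, L' \in \mathbb{N}$ and reals $\eps, d$ satisfying
\[
0 < 1/n \ll 1/L_0 \ll 1/L' \ll \eps \ll d \ll \eta \ll 1,
\]
chosen so that both the hypothesis of Lemma~\ref{0balanced} and the hypothesis of Lemma~\ref{embedHC2} are satisfied for every $\ell$ in the range $L' \leq \ell \leq L_0$. Since $n \in 3\mathbb{N}$, $G$ is $\eta$-good, and $n$ is sufficiently large, Lemma~\ref{0balanced} yields a spanning subgraph $G' \subseteq G$ together with an integer $\ell$ with $L' \leq \ell \leq L_0$ such that $G'$ has a spanning $(R, \ell, M, \eps, d)$-cycle structure $\mathcal{C}$ in which $M$ is $((1-\eps)m, (1+\eps)m)$-bounded and $0$-balanced (where $m$ is determined by $n$ and $\ell$ via~\eqref{3mln}).

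Next I would apply Lemma~\ref{embedHC2} to $G'$ with this cycle structure. The hypotheses of Lemma~\ref{embedHC2} are matched verbatim: we have $0 < 1/n \ll 1/\ell \ll \eps \ll d \ll 1$ (since $\ell \geq L'$), and $\mathcal{C}$ is a spanning $(R, \ell, M, \eps, d)$-cycle structure with $M$ being $((1-\eps)m, (1+\eps)m)$-bounded and $0$-balanced. The conclusion is that $G'$ contains the square of a Hamilton cycle; since $G' \subseteq G$, so does $G$.

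The only real subtlety is arithmetic: Lemma~\ref{embedHC2} relies implicitly on the fact that $n$ is divisible by $3$ (so that the column sums of a $0$-balanced spanning size matrix are integers summing to $n/3$ each), which is exactly our hypothesis $n \in 3\mathbb{N}$. Beyond this, the proof is genuinely a two-line composition of the two black boxes from the previous sections, so I do not anticipate any substantive obstacle here; all of the difficulty has been absorbed into the construction of the cycle structure in Section~\ref{sec:square} and the use of the Blow-up lemma inside Lemma~\ref{embedHC2}.
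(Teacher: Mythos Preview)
Your proposal is correct and follows essentially the same approach as the paper: choose a suitable hierarchy of constants, apply Lemma~\ref{0balanced} to obtain a spanning $0$-balanced cycle structure in a spanning subgraph $G' \subseteq G$, and then apply Lemma~\ref{embedHC2} to $G'$ to find the square of a Hamilton cycle. The paper's proof is exactly this two-step composition, with no additional ideas beyond what you describe.
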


\begin{proof}
Choose $L_0,L' \in \mathbb{N}$ and positive constants $\eps,d$ so that $0 \ll 1/n \ll 1/L_0 \ll 1/L' \ll \eps \ll d \ll \eta \ll 1$.
Apply Lemma~\ref{0balanced} to show that there exists a spanning subgraph $G' \subseteq G$, and $\ell \in \mathbb{N}$ with $L' \leq \ell \leq L_0$, such that $G'$ has a spanning $(R,\ell,M,\eps,d)$-cycle structure such that $M$ is $0$-balanced and $((1-\eps)m,(1+\eps)m)$-bounded.
Now apply Lemma~\ref{embedHC2} to show that $G'$, and hence $G$, contains the square of a Hamilton cycle.
\end{proof}

The proof of Theorem~\ref{mainthm} is now a short step away.

\medskip
\noindent
\emph{Proof of Theorem~\ref{mainthm}.}
Let $\eta > 0$.
Without loss of generality, we may assume that $\eta \ll 1$.
Choose $n_0 \in \mathbb{N}$ so that $0 < 1/n_0 \ll \eta$ and the conclusion of Theorem~\ref{almost} holds whenever $n \geq n_0-2$ and $\eta/2$ plays the role of $\eta$.
Let $G$ be a graph on $n \geq n_0$ vertices, whose degree sequence $d_1 \leq \ldots \leq d_n$ satisfies
$$
d_i \geq n/3 + i + \eta n\ \ \text{ for all }\ \ i \leq n/3.
$$
Note firstly that $G$ is $(2\eta/3)$-good.
Then (\ref{etagood}) with $2\eta/3$ playing the role of $\eta$ implies that we can find vertex-disjoint edges $x_1y_1, x_2y_2 \in E(G)$ such that $x_1,y_1, x_2,y_2 \in V(G)_{2\eta/3}$.

Let $k$ be the least non-negative integer such that $n \equiv k \mod 3$.
So $k \in \lbrace 0,1,2 \rbrace$.
Let $G'$ be the graph obtained as follows.
If $k=0$, set $G' := G$.
Otherwise, we let $z_j$ be a new vertex for each $1 \leq j \leq k$, and set
$$
V(G') := V(G) \cup \lbrace z_j : 1 \leq j \leq k \rbrace \setminus \lbrace x_j,y_j : 1 \leq j \leq k \rbrace
$$
and
$$
E(G') := E(G \setminus \lbrace x_j,y_j : 1 \leq j \leq k \rbrace) \cup \lbrace vz_j : 1 \leq j \leq k \ \text{ and }\ v \in N^2_G(x_j,y_j) \rbrace.
$$

Note that, for all $1 \leq j \leq k$ we have
$$
d_{G'}(z_j)  = |N_G^2(x_j,y_j)| \geq (1/3+\eta)n
$$
by Proposition~\ref{2ndnbrhd}(i).
It is easy to see that $G'$ is an $(\eta/2)$-good graph and $|G'| = n-k \equiv 0 \mod 3$.
Furthermore, $|G'| \geq n-2 \geq n_0-2$.
Then Theorem~\ref{almost} implies that $G'$ contains the square of a Hamilton cycle $C'$.
Since every neighbour of $z_j$ in $G'$ is a neighbour of both $x_j$ and $y_j$ in $G$, the cycle $C$ obtained from $C'$ by replacing each $z_j$ with the edge $x_jy_j$ (in any order) gives the square of a Hamilton cycle in $G$.
\hfill$\square$


\section{Concluding remarks}\label{concsec}

Recall that in Lemma~\ref{trianglecycle}, we showed that a graph $G$ as in Theorem~\ref{mainthm} contains an almost spanning copy of a so-called triangle cycle $Z_\ell$.
We then used this framework to embed the square of a Hamilton cycle.
(Roughly speaking, by \emph{framework} we mean a structure in the reduced graph which enables us to embed a subgraph into $G$.)
Frameworks similar to $Z_\ell$ have been used previously for embedding other spanning structures.

In~\cite{maxplanar}, K\"uhn, Osthus and Taraz showed that any graph $G$ on $n$ vertices and with $\delta(G) \geq (2/3 + o(1))n$ contains a spanning triangulation, i.e.~a maximal planar graph.
To embed the triangulation, the framework they used was the square of a Hamilton path.
(The error term $o(n)$ here was subsequently removed in~\cite{koplanar}, yielding an exact result.)

We say a graph $H$ on $n$ vertices has \emph{bandwidth} $b$ if there exists an ordering of the vertices $1, \ldots, n$ so that $|i-j| < b$ whenever $ij$ is an edge of $H$.
In~\cite{3partite}, B\"ottcher, Schacht and Taraz considered the more general problem of embedding (possibly spanning) graphs $H$ with small bandwidth.
They showed that any graph $G$ on $n$ vertices with $\delta(G) \geq (2/3 + o(1))n$ contains every $3$-chromatic graph $H$ on at most $n$ vertices and of bounded maximum degree and bandwidth $o(n)$.
Again, the framework used here was the square of a Hamilton path.
In later work~\cite{bst} they generalised this to $r$-chromatic graphs $H$ and used an analogue of $Z_\ell$ for their framework.

We believe that a graph as in Theorem~\ref{mainthm} contains a spanning triangulation.

\begin{conjecture}\label{conjplanar}
Given any $\eta >0$ there exists an $n_0 \in \mathbb N$ such that the following holds. If $G$ is a graph on $n \geq n_0$ vertices whose degree sequence $d_1\leq \dots \leq d_n$ satisfies
$$ d_i \geq n/3+i+\eta n \ \ \text{ for all } \ \ i \leq n/3,$$ then $G$ contains a spanning triangulation.
\end{conjecture}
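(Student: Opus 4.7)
\medskip
\noindent
\emph{Proof proposal for Conjecture~\ref{conjplanar}.}

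The plan is to follow the blueprint developed in Sections~\ref{sectc}--\ref{sec:square} of the present paper, replacing the final embedding of the square of a Hamilton cycle by the embedding of a suitable spanning triangulation. The overall strategy has three phases. First, apply Lemma~\ref{trianglecycle} to the $\eta$-good graph $G$ to obtain an almost spanning $\ell$-triangle cycle $Z_\ell$ in the reduced graph $R$ of $G$. Second, invoke the cycle-structure machinery of Section~\ref{sec:square} (namely Lemma~\ref{0balanced}) to upgrade this to a spanning, $0$-balanced $(R,\ell,M,\eps,d)$-cycle structure $\mathcal{C}$ of $G$ (after, if necessary, deleting a bounded number of vertices using the reservoir trick from Lemma~\ref{reservoir} so that $n$ is congruent to $0$ modulo the appropriate integer). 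Third, use the Blow-up lemma (Theorem~\ref{blowup}) to embed a carefully chosen spanning triangulation $H$ into $\mathcal{C}$.

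For the third phase, the key point is that $Z_\ell$ contains the square of a Hamilton cycle $C^2_{3\ell}$, and any $3$-cluster `slice' $V_{i,1} \cup V_{i,2} \cup V_{i,3}$ induces an $(\eps,d)$-superregular triple. This is exactly the framework exploited by K\"uhn, Osthus and Taraz~\cite{maxplanar} to embed a spanning triangulation in a graph of minimum degree $(2/3+o(1))n$, and by B\"ottcher, Schacht and Taraz~\cite{3partite, bst} for spanning subgraphs of small bandwidth. Concretely, I would choose $H$ to be a canonical spanning triangulation of bounded maximum degree and bandwidth $o(n)$ that admits a graph homomorphism $\phi : V(H) \to V(C^2_{3\ell}) \subseteq V(Z_\ell)$ with $|\phi^{-1}((i,j))| = |V_{i,j}|$ for every $(i,j) \in [\ell] \times [3]$. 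Such an $H$ and $\phi$ can be constructed explicitly by taking a `fan triangulation' or triangular strip along the cyclic ordering $(1,1),(1,2),(1,3),(2,1),\ldots$, subdividing the vertex set evenly according to the sizes $|V_{i,j}|$, and filling in any missing edges to make each face a triangle while respecting the cluster structure. With $\phi$ and the superregularity guaranteed by $\mathcal{C}$ in hand, Theorem~\ref{blowup} produces the required embedding $H \hookrightarrow G$.

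The first two phases require essentially no new work: Lemmas~\ref{trianglecycle} and~\ref{0balanced} are already proved in full generality and make no use of the fact that the target graph is the square of a Hamilton cycle. The main obstacle is therefore concentrated in the third phase, namely producing a canonical spanning triangulation $H$ together with a homomorphism $\phi : V(H) \to V(Z_\ell)$ that respects the prescribed cluster sizes $|V_{i,j}|$ \emph{exactly}. The cluster sizes cannot be assumed to be equal: $\mathcal{C}$ is only $0$-balanced, meaning that $|V_{i,1}| = |V_{i,2}| = |V_{i,3}|$ within each triangle of $T_\ell$, but distinct triangles in $T_\ell$ may have slightly different sizes. This creates mild bookkeeping issues when designing $H$: one must allow local adjustments in the triangulation so that within the $i$-th block of $3|V_{i,1}|$ vertices, the number mapped to each colour class is correct. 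A clean way to achieve this is to first build $H$ as a sequence of $\ell$ triangulated `drums' joined cyclically (each drum being a triangulation of a cylinder of circumference $3$ and appropriate height $|V_{i,1}|$), and to check that standard edge flips allow the prescribed homomorphism. Once this combinatorial construction is in place, the analytic part — verifying the hypotheses of the Blow-up lemma — is identical to the proof of Lemma~\ref{embedHC2}. An alternative, more conceptual route would be to deduce Conjecture~\ref{conjplanar} directly from a bandwidth theorem under degree sequence conditions, but such a theorem is not yet available and would itself be a substantial project; the triangulation construction above is the shortest self-contained route.
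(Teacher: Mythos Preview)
The statement you are attempting to prove is Conjecture~\ref{conjplanar}, which the paper explicitly leaves open: it appears in Section~\ref{concsec} (Concluding remarks) as a conjecture, not a theorem, and the paper contains no proof of it. There is therefore nothing to compare your proposal against. The paper does, however, suggest exactly the line of attack you outline --- ``One approach to prove Conjecture~\ref{conjplanar} could be to use $Z_\ell$ as a framework for embedding (i.e.\ apply Lemma~\ref{trianglecycle})'' --- so your strategy is in line with what the authors envision, but it is a genuine research programme rather than a routine adaptation.

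That said, a few points in your sketch would need real work. First, your handling of the divisibility constraint is muddled: Lemma~\ref{reservoir} is a reservoir lemma, not a vertex-deletion device, and in any case deleting vertices from $G$ would destroy the spanning property you are after. The paper's own trick for $3 \nmid n$ (merging high-degree edges into single vertices, end of Section~\ref{sectionend}) works because the square of a Hamilton cycle behaves well under replacing a vertex by an edge; the analogous operation for a maximal planar graph is less transparent and would need to be argued. Second, the heart of the matter is your third phase: constructing a bounded-degree triangulation $H$ together with a homomorphism $\phi: V(H) \to V(Z_\ell)$ hitting each cluster exactly $|V_{i,j}|$ times. Bounded-degree triangulations certainly exist, but the ``drum'' construction you sketch must be made precise, and in particular you must ensure that gluing the drums cyclically yields a planar graph in which \emph{every} face is a triangle (including across the seams), while keeping the maximum degree bounded independently of $\ell$ and respecting the prescribed cluster sizes. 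This is the step the paper does not carry out, and it is where the content of a proof of Conjecture~\ref{conjplanar} would lie.
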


If true, Conjecture~\ref{conjplanar} implies the aforementioned result of K\"uhn, Osthus and Taraz.
One approach to prove Conjecture~\ref{conjplanar} could be to use $Z_\ell$ as a framework for embedding (i.e. apply Lemma~\ref{trianglecycle}).
This approach could also be fruitful in attacking the following more general conjecture.

\begin{conjecture}\label{conjband}
Given any $\eta >0$ and $\Delta \in \mathbb{N}$, there exists a $\beta > 0$ and an $n_0 \in \mathbb N$ such that the following holds. Suppose that $H$ is a $3$-chromatic graph on $n \geq n_0$ with $\Delta(H) \leq \Delta$ and bandwidth at most $\beta n$.
If $G$ is a graph on $n$ vertices whose degree sequence $d_1\leq \dots \leq d_n$ satisfies
$$ d_i \geq n/3+i+\eta n \ \ \text{ for all } \ \ i \leq n/3,$$ then $G$ contains $H$.
\end{conjecture}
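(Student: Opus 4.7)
The plan is to combine Lemma~\ref{0balanced} with the Blow-up-lemma-based embedding methodology developed by B\"ottcher, Schacht and Taraz~\cite{bst} for graphs of sublinear bandwidth. Since the hypothesis of Conjecture~\ref{conjband} matches that of Theorem~\ref{mainthm}, Lemma~\ref{0balanced} delivers, for any $L_0$ sufficiently large, a spanning $(R,\ell,M,\eps,d)$-cycle structure $\mathcal{C}$ in $G$ with $L' \le \ell \le L_0$, $R \supseteq Z_\ell$, and with size matrix $M=(m_{i,j})$ that is $0$-balanced and $((1-\eps)m,(1+\eps)m)$-bounded. Provided the constants satisfy the hierarchy $0 < 1/n_0 \ll 1/L_0 \ll \beta \ll \eps \ll d \ll 1/\Delta,\eta$, this structure is rich enough (each cluster has size $\approx m = n/(3\ell) \gg \beta n$, and every triangle of $T_\ell$ is superregular) to host any $H$ as in the conjecture.

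To embed $H$ into $\mathcal{C}$, I would first fix a linear ordering $v_1,\dots,v_n$ of $V(H)$ of bandwidth at most $\beta n$ and then produce a proper $3$-coloring of $H$ that respects this ordering and is \emph{balanced} with respect to the size matrix $M$: for every $1 \le i \le \ell$ and $j \in [3]$, the number of colour-$j$ vertices appearing in the $i$th `block' of $3m$ consecutive vertices of the ordering should equal $m_{i,j}$, up to an error of at most $\sqrt{\eps}m$. Such a colouring exists by an adaptation (to the cyclic setting) of the `Lemma for $H$' of~\cite[Lemma~8]{bst}, whose proof uses a careful swapping argument enabled by the small bandwidth and the $3$-chromaticity of $H$. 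Grouping the vertices of $H$ by $(i,j)$ (block index, colour) gives a partition $\{U_{i,j}:(i,j)\in[\ell]\times[3]\}$ of $V(H)$ together with a natural graph homomorphism $\phi:V(H)\to V(Z_\ell)$ sending $U_{i,j}$ to $(i,j)$. The bandwidth assumption ensures that any edge of $H$ has its two endpoints in either the same block or in two blocks that are consecutive in the cyclic order, so $\phi$ genuinely maps into $E(Z_\ell)$.

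To apply the Blow-up lemma (Theorem~\ref{blowup}) cluster by cluster, one needs the preimages $\phi^{-1}(i,j)$ to match the cluster sizes $|V_{i,j}|$ exactly, not just up to error $\sqrt{\eps}m$. This mismatch is handled by the standard two-stage procedure: apply the Partial Embedding Lemma (Lemma~\ref{partialembed}) to a small `connector' subgraph of $H$ consisting of the $O(\sqrt{\eps}m)$ vertices in each $U_{i,j}$ that are near a block boundary (they have at most $\Delta$ neighbours, all in $U_{i,j} \cup U_{i-1,\cdot} \cup U_{i+1,\cdot}$, and hence admit many valid candidate images in $V_{i,j}$); this reduces the remaining embedding problem to one \emph{within} each triangle of $T_\ell$, with prescribed target sets $S_y$ for the interface vertices. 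Since each $G[V_{i,1},V_{i,2},V_{i,3}]$ is a superregular triple by (C3), Theorem~\ref{blowup} completes the embedding.

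The main obstacle is the first step: producing the balanced $3$-colouring of $H$ that conforms to the block/colour partition dictated by $M$. The bandwidth condition means that within any window of length $\beta n$ the colouring is essentially rigid (all colourings of a connected subgraph agree up to a permutation of colour classes), so rebalancing the colour-class sizes requires simultaneously `rotating' the colouring across many blocks while being careful at block boundaries where the window overlaps two triangles of $Z_\ell$. Achieving this cleanly---in particular, obtaining the required balance on every single block rather than only globally---is the key technical step and is where one would need to work out the analogue of~\cite[Lemma~8]{bst} for the cyclic, triangle-partitioned framework $Z_\ell$. Once that step is in place, the remaining ingredients assembled in this paper suffice to prove Conjecture~\ref{conjband}.
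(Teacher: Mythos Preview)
The statement you are addressing is Conjecture~\ref{conjband}, which the paper explicitly leaves open; there is no proof in the paper to compare against. The paper only remarks that applying Lemma~\ref{trianglecycle} (the almost spanning $Z_\ell$) ``could also be fruitful in attacking'' this conjecture, and your outline is precisely a fleshed-out version of that suggestion, upgraded to use the stronger Lemma~\ref{0balanced}.

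As an outline your proposal is coherent and identifies the right ingredients, but it is not a proof, and you correctly flag the genuine gap yourself: the analogue of the ``Lemma for $H$'' from~\cite{bst} adapted to the cyclic $Z_\ell$ framework with a prescribed $0$-balanced size matrix $M$. A couple of points you may be underestimating. First, in~\cite{bst} the colouring lemma does more than balance colour-class sizes per block: it also arranges that certain interface positions are ``zero-free'' (avoid a designated colour), which is what allows the homomorphism into the framework to be well-defined across block boundaries; you would need the same here, and the cyclic (rather than path) structure of $Z_\ell$ introduces a parity/wrap-around constraint that has to be handled. Second, Lemma~\ref{0balanced} assumes $n\in 3\mathbb{N}$; for general $n$ you would need an adaptation analogous to the final step of the proof of Theorem~\ref{mainthm}, but the vertex-contraction trick used there does not obviously preserve bounded maximum degree or small bandwidth of $H$, so a different workaround is required. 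Until these points are resolved, the conjecture remains open, as the paper states.
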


We conclude the paper by discussing degree sequence conditions that force the $k$th power of a Hamilton cycle in a graph. (The $k$th power of a Hamilton cycle $C$ is obtained from $C$ by adding an edge between every pair of vertices of distance at most $k$ on $C$.) 
A conjecture of Seymour~\cite{seymour} states that  every graph $G$ on $n$ vertices  with $\delta (G) \geq kn/(k+1)$ contains the $k$th power of a Hamilton cycle. Thus, Seymour's conjecture is a generalisation of Conjecture~\ref{posaconj}. Koml\'os,  S\'ark\"ozy and  Szemer\'edi~\cite{kss} proved Seymour's conjecture for sufficiently large graphs $G$. 
In light of Theorem~\ref{mainthm}, we believe the following degree sequence version of Seymour's conjecture is true.

\begin{conjecture}\label{conjnew}
Given any $\eta >0$ and $k \geq 2$ there exists an $n_0 \in \mathbb N$ such that the following holds. If $G$ is a graph on $n \geq n_0$ vertices whose degree sequence $d_1\leq \dots \leq d_n$ satisfies
$$ d_i \geq \frac{(k-1)n}{k+1}+i+\eta n \ \ \text{ for all } \ \ i \leq \frac{ n}{k+1},$$ then $G$ contains the $k$th power of a Hamilton cycle.
\end{conjecture}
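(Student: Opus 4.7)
The proof plan is to follow the template of Theorem~\ref{mainthm} but with every structure replaced by its $k$th-power analogue. The overall architecture will consist of (a) an almost-perfect packing result for $\eta$-heavy $k$th-power paths, (b) a connecting/reservoir mechanism that joins them into an almost-spanning $k$th-power cycle, and (c) a generalized ``$(k+1)$-clique cycle'' $Z_\ell^{(k)}$ in the reduced graph that provides a framework for absorbing the remaining vertices via the Blow-up lemma. Throughout, the appropriate generalization of ``$\eta$-good'' will be the condition that $d(v_i) \geq ((k-1)/(k+1)+\eta)n + i + 1$ for all $i \leq n/(k+1)$, and $V(G)_\eta$ will consist of vertices of degree at least $(k/(k+1)+\eta)n$.

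The first ingredient is a perfect $K_{k+1}$-packing in the reduced graph $R$: this follows because the second author's result in~\cite{triangle} already covers arbitrary $H$, so the $(\eta/2, |R|)$-good reduced graph $R$ (the analogue of Lemma~\ref{reduced}) contains a spanning $K_{k+1}$-packing. Second, folded paths must be generalized: a $k$-folded path will be a sequence of $(k+1)$-cliques in which consecutive cliques share $k$ vertices, so that a homomorphism analogous to Lemma~\ref{foldedpath} sends a $k$th-power path into the clique sequence. The analogue of Lemma~\ref{findfpath} would iteratively extend an initial $(k+1)$-clique by choosing a new vertex in $N^k_G$ of $k$ existing clique vertices, using Proposition~\ref{vertexdeg} to either raise the cumulative degree parameter $c(T_i)$ by a fixed amount or land on a vertex in $V(G)_\eta$. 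Since the $k$-fold common neighborhood of a $(k+1)$-clique has size at least $\sum d(x_i) - (k-1)n$ by inclusion-exclusion, the same kind of case split works, with the bookkeeping parameter $\alpha_i$ running over $k+1$ coordinates rather than three.

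With folded paths in hand, the $k$th-power analogue of Lemma~\ref{almostpath} is obtained exactly as in Section~\ref{sectionalmost}, combining Theorem~\ref{trianglepack} (generalized), the folded-path lemma, and the Blow-up lemma applied inside each $(k+1)$-clique of the packing. The reservoir lemma (Lemma~\ref{reservoir}) generalizes with essentially no change. The framework structure $Z_\ell$ must be replaced by $Z_\ell^{(k)}$, the $6$-regular triangle cycle being replaced by the graph on $[\ell]\times[k+1]$ formed by a cyclic sequence of $(k+1)$-cliques any two consecutive ones of which are joined by a complete bipartite graph minus a perfect matching; this graph contains $C^{(k)}_{(k+1)\ell}$ (the $k$th power of a cycle) and is invariant under permuting the second coordinate, which is the property that drives the balancing argument of Section~\ref{sec:square}. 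An analogue of Lemma~\ref{trianglecycle} producing $Z_\ell^{(k)}$ in $R$ follows from the $k$th-power version of Lemma~\ref{almostcycle} together with the Blow-up lemma exactly as before, and then the balancing procedure of Section~\ref{sec:square} carries over with $3$ replaced by $k+1$ at every step, since the validity condition $v \to V_{i,j}$ just becomes $d_G(v, V_{i,j'}) \geq (d-\eps)m$ for all $j' \in [k+1]\setminus\{j\}$, and Propositions~\ref{replacesmall} and~\ref{replacelarge} generalize via the same inclusion-exclusion.

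The principal obstacle will be the generalization of the connecting machinery in Section~\ref{connectingsec}, i.e.\ Lemmas~\ref{flexlemma} and~\ref{abcd}. For $k=2$ the proof rests on a delicate case analysis of pairwise $2$-neighborhoods $N^2_U(a,b)$ and their intersections, exploiting the fact that three pairwise-disjoint subsets of $V(G)$ each of size $>(1/3+\eta)n$ cannot exist in an $\eta$-good graph. The natural analogue requires controlling $k$-fold common neighborhoods $N^k_U(x_1,\ldots,x_k)$ and the ``flexibility'' notion must be strengthened so that an $\eta$-tail-flexible $k$th-power path spans a $K_{k+2}$ on its last $k+2$ vertices, permitting the various re-orderings needed to find a connector of bounded length; the required counting argument involves up to $k+1$ pairwise-disjoint neighborhoods, each of size $>((k-1)/(k+1)+\eta)n$ by the generalization of Proposition~\ref{2ndnbrhd}, and the partition argument of Proposition~\ref{largeset}(iv) must be extended to multiple parts. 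Handling this, and in particular exhibiting an $(ab,cd)$-connector of length bounded by a function of $k$ alone, will be where nearly all of the new work lies; the remaining stages should adapt routinely.
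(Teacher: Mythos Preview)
The statement you are attempting to prove is a \emph{conjecture}, not a theorem: the paper does not prove it, and explicitly leaves it open. So there is no ``paper's own proof'' to compare against. What the paper does say (in Section~\ref{concsec}) is precisely what you have discovered in your outline: that most of the argument for Theorem~\ref{mainthm} should generalise routinely, but that the connecting lemma (Lemma~\ref{abcd}) is the genuine obstacle, with the authors remarking that their methods ``seem somewhat tailored to the case of the square of a Hamilton cycle.''

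Your proposal is therefore not a proof but a programme, and you have correctly located the gap yourself. The issue is that your final paragraph does not close it: you describe what a generalised flexibility notion and a generalised Lemma~\ref{abcd} would need to accomplish, but you do not supply the argument. The specific difficulty is that the proof of Lemma~\ref{abcd} for $k=2$ hinges on a short, hand-tuned case analysis (Claims~1--3 of Lemma~\ref{flexlemma}, and the $S_3/S_4$ and $T_{ab}/T_{cd}$ arguments in Lemma~\ref{abcd}) that exploits the tight interaction between pairwise common neighbourhoods of size $>(1/3+\eta)n$ and the pigeonhole bound that three such disjoint sets cannot fit in $n$ vertices. For general $k$, the analogous common neighbourhoods $N^k_U(x_1,\ldots,x_k)$ are only guaranteed to have size roughly $(1/(k+1)+\eta)n$, the flexibility needed is a $K_{k+2}$ on the last $k+2$ vertices, and the number of ordered $k$-segments one must simultaneously control grows; the delicate contradiction arguments of Cases~1 and~2 at the end of Lemma~\ref{flexlemma} do not have an evident inductive or uniform analogue. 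Until you can exhibit, for every $k$, a bounded-length $(a_1\cdots a_k, c_1\cdots c_k)$-connector avoiding a small forbidden set, the rest of the outline---however plausible---does not yield a proof.
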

If true, Conjecture~\ref{conjnew} would be essentially best possible. Indeed, the example in Proposition~17 in~\cite{bkt} shows that one cannot replace the term $\eta n$ in the degree sequence condition here with $-1$.
Note that  a necessary
condition for a graph $G$ to contain the $k$th power of a Hamilton cycle is that $G$
contains a perfect $K_{k+1}$-packing:
In~\cite{triangle} it was shown that the hypothesis of Conjecture~\ref{conjnew} indeed ensures that $G$ contains a perfect $K_{k+1}$-packing. 

We believe that most of the proof of Theorem~\ref{mainthm} naturally generalises to $k$th powers of Hamilton cycles. The main difficulty in proving Conjecture~\ref{conjnew} appears to be in proving a `connecting lemma' (i.e. an analogue of Lemma~\ref{abcd}). In particular,  the methods we use to prove Lemma~\ref{abcd} seem somewhat tailored to the case of the square of a Hamilton cycle.


\section*{Acknowledgements}

We are grateful to Daniela K\"uhn and Deryk Osthus for helpful comments on the manuscript, and to the referees for their careful and helpful reviews.
The first author was partially supported by ERC grant~306493 and the second author was supported by EPSRC grant EP/M016641/1.

\medskip

{\footnotesize \obeylines \parindent=0pt

\begin{tabular}{lll}
Katherine Staden                 &\ &  Andrew Treglown\\
Mathematics Institute						    &\ &  School of Mathematics \\
University of Warwick 					&\ & University of Birmingham   	 \\
Coventry                     &\ &  Birmingham  \\
CV4 2AL														&\ & B15 2TT	\\
UK																	&\ &  UK
\end{tabular}
}

\begin{flushleft}
{\it{E-mail addresses}:
\tt{k.l.staden@warwick.ac.uk, a.c.treglown@bham.ac.uk}}
\end{flushleft}

\end{document}